\katehomepage\url{websupport1.citytech.cuny.edu/faculty/kpoirier}
\gabehomepage\url{cgp.ibs.re.kr/~gabriel}
\DeclareMathOperator{\id}{id}
\DeclareMathOperator{\inj}{inj}
\DeclareMathOperator{\Maps}{Maps}
\newcommand{\pq}{{pq}}
\newcommand{\reparam}{\mathfrak{r}}
\newcommand{\length}[1]{\|#1\|}
\newcommand{\bigstarr}[2]{\nabla^{{#1}^{#2}}}
\newcommand{\spaced}{\mathcal{SD}}
\newcommand{\spaceduo}{\spaced^{\mathrm{u}}}
\newcommand{\gkl}{(\chi,k,\ell)}
\newcommand{\Mover}[2]{\mathcal{N}(#1;#2)}
\newcommand{\Mspace}[1]{\mathcal{N}(#1)}
\newcommand{\chains}{C_*}
\DeclareMathOperator{\straighten}{str}
\newcommand{\chid}{|\chi| d}
\newcommand{\prune}{\! \setminus \!}
\DeclareMathOperator{\identity}{Id}
\DeclareMathOperator{\inputs}{In}
\newcommand{\leaves}[1]{{L({#1})}}
\newcommand{\rcx}{r}
\DeclareMathOperator{\characteristic}{Ch}
\newcommand{\at}{\alpha}
\newtheorem*{theorem*}{Theorem}
\newtheorem{theorem}{Theorem}[section]
\newtheorem*{mainthm}{Main Theorem}
\newtheorem{lemma}[theorem]{Lemma}
\newtheorem{prop}[theorem]{Proposition}
\newtheorem{cor}[theorem]{Corollary}
\theoremstyle{definition}
\newtheorem{defi}[theorem]{Definition}
\newtheorem*{conjecture}{Conjecture}
\newtheorem{letterconjecture}{Conjecture}
\theoremstyle{remark}
\newtheorem*{remark}{Remark}
\begin{document}
\begin{abstract}
We construct a space of string diagrams, which are a type of fatgraph with some additional data, and show that there are string topology operations on the chains of the free loop space of a closed Riemannian manifold which are parameterized by the chains on the space of string diagrams. These operations are shown to recover known structure on homology of the free loop space.
\end{abstract}

\title{Chain-Level String Topology Operations}

\author{Gabriel C. Drummond-Cole}
\address{Center for Geometry and Physics, Institute for Basic Science (IBS)\\
77 Cheongam-ro, Nam-gu, Pohang-si, Gyeongsangbuk-do\\
Republic of Korea 790-784}
\email{gabriel@ibs.re.kr}
\urladdr{\gabehomepage} 
\thanks{This work was supported by IBS-R003-D1}

\author{Kate Poirier}
\address{Department of Mathematics\\
New York City College of Technology\\
300 Jay Street\\
Brooklyn, NY }
\email{kpoirier@citytech.cuny.edu}
\urladdr{\katehomepage}
\thanks{This work was partially supported by NSF RTG grant DMS-0838703}

\author{Nathaniel Rounds}
\address{
Reasoning Mind\\
2000 Bering Drive\\
Suite 300\\
Houston, TX}
\email{nathaniel.rounds@reasoningmind.org}

\maketitle
\setcounter{tocdepth}{1}
\tableofcontents 
\section*{Introduction}
String topology on a manifold $M$ is the study of natural operations on the free loop space $LM$, the space of continuous maps from circle to $M$. These operations on $LM$ are defined using the algebraic and geometric topology of $M$ itself. 

Historically, string topology operations were constructed as maps between various tensor powers of the homology groups of $LM$. Usually these operations involve a shift in degree, which we suppress throughout this introduction. These homology-level operations can be parameterized by spaces of graphs; the spaces of graphs parameterizing operations discovered so far are closely related to moduli spaces of Riemann surfaces.

In this paper, we define string topology operations that are maps between the tensor powers of the singular \emph{chains} of $LM$, rather than the homology groups of $LM$. These chain-level operations induce operations at the level of homology, including many of the previously constructed operations. 

This paper constitutes one step in a program to extend the construction of string topology operations to the most general possible setting, along the way employing those spaces of operations that yield the finest possible invariants.

\subsection*{Background}
This history of string topology is necessarily incomplete and is geared toward the contents of this paper. 

Chas and Sullivan introduced string topology operations by defining a loop product on the homology groups of the loop space
\[H_*(LM) \otimes H_*(LM) \to H_{*}(LM) \]
which gives $H_*(LM)$ the structure of a  commutative associative algebra~\cite{CS}.

A chain in $LM$ determines a chain in $M$ by evaluation of loops at their basepoints.
For a pair of chains in $LM$ whose chains of basepoints intersect transversally in $M$, Chas and Sullivan defined their chain-level loop product by concatenating loops along the intersection locus of basepoints in $M$.
Since it is not true that any two chains in $LM$ determine transversal chains in $M$, this chain-level product is only partially defined on $C_*(LM) \otimes C_*(LM)$.
However, for each pair of homology classes in $H_*(LM)$, Chas and Sullivan choose pairs of representative cycles whose chains of basepoints in $M$ do intersect transversally in order to give their loop product, which is fully defined on $H_*(LM) \otimes H_*(LM)$.

Later, Cohen and Jones took a different approach, using homotopy theory to generate an alternate definition of Chas and Sullivan's loop product~\cite{CJ}. This allowed them to avoid explicitly working with transversality, which is notoriously delicate. The space $LM \times_M LM$ of pairs of loops in $M$ whose basepoints coincide is a finite codimension subspace of the infinite dimensional manifold $LM \times LM$ of pairs of loops in $M$.
Because of this Cohen and Jones were able to use a Pontryagin--Thom construction to define a wrong-way map from $H_*(LM \times LM)$ to $H_{*}(LM \times_M LM)$, which is  a composition of two maps.
The first map in the composition is induced on homology by the map of spaces which takes $LM \times LM$ to the Thom space of a tubular neighborhood of $LM \times_M LM$ in $LM \times LM$.
The second is the Thom isomorphism from the homology of this Thom space to the homology of $LM \times_M LM$.
Cohen and Jones realized the loop product by composing this wrong-way map with the map induced on homology by concatenating loops whose basepoints coincide.

Chas and Sullivan also showed that the loop product is, in fact, part of a richer structure on $H_*(LM)$, namely that of a Batalin--Vilkovisky algebra.
The Batalin--Vilkovisky operator $\Delta$, which is a square zero unary second-order differential operator with respect to the loop product, is induced by the $S^1$ action on $LM$ given by rotating loops.
Additionally, Chas and Sullivan used the loop product to define the string bracket, which gives $H_*^{S^1}(LM)$, the $S^1$-equivariant homology of $LM$, the structure of a Lie algebra.
They later used similar methods to define the string cobracket, which, together with the string bracket, gives $H_*^{S^1}(LM, M)$, the $S^1$ equivariant homology of $LM$ relative to the subspace of constant loops, the structure of an involutive Lie bialgebra \cite{ChasSullivan:CSOTLLBHSA}.

Chas and Sullivan's operations inspired Cohen and Godin to develop a coherent set of more general operations~\cite{CG}  following the method of Cohen and Jones.
Cohen and Godin defined a kind of fatgraph called a Sullivan chord diagram and added additional structure to these diagrams to define a \emph{space} of marked metric chord diagrams.

For each point in their space, they defined an operation of the form 
\[H_*(LM)^{\otimes k} \to H_*(LM)^{\otimes \ell}\]
for some $k$ and $\ell$ determined by the diagram. They also showed that this operation depends only on the path component of the point in their space. In other words, they described natural operations on the homology of the loop space parameterized by the zeroth homology of the space of marked metric chord diagrams. Chataur later showed that there are natural operations on the homology of the loop space parameterized by the \emph{higher} homology of the space of marked metric chord diagrams~\cite{Chataur:BAST}. 

There is an embedding of Cohen and Godin's space of marked metric chord diagrams into the moduli space $\mathcal{M}$ of Riemann surfaces with parameterized boundary. Godin later extended Cohen--Godin's construction to provide natural operations on the homology of the loop space parameterized by the homology of $\mathcal{M}$~\cite{godin}. 
Kupers recently used a different model of $\mathcal{M}$ originally defined by B\"odigheimer to recover Godin's structure \cite{Kupers:CHSOMURSC, Bodigheimer}.
In fact, Cohen--Godin, Chataur, Godin, and Kupers each assembled their collections of operations into an algebraic structure, in each case some kind of field theory.

Tamanoi~\cite{Tamanoi:LCSTTHGTQFTO} showed that many of Cohen and Godin's operations are trivial. He also showed that Godin's operations coming from homology classes of $\mathcal{M}$ which are in the image of the stabilization map are trivial ~\cite{Tamanoi:SSOAT}.
The full implications of these facts for Chataur and Godin--Kupers' higher homology operations are still unclear, but 
this presents an interesting question because not much is known about the unstable homology of $\mathcal{M}$.
Given a homology class of $\mathcal{M}$, is there a manifold $M$ for which the corresponding string topology operation is nontrivial? If the answer is yes, then the homology class is an unstable class. As far as the authors know, so far no unstable homology classes have been found by these means.

Other lines of research have focused on algebraic models of the free loop space rooted in Hochschild homology~\cite{TZ,Malm:STBLS}. Some of this research has also used spaces of chord diagrams. This Hochschild approach is not taken in this paper and we will not discuss this perspective in detail.

\subsection*{Compactified and chain-level operations}

A general precept of homotopical algebra is that natural operations on the homology of something usually reflect a richer structure at the chain level, and that the passage to homology usually loses information. Following this precept, a natural conjecture is that all the algebraic structure on the homology $H_*(LM)$ of the free loop space discussed above is the shadow of an algebraic structure on the chains $C_*(LM)$. A second conjecture is that this algebraic structure on the chains is strictly richer than that on homology. A realization of the first conjecture is the subject of this paper; at this point the status of the second conjecture is still open, as far as we know.

Another natural goal is to describe the universal space of string topology operations. With this goal in mind, we expect that there are natural operations on the chains of the free loop space parameterized by the chains on a {\em compactification} of $\mathcal{M}$. We expect that these operations recover those parameterized by the non-compact spaces used by~\cite{CG,Chataur:BAST,godin,Kupers:CHSOMURSC} and also incorporate operations and relations not evident in this previous work.

These considerations were discussed by Sullivan in his survey~\cite{Sullivan:STBAPS}. There he outlined natural operations on a chain complex computing the $S^1$-equivariant homology of the loop space parameterized by a compactification of a related moduli space of Riemann surfaces. Following Sullivan, there have been multiple descriptions of chain-level operations and compactified spaces of operations. The second author, in her thesis~\cite{PoirierThesis}, described a compactified space of operations and a more detailed description of the operations on this equivariant chain model. The second and third author wrote a preprint~\cite{PR1} describing a non-equivariant version; this paper is the natural outgrowth of that preprint but uses new tools to give what we believe is a more natural and comprehensive presentation.
Recently, Irie defined a model for chains on the free loop space to deal with transversality issues~\cite{Irie:TPSTDRC,Irie:CLBVSSTDC}.
Using this model, he was able to define chain-level operations parameterized by a space of graphs called decorated cacti. However, this model does not seem to be well-suited to operations with more than one output or corresponding to surfaces of higher genus. Hingston and Wahl~\cite{HingstonWahl} have also announced results similar to ours.

\subsection*{Contents of this paper}
Following Sullivan~\cite{Sullivan:STBAPS}, our chain-level construction deals with transversality issues by using short geodesic segments, and generalizations of such segments, to join points which are nearby in $M$, but which may not actually coincide. It combines aspects of both the original Chas--Sullivan construction for transversally intersecting chains and the Cohen--Jones Pontryagin--Thom construction. This geodesic segment technique is our replacement for the concatenation of loops that literally intersect; our use of this technique is a fundamental difference between our methods and those of Chas--Sullivan and Cohen--Jones.

In this paper we define a version of chord diagrams called {\em string diagrams}, which generalizes the definitions of string diagrams appearing in \cite{PR1} and \cite{PoirierThesis}.
Roughly, string diagrams are metric graphs obtained by attaching leaves of trees to circles.

We denote the space of string diagrams by $\spaced$. 
Each path component $\spaced \gkl$ of $\spaced$ has the structure of a finite cell complex so is compact. 

We conjecture that the space of string diagrams is homotopy equivalent to the moduli space of Riemann surfaces with parameterized boundary. There is a natural equivalence relation on the space of string diagrams and we further conjecture that our space modulo equivalence has the homotopy type of a known compactification. See the next subsection for details.

The main theorem of the paper is the following.
\begin{mainthm}
There are chain-level string topology operations 
\[C_*(\spaced \gkl) \otimes C_*(LM)^{\otimes k} \to C_*(LM)^{\otimes \ell}.
\]
which realize diffuse intersection string topology and induce the classical string topology operations of~\cite{CS} and~\cite{CG}.
\end{mainthm}
The operations are defined in Section~\ref{section:stringtopology} and they are shown to recover previous work in Section~\ref{section:previouswork}. Dealing with transversality issues requires care and results in some technical definitions.

The most important technical accomplishments in the paper are the following. 
\begin{enumerate}
\item We construct an appropriate {\em diffuse intersection class} representative that we can cap chains with to model intersection. This involves carefully patching together Thom classes and ensuring that the output class is well-defined and unique. This diffuse intersection class plays the role of the Thom class in our version of the Pontryagin--Thom construction.
\item We define a generalized geodesic construction $\heartsuit$, so-called because it is the heart of our string topology construction. The map $\heartsuit$ takes a string diagram and a collection of loops in $M$ satisfying a closeness condition dependent on the string diagram and outputs a map from the string diagram to $M$. This must be done coherently in families as the string diagram varies in $\spaced$.
\end{enumerate}
Carefully defining these two constructions takes up much of this paper.

We show that our operations induce Cohen--Godin's operations and Chas--Sullivan's BV algebra on homology.
We expect that our operations should also induce the structures of Chataur, Godin, and Kupers as well.

This paper is organized as follows.

In Section \ref{section: conventions and definitions} we introduce our conventions and preliminary definitions which will be used for the definition of string diagrams given in Section \ref{section:string diagrams}.
Also in Section  \ref{section:string diagrams}, we discuss the space of string diagrams and show that it has the structure of a cell complex. 

In Section \ref{section:straightening} we show how to map a particular subgraph of a string diagram into a simplex by first mapping its leaves to the vertices. This {\em straightening} map is defined in terms of a {\em straightening} map for trees in Appendix  \ref{appendix:straightening} which maps a tree into a simplex by first mapping its leaves to the vertices.
The straightening map for string diagrams is the first map in a composition of two maps defining the map $\heartsuit$. Section \ref{section:stringtopology} is devoted to the definition of the map $\heartsuit$, our generalized geodesic construction.

In Section \ref{section:push-pull map} we fix an arbitrary cochain $W$ on the domain $S$ of the map $\heartsuit$ and use it to define a {\em push-pull} map  $\mathcal{ST}_W: \chains(\spaced ) \otimes \chains(LM)^{\otimes k} \to \chains(LM)^{\otimes \ell}$.
When the cochain $W$ is a cocycle representing the {\em diffuse intersection class}, we call $\mathcal{ST}_W$ a {\em string topology construction}. 

The diffuse intersection class is the pullback under an {\em evaluation map} of a {\em global Thom class over $\spaced$}.
In Section \ref{section:thom} we prove Theorem \ref{thm:patching} to describe how to assemble cohomology classes which are indexed by cells of $\spaced$ to get {\em global classes over $\spaced$}.
In Section \ref{section:diffuse intersection class} we assemble the global Thom class over $\spaced$ and define the evaluation map in order to define the diffuse intersection class.

In Section \ref{section:previouswork} we show that our string topology operations recover Cohen--Godin's operations and Chas and Sullivan's BV algebra structure on $H_*(LM)$.

\subsection*{Future directions}
There are several points we have left unaddressed in this paper.

First of all, there is the question of composition. Cohen and Godin described a sort of composition map on marked metric chord diagrams. It would perhaps be better to call it a composition relation, as it is not defined for all pairs and is not always unique when it is defined. They showed that this composition map induces a well-defined associative map on zeroth homology groups and that their string topology construction respected composition. 

In our context too there is a cognate composition map. Our composition map is fully defined and uni-valued. However, it is not strictly associative but only associative up to homotopy. Similarly, the chain-level string topology construction does not respect composition on the nose, but rather only up to homotopy. We could rectify this algebraic structure or use colored or $\infty$-properads to deal with this problem, but as this paper is already long and technical we decided instead to defer these questions.

Another question is that of the homotopy type of the space of string diagrams. As discussed previously, we have the following conjecture:
\begin{letterconjecture}\label{conj: homotopy type is moduli space}
The space of unoriented string diagrams is homotopy equivalent to the moduli space of Riemann surfaces with parameterized boundary. 
\end{letterconjecture}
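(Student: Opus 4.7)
The plan is to compare $\spaced$ with a known ribbon-graph model of the moduli space $\mathcal{M}$ of Riemann surfaces with parameterized boundary, adapting the classical work of Strebel, Harer, Penner, and Kontsevich as refined in the parameterized-boundary setting by Costello and Godin. The overall strategy has three steps: (i) build a continuous map $\Theta:\spaced\to\mathcal{M}$ by thickening each string diagram to a surface; (ii) factor $\Theta$ through a forgetful map that contracts away the tree portions of each diagram; and (iii) identify the resulting space of purely cyclic metric ribbon graphs with the standard spine of $\mathcal{M}$.

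For step (i), each string diagram carries a canonical ribbon-graph structure: the grafting trees inherit a cyclic ordering from their embedding near the circles, and the circles are themselves cyclically ordered. The standard thickening procedure then produces a surface with boundary, and the oriented circle components of the string diagram provide a preferred parameterization of each boundary component. Continuity in the metric data makes this into a map to $\mathcal{M}$ once a consistent complex structure is chosen---canonically, via the Strebel-differential construction with residues dictated by the circle lengths. For step (ii), fix the combinatorial type and the cycle-edge lengths; the space of metric trees grafted at fixed points is a product of open simplices, and simultaneously collapsing all tree edges to their attaching vertices is a deformation retraction that does not change the topological thickening or the boundary parameterization. This factors $\Theta$ through the space of metric cyclic ribbon graphs, for which the homotopy equivalence with $\mathcal{M}$ is step (iii) and is classical.

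The main obstacle lies at strata where the tree structure and the cycle structure interact nontrivially---for example, when collapsing a tree edge makes two distinct attaching points on a common circle coincide, or when a grafting vertex passes through a circle vertex, so that tree-collapse changes the combinatorial type. A careful stratification of $\spaced$ compatible with its cell structure over $\combtypespace$ will be required, together with the verification that tree-collapse defines a cellular deformation retract globally rather than merely fiberwise; this will presumably be organized by induction on the number of internal tree edges. A secondary concern is orientation: the conjecture is stated for \emph{unoriented} string diagrams, so one must check that removing the orientation data matches $\mathcal{M}$ with its standard complex orientation without introducing a further $\Z/2$ quotient. This should be addressed by identifying the orientation datum on a string diagram with the rotational sense around each circle, which matches the orientation inherited by the thickened surface.
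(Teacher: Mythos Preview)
This statement is a \emph{conjecture} in the paper, not a theorem; the authors give no proof. Their only evidence is the remark at the end of Section~\ref{section:string diagrams}: for a few components of low genus and few boundary components they can check by hand that the quotient map $\spaceduo\to\mathcal{TZ}\hookrightarrow\mathcal{BM}$ lifts up to homotopy to a map $\spaceduo\to\mathcal{M}$, and that this lift is a homotopy equivalence in those cases. So there is nothing to compare your argument against---you are attempting to settle an open problem.

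Your proposed argument does not work as stated, and the obstruction is structural rather than merely technical. In step~(ii) you want to collapse all tree edges to their attaching vertices and claim this is a deformation retraction inside $\spaceduo$. But the metric constraints in Definition~\ref{def:string diagram} forbid this: each $T_j$ is required to be short branched, which in particular forces its total length to equal its leaf length, and the leaf length of any $T_j$ is at least~$1$. There is no point of $\spaceduo$ in which a tree has been shrunk to a point; the putative ``tree-less'' subspace you want to retract onto is simply not contained in $\spaceduo$.

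Even setting that aside, step~(iii) is not classical in the sense you suggest. The space obtained by collapsing the trees of a string diagram is essentially Cohen--Godin's space $\mathcal{CG}$ of marked metric chord diagrams, and the paper explicitly records (citing Godin's later work) that the natural inclusion $\mathcal{CG}\hookrightarrow\mathcal{M}$ is \emph{not} a homotopy equivalence outside the genus-zero, one-output regime. So identifying the collapsed space with ``the standard spine of $\mathcal{M}$'' is exactly what fails. The ribbon-graph models of Strebel--Harer--Penner--Kontsevich use \emph{arbitrary} metric fatgraphs, not the special ones that arise as unions of circles with collapsed trees; bridging that gap is precisely the content of the conjecture.
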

There is an equivalence relation on the space of string diagrams, not discussed in this paper. Our constructions pass to the quotient by the equivalence relation. We also formulate the following conjecture:
\begin{letterconjecture}\label{conj: homotopy type is Bod}
The space of unoriented string diagrams modulo equivalence is homotopy equivalent to B\"odigheimer's harmonic compactification of the moduli space of Riemann surfaces with parameterized boundary~\cite{Bodigheimer}. 
\end{letterconjecture}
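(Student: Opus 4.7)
The plan is to bootstrap off Conjecture~\ref{conj: homotopy type is moduli space} and build an explicit map from the quotient of $\spaced$ by equivalence into B\"odigheimer's harmonic compactification, then verify the homotopy equivalence using a cell-by-cell comparison. B\"odigheimer's model presents the compactification as a space of radial slit configurations (equivalently, of normalized harmonic potentials on a surface with prescribed logarithmic singularities at the parameterized boundary components); each point encodes a surface together with a preferred harmonic ``flow lines meeting at critical points'' structure. The string diagrams in $\spaced$ encode essentially the same data combinatorially, since the trees glued onto the circles model the critical graph of such a harmonic function and the metric on the tree records lengths of flow segments between critical values. So the first step is to make this dictionary precise by assigning to each string diagram the slit configuration whose critical graph and edge-lengths it prescribes, which requires verifying that the attaching data at the circle endpoints matches B\"odigheimer's gluing conventions.

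Second, I would identify the equivalence relation on $\spaced$ explicitly as the relation generated by collapses of edges that are internal to the tree parts (or that become redundant after collapse), together with any move that swaps configurations which define the same radial slit data. The key checks here are (i) that every move in the equivalence relation corresponds to an identification already present in the harmonic compactification (so the map from $\spaced/\!\sim$ to B\"odigheimer's space is well-defined) and (ii) that every identification in the compactification arises from a finite sequence of such moves on string diagrams (so the map is injective on the level of underlying data). Step (ii) is essentially a combinatorial reconstruction: given a degenerate harmonic configuration, one reads off a string diagram whose tree parts record the collapsed flow structure.

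Third, assuming Conjecture~\ref{conj: homotopy type is moduli space}, the map restricts on the complement of degenerate loci to a homotopy equivalence with the interior $\mathcal{M}$ of the compactification. I would then extend this equivalence over the boundary strata by a stratified-space argument: each boundary stratum of B\"odigheimer's space is itself a moduli of lower-complexity harmonic data, which one matches with the corresponding strata of $\spaced/\!\sim$ where certain edges of the tree part have collapsed or circles have been pinched. Using the cell structure on $\spaced \gkl$ already established and the CW structure on the harmonic compactification, one proves the equivalence inductively, stratum by stratum, via the gluing lemma for homotopy equivalences of CW pairs.

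The hardest part will be matching the equivalence relation to the identifications in the harmonic compactification. B\"odigheimer's degenerations allow critical values to merge or escape to infinity in controlled ways, and translating these analytic degenerations into the combinatorial moves on string diagrams (edge-collapses and relabelings of how tree leaves attach to circles) requires a careful case analysis that will dominate the argument; in particular, the ``unoriented'' aspect and the treatment of circles whose entire tree has collapsed introduce several special cases that need to be handled uniformly. Once this dictionary is in place, the homotopy-equivalence statement should follow by combining Conjecture~\ref{conj: homotopy type is moduli space} with a standard stratified gluing argument.
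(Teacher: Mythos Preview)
This statement is a \emph{conjecture}, not a theorem, and the paper does not prove it. It is labeled as Conjecture~B and appears in the ``Future directions'' subsection, where the authors explicitly write that the equivalence relation on the space of string diagrams is ``not discussed in this paper'' and merely formulate the conjecture. There is therefore no proof in the paper to compare your proposal against.

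Your outline is a reasonable sketch of how one might attack such a statement, but it is not a proof and has structural gaps worth flagging. First, you assume Conjecture~\ref{conj: homotopy type is moduli space} as input; that conjecture is also unproven in the paper (the authors give only low-complexity computational evidence), so your argument would at best be conditional. Second, since the paper never specifies the equivalence relation, your second step---identifying it with radial-slit identifications---is speculation about an undefined object; you would first need a precise definition before any comparison could be carried out. Third, the stratified gluing argument you invoke requires knowing that both sides carry compatible CW or stratified structures with matching links, and for B\"odigheimer's harmonic compactification this is nontrivial and would itself need to be established or cited. In short, what you have written is a plausible program, not a proof, and the paper itself offers no proof to validate it against.
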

This compactification has appeared before in string topology, in the work of the second author as well as Kupers~\cite{PoirierThesis,Kupers:CHSOMURSC}.

There are a few aother avenues of research that should at this point be straightforward, if technical, generalizations of the techniques used in the current paper.

First, Godin's field theory in fact gives operations for both loops and paths in $M$; it is an {\em open-closed} theory.
While in this paper we focus purely on closed loops, we expect the methods that produce our string topology operations to generalize to an open-closed chain-level theory.

Second, we expect that our operations induce chain-level operations which recover Chas and Sullivan's involutive Lie bialgebra on reduced equivariant homology, which would enable us to realize Sullivan's outline from~\cite{Sullivan:STBAPS} in more detail.

Finally, we expect that we should be able to use cellular chains on our space of string diagrams to define operations on the Hochschild-homological models of the free loop space. 

\subsection*{Acknowledgements}
The authors gratefully acknowledge many helpful discussions with Dennis Sullivan. The second author would like to thank Thomas Tradler for helpful conversations and thank the IBS Center for Geometry and Physics in Korea for a lovely and conducive working environment.

\section{Conventions and definitions}\label{section: conventions and definitions}

\subsection{Conventions}
A {\em graph} is a quadruple $(V,H,s,\iota)$, where $V$ is a finite set of {\em vertices}, $H$ is a finite set of {\em half-edges}, $s$ is a surjective map from $H$ to $V$ (we call $s(h)$ the {\em source} of $h$), and $\iota$ is a fixed-point free involution of $H$. 
We call an orbit of $\iota$ an {\em edge}.
The {\em half-edge set} of a vertex $v$ is the set of half-edges with source $v$.
The {\em valence} of a vertex is the cardinality of its half-edge set.
A univalent vertex is called a {\em leaf}; a vertex which is not a leaf is called an {\em internal vertex}.
A {\em leaf half-edge} (respectively an {\em internal half-edge}) is a half-edge whose source is a leaf (respectively an internal vertex).
An {\em external edge} is an edge containing a leaf half-edge; an {\em internal edge} is an edge not containing a leaf half-edge.

A graph is {\em connected} if it admits no partition into two disjoint nonempty subgraphs. A {\em component} of a graph is a maximal connected subgraph.  
A {\em cycle graph} is a nonempty connected bivalent graph. A {\em tree} is a nonempty connected graph with no cycle subgraph. A {\em segment} is a tree with one edge. The {\em Euler characteristic} of a graph is the difference between its number of vertices and its number of edges.

A {\em cyclic order} on a finite set is a permutation of that set which is a single cycle.  We shall call a set equipped with a cyclic order a {\em cycle}.
A {\em fatgraph} $\Gamma$ is a graph $G$ together with a cyclic order of the half-edge set of each vertex of $G$.

 A {\em pseudometric (fat)graph} is a (fat)graph together with a non-negative length function $\ell$ on the set of edges.
Let $\Gamma$ be a pseudometric (fat)graph.  
The {\em length} of $\Gamma$ is the sum of the lengths of all the edges of $\Gamma$.

Let $e$ be an edge of a graph $G=(V,H, s, \iota)$ which is not a segment component.
{\em Contracting the edge} $e$ produces a graph $G/e$ which has half-edge set $H-e$ and vertex set the quotient of $V$ where the sources of $e$ are identified. The source map and involution for $G/e$ are induced by those for $G$.
If $G$ has a fat or pseudometric structure then $G/e$ inherits a fat or pseudometric structure from $G$\footnote{Many definitions do not allow contraction of edges whose sources coincide or edges of positive length. In practice we will never contract such edges.}. 

An { \em oriented edge} $\vec{e_h}$  of a graph consists of a choice of order $(h, \iota(h))$ for the two  half-edges that make up the edge $e_h = \{h, \iota(h)\}$.
The source of the first half-edge is called the {\em source}  of  the oriented edge and the source of the second half-edge is called the  {\em target} of the oriented edge.
The set $\widetilde{E}$ of oriented edges of a graph is in bijective correspondence with its set of half-edges.
Hence, $\widetilde{E}$ inherits the involution $\iota$ from the set of half-edges and $\iota(\vec{e}_h)$ is equal to $\vec{e}_{\iota(h)}$.

The set $\widetilde{E}$ also forms a double cover of the set $E$ of edges of a graph,
where the map forgets the ordering of the constituent half-edges.
If $G$ is a pseudometric graph, then we define the length of an oriented edge to be equal to the length of its underlying unoriented edge.

A {\em boundary cycle} of a fatgraph is a cycle of half-edges $h_1,\ldots, h_n$ so that $h_{i+1}$ follows $\iota(h_i)$ in the cyclic order of the half-edge set of $s(h_{i+1})$. See Figure \ref{fig:boundary cycle}.

\begin{remark}
There is a standard construction which produces an oriented surface with boundary from a fatgraph using the fatgraph structure to thicken it \cite{Strebel:QD}.
The boundary cycles of a fatgraph correspond precisely to the boundary components of this surface.
In what follows we do not use the thickening explicitly, though it is sometimes included in the figures.
\end{remark}

A {\em marking} of a boundary cycle $C$ of a fatgraph $\Gamma$ is a leaf of $\Gamma$ whose leaf half-edge appears in $C$. 
If $G$ is pseudometric, we require that the length of the external edge is zero.
A {\em partially marked fatgraph} is a fatgraph such that each boundary cycle contains at most one marking.
A {\em marked fatgraph} is a fatgraph  such that each boundary cycle contains exactly one marking.

\subsection{Preliminary definitions}\label{section:preliminary defs}

Next, we collect several less standard definitions for graphs that we will use in our main definition of Section \ref{section:string diagrams}.

Informally, a {\em vertex explosion} of a graph  pulls apart a graph at a specified vertex, making an $n$-valent vertex into $n$ distinct leaves.
\begin{defi}
Let $G=(V,H,s,\iota)$ be a graph and let $v$ a vertex of $G$. The {\em vertex explosion} of $G$ at $v$ is a new graph whose vertices are
\[
V-\{v\} \sqcup\left( \{v\}\times s^{-1}(v)\right)
.\]
The half-edge set and involution are not changed; the source map takes $h$ in $s^{-1}(v)$ to $v\times h$ and is otherwise unchanged.
\end{defi}
\begin{remark}
Up to canonical isomorphism, vertex explosion at different vertices commutes so we can unambiguously take a vertex explosion at a set of vertices. 
\end{remark}
We can also do a kind of partial vertex explosion.
\begin{defi}
Let $G=(V,H,s,\iota)$ be a graph. Let $v$ a vertex of $G$ and let $S$ be a proper subset of $s^{-1}(v)$. The {\em pruning} of $G$ at $v$ along $S$ is a new graph whose vertices are
\[
V \sqcup\left( \{v\}\times S\right)
.\]
The half-edge set and involution are not changed; the source map takes $h$ in $S$ to $v\times h$ and is otherwise unchanged.
\end{defi}
Figure \ref{fig:pruning} shows an example of pruning a tree at an internal vertex along a single half-edge.
\begin{figure}[ht]
\includegraphics{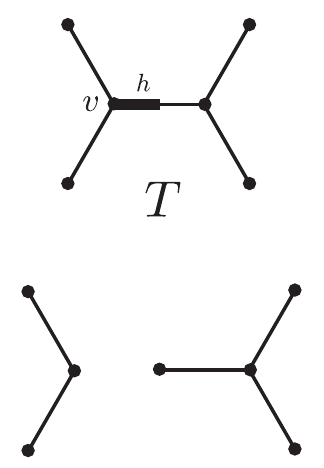}
\caption{A tree $T$ and the pruning of $T$ at $v$ along the half-edge $h$.
}
\label{fig:pruning}
\end{figure}

\begin{remark}
There is a canonical quotient map from any vertex explosion or pruning of $G$ to $G$ itself. If $G$ is fat and/or pseudometric, then the same structure is induced on a vertex explosion or pruning of $G$. 
\end{remark}

Let $U$-graphs stand for either graphs, fatgraphs, pseudometric graphs, or pseudometric fatgraphs. We will tend to abuse notation by specifying only the data of the underlying graph of a $U$-graph, eliding the fat or pseudometric structure until it is needed.

There is a {\em pseudometric realization}  $|\ \cdot\ |$ functor from pseudometric graphs to metric spaces as follows.
Consider the metric space
\[
V \sqcup \bigsqcup_{\vec{e} \in \widetilde{E}}( [0, \ell(\vec{e})] \times \{\vec{e}\})
\]
(where the distance between two points in different components is defined to be the length of $G$).
The space $|G|$ is defined as the quotient metric space 
 by the following relations.
For the oriented edge $\vec{e}$, $(0 , \vec{e})$ is identified with the source of $\vec{e}$ and $(\ell(\vec{e}) , \vec{e})$ is identified with the target of $\vec{e}$. 
Further, $( t , \vec{e})$ is identified with $((\ell(\vec{e}) -t ), \iota(\vec{e}))$. See Figure \ref{fig:boundary cycle}.

\begin{figure}[ht]
\includegraphics{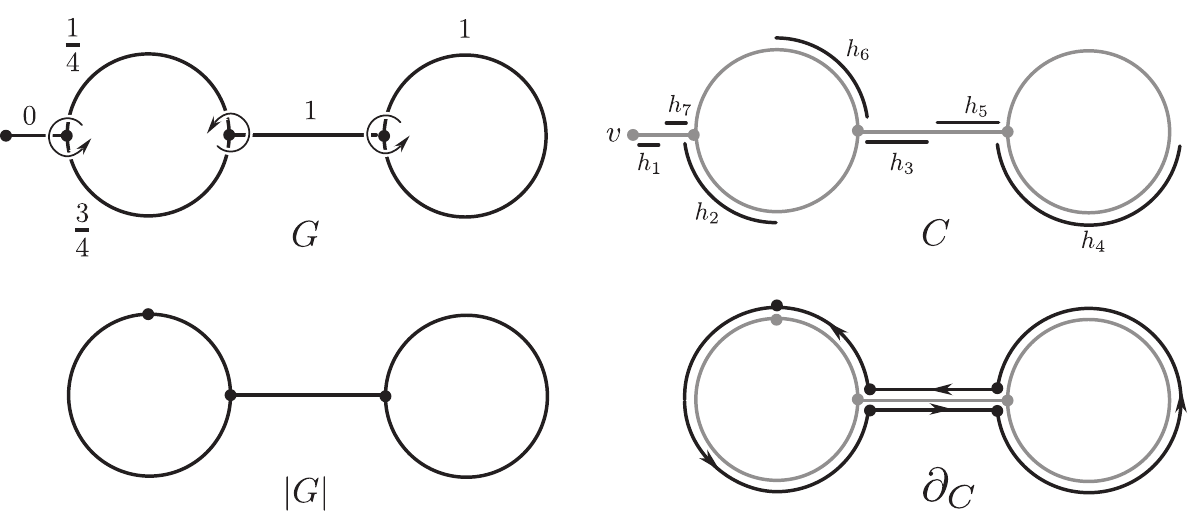}
\caption{A pseudometric fatgraph $G$, its pseudometric realization $|G|$, a marked boundary cycle $C$, and the map $\partial_C$ from a standard circle into $|G|$.
}
\label{fig:boundary cycle}
\end{figure}

Note that in the case that the pseudometric graph has no length zero edges, the pseudometric realization is homeomorphic to the usual geometric realization of a graph.
In general, the pseudometric realization is homeomorphic to a quotient of  the usual geometric realization obtained by contracting all length zero edges. 
This means that if $e$ is an edge of length zero of the pseudometric graph $G$, there is a canonical identification of $|G|$ with $|G/e|$.

We will have need for the following graph model of the circle.
A {\em fat lollipop} is a connected  fatgraph which has an external edge between a leaf and a trivalent vertex and all other vertices bivalent. We call the boundary cycle containing the leaf half-edge the {\em marked boundary cycle}.
See Figure \ref{fig:fat lollipop}.

\begin{figure}[ht]
\includegraphics{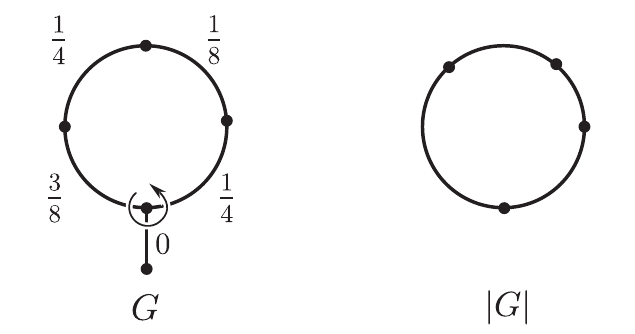}
\caption{A pseudometric fat lollipop of total length one $G$ and its pseudometric realization $|G|$.}
\label{fig:fat lollipop}
\end{figure}

We would like to use a boundary cycle to define a map from the circle into the realization of a pseudometric fatgraph.
To do this, we require exactly one marking of the boundary cycle.
Our model for the {\em length $L$ standard circle} is the quotient of the  interval $[0, L]$ which identifies $0$ with $L$.
Linear scaling provides identifications of standard circles of different  lengths.
Let $C$ be a boundary cycle of a pseudometric fatgraph $G$ with exactly one marking $v$.
There is a canonical map $\partial_C$ from a standard circle  into the pseudometric realization $|G|$ of $G$, defined as follows.
Let $C$ be the marked boundary cycle $(h_1, \dots, h_n)$ where $h_1$ is the leaf half-edge of the leaf $v$ and $h_n$ is equal to $\iota(h_1)$.
Let $\vec{e_i}$ be the oriented edge $(h_i, \iota(h_i))$ with length $\ell_i$. 
Let $L$ be the sum of the lengths $\ell_i$.
If $L=0$, $\partial_C$ is the constant map to the image of the leaf $v$ in the pseudometric realization $|G|$ of $G$.
If $L>0$, $\partial_C$ is induced by a map from $[0,L]$ to $|G|$ which maps
 the subinterval $[\ell_1 + \dots \ell_{i-1}, \ell_1 + \dots \ell_i]$ to the image of $[0, \ell_i] \times \vec{e_i}$ in $|G|$ by translation on the first factor.
This map sends $L$ to the image of $v$ in $|G|$, so it determines the map $\partial_C$ from the length $L$ standard circle to $|G|$.
See Figure \ref{fig:boundary cycle}. We will also use the map $\bar\partial_C$ from the length $L$ standard circle to $|G|$ which uses the same algorithm but follows the boundary cycle in the opposite order. Alternatively, this can be written $\bar\partial_C(t)=\partial_C(L-t)$.
	
We will be considering pseudometric fatgraph trees in service of our main definition, Definition~\ref{def:string diagram}. Later these trees will parameterize our diffuse intersections. For technical reasons it will be useful to have the distance between distinct leaves in such a tree be both bounded above and bounded away from zero (see Definition~\ref{def:short-branched}). We will achieve the bound above by imposing a length restriction on the trees we consider. We will achieve the bound away from zero by imposing a further length restriction on certain subtrees. This will require some setup.

\begin{defi}\label{def:leaf length}
The {\em leaf length} of a pseudometric fatgraph tree $T$ is one less than the number of leaves of $T$.
\end{defi}
The definition of leaf length may seem unmotivated, but see the remark following the next definition.

\begin{defi} \label{def:branch}
Let $h$ be a half-edge of a tree $T$ whose source is at least trivalent. The {\em branch} $T_h$ is the unique maximal subtree of $T$ containing $h$ so that the vertex of $h$ is a leaf in $T_h$. The {\em pollard} $T^h$ is the unique maximal subtree of $T$ which contains no edge of $T_h$.
\end{defi}
\begin{remark}
The leaf length of a branch of $T$ is the number of its leaves which are leaves in $T$.
\end{remark}
\begin{remark}
We can canonically identify $T_h$ with the component of the pruning of $T$ at $s(h)$ along $\{h\}$ which contains $h$ and $T^h$ with the component of the same pruning which does not contain $h$. See Figure \ref{fig:prunepollard}.
\end{remark}

\begin{figure}[ht]
\includegraphics{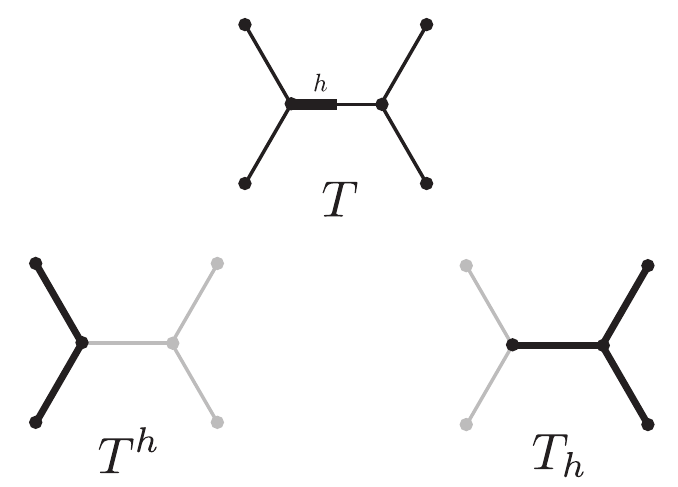}
\caption{The branch $T_h$ and pollard $T^h$ of a tree $T$ and half-edge $h$.}
\label{fig:prunepollard}
\end{figure}

Recall that the \emph{length} of a pseudometric graph is the sum of the lengths of its edges.
\begin{defi}
We call a branch of a pseudometric tree {\em prunable} if its length is equal to its leaf length.
\end{defi}

\begin{defi}\label{def:short-branched}
A pseudometric tree $T$ is {\em short branched} if:
\begin{enumerate}
\item The length of $T$ is equal to its leaf length.
\item For every half-edge $h$ of $T$ whose source is at least trivalent, the length of the branch $T_h$ is less than or equal to its leaf length.
\end{enumerate}
We call a short-branched tree {\em non-degenerate} if it has no edges of length zero and no prunable branches.
We call a short-branched tree {\em degenerate} otherwise.

\end{defi}

Figure \ref{fig:short-branched trees} shows examples of short-branched pseudometric trees and pseudometric trees that are not short branched.

\begin{figure}[ht]

\includegraphics{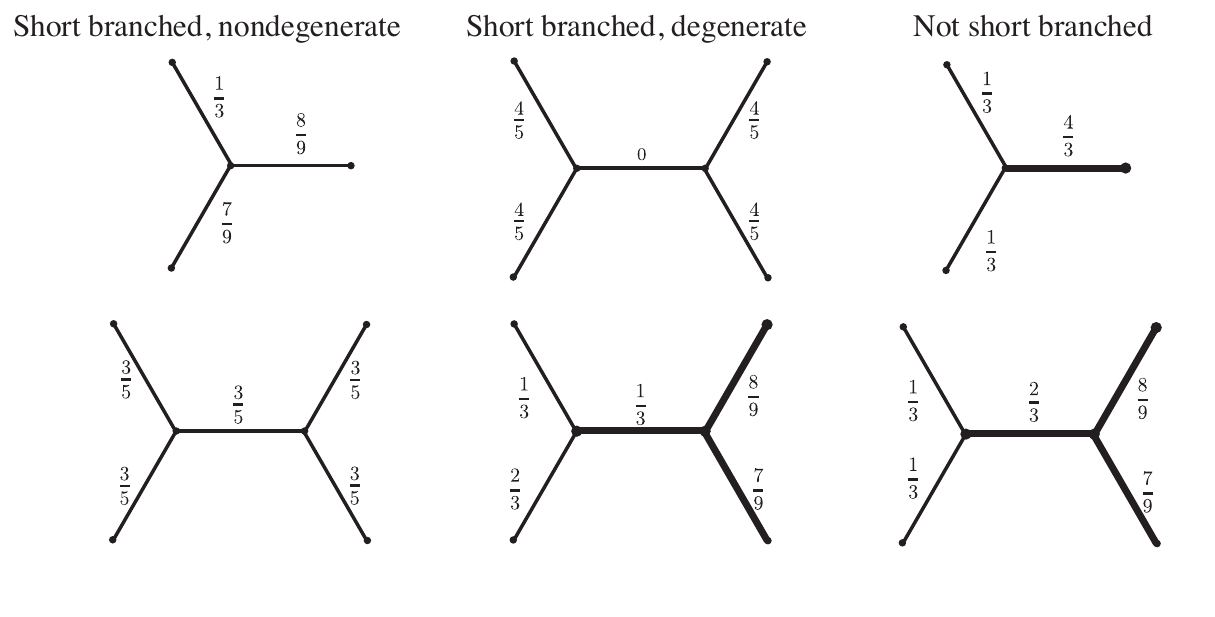}
\caption{Examples of short-branched trees and trees which are not short branched. Edge lengths are given. The subtrees made up of of bold edges are examples of branches whose lengths are greater than or equal to their leaf lengths.
}
\label{fig:short-branched trees}
\end{figure}

\section{String diagrams}\label{section:string diagrams}
\subsection{Definitions of string diagrams}
The main two definitions of this paper are Definitions~\ref{def:combinatorial string diagram} (a combinatorial definition) and~\ref{def:string diagram} (which adds continuous pseudometric data). In these definitions, we describe the type of graphs we will work with. Intuitively, we start with a collection of fat lollipops, $\{Q_i\}$. Then we glue on a collection of fatgraph trees $\{T_j\}$ along their leaves. The leaves of the first fatgraph trees are glued onto the fat lollipops, but successive trees can have some or all of their leaves glued onto trees that had been glued on previously as well. Finally, we glue on markings $\{L_i\}$ onto any boundary cycle which is not already marked. The rigorous presentation of Definition~\ref{def:combinatorial string diagram} does not follow this intutitive outline exactly, but captures the ideas behind it.

\begin{defi}\label{def:combinatorial string diagram}
A {\em combinatorial string diagram} $\Gamma$ is a connected marked fatgraph (also called $\Gamma$) with no bivalent vertices equipped with the following data: 
\begin{enumerate}
\item collections of subgraphs $\{Q_1,\ldots, Q_k\}$, $\{L_1,\ldots, L_\ell\}$, and $\{\widetilde{T}_j\}$ for $j$ in some finite set so that each edge of $\Gamma$ is contained in precisely one subgraph, and
\item a choice of a subset of internal vertices of each $\widetilde{T}_j$ as {\em fundamental}.
\end{enumerate}
This data satisfies the following conditions:
\begin{enumerate}
\item Each $Q_i$ is a fat lollipop whose inclusion into $\Gamma$ preserves its marked boundary cycle,
\item each $L_i$ is a segment precisely one of whose leaves is a leaf of $\Gamma$,
\item no $\widetilde{T}_j$ contains a leaf of $\Gamma$,
\item each internal vertex of $\Gamma$ is either contained in precisely one $Q_i$ or is disjoint from all $Q_i$ and fundamental in precisely one $\widetilde{T}_j$, 
\item the vertex explosion of each $\widetilde{T}_j$ at its non-fundamental vertices is a fatgraph tree, and
\item there is no cycle $\widetilde{T}_{j_1},\ldots, \widetilde{T}_{j_m}=\widetilde{T}_{j_1}$ such that each $\widetilde{T}_{j_i}$ has a fundamental vertex which coincides with a non-fundamental vertex of $\widetilde{T}_{j_{i+1}}$.
\end{enumerate}
We denote the vertex explosion of $\widetilde{T}_j$ at its non-fundamental vertices by $T_j$. 
\end{defi}

See Figure \ref{fig:string diagram} for a picture of a combinatorial string diagram.

\begin{remark}
We can recover $\widetilde{T}_j$ and its subset of fundamental vertices from the isomorphism class of $T_j$ and the map 
\[T_j\to \widetilde{T}_j\to \Gamma.\]
That is, a vertex is fundamental in $\widetilde{T}_j$ if and only if it is internal in $T_j$. We will often specify the additional data necessary to turn a  fatgraph into a combinatorial string diagram by specifying $T_j\to \Gamma$ rather than $\widetilde{T}_j\subset \Gamma$. The final condition captures the idea from the intuitive outline that the $T_j$ should be glued on according to a partial order on the set of trees $\{T_j\}$.
\end{remark}

Notice that the leaves of the $Q_i$ and the leaves of the $L_i$ which are also leaves of $\Gamma$ provide the markings of the boundary cycles of a combinatorial string diagram $\Gamma$.

\begin{defi}
We call the $k$ boundary cycles $C_1, \dots C_k$ of a combinatorial string diagram $\Gamma$ induced by the marked boundary cycles of $Q_i$ \emph{input boundary cycles}. 
Similarly, 
we call the other $\ell$ boundary cycles of $\Gamma$ \emph{output boundary cycles}.
\end{defi}

Later, for the string topology construction, we will use combinatorial string diagrams to describe how a $k$-fold loop in a manifold may intersect--or nearly intersect--itself.
In order to describe the combinatorics of these intersection configurations we will use the components of the following  fatgraph associated to a combinatorial string diagram.

\begin{defi}\label{def:intersectiongraph}
Let $\Gamma$ be a  combinatorial string diagram.
The {\em intersection graph} of $\Gamma$ is the  fatgraph  $\widehat{\Gamma}$ obtained as follows. Begin with the subgraph of $\Gamma$ which is the union of all the $\widetilde{T}_j$. Then perform vertex explosion on this subgraph at all vertices which are vertices of the $Q_i$ subgraphs of $\Gamma$. See Figure \ref{fig:intersection graph}.
\end{defi}

\begin{remark}
The canonical map from $T_j$ to $\Gamma$  naturally factors through $\widehat{\Gamma}$.
\end{remark}

\begin{defi}\label{def:string diagram}
A {\em string diagram} is a 
combinatorial string diagram $\Gamma$ equipped with a pseudometric structure such that
\begin{enumerate}
\item each $Q_i$ has total length one, and
\item each $T_j$ is short branched.
\end{enumerate}
By abuse of notation, we also use $\Gamma$ to denote the string diagram with underlying combinatorial string diagram $\Gamma$.
\end{defi}

\begin{figure}[ht]
\includegraphics[scale=0.75]{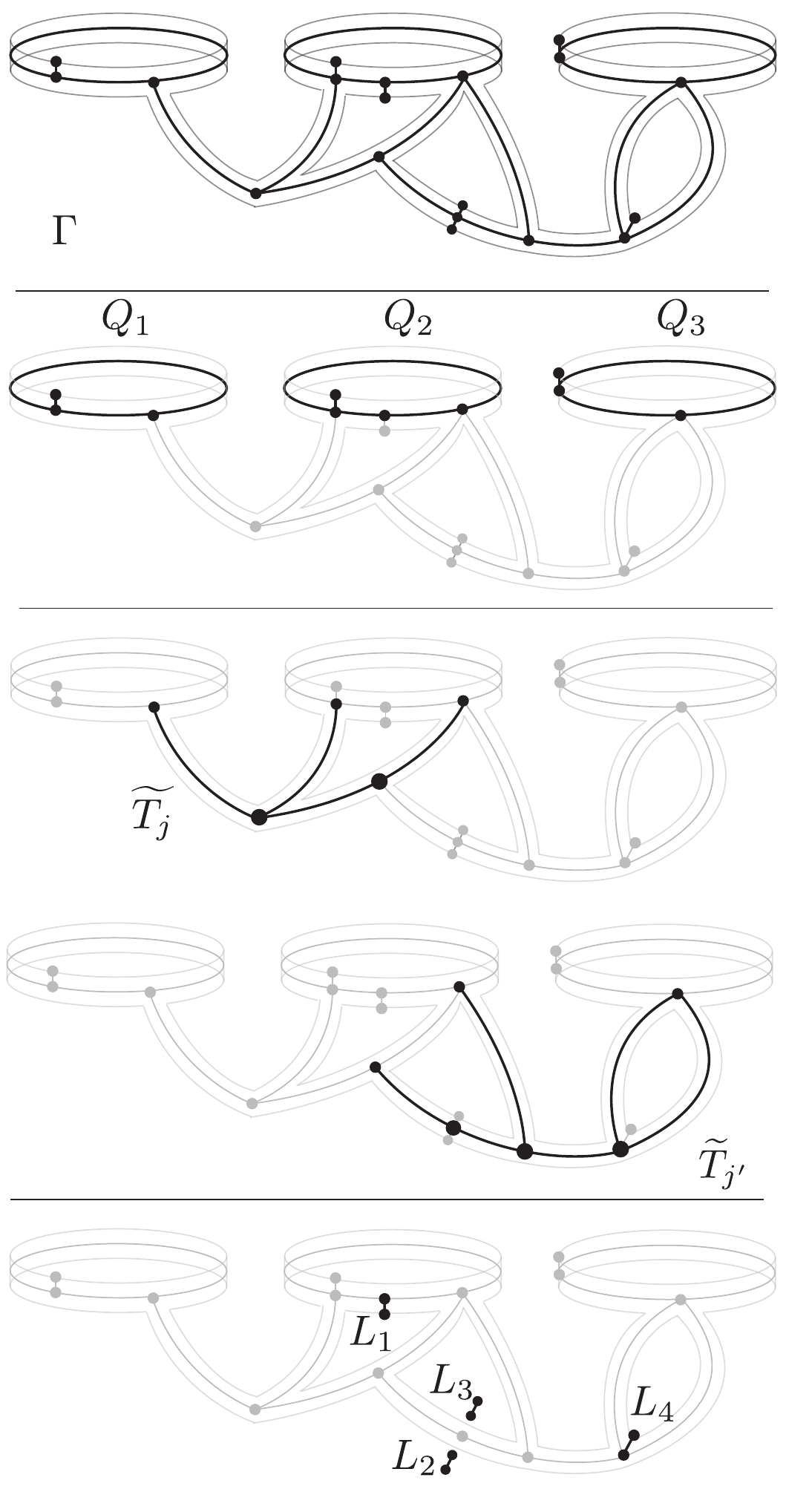}
\caption{
The data of a combinatorial string diagram $\Gamma$. Fundamental vertices are depicted as larger than non-fundamental vertices.}
\label{fig:string diagram}
\end{figure}

\begin{defi}

We call the disjoint union of $k$ copies of the standard circle, equipped with the map given by the disjoint union of the canonical maps $ \partial_{C_i}$ to $|\Gamma|$, the {\em inputs} of $\Gamma$ and denote the map by $\inputs_\Gamma$.
Similarly, 
we call the disjoint union of $\ell$ copies of the standard circle, equipped with the analogous map for output boundary cycles of $\Gamma$, the {\em outputs} of $\Gamma$.
\end{defi}

\begin{remark}
The map from the inputs to $|\Gamma|$ is always a homeomorphism onto its image. 
The same need not be true for the outputs.
\end{remark}

\begin{figure}[ht]
\includegraphics{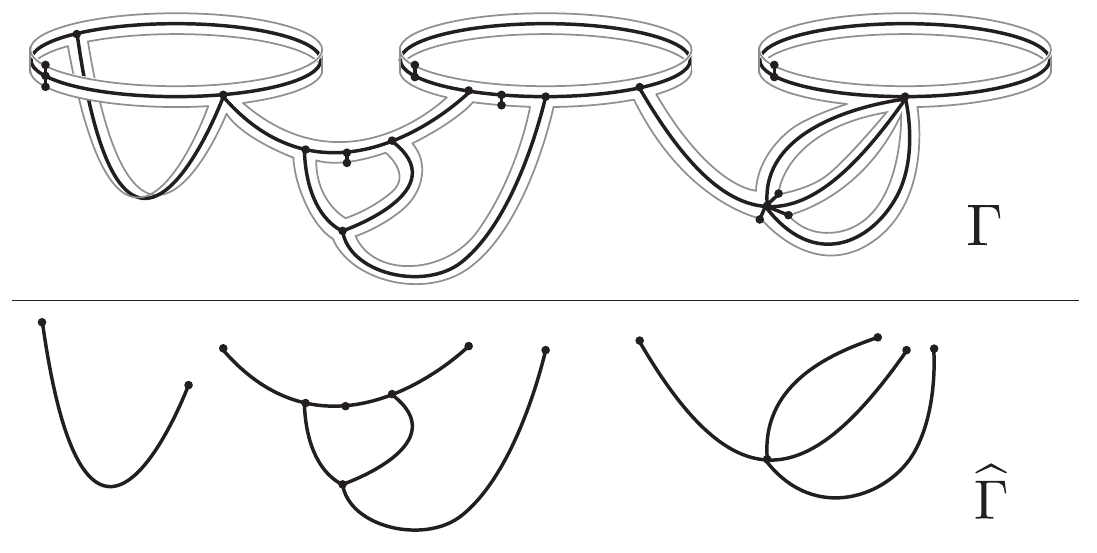}
\caption{A combinatorial string diagram $\Gamma$ and its intersection graph $\widehat{\Gamma}$.}
\label{fig:intersection graph}
\end{figure}

\begin{defi}
An {\em ordering} on a combinatorial string diagram is 
\begin{enumerate}
\item an ordering on the set $\mathcal{T} = \{T_j\}$ and
\item an ordering on the disjoint union  $\mathcal{L}$ of the sets of leaves $\leaves{T_j}$.
\end{enumerate}
\end{defi}

There is an evident action of the product of the symmetric groups of $\mathcal{T}$ and $\mathcal{L}$ on the set of orderings of a string diagram. There is a homomorphism from this group to $\mathbb{Z}/2\mathbb{Z}$ given by the product of the sign homomorphisms. An element in this product group is a pair of permutations; the element is in the kernel of the homomorphism to $\mathbb{Z}/2\mathbb{Z}$ if either both are even permutations or both are odd permutations.

\begin{defi}
An {\em orientation} on a combinatorial string diagram is a choice of ordering up to the action of the kernel of the homomorphism described above.
\end{defi}
For a string diagram, an ordering or orientation means the respective structure on its underlying combinatorial string diagram.

Orientations will allow us to build Thom classes of products of diagonal maps of our manifold $M$ in Section \ref{section:diffuse intersection class}; they will not be used in a fundamental way until then.

\subsection{The space of string diagrams}

The rest of the section is devoted to describing the topology and CW structure on the set of (isomorphism classes of) oriented string diagrams.

\begin{lemma}\label{lem:cell degenerations}
Let $\Gamma$ be a combinatorial string diagram.
Let $K$ be the set of string diagrams with underlying combinatorial string diagram $\Gamma$.

Taking internal edge lengths defines an inclusion map from $K$ to $\mathbb{R}^E$ where $E$ is the set of internal edges of $\Gamma$. The image of this inclusion is a bounded convex polytope (intersection of half-spaces) in $\mathbb{R}^E$.

Furthermore, passing to the boundary of this polytope corresponds to the following types of degenerations:
\begin{enumerate}
\item
allowing edge lengths to shrink to zero, preserving the overall length of each $Q_i$ and $T_j$ and  
\item
allowing edge lengths to change so that $T_j$ has a prunable branch.
\end{enumerate}
\end{lemma}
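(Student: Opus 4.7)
The plan is to show that the set of string diagram structures on $\Gamma$ is cut out of $\mathbb{R}^E$ by finitely many affine-linear equalities and inequalities, to verify that this solution set is a bounded convex polytope, and then to match the two types of facet degenerations with those in the statement.

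First I would enumerate the constraints on the edge length vector $(\ell(e))_{e\in E}\in\mathbb{R}^E$. A pseudometric structure freely assigns a non-negative real number to each internal edge (external edges at leaves of $\Gamma$ are already forced to have length zero by the marking convention). For the resulting pseudometric fatgraph to be a string diagram in the sense of Definition~\ref{def:string diagram} one needs: the equality $\sum_{e\in Q_i}\ell(e)=1$ for each fat lollipop $Q_i$; the equality $\sum_{e\in T_j}\ell(e)=$ leaf length of $T_j$ for each tree $T_j$ (the overall-length clause of Definition~\ref{def:short-branched}); the branch inequality $\sum_{e\in (T_j)_h}\ell(e)\le$ leaf length of $(T_j)_h$ for each half-edge $h$ of $T_j$ whose source has valence at least three; and non-negativity $\ell(e)\geq 0$ for every internal edge $e$. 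All of these are affine-linear, and the map $K\to\mathbb{R}^E$ is evidently injective, since once the combinatorial data of $\Gamma$ is fixed the only varying information is the edge length vector. Hence the image is precisely the intersection of these closed half-spaces and affine hyperplanes.

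Second, this intersection is a convex polytope by construction. Boundedness follows because each internal edge $e$ lies in exactly one piece $Q_i$ or $T_j$ whose total length is pinned to a fixed value by the equality constraints; combined with non-negativity of the other edge lengths in that piece, each coordinate $\ell(e)$ is bounded above by a constant depending only on $\Gamma$. (Inclusion also yields injectivity of the map, completing the first assertion.)

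Third, a point of the polytope lies on its boundary if and only if at least one of its defining inequalities is attained as an equality. An equality $\ell(e)=0$ exhibits the first type of degeneration — an edge shrunk to zero length, with the equality constraints on the $Q_i$ and $T_j$ sums automatically preserving their overall lengths. A branch inequality attained as equality says that $(T_j)_h$ has length equal to its leaf length, which by definition means $(T_j)_h$ is prunable, the second type of degeneration. The main obstacle, I expect, is careful bookkeeping rather than deep content: verifying that the short-branched conditions of Definition~\ref{def:short-branched} translate exactly into the affine system above, that the branch inequalities listed really do exhaust the facet-defining inequalities (one per at-least-trivalent internal half-edge, using the remark after Definition~\ref{def:leaf length} to identify the leaf length of a branch), and that together the two listed degeneration types cover every boundary stratum.
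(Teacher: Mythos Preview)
Your proposal is correct and follows essentially the same approach as the paper: enumerate the affine constraints (non-negativity, the $Q_i$ and $T_j$ length equalities, and the branch inequalities), observe that their intersection is a bounded convex polytope, and identify the boundary faces with the two listed degeneration types. The paper additionally notes in passing that the equalities together with non-negativity already cut out a product of simplices before intersecting with the branch half-spaces, but this is the same boundedness observation you give.
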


\begin{figure}[ht]
\includegraphics[scale=1]{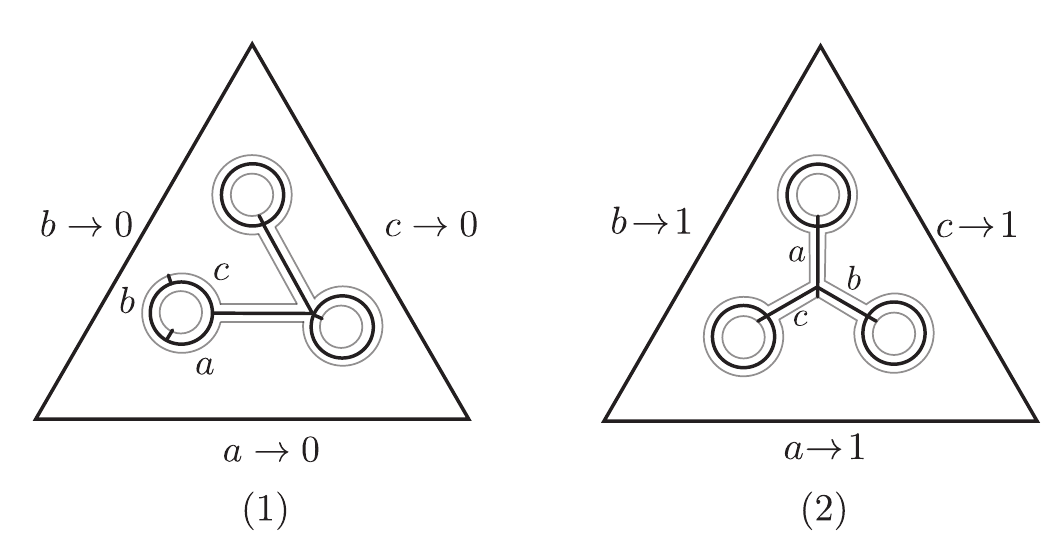}
\caption{
 Degenerations of the types (1)--edge lengths shrink to zero--and (2)--branches grow--listed in Lemma \ref{lem:cell degenerations}. Compare to Figure~\ref{fig:cell attachings}, where degenerate string diagrams at the boundary of the cell are identified with nondegenerate string diagrams in a different cell.}
\label{fig:cell degenerations}
\end{figure}

\begin{proof}
The set $K$ has parameters corresponding to the lengths of edges in $\Gamma$. Since external edges always have length zero, the set of possible edge-length parameters form a subspace of $\mathbb{R}_{\geq0}^{E}$.
The lengths are subject to the conditions:
\begin{enumerate}
\item\label{condition:lollipop length}
each $Q_i$ has total length one,
\item\label{condition:leaf length}
each $T_j$ has total length equal to its leaf length, and 
\item\label{condition:prunable}
each branch of a $T_j$ has total length at most its leaf length.
\end{enumerate}

Each condition in (\ref{condition:lollipop length}) and (\ref{condition:leaf length}) corresponds to a hyperplane in $\mathbb{R}^{E}$.
The intersections of all these hyperplanes with $\mathbb{R}_{\geq0}^{E}$ is a bounded convex polytope, in fact a product of simplices.
Each condition (\ref{condition:prunable}) corresponds to a half-space in $\mathbb{R}^{E}$.
The intersection of these  half-spaces with the convex polytope is still a convex polytope.
The faces of this polytope correspond to edges of length zero and prunable branches.

\end{proof}
By abuse of notation, we use $K$ to denote the set of string diagrams with underlying combinatorial string diagram $\Gamma$, given the subspace topology and polytope structure from the inclusion of the lemma.

The space of oriented string diagrams with a fixed underlying combinatorial diagram is the disjoint union of two copies of such a polytope.
We will define a cell complex built by assembling the oriented version of these polytopes below. 
In preparation, we discuss two operations on string diagrams that we will use to define the attaching maps.
\begin{defi}\label{def: contract edge}
Let $\Gamma$ be a string diagram with $e$ an internal edge of length zero. We define a string diagram whose underlying pseudometric fatgraph is $\Gamma/e$, the fatgraph $\Gamma$ with the edge $e$ contracted as follows. We will specify subgraphs $Q_i$ and $L_i$ along with pseudometric fatgraph trees $T_j$ equipped with a map to $\Gamma/e$ as in the remark following Definition~\ref{def:string diagram}.
\begin{enumerate}
\item 
\label{case: e in Q_i}
 If $e$ is an edge of $Q_i$, then contract $e$ in $Q_i$ and let all data be induced.
\item 
\label{case: e internal of T_j}
Similarly, if $e$ is an internal edge of $T_j$ or an external edge of $T_j$ whose internal vertex is bivalent in $T_j$, then contract $e$ in $T_j$ and let all other data be induced.
\item 
\label{case: e external of T_j}
If $e$ is an external edge of $T_j$ whose internal vertex $v$ is at least trivalent in $T_j$, then contract $e$ in $T_j$ and then perform a vertex explosion at the identified vertex. This procedure yields a graph $(T_j/e)^{[v]}$ which has components $\{T_a\}$, each of which comes equipped with a map $T_a\to (T_j/e)^{[v]}\to T_j/e\to \Gamma/e$. This specifies a subgraph $\widetilde{T}_a$ for each $T_a$. Let all other data be induced.  See Figure \ref{fig:contract external edge}.

\begin{figure}[ht]
\includegraphics[scale=.7]{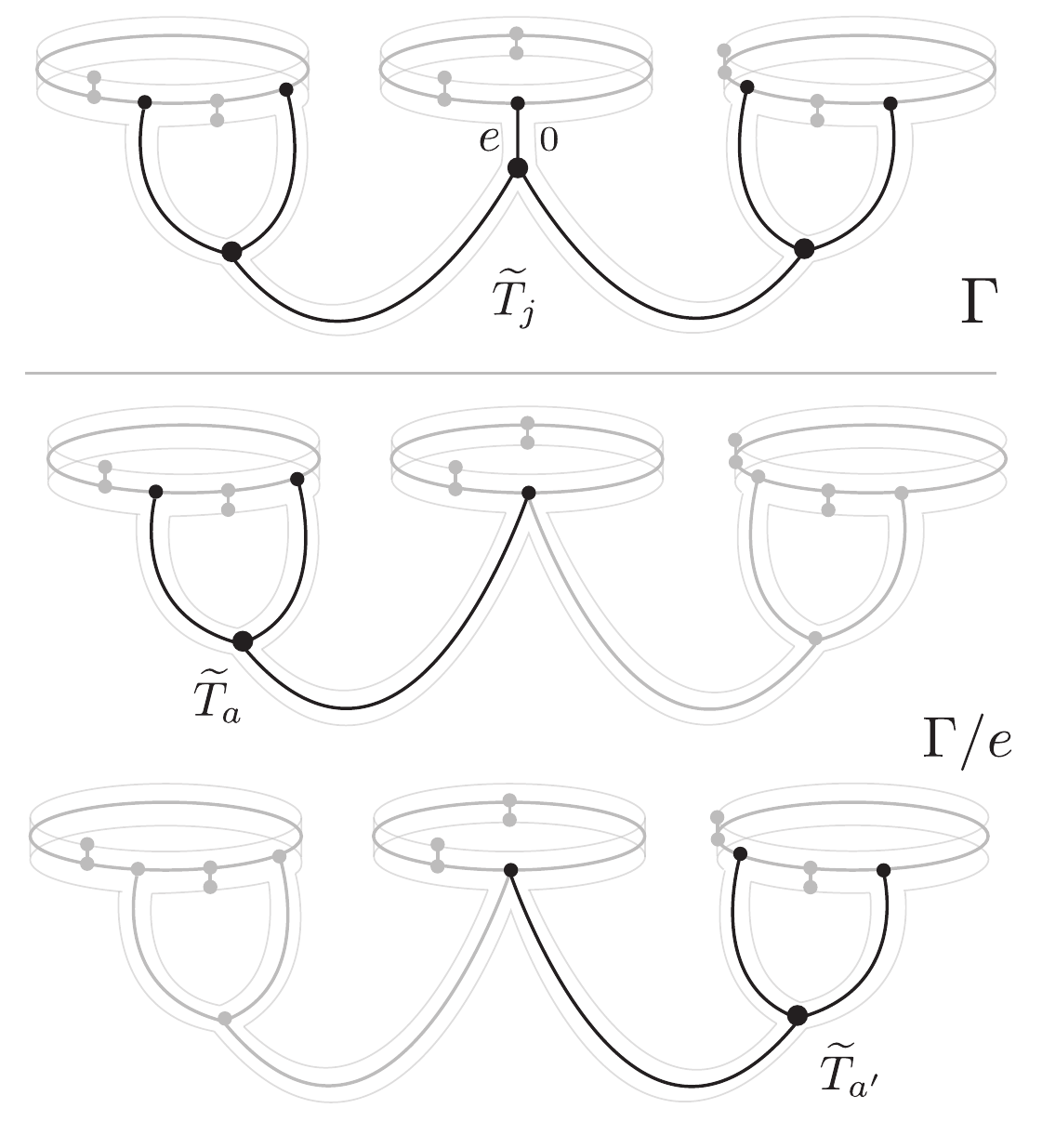}
\caption{Contraction of an external edge of a tree whose internal vertex is 3-valent.}
\label{fig:contract external edge}
\end{figure}
\end{enumerate}
Note that if $e$ is an edge of $T_j$ with two external vertices, then $T_j$ is a segment and thus has leaf length $1$ and length $1$ so $e$ cannot have length $0$.
\end{defi}

\begin{defi}

Let $\Gamma$ be a string diagram and let $h$ be a half-edge in $T_j$ whose source is at least trivalent in $T_j$. Suppose further that the branch $T_h$ of $T_j$ is prunable. The {\em pruning} of $\Gamma$ along $h$, denoted $\Gamma\prune h$, has the same underlying pseudometric fatgraph as $\Gamma$ but has its subgraphs changed as follows. The inclusions of $T_h$ and $T^h$ into $T_j$ induce maps from $T_h$ and $T^h$ into $\Gamma$ which specify $\widetilde{T}_h$ and $\widetilde{T}^h$ to replace $\widetilde{T}_j$ in $\Gamma\prune h$. See Figure \ref{fig:prune}.

\begin{figure}[ht]
\includegraphics[scale=.7]{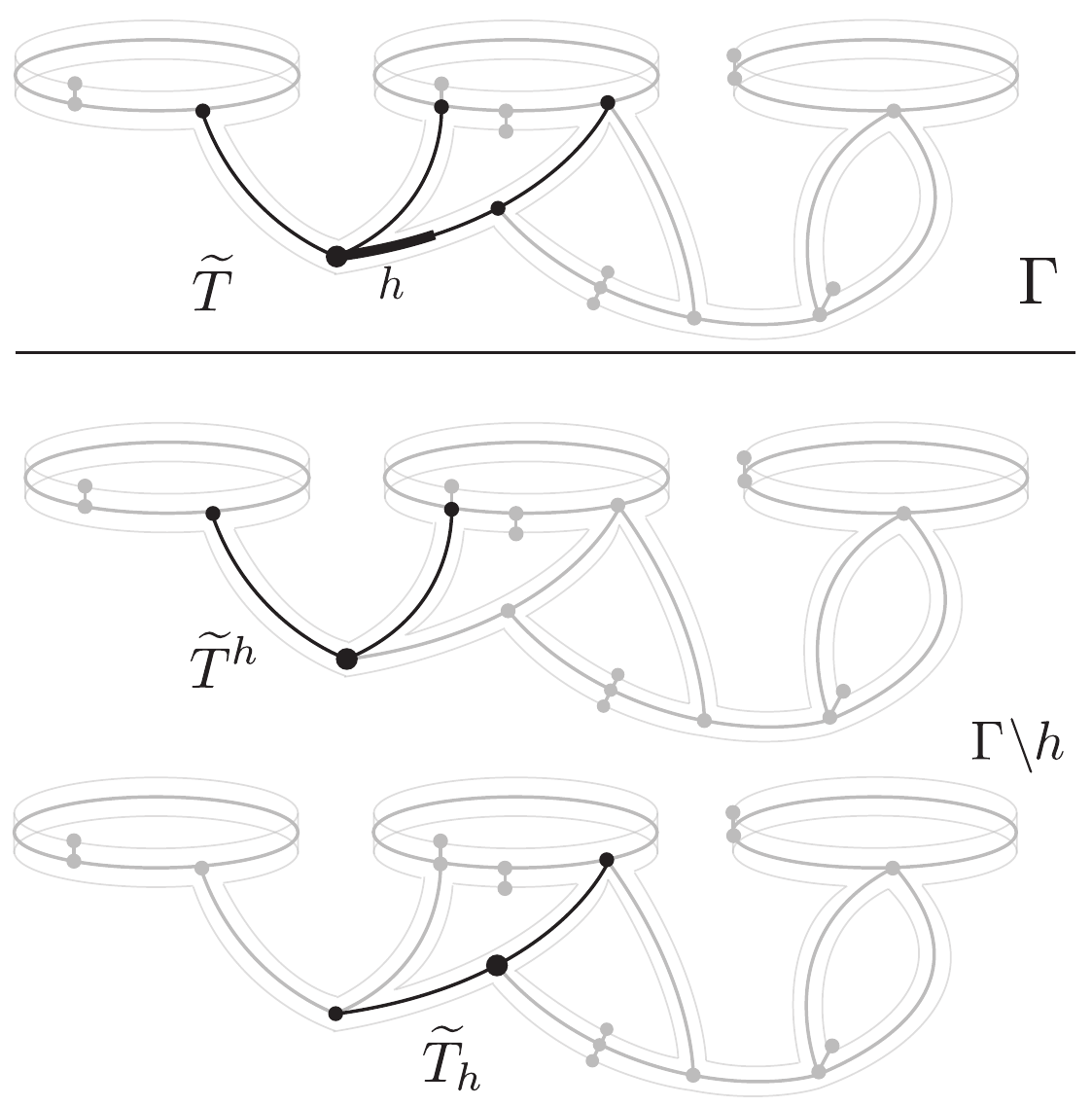}
\caption{The pruning of a tree $T_j$ in a string diagram along $h$.}
\label{fig:prune}
\end{figure}
	\end{defi}
It is an elementary lemma to verify that the induced structures described are in fact string diagrams. That is, it is direct to show that the balance conditions for the fatgraph tree subgraphs and the existence, uniqueness, and cycle-free properties of fundamental vertices are preserved in the edge contraction or the pruning.

\begin{remark}
Let $\Gamma$ be a string diagram.
For an edge $e$ of length zero, there is a canonical isomorphism from $|\Gamma|$ to $|\Gamma / e|$;
similarly, for a half-edge $h$ which determines a prunable branch in some $T_j$, there is a canonical isomorphism from $|\Gamma|$ to $|\Gamma \prune h|$.
\end{remark}

In the case that $\Gamma$ is an oriented string diagram and $e$ is an internal edge of length zero, we induce an orientation on $\Gamma/e$ as follows.
In cases (\ref{case: e in Q_i}) and (\ref{case: e internal of T_j}) in Definition \ref{def: contract edge}, an orientation on $\Gamma$ induces an evident orientation on $\Gamma/e$.
In case (\ref{case: e external of T_j}), where $e$ is an external edge of a tree $T_j$ whose internal vertex $v$ has valence $|v|\ge 3$, we must be more explicit.
Fix a representative ordering of the orientation on $\Gamma$ so that the tree $T_j$ is first in the ordering on $\mathcal{T}$. In $\Gamma/e$, the tree $T_j$ is replaced by the set of trees $\{T_a\}$; choose an arbitrary order $\{T_a^1, \dots, T_a^{|v| - 1}\}$ on this set. 
The corresponding set of new leaves $\{\ell_a\}$ are added to $\mathcal{L}$ such that $\ell_a^2, \dots, \ell_a^{|v|-1}$ appear in order at the beginning, and the leaf $\ell$ of $e$ is replaced by the leaf $\ell_a^1$. 
Notice first that the orientation class of the constructed ordering is invariant of both the representative ordering of $\Gamma$ and the chosen order on $\{T_a\}$ (because any permutation on the set $\{T_a\}$ induces a corresponding permutation with the same sign on the set $\{\ell_a\}$).

In the case that $\Gamma$ is an oriented string diagram and $h$ is a half-edge of $T_j$ corresponding to a prunable branch $T_h$, we induce an orientation on $\Gamma \prune h$ as follows.
Choose a representative ordering of the orientation on $\Gamma$ so that the tree $T_j$ is first in the ordering on $\mathcal{T}$. Then replace the tree $T_j$ with the ordered pair $(T^h, T_h)$.
In the ordering on $\mathcal{L}$, the new leaf $s(h)$ of $T_h$ is added at the beginning.

\begin{remark}
The only automorphism of a combinatorial string diagram (fixing the orders on the sets of subgraphs $\{Q_i\}$ and $\{L_i\}$) is the identity. This implies that isomorphic combinatorial string diagrams can be canonically identified. Because of this we can safely be sloppy about whether we are referring to a particular combinatorial string diagram or its isomorphism class. 
The same is true for oriented combinatorial string diagrams and orientation preserving automorphisms.
Our main application of this fact is the canonical identification of the polytopes $K$ for two oriented combinatorial string diagrams in the same isomorphism class.
\end{remark}

\begin{defi}\label{def:space of string diagrams}
Let $\spaced$ be the cell complex built as follows.

There is a proper class of polytopes $K$, where the underlying oriented combinatorial string diagram ranges over all oriented combinatorial string diagrams. The set of cells of $\spaced$ is this proper class under the canonical identification of polytopes for isomorphic oriented combinatorial string diagrams.

The attaching map for $K$ takes the oriented string diagram ${\Gamma}$ in a codimension one face of $K$ to the oriented string diagram described below.
This is well defined by Proposition \ref{prop:space of string diagrams}.
\begin{enumerate}
\item
If ${\Gamma}$ has an internal edge $e$ of length zero, identify ${\Gamma}$ with $\Gamma/e$ with the induced orientation. Note that this is not a codimension one identification if $e$ is an edge of $\widetilde{T}_j$ between an at least trivalent fundamental vertex of $\widetilde{T}_j$ and a non-fundamental vertex of $\widetilde{T}_j$. 
\item
If a branch ${T}_h$ of ${T}_j$ in ${\Gamma}$ is prunable, identify $\Gamma$ with $\Gamma\prune h$ with the induced orientation.
\end{enumerate}
\end{defi}

Figure \ref{fig:cell attachings} shows a picture of the attaching maps.

\begin{figure}[ht]
\includegraphics[scale=1]{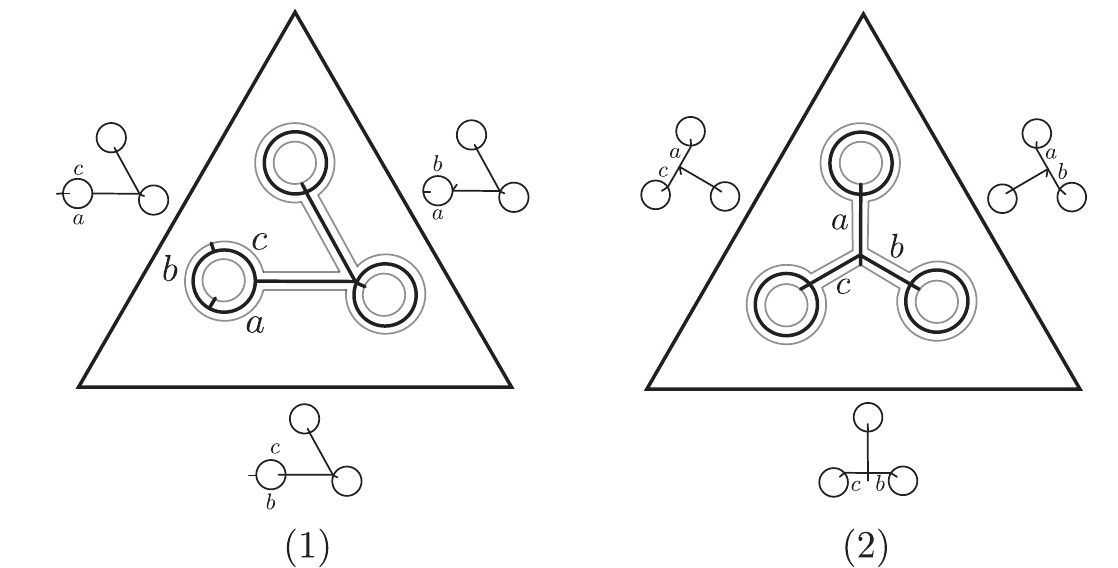}
\caption{
Attachings of the types listed in Definition \ref{def:space of string diagrams}. Compare to Figure \ref{fig:cell degenerations}.
}
\label{fig:cell attachings}
\end{figure}

\begin{prop}\label{prop:space of string diagrams}
The attaching maps in Definition \ref{def:space of string diagrams} are well defined and so $\spaced$ is a cell complex.
\end{prop}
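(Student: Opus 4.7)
The plan is to verify three things: (a) each operation in Definition \ref{def:space of string diagrams} outputs a legitimate oriented string diagram, (b) the induced orientation is independent of the auxiliary choices in the recipe, and (c) the identification extends consistently over the whole boundary of $K$: approaching a codimension-$r$ face through different codimension-one faces must give the same target.

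For (a), I would run through each subcase of Definition \ref{def: contract edge} and the pruning recipe against the axioms of Definitions \ref{def:combinatorial string diagram} and \ref{def:string diagram}. Subcases (\ref{case: e in Q_i}) and (\ref{case: e internal of T_j}) are routine: the fat lollipop or tree being modified retains its total length, the short-branched inequalities for every remaining branch are inherited verbatim, and the other combinatorial axioms are unaffected. The only subtle check is subcase (\ref{case: e external of T_j}): when $e$ is an external edge of $T_j$ at a vertex $v$ of valence $|v|\ge 3$, contracting $e$ and exploding at the merged vertex produces $|v|-1$ components $\{T_a\}$, each canonically identified with a branch of $T_j$ at $v$. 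Summing the inequality $\ell(T_a) \le \mathrm{leaflength}(T_a)$ (which holds by short-branchedness of $T_j$) against the identity $\sum_a \ell(T_a) = \ell(T_j) = \mathrm{leaflength}(T_j) = \sum_a \mathrm{leaflength}(T_a)$ forces termwise equality, so each $T_a$ is short-branched. For case (2), the pollard $T^h$ inherits the branch conditions from $T_j$ and satisfies $\ell(T^h) = \ell(T_j) - \ell(T_h) = \mathrm{leaflength}(T_j) - \mathrm{leaflength}(T_h) = \mathrm{leaflength}(T^h)$ by the prunability hypothesis.

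For (b), the orientation recipe in case (\ref{case: e external of T_j}) depends on an arbitrary order on the new components $\{T_a\}$ paired with the corresponding order on the new leaves $\{\ell_a\}$. A permutation of $\{T_a\}$ induces the identically-signed permutation of $\{\ell_a\}$, so the pair lies in the kernel of the sign homomorphism to $\mathbb{Z}/2\mathbb{Z}$ and the orientation class is unchanged. The analogous verification for case (2) is immediate: swapping the pair $(T^h, T_h)$ is paired with a trivial leaf permutation that adds a single new leaf to the start of $\mathcal{L}$.

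The main obstacle, and the essential content of the cell complex assertion, is (c). By Lemma \ref{lem:cell degenerations}, a codimension-$r$ face of $K$ corresponds to $r$ independent degeneration conditions drawn from length-zero internal edges and prunable branches. I would check that the elementary contraction and pruning operations commute on disjoint supports: two length-zero edges can be contracted in either order to yield canonically isomorphic oriented string diagrams; a prunable branch outside a contracted edge remains prunable and yields the same result regardless of when it is pruned; and two prunable branches are either disjoint (in which case they prune independently) or nested (in which case iterated pruning produces the same decomposition of $\widetilde{T}_j$ into new subgraphs regardless of order). The parenthetical in Definition \ref{def:space of string diagrams}(1) --- that contracting an external edge of $\widetilde{T}_j$ at an at-least-trivalent fundamental vertex is not a codimension-one identification --- is consistent with this framework: the vertex explosion built into subcase (\ref{case: e external of T_j}) simultaneously resolves several independent degeneration conditions, so the identification lands directly in a strictly lower-dimensional cell.
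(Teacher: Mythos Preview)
Your outline for (a) and (b) is fine and matches what the paper handles in the paragraphs preceding the proposition. The gap is in (c).

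First, you never verify that the induced \emph{orientations} agree on codimension-two faces. Your (b) checks that a single application of the recipe is independent of auxiliary choices, but (c) requires that applying two codimension-one recipes in either order yields the same orientation class. This is the heart of the paper's proof: for the prune--prune case it writes out explicit representative orderings for one of the four subcases (two half-edges $h$, $h'$ with $T_{h'}$ containing $T_h$) and checks that the two composite orderings differ by an even permutation on trees and an even permutation on leaves. Your sentence ``yields the same result regardless of order'' asserts this without argument, and the orientation bookkeeping is not automatic --- e.g., pruning $T_{h'}$ first and then $T_h$ requires reordering so that $T_{h'}$ is first, which introduces a sign that must be cancelled elsewhere.

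Second, you miss the one codimension-two face where the two codimension-one paths are \emph{not} the same pair of operations in opposite orders. When $e$ is a length-zero external edge of $T_j$ whose internal vertex $v$ is trivalent in $T_j$, the two other half-edges at $v$ each determine a prunable branch; one path prunes at $h_1$ then contracts $e$ (now at a bivalent vertex), the other prunes at $h_2$ then contracts $e$. These are different prunings, not the same prune done before or after a contraction, so your ``disjoint supports commute'' framework does not cover it. The paper isolates this case explicitly (Figure~\ref{fig:prunecontractattachingcommutes}) and checks that the two induced orderings differ by a transposition on trees and a transposition on leaves, hence agree as orientations. Your parenthetical about subcase~(\ref{case: e external of T_j}) landing in a lower-dimensional cell is related but does not address this compatibility.
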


\begin{proof}

In order to ensure that the attaching maps are well defined, we must check that they agree on codimension two faces.
Because each cell $K$ is a polytope, each codimension two face arises in precisely two ways as a codimension one face of a codimension one face.

In almost every case, the two codimension one degenerations evidently commute up to a potential difference in orientation. When both codimension one degenerations are pruning degenerations, there are four cases depending on the combinatorics of the trees involved. In every case, it is easy to see that the orientations of the two sequences of attaching maps agree. See Figure~\ref{fig:prunepruneattachingcommutes}. 

As a representative example, consider case (c) in the figure. Let $T$ be the depicted tree, imagined as some local piece of an oriented string diagram. There is a codimension one degeneration corresponding to pruning $T$ at $h$; the orientation $(T,\ldots);(\ldots)$ induces the orientation $(T^h,T_h,\ldots);(s(h),\ldots)$. Then pruning $T^h$ at $h'$ induces the orientation 
\[((T^h)^{h'},(T^h)_{h'},T_h,\ldots);(s(h'),s(h),\ldots).\] On the other hand, pruning $T$ at $h'$ induces the orientation $(T^{h'},T_{h'},\ldots);(s(h'),\ldots)$. Further pruning $T_{h'}$ at $h$ requires interchanging $T^{h'}$ and $T_{h'}$ in the representative ordering. Then the induced orientation on the doubly pruned string diagram is 
\[((T_{h'})^h,(T_{h'})_h,T^{h'},\ldots);(s(h),s(h'),\ldots),\] where we have swapped $s(h)$ and $s(h')$ in the ordering to compensate for interchanging $T_{h'}$ and $T^{h'}$. Finally, note that $(T^h)^{h'}=T^{h'}$, that $(T^h)_{h'}=(T_{h'})^h$, and that $(T_{h'})_h=T_h$; then the representative orders of the two induced orientations differ by an even permutation on the set of trees and thus agree. 

The other cases in Figure~\ref{fig:prunepruneattachingcommutes} are similar and will be omitted. 

Only one further case requires comment. If the codimension two face arises by contracting an external edge $e$ of length zero of a tree ${T}_j$ whose internal vertex $v$ in $T_j$ is trivalent, then the two degenerations are as follows. 
First recall that branches of ${T}_j$ correspond to internal half-edges of ${T}_j$.
At $v$ there are two branches corresponding to the edges other than $e$.
Because $e$ has length zero, both of these branches are prunable.
After performing an identification of type (2) with either one of these branches, the other degeneration becomes a degeneration of type (1), where $e$ is now a length zero edge.
The two resulting string diagrams are equivalent.  See Figure~\ref{fig:prunecontractattachingcommutes}. Given an ordering on a string diagram $\Gamma$ containing such a tree $T_j$, the induced orderings for these two degenerations always differ by a transposition on both the tree factor and on the leaf factor, so the induced orientations also agree.

\end{proof}
\begin{figure}
\includegraphics[scale=.8]{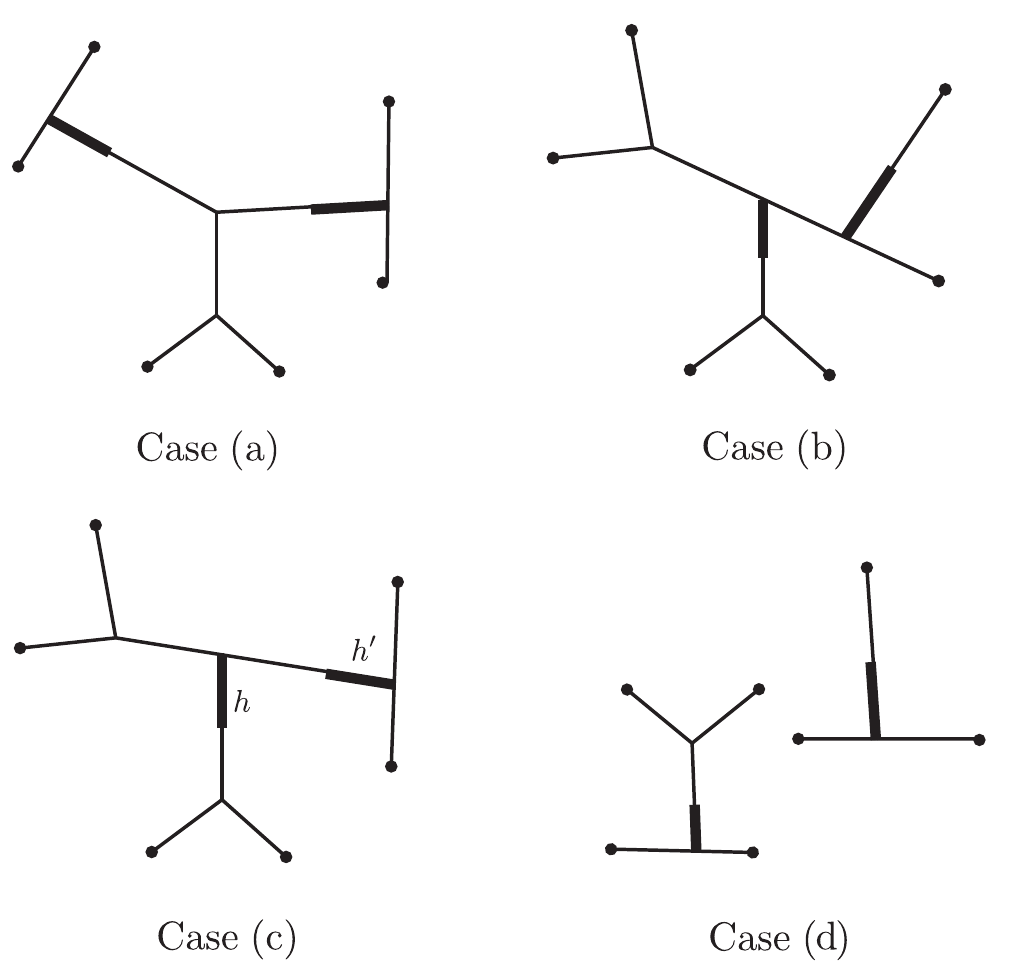}
\caption{The four types of codimension two degenerations where both codimension one degenerations are prunings.}
\label{fig:prunepruneattachingcommutes}
\end{figure}

\begin{figure}
\includegraphics{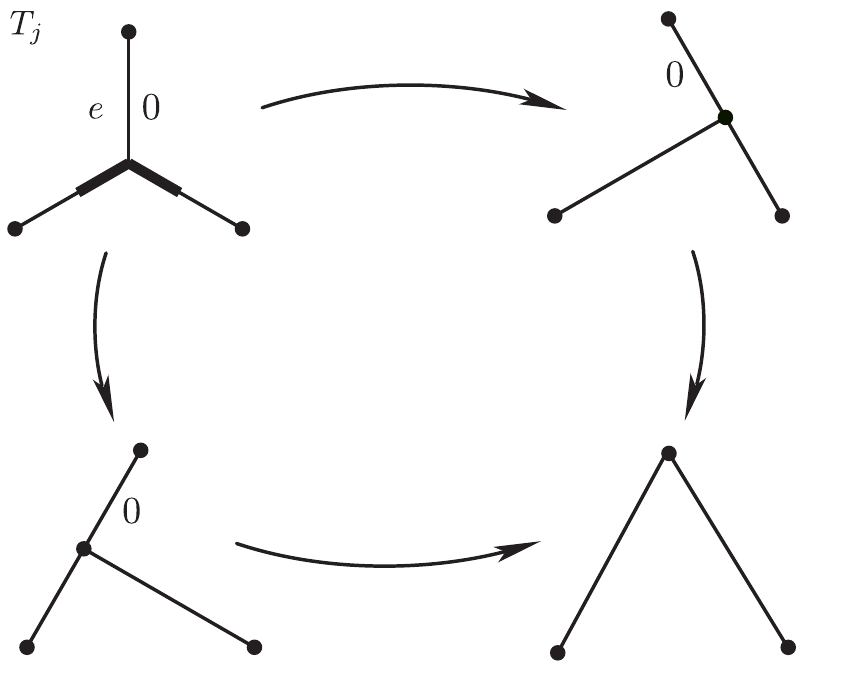}
\caption{The induced orientations for a codimension two degeneration that arises by contracting an external edge of a tree $T_j$ whose internal vertex is trivalent.}\label{fig:prunecontractattachingcommutes}
\end{figure}

\begin{remark}
The attaching maps work just as well without an orientation and we can also define a space $\spaceduo$ of unoriented string diagrams just as in Definition~\ref{def:space of string diagrams}. There is a double cover $\spaced\to \spaceduo$ given by forgetting the orientation.\footnote{In the special case where a string diagram has no $T_j$, then this is \emph{not} a double cover but an isomorphism. See Proposition~\ref{prop:we agree BV} for some discussion of this special case.}
\end{remark}
\begin{conjecture}\label{conj:trivialcover}
The double cover $\spaced\to \spaceduo$ is trivial.
\end{conjecture}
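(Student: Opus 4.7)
The plan is to construct a continuous global section of the double cover $\spaced\to\spaceduo$. Since $\spaceduo$ is a CW complex whose cells are combinatorial string diagrams (with no chosen orientation) and whose attaching identifications are the edge contractions and prunings of Definition~\ref{def:space of string diagrams}, a section is the same as a canonical assignment $\Gamma \mapsto o(\Gamma)$ of one of the two orientations to each combinatorial string diagram $\Gamma$, compatible with the induced-orientation rules that accompany each codimension-one attaching identification.

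First I would build such an $o(\Gamma)$ using only the intrinsic combinatorics of $\Gamma$. The input boundary cycles $C_1,\ldots,C_k$ are canonically labeled and come with the canonical traversals $\partial_{C_i}$ starting from their markings; using these, one can linearly order the fundamental vertices of the $\widetilde T_j$ that lie on some input cycle, first by which cycle they lie on and then by traversal position. One then iterates inductively through the partial order on $\mathcal{T}=\{T_j\}$ coming from the cycle-free condition of Definition~\ref{def:combinatorial string diagram}: at each stage, the trees whose fundamental vertices all lie on $\inputs_\Gamma$ or on earlier-ordered trees' subgraphs can be canonically ranked by comparing the traversal positions of their fundamental vertices, producing a canonical linear order on $\mathcal T$. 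Given this order, each $\widetilde T_j$ inherits a canonical leftmost depth-first enumeration of its leaves (using the fatgraph cyclic order at each internal vertex together with a distinguished initial half-edge adjacent to the earliest-ranked fundamental vertex), and concatenating gives a canonical linear order on $\mathcal L$. The orientation class of this pair of orderings is our proposed $o(\Gamma)$.

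Next I would check that $o$ is preserved under each attaching identification. Contraction of an internal edge of some $Q_i$, an internal edge of $T_j$, or an external edge of $T_j$ whose internal vertex is bivalent in $T_j$ leaves both $\mathcal T$ and $\mathcal L$ essentially unchanged, and compatibility is immediate. The two substantive cases are contraction of an external edge of $T_j$ at an internal vertex of valence $|v|\ge 3$, which replaces $T_j$ by $|v|-1$ new trees $T_a^i$ and changes both $|\mathcal T|$ and $|\mathcal L|$ by $|v|-2$, and pruning along a half-edge $h$ whose branch $T_h$ is prunable, which replaces $T_j$ by the ordered pair $(T^h,T_h)$ and inserts the new leaf $s(h)$. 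In each case one compares the canonical ordering of the degenerate diagram (computed directly) with the induced ordering on $\mathcal T\times\mathcal L$ (computed from the canonical ordering of the generic diagram via the rules spelled out before Definition~\ref{def:space of string diagrams}), and must show that the two differ by an element of the kernel of the sign map $S_{|\mathcal T|}\times S_{|\mathcal L|}\to\mathbb Z/2$.

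The hard part will be these two sign verifications. The induced-orientation rules in the paper are calibrated precisely so that the parity of the required permutation on $\mathcal T$ always matches the parity on $\mathcal L$, independently of the arbitrary order chosen on $\{T_a^i\}$ in the external-edge case; the analogous matching must be established for the canonical rule. A secondary concern is showing that the canonical linear extension of the partial order on $\mathcal T$ is genuinely canonical even when several trees have indistinguishable ``attaching positions'', which may require refining the tiebreaker to use the full local embedded combinatorial structure of each tree in $\widehat\Gamma$. Because each attaching identification is strictly local in $\Gamma$, the verification should reduce to a finite combinatorial case analysis around the contracted edge or pruned branch, comparing local leaf-traversal orders to local tree-ranking orders.
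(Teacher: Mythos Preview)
The statement you are attempting to prove is labeled in the paper as a \emph{conjecture}, and the paper gives no proof of it; the authors explicitly say that because they do not know Conjecture~\ref{conj:trivialcover}, they must carry the orientation data throughout. So there is no paper proof to compare your proposal against.

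As for the proposal itself: the strategy of building a global section by assigning a canonical orientation $o(\Gamma)$ to each combinatorial string diagram and checking compatibility with the attaching identifications is the natural one, and your outline of how to manufacture a canonical pair of linear orders on $\mathcal T$ and $\mathcal L$ from the input-cycle traversals and the fatgraph structure is plausible. However, the entire content of the conjecture is concentrated in exactly the step you label ``the hard part'' and do not carry out. For the pruning and the trivalent-external-edge contractions, the paper's induced-orientation rule moves the affected tree to the front of the $\mathcal T$-ordering and inserts the new leaf (or leaves) at the front of the $\mathcal L$-ordering, whereas your canonical rule will insert the new trees $T^h,T_h$ (or the $T_a^i$) and the new leaf $s(h)$ (or leaves $\ell_a^i$) at positions determined by traversal order, which can interleave arbitrarily with the rest of $\mathcal T$ and $\mathcal L$. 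There is no a priori reason the resulting permutation on $\mathcal T$ and the one on $\mathcal L$ should have the same parity; this is a genuine combinatorial identity that must be checked, and different ``canonical'' ordering schemes will give different answers. Your tie-breaking concern is also real: distinct trees can share the same attachment vertex on an input cycle, and any refinement you introduce must itself be shown stable under the degenerations. Until those parity checks are actually executed for a specific, fully specified ordering rule, this remains a plan rather than a proof, and the paper's authors presumably left the statement as a conjecture precisely because no such scheme is known to work.
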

If we knew Conjecture~\ref{conj:trivialcover}, we could treat $\spaceduo$ as our fundamental space of operations instead of $\spaced$; as it is, we will need the extra data of the orientation to make choices to build the diffuse intersection class later. See Section~\ref{section:diffuse intersection class}.

\begin{defi}\label{def:gkl space}
Let $\spaced \gkl$ be the subspace of $\spaced$ where there are $k$ inputs, $\ell$ outputs, and where each string diagram has Euler characteristic $\chi$.

\end{defi}

\begin{prop}\label{prop:gkl space}
The space $\spaced(\chi,k,\ell)$ is a finite cell complex. 
\end{prop}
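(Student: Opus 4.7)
The plan is to establish two things: that $\spaced(\chi, k, \ell)$ is a subcomplex of $\spaced$, and that it contains only finitely many cells.

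For the subcomplex claim, I would check that both attaching operations from Definition \ref{def:space of string diagrams} preserve the invariants $k$, $\ell$, and $\chi$. The pruning operation $\Gamma \mapsto \Gamma \prune h$ leaves the underlying pseudometric fatgraph unchanged and only redistributes tree data among the $\widetilde{T}_j$, so boundary cycles, marked lollipops $Q_i$, and the counts $(V, E)$ are all preserved. The edge contraction $\Gamma \mapsto \Gamma/e$ identifies two vertices and removes one edge, giving $V \to V - 1$ and $E \to E - 1$, so $\chi = V - E$ is preserved; the number $k$ of lollipops $Q_i$ is preserved in all three cases of Definition \ref{def: contract edge} (in case (\ref{case: e external of T_j}), the auxiliary vertex explosion is internal to $T_j/e$ and does not alter the underlying fatgraph of $\Gamma/e$), and the number of boundary cycles is preserved since edge contraction in a fatgraph preserves boundary cycles.

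For finiteness, I would bound the size of the underlying fatgraph in terms of $\chi$, $k$, and $\ell$. Since $\Gamma$ is marked with exactly one leaf per boundary cycle and the leaves come precisely from the $k$ external edges of the $Q_i$ and the $\ell$ external edges of the $L_i$ with an end in $\Gamma$, the number of leaves is $L = k + \ell$. Since $\Gamma$ has no bivalent vertices, each of the $V_i$ internal vertices has valence at least $3$, and summing valences gives $3V_i + L \leq 2E$. Combined with $\chi = V_i + L - E$, this yields
\[
V_i \leq k + \ell - 2\chi, \qquad E \leq 2(k + \ell) - 3\chi.
\]
Thus there are only finitely many isomorphism classes of fatgraphs arising as underlying graphs of combinatorial string diagrams in $\spaced(\chi, k, \ell)$.

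Given such a finite bound on $(V, E)$, the remaining combinatorial data of a string diagram---the partition of edges into the subgraphs $\{Q_i\}, \{L_i\}, \{\widetilde{T}_j\}$, the choice of fundamental vertices in each $\widetilde{T}_j$, and the orientation---admits only finitely many possibilities for each underlying fatgraph. I therefore expect no serious obstacle; the only place requiring care is verifying that the vertex explosion appearing in case (\ref{case: e external of T_j}) of Definition \ref{def: contract edge} does not secretly change $\chi$, but this is immediate since the explosion takes place in the auxiliary graph $T_j/e$ and only affects how the trees $T_a$ are indexed, not the underlying fatgraph $\Gamma/e$. Combining the two parts shows $\spaced(\chi, k, \ell)$ is a finite subcomplex, hence a finite cell complex.
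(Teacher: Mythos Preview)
Your proof is correct and follows the same two-step structure as the paper's proof: first verifying that the attaching maps preserve $(\chi,k,\ell)$ so that $\spaced(\chi,k,\ell)$ is a subcomplex, then arguing finiteness of the set of cells. The paper is terser on both points---it simply asserts that the attaching maps preserve the invariants and that there are finitely many connected marked fatgraph isomorphism types with no bivalent vertices and fixed Euler characteristic---whereas you supply the explicit valence inequality $3V_i + L \le 2E$ to bound $V_i$ and $E$. Your added detail is correct and welcome; there is no substantive difference in strategy.
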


\begin{proof}
None of the attaching maps change the Euler characteristic of the underlying fatgraph or change the number of inputs or outputs. Therefore $\spaced(\chi,k,\ell)$ is a subcomplex of $\spaced$. 
There are a finite number of connected marked fatgraph isomorphism types with no bivalent vertices and fixed Euler characteristic. 
Given a marked fatgraph, there are only finitely many ways to give it an oriented combinatorial string diagram structure as in Definition \ref{def:string diagram}.

This shows that the space $\spaced \gkl$ is a finite cell complex.

\end{proof}
\begin{remark}
Spaces of fagraphs like $\spaced$ and $\spaceduo$ have been used for some time to study not only string topology, but moduli spaces of Riemann surfaces with boundary as well
\cite{Strebel:QD, Penner:TDTMSOPS, Harer:TCOTMSOC, Kontsevich:ITOTMSOCATAF, Igusa:HFRT, Costello:ADPOVOTRGDOMS, Godin:TUSIHOTMCGOASWB}. We will denote the disjoint union of such moduli spaces over all genera and number of incoming and outgoing boundary components by $\mathcal{M}$.
For example, Cohen and Godin defined {\em Sullivan chord diagrams} and  {\em marked metric chord diagrams}, which they use to define string topology operations \cite{CG} (we will show in Section~\ref{section:previouswork} that our constructions recover those of Cohen--Godin).
Their space of marked metric chord diagrams, which we denote by $\mathcal{CG}$, includes into $\mathcal{M}$. For the subspace of genus-zero surfaces with one outgoing boundary component, this inclusion is a homotopy equivalence.
Cohen and Godin initially thought that their inclusion could be a homotopy equivalence in general, but this turned out not to be the case  \cite{Godin:CGMSMSBRSMSSC}. 
Tradler and Zeinalian later defined a more general version of marked metric chord diagrams to study algebraic string topology operations \cite{TZ}.
Their space,  which we denote by $\mathcal{TZ}$, is a compactificaction of the space $\mathcal{CG}$ and in turn, their space includes into a compactification of $\mathcal{M}$.
More specifically, the space $\mathcal{TZ}$ is a deformation retract of B\"odigheimer's harmonic compactification of $\mathcal{M}$, which we denote by $\mathcal{BM}$ \cite{Bodigheimer, PoirierThesis, EgasKupers:CCMFMSATC}.
Thus there is a commutative square of inclusions, as in the outside square of the diagram below.

\[
\begin{tikzcd}
{\mathcal{CG}} \ar[hookrightarrow]{rrrr}{\not\sim} \ar[hookrightarrow]{dd}
&&&&
\mathcal{M}\ar[hookrightarrow]{dd}{}
\\
&&
\spaceduo \ar{dll}{q} \ar[dashed, swap]{rru}{\sim}
\\
\mathcal{TZ}\ar[hookrightarrow]{rrrr}{\sim}
&&&&
\mathcal{BM}
\end{tikzcd}
\]

The space $\mathcal{TZ}$ receives a quotient map from our space $\spaceduo$.
We have seen that, for some components with low genus and small numbers of boundary components, the composition of this quotient with the inclusion of $\mathcal{TZ}$ into $\mathcal{BM}$ has a lift up to homotopy to a map from $\spaceduo$ to $\mathcal{M}$, as depicted in the digaram.
Further, in these examples, this lift is a homotopy equivalence. This constitutes our evidence for Conjecture~\ref{conj: homotopy type is moduli space} in the introduction.
\end{remark}

\section{Straightening a string diagram}\label{section:straightening}

Next, we define {\em straightening} maps from the intersection graph of a string diagram to a product of simplices, one simplex for each component of the intersection graph.
Once this is done we will use maps of these simplices into $M$ to define the map $\heartsuit$ and its domain $S$.

\begin{prop}\label{prop:degenerate straighten}
Given a short-branched tree $T$ with leaf set $\leaves{T}$, let $\Delta_{\leaves{T}}$ be the simplex spanned by $\leaves{T}$.
There exists a {\em straightening map} $\straighten$ from the pseudometric realization $|T|$ of $T$ to $\Delta_{\leaves{T}}$, which satisfies the following properties.
\begin{enumerate}
\item 
The map $\straighten$ takes  each leaf in $|T|$ to itself in $\Delta_\leaves{T}$.
\item 
Let $e$ be an internal edge of $T$ or an external edge of $T$ whose internal vertex is bivalent,
Assume $e$ has length zero.
Let $|T| \to |T/e|$ be the isomorphism induced the contraction of the edge $e$.
Then the following diagram commutes.
\[
\begin{tikzcd}
{|T|} \ar{rrrr}{\straighten} \ar{dd}
&&&&
\Delta_{\leaves{T}}\ar{dd}{\cong}
\\
\\
{|T/e|} \ar{rrrr}{\straighten}
&&&&
\Delta_{\leaves{T/e}}
\end{tikzcd}
\]

\item
Let $T_h$ be a prunable branch of $T$ with pollard $T^h$.
Since $\leaves{T^h}$ is a subset of $\leaves{T}$, there is a natural inclusion of $\Delta_\leaves{T^h}$ in $\Delta_{\leaves{T}}$.
Since every leaf of $T_h$ except $s(h)$ is also a leaf of $T$, assigning a point in $\Delta_{\leaves{T}}$ to $s(h)$ yields a  linear inclusion of $\Delta_{\leaves{T_h}}$ in $\Delta_{\leaves{T}}$.
Here, since $s(h)$ is also a vertex of $T^h$, it has an image point $|s(h)|$ in the pseudometric realization $|T^h|$. Thus we assign to $s(h)$ the image of $|s(h)|$ under the straightening map of $T^h$.
Then the following diagram commutes.
\[
\begin{tikzcd}
{|T^h|} \sqcup |T_h| \ar{rrrr}{\straighten} \ar{dd}
&&&&
\Delta_{\leaves{T^h}} \sqcup \Delta_{\leaves{T_h}} \ar{dd}
\\\\
{|T|} \ar{rrrr}{\straighten}
&&&&
\Delta_{\leaves{T}}
\end{tikzcd}
\]

\end{enumerate}
\end{prop}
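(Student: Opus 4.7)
My plan is to construct $\straighten$ by induction on the leaf length of $T$. The base case, leaf length one, means $T$ has exactly two leaves and consists of a single path between them, possibly with interspersed bivalent vertices; define $\straighten$ to be the arc-length parameterized affine isomorphism $|T|\to\Delta^1$ sending each endpoint to the corresponding vertex of $\Delta^1$. All three properties hold trivially in this case.

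For the inductive step, the key construction uses pruning. Suppose first that $T$ admits a prunable branch $T_h$ with pollard $T^h$. The identities $\text{length}(T)=\text{length}(T^h)+\text{length}(T_h)$ and $|\leaves{T}|=|\leaves{T^h}|+|\leaves{T_h}|-1$ (each leaf of $T$ appears as a leaf of exactly one of $T^h$ or $T_h$, but $s(h)$ is a leaf of $T_h$ which is not a leaf of $T$), combined with the prunability equation $\text{length}(T_h)=|\leaves{T_h}|-1$, give $\text{length}(T^h)=|\leaves{T^h}|-1$, so that $T^h$ is also short-branched; the branch-length bounds for $T^h$ and $T_h$ are inherited from those of $T$. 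Apply the induction hypothesis to each sub-tree, and glue to define $\straighten_T$: on $|T^h|$, take $\iota_{T^h}\circ\straighten_{T^h}$, where $\iota_{T^h}$ is the canonical simplex-face inclusion $\Delta_{\leaves{T^h}}\hookrightarrow\Delta_{\leaves{T}}$; on $|T_h|$, take $\iota_{T_h}\circ\straighten_{T_h}$, where $\iota_{T_h}$ is the affine embedding $\Delta_{\leaves{T_h}}\to\Delta_{\leaves{T}}$ sending $s(h)\mapsto\straighten_{T^h}(s(h))$ and each other leaf of $T_h$ to its corresponding vertex of $\Delta_{\leaves{T}}$. The two pieces agree at $s(h)$ by property~(1) of the inductive hypothesis, so the glued map is continuous. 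For a non-degenerate short-branched $T$ (no zero-length edge and no prunable branch), I would deform the edge lengths within the short-branched polytope of Lemma~\ref{lem:cell degenerations} toward a degenerate configuration in which some branch becomes prunable; since the piecewise-affine construction above depends linearly on the edge lengths, it extends by continuity to a unique map on $|T|$.

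Properties (1) and (2) then follow from the construction. Property (1) is preserved inductively because each leaf of $T$ is a leaf of either $T^h$ or $T_h$. Property (2) follows because contraction of a length-zero edge of $T$ induces a canonical isomorphism of pseudometric realizations and does not alter the leaf set, so the inductive construction descends to the quotient. Property (3) for the specific prunable branch $T_h$ used in the decomposition is immediate by construction; for any other prunable branch $T_{h'}$ of $T$, property~(3) must be verified separately.

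The main obstacle is precisely this last verification: that the pruning-based construction is independent of the chosen prunable branch. This reduces to a commutation statement: if $T$ has two prunable half-edges $h,h'$, the gluing construction for $h$ followed by $h'$ (as applicable in the sub-trees after the first pruning) agrees with that for $h'$ followed by $h$, up to the canonical identification of target simplices. I would handle this by case analysis on the relative position of $T_h$ and $T_{h'}$ in $T$ (disjoint, one contained in the other, or sharing only the vertex $s(h)=s(h')$), reducing each case to linear-algebraic identities in $\Delta_{\leaves{T}}$ that follow from the short-branched balance conditions.
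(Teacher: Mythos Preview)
Your inductive approach differs fundamentally from the paper's, which gives an explicit closed formula: for $v\in|T|$ and a leaf $w$, the barycentric coordinate is
\[
a(v,w)=\prod_{i=1}^{n-1}\frac{D(v_i,v_{i+1})}{1-D(v_{i+1},v_i)},
\]
where $v=v_1,\dots,v_n=|w|$ is the chain of $v$-zero cells from $v$ to $|w|$ and $D(v_i,v_j)$ is the \emph{deviation} (leaf length minus length) of the subspace of $|T|$ on the $v_j$-side of $v_i$. The paper then verifies directly that these are legitimate barycentric coordinates and that properties (2) and (3) hold.

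Your proposal has a genuine gap in the non-degenerate case. Your inductive step only defines $\straighten_T$ when $T$ has a prunable branch, but a generic short-branched tree has none: already for the tripod with three edges of length $2/3$, the total length is $2$ and every branch has length $2/3<1$. Your suggestion to ``deform within the short-branched polytope toward a degenerate configuration'' and ``extend by continuity since the construction depends linearly on edge lengths'' does not stand up. First, the polytope of edge lengths is not a simplex, so values on its boundary do not determine a unique affine extension to the interior. Second, and more seriously, the map is \emph{not} affine in the edge lengths: the paper's formula is a product of ratios of deviations, and even in the tripod example the image of the internal vertex is the barycenter for all admissible lengths only because of a delicate cancellation, not linearity. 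You have no mechanism for producing a value of $\straighten_T(v)$ at an interior point of the polytope, let alone one that is continuous in both $v$ and the edge lengths and compatible with all boundary degenerations simultaneously.

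There is also a second, subordinate gap: the independence of the choice of prunable branch. You correctly identify this as the crux, but ``case analysis reducing to linear-algebraic identities that follow from the short-branched balance conditions'' is not a proof. In the paper this independence is a \emph{consequence} of the explicit formula (Lemma~\ref{lemma: graft straighten}), not an input to the construction. Without a formula, you would need to prove it directly, and the case where $T_{h'}\subset T_h$ requires knowing how $\straighten_{T_h}$ behaves under a further pruning---which is exactly property~(3) for $T_h$, so the induction becomes circular unless you strengthen the hypothesis.

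The upshot is that an explicit formula valid for \emph{all} short-branched trees is essentially forced; the inductive gluing picture is a useful heuristic for discovering the formula and for checking property~(3), but it cannot serve as the definition.
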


The proof is technical and is deferred to Appendix \ref{appendix:straightening}. See Figure~\ref{fig:straightened tree} for a picture of the straightening map.
\begin{figure}[ht]
\includegraphics[scale=0.8]{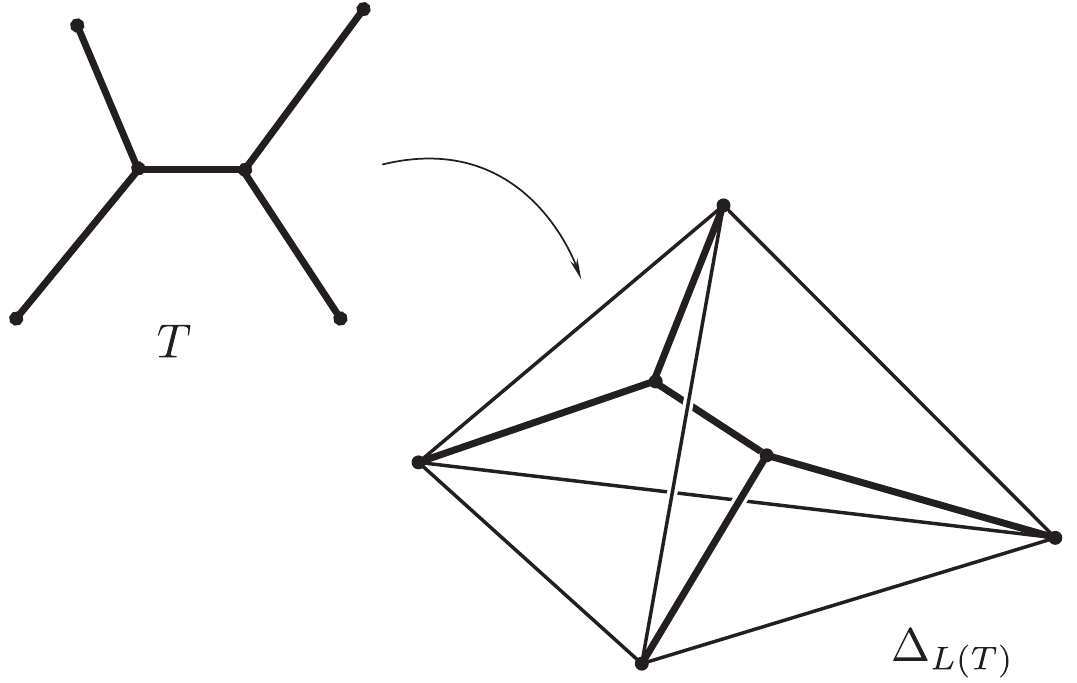}
\caption{A short-branched tree $T$ and its straightening in $\Delta_{\leaves{T}}$}
\label{fig:straightened tree}
\end{figure}

Let $C$ be a component of the intersection graph $\widehat{\Gamma}$ of the  string diagram $\Gamma$ with leaf set $\leaves{C}$.
Let $\Delta_{\leaves{C}}$ be the simplex generated by leaves of $C$.
Here we describe how to use the straightening map for trees above to extend the inclusion of the leaves of $C$ into $\Delta_{\leaves{C}}$ to a map from $|C|$ to $\Delta_{\leaves{C}}$, even if $C$ is not a tree.

The component $C$ comes equipped with canonical maps from each tree in a subset of the set of trees $\{T_j\}$ discussed in the remark following Definition \ref{def:intersectiongraph}.
We will give a well-defined map from $|C|$ to $\Delta_{\leaves{C}}$, defined piecewise by first mapping each  of these trees $|T_{j}|$ to $\Delta_{\leaves{T_j}}$ using straightening maps above, and then mapping each such $\Delta_{\leaves{T_j}}$ to $\Delta_{\leaves{C}}$. 

The maps $\Delta_{\leaves{T_j}}$ to $\Delta_{\leaves{C}}$ are defined inductively and we require an ordered partition $P_1, P_2, \dots, P_m$ of the set of trees $T_{j}$ of $C$.
The ordered partition comes from the canonical  map from $T_{j}$ to $C$ and is defined as follows.
The tree $T_{j}$ is in $P_n$ if it is not in any previous subset of the partition and
the image in $C$ of the leaves of $T_j$ is contained in the union of the leaves of $C$ and the images in $C$ of all trees in $P_1, P_2, \dots, P_{n-1}$.

Assume the maps $\Delta_{\leaves{T_{j}}}$ to $\Delta_{L_C}$ have been defined for trees in $P_1, P_2, \dots, P_{n-1}$.
Let $T_j$ be in $P_n$.
A vertex of $\Delta_{\leaves{T_j}}$ corresponds to a leaf of $T_j$, which already has an image in $\Delta_{L_{C}}$ by the definition of $P_n$.
Extend this linearly to a map from $\Delta_{\leaves{T_j}}$ to $\Delta_C$.

\begin{defi}
The {\em straightening} of a component $C$ of the intersection graph $\widehat{\Gamma}$ of a  string diagram $\Gamma$ is the map described immediately above from $|C|$ to $\Delta_{\leaves{C}}$.
The {\em straightening} of the  string diagram $\Gamma$ is the disjoint union over components $C$ of the intersection graph $\widehat{\Gamma}$ of the straightening maps $|C|$ to $\Delta_{\leaves{C}}$.
We also denote this map by $\straighten$.
\end{defi}

The conditions of Proposition \ref{prop:degenerate straighten} may be extended to the straightening maps for string diagrams, which are used in the  definition of the map $\heartsuit$ in the next section.

Recall that if a string diagram has
 an internal edge of length zero, the attaching map of Definition \ref{def:space of string diagrams} contracts the edge.
If the string diagram has a tree with a prunable branch, the attaching map breaks the tree into two trees.

Roughly, the next lemma shows that contracting an edge commutes with straightening.
For the kind of edges we will consider, there is a canonical bijection between the leaf set of $C$ and the leaf set of $C/e$.

\begin{lemma} \label{lemma: contract straighten component}
Let $|C| \to |C/e|$ be the map which contracts an edge $e$ which is either a external edge with bivalent internal vertex or an internal edge which is not the image of an external edge of a tree $T_j$ whose internal vertex is at least trivalent in $C$.
Then the following diagram commutes.
\[
\begin{tikzcd}
{|C|} \ar{rrrr}{{\straighten}} \ar{dd}[swap]{\cong}
&&&&
\Delta_{L_{C}}\ar{dd}{\cong}
\\
\\
{|C/e|} \ar{rrrr}{{\straighten}}
&&&&
\Delta_{L_{C/e}}
\end{tikzcd}
\]
\end{lemma}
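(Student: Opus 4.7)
The plan is to reduce the statement to the corresponding tree-level property, Proposition~\ref{prop:degenerate straighten}(2). The edge $e$ is contained in a unique tree $T_{j_0}$ among the trees $\{T_j\}$ whose straightenings assemble into the straightening of $C$. We first check that, under the hypotheses of the lemma, $e$ viewed inside $T_{j_0}$ is either an internal edge of $T_{j_0}$ or an external edge of $T_{j_0}$ whose internal vertex is bivalent in $T_{j_0}$, so that Proposition~\ref{prop:degenerate straighten}(2) applies directly. In the external case, bivalence in $C$ prevents any other tree from being glued at that vertex, forcing bivalence in $T_{j_0}$ as well. In the internal case, the exclusion clause in the hypothesis is precisely what rules out case (\ref{case: e external of T_j}) of Definition~\ref{def: contract edge}, which is the only case in which contracting inside $T_{j_0}$ changes the combinatorics of the tree data via vertex explosion.

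Next we record the canonical identifications of leaf sets: $\leaves{C} \leftrightarrow \leaves{C/e}$ (the leaf endpoint of $e$ is absorbed into the bivalent vertex in the external case, and nothing changes in the internal case) and similarly $\leaves{T_{j_0}} \leftrightarrow \leaves{T_{j_0}/e}$ compatible with the canonical map to $C$. These identifications produce the vertical isomorphism $\Delta_{\leaves{C}} \cong \Delta_{\leaves{C/e}}$ in the diagram, and they further show that the ordered partition $P_1,\ldots,P_m$ of the trees used to define the component straightening is preserved, since membership of each tree in some $P_n$ depends only on where its leaves land in $C$, and this incidence data is unaffected by the contraction.

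With this bookkeeping in place, the diagram decomposes piecewise over the trees of $C$. For each tree $T_k \neq T_{j_0}$, the tree straightening $|T_k| \to \Delta_{\leaves{T_k}}$ and the subsequent linear extension to $\Delta_{\leaves{C}}$ are literally unchanged. For $T_{j_0}$ itself, Proposition~\ref{prop:degenerate straighten}(2) gives commutativity of the tree straightening with the contraction of $e$, and the inductive linear extension $\Delta_{\leaves{T_{j_0}/e}} \to \Delta_{\leaves{C/e}}$ is determined by the same anchor points in $\Delta_{\leaves{C/e}} \cong \Delta_{\leaves{C}}$ as before. We expect the main obstacle to be the initial case analysis guaranteeing that $e$ is of the kind covered by Proposition~\ref{prop:degenerate straighten}(2) inside $T_{j_0}$; once that is settled, the componentwise comparison is routine.
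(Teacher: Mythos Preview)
Your proposal is correct and follows essentially the same approach as the paper: isolate the unique tree $T_{j_0}$ containing $e$, invoke Proposition~\ref{prop:degenerate straighten}(2) for that tree, and observe that the remaining trees and their simplex inclusions are unaffected. You are more explicit than the paper in verifying that the hypotheses of Proposition~\ref{prop:degenerate straighten}(2) are met inside $T_{j_0}$; the only presentational point is that the paper treats trees later than $T_{j_0}$ in the ordered partition \emph{after} handling $T_{j_0}$, since their simplex inclusions depend inductively on its straightening, whereas you group all $T_k\neq T_{j_0}$ together---but this does not affect correctness.
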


\begin{proof}
The edge $e$ of $C$ is an edge in the image of a unique tree $T_j$. The straightening maps and simplex inclusion maps for any tree earlier than or incomparable with $T_j$ in the ordered partition of trees in $C$ are unaffected by the edge contraction. 

Proposition \ref{prop:degenerate straighten} shows that straightening the tree $T_j$ commutes with contracting $e$.
Because all previous straightening maps and simplex inclusions are unchanged, the map $\Delta_{\leaves{T_j}} \to \Delta_{\leaves{C}}$ is also unchanged. Then, the straightening maps and simplex inclusions are unchanged for trees after $T_j$ in the ordered partition.
\end{proof}

Next, we consider a  string diagram $\Gamma$ where the tree $T_j$ has a prunable branch $T_h$.
Let $C$ be the component of $\widehat{\Gamma}$ which receives the canonical map from $T_j$.
Under the attaching map in Definition \ref{def:space of string diagrams}, $\Gamma$ is identified with the  string diagram $\Gamma \prune h$ where the tree $T_j$ is broken into two trees $T^h$ and $T_h$.
The component $C \prune h$ of  $\widehat{\Gamma \prune h}$ which receives the canonical map from  $T^h$ and $ T_h$ is canonically isomorphic as a pseudometric fatgraph to $C$.

\begin{lemma}\label{lemma: prune straighten component}
Let $C$ and $C \prune h$ be as above and let $|C| \to |C \prune h|$ be the identity map.
Then the following diagram commutes.
\[
\begin{tikzcd}
{|C|} \ar{rrrr}{{\straighten}} \ar{dd}[swap]{\cong}
&&&&
\Delta_{L_{C}}\ar{dd}{\cong}
\\
\\
{|C \prune h|} \ar{rrrr}{{\straighten}}
&&&&
\Delta_{L_{C\prune h}}
\end{tikzcd}
\]

\end{lemma}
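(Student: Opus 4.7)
The plan is to unpack both sides of the square using the piecewise construction of $\straighten$ on a component of the intersection graph, and then reduce the whole statement to Proposition~\ref{prop:degenerate straighten}(3) applied to $T_j$. Let $P_1, P_2, \ldots, P_m$ be the ordered partition of the trees of $\Gamma$ mapping to $C$ used to build $\straighten\colon |C| \to \Delta_{\leaves{C}}$, and suppose $T_j \in P_n$. In $\widehat{\Gamma \prune h}$, the tree $T_j$ is replaced by the pair $T^h, T_h$, while the remaining trees mapping to $C \prune h$ agree with those of $C$ other than $T_j$.

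First I would exhibit a compatible ordered partition of the trees mapping to $C \prune h$: place $T^h$ in $P_n$ in the role previously occupied by $T_j$, and insert $T_h$ as a new block immediately afterwards (before what was $P_{n+1}$). This is legitimate because the leaves of $T^h$ mapping to $C \prune h$ are a subset of the leaves of $T_j$ that are either leaves of $C$ or images of trees in $P_1,\ldots,P_{n-1}$; and the leaves of $T_h$ are leaves of $T_j$ together with $s(h)$, with $s(h)$ itself lying in the image of $T^h$. With this partition, the straightening maps $|T_{j'}| \to \Delta_{\leaves{C}} = \Delta_{\leaves{C \prune h}}$ for trees $T_{j'}$ in blocks earlier than $P_n$, as well as for $T^h$ itself, are identical on the two sides of the square, since none of the data used to construct them has changed.

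Next I would compare the two maps on the image of $|T_j| = |T^h| \cup |T_h|$. By Proposition~\ref{prop:degenerate straighten}(3), the straightening on $T_j$ is recovered from those on $T^h$ and $T_h$, where the leaf $s(h) \in \Delta_{\leaves{T_h}}$ is assigned the image of $|s(h)|$ under $\straighten_{T^h}$, and then $\Delta_{\leaves{T_h}}$ is included linearly into $\Delta_{\leaves{T_j}}$. In the straightening on $C \prune h$, the inductive rule for constructing the linear inclusion $\Delta_{\leaves{T_h}} \hookrightarrow \Delta_{\leaves{C \prune h}}$ assigns to the new leaf $s(h)$ its already-determined image via the previously processed tree $T^h$ — which is exactly the image of $|s(h)|$ under $\straighten_{T^h}$. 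Hence the composites $|T_h| \to \Delta_{\leaves{T_h}} \to \Delta_{\leaves{C \prune h}}$ and $|T^h| \to \Delta_{\leaves{T^h}} \to \Delta_{\leaves{C \prune h}}$ assemble into the same map on $|T_j|$ as the composite $|T_j| \to \Delta_{\leaves{T_j}} \to \Delta_{\leaves{C}}$ used in the straightening of $C$.

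Finally, for trees $T_{j'}$ in blocks after $T_j$, the linear inclusion $\Delta_{\leaves{T_{j'}}} \hookrightarrow \Delta_{\leaves{C}}$ is determined by the images of its leaves, which depend only on the leaves of $C$ and the straightenings of previously processed trees; by the previous paragraph these agree on the two sides of the square, so all remaining contributions match. The main obstacle is simply the bookkeeping needed to align the two ordered partitions and to check that the inductive assignment for the new leaf $s(h)$ on the pruned side coincides with the value produced by $\straighten_{T^h}$; once this is in place the lemma follows immediately from Proposition~\ref{prop:degenerate straighten}(3).
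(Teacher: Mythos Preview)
Your proposal is correct and follows essentially the same strategy as the paper's proof: earlier (and incomparable) trees are untouched, Proposition~\ref{prop:degenerate straighten}(3) handles $T_j$ itself, and later trees are then determined by data already shown to agree. The paper's proof is a two-sentence sketch that omits exactly the bookkeeping you supply---the construction of a compatible ordered partition for $C\prune h$ and the verification that the inductive assignment to the new leaf $s(h)$ matches the value produced by $\straighten_{T^h}$.
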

\begin{proof}

As in Lemma~\ref{lemma: contract straighten component}, earlier and incomparable trees are not affected a priori.
Proposition~\ref{prop:degenerate straighten} show that $T_j$ and subsequent trees have the same straightening maps and simplex inclusions.  
\end{proof}

\section{The heart of the string topology construction}\label{section:stringtopology}

Let $M$ be a closed Riemannian manifold.
Given an oriented string diagram $\Gamma$, and a map $\gamma$ from its inputs into $M$, we would like to produce a map from its outputs into $M$.
To do this, we would like to extend $\gamma$ to a map from $|\Gamma|$ to $M$, and then pull back to the outputs of $\Gamma$.
The extension of $\gamma$ to $|\Gamma|$ is our version of a wrong-way map and is the heart of the string topology construction.

Our construction requires certain points in the image of $\gamma$  be close together in $M$.
In this section, we define
\begin{enumerate}
\item a space $S$ which realizes this closeness condition,
\item a space $\spaced(M)$ which captures mapping spaces from oriented string diagrams to $M$ as oriented string diagrams vary in $\spaced$, and
\item a map $\heartsuit$ from $S$ to $\spaced(M)$ which realizes the extension of $\gamma$ to $|\Gamma|$.
\end{enumerate}

In what follows, all oriented string diagrams have Euler characteristic $\chi$ and $k$ inputs.
The number of outputs $\ell$ is also fixed and to simplify notation, we use $\spaced$ for $\spaced \gkl$. All of the constructions in this section would work just as well for unoriented string diagrams, but as we will eventually need to use the orientation, we have chosen to work with it from the beginning.

Let $LM$ be the space of continuous maps from the standard circle into $M$.
Let $LM^k$ be the $k$-fold Cartesian product of $LM$ with itself.
A point in $LM^k$ may be represented as a map $\gamma$ from $k$ copies of the standard circle into $M$.
In the definition, we will use the canonical identification of the domain of  $\gamma$ with the subspace of the pseudometric realization of an oriented string diagram $\Gamma$ determined by the subgraph $\sqcup Q_i$.
We also specify the notation $\iota$ for the map from the leaves of $\widehat{\Gamma}$ to $\sqcup |Q_i|$.

The domain of the map $\heartsuit$ will be fibered over $\spaced$ and we begin by describing the fiber over a particular string diagram $\Gamma$.
In fact, we will describe a fiber that depends on a parameter $\varepsilon$ and later we will fix the appropriate $\varepsilon$ which will make the construction of $\heartsuit$ possible.

\begin{defi}\
Let $\Gamma$ be an oriented string diagram and let $\gamma$ be in $LM^k$. 
Let $a$ and $b$ be two leaves in the same component of the intersection graph $\widehat{\Gamma}$. 
We say that $\gamma$ is {\em$\varepsilon$-Lipschitz with respect to $a$ and $b$} if the distance in $M$ between $\gamma(\iota(a))$ and $\gamma(\iota(b))$ is less than $\varepsilon$ times the distance in $|\widehat{\Gamma}|$ between $a$ and $b$.
We say that $\gamma$ is  {\em$\varepsilon$-Lipschitz with respect to $\Gamma$} if
it is $\varepsilon$-Lipschitz with respect to all such pairs of leaves.
\end{defi}

We call such maps $\varepsilon$-Lipschitz with respect to $\Gamma$ because we would like to extend $\gamma$ to a  function from $|\Gamma|$ to $M$ whose restriction to $|\widehat{\Gamma}|$ is Lipschitz with Lipschitz constant $\varepsilon$.

This definition depends on the structure of $\Gamma$. However, it turns out that contracting edges of length zero and pruning prunable branches do not change the $\varepsilon$-Lipschitz condition.

\begin{lemma}\label{lemma:epsilon Lipschitz}
Let $\Gamma$ and $\Gamma'$ be two oriented string diagrams identified by the attaching map of Definition~\ref{def:space of string diagrams}, and let $\gamma$ be in $LM^k$. 
Then $\gamma$ is $\varepsilon$-Lipschitz with respect to $\Gamma$ if and only if $\gamma$ is 
$\varepsilon$-Lipschitz with respect to $\Gamma'$.
\end{lemma}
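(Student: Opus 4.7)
My plan is to analyze the two types of attaching maps from Definition~\ref{def:space of string diagrams} separately. In each case, the goal is to exhibit a canonical isometry from $|\widehat{\Gamma}|$ to $|\widehat{\Gamma'}|$ which is compatible with the map $\iota$ to $\sqcup_i |Q_i|$ and which preserves the relation of two leaves lying in the same component of the intersection graph; once this is in place, the $\varepsilon$-Lipschitz condition transports across verbatim.

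I would first dispatch the pruning attaching as the easy direction. When $\Gamma' = \Gamma \prune h$, the underlying pseudometric fatgraph is literally unchanged, and only the decomposition of $\widetilde{T}_j$ into $\widetilde{T}^h \cup \widetilde{T}_h$ is modified. Since $s(h)$ is a fundamental vertex of $\widetilde{T}_j$ it is disjoint from all $Q_i$, so no vertex explosion occurs there when forming the intersection graph, and the two new subgraphs $\widetilde{T}^h$ and $\widetilde{T}_h$ meet at this shared internal vertex of $\widehat{\Gamma'}$. A direct check using Definition~\ref{def:intersectiongraph} then shows that $\widehat{\Gamma'} = \widehat{\Gamma}$ as pseudometric fatgraphs, so the $\varepsilon$-Lipschitz condition is unchanged.

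For the edge contraction attaching, $\Gamma' = \Gamma/e$ with $\ell(e) = 0$, I would take the three subcases of Definition~\ref{def: contract edge} in turn. If $e \subset Q_i$, then $e$ does not appear in the intersection graph at all, so $\widehat{\Gamma'} = \widehat{\Gamma}$. If $e$ is internal in $T_j$, or external with bivalent internal vertex, then $\widehat{\Gamma'}$ is obtained from $\widehat{\Gamma}$ by contracting the corresponding length-zero edge, which yields the required isometry and preserves the leaf set, component structure, and $\iota$. The main obstacle is the third subcase, where $e$ is external in $T_j$ with internal vertex $v$ of valence at least three in $T_j$ and $T_j$ is replaced by several trees $T_a$ sharing a common root $v'$ in $\Gamma'$.

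In this obstacle case I would further split on where in $\Gamma$ the leaf end $w$ of $e$ sits: by condition (4) of Definition~\ref{def:combinatorial string diagram}, $w$ lies either in some $Q_i$ or is fundamental in some other $\widetilde{T}_k$. If $w$ is fundamental in $\widetilde{T}_k$, then $v'$ is not in any $Q_i$ and remains as an internal vertex of $\widehat{\Gamma'}$, so no new leaves of the intersection graph are created and $\widehat{\Gamma'}$ is again $\widehat{\Gamma}$ with a length-zero edge contracted. If $w \in Q_i$, then $v' \in Q_i$ is exploded when forming $\widehat{\Gamma'}$ and the single leaf of $\widehat{\Gamma}$ at $w$ is replaced by one new leaf per $T_a$, all situated at the same point $v'$ of $|\widehat{\Gamma'}|$ and all sharing the same $\iota$-image in $|Q_i|$. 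The Lipschitz inequality for any pair of these new leaves then holds trivially with both sides zero, and for a new leaf paired with another leaf $b$ it reproduces the inequality for the original leaf at $w$ paired with $b$. I expect the careful bookkeeping in this last subcase---verifying that the new leaves really do all map to a single point of $\sqcup_i |Q_i|$ under $\iota$---to be the only real content of the argument.
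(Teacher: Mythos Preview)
Your case analysis is correct and largely matches the paper's, but there is a genuine gap in the final subcase. Your stated plan is to produce an isometry $|\widehat{\Gamma}|\to|\widehat{\Gamma'}|$ that preserves the relation ``lie in the same component.'' In the subcase where $e$ is an external edge of $T_j$ with trivalent-or-more internal vertex and leaf $w\in Q_i$, this relation is \emph{not} preserved: exploding $v'$ in $\widehat{\Gamma'}$ can break the component of $\widehat{\Gamma}$ containing $e$ into several components. So the plan cannot be carried out literally, and you correctly drop to checking Lipschitz inequalities directly. But your enumeration there covers only (i) a pair of the new leaves at $v'$ and (ii) a new leaf against an old leaf $b$. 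You omit (iii) a pair of \emph{old} leaves $a,b$ of $\widehat{\Gamma}$ (neither equal to $c$) that lie in the same component of $\widehat{\Gamma}$ but in different components of $\widehat{\Gamma'}$.

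Case (iii) is precisely where the implication ``$\varepsilon$-Lipschitz for $\Gamma'$ $\Rightarrow$ $\varepsilon$-Lipschitz for $\Gamma$'' has content: in $\Gamma'$ there is no inequality relating $\gamma(\iota(a))$ and $\gamma(\iota(b))$, so you must manufacture one. The paper does this with the triangle inequality: the geodesic in $|\widehat{\Gamma}|$ from $a$ to $b$ passes through $v$, so (since $\ell(e)=0$) $d_{\widehat{\Gamma}}(a,b)=d_{\widehat{\Gamma}}(a,c)+d_{\widehat{\Gamma}}(b,c)$, and the $\Gamma'$-Lipschitz inequalities for $(a,c')$ and $(b,c'')$ combine to give the required bound on $d_M(\gamma\iota(a),\gamma\iota(b))$. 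Without this step your argument establishes only one direction of the equivalence in the key subcase.
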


\begin{proof}
It suffices to consider the case when $\Gamma'$ is obtained from $\Gamma$ by a codimension one degeneration. In fact, there is only one type of degeneration which affects either the leaf set or components of the intersection graph $\widehat{\Gamma}$ or distances in its pseudometric realization $|\widehat{\Gamma}|$. Namely, it is necessary to prove the lemma for the case $\Gamma'=\Gamma/e$ where $e$ is an external edge of $\widehat{\Gamma}$ of length zero. The contraction has the potential to break the component of $\widehat{\Gamma}$ containing the edge $e$ into multiple components.

Let $c$ be the leaf of $e$ and let $v$ be its internal vertex.
Let $a$ and $b$ be leaves of $\widehat{\Gamma}$ which are in the same component as $c$.

Let $d$ denote the distance functions in  $M$, in $|\widehat{\Gamma}|$, and in $|\widehat{\Gamma'}|$.
First assume $\gamma$ is $\varepsilon$-Lipschitz with respect to $\Gamma$.
Thus for $\widehat{\Gamma}$ we have
\begin{align*}
d(\gamma(\iota(a)), \gamma( \iota(b))) &< \varepsilon d(a,b), \\
d(\gamma(\iota(b)), \gamma( \iota(c))) &< \varepsilon d(b,c), \text{ and}\\ 
d(\gamma(\iota(a)), \gamma( \iota(c))) &< \varepsilon d(a,c).
\end{align*}

The contraction of $e$ identifies the leaves $a$ and $b$ of $\widehat{\Gamma}$ with leaves $a'$ and $b'$ of 
$\widehat{\Gamma'}$
The leaf $c$ may be identified with multiple leaves of $\widehat{\Gamma'}$; exactly one such $c'$ is in the same component as $a'$ and another such $c''$ is in the same component as $b'$.

Recall that the distances between vertices in the $|\widehat{\Gamma}|$ are induced by lengths of edges in $\widehat{\Gamma}$ and likewise for $\widehat{\Gamma'}$.
Because lengths in  $\Gamma'$ are induced by those in $\Gamma$, 
for $\widehat{\Gamma'}$, we have
\begin{align*}
d(\gamma(\iota(b')), \gamma( \iota(c''))) &< \varepsilon d(b',c'') \text{ and} \\
d(\gamma(\iota(a')), \gamma( \iota(c'))) &< \varepsilon d(a',c').
\end{align*}
This shows that if  $\gamma$ is $\varepsilon$-Lipschitz with respect to $\Gamma$ then $\gamma$ is also $\varepsilon$-Lipschitz with respect to $\Gamma'$.

To see the converse, we assume
the previous two inequalities, which yield
\begin{align*}
d(\gamma(\iota(b)), \gamma( \iota(c))) &< \varepsilon d(b,c) \text{ and} \\
d(\gamma(\iota(a)), \gamma( \iota(c))) &< \varepsilon d(a,c).
\end{align*}
We must verify
$d(\gamma(\iota(a)), \gamma( \iota(b))) < \varepsilon d(a,b)$ as well.

If the distance-minimizing path between $a$ and $b$ in $|\widehat{\Gamma}|$ does not pass through the vertex $v$, then contracting $e$ does not affect the path and because $\gamma$ is $\varepsilon$-Lipschitz with respect to $\Gamma'$, we have $d(\gamma(\iota(a)), \gamma( \iota(b))) < \varepsilon d(a,b)$.

If the distance-minimizing path between $a$ and $b$ in $|\widehat{\Gamma}|$ does pass through the vertex $v$, then $d(a,b)$ is equal to $d(a,c)+d(b,c)$ since the length of $e$ is zero.
In this case, we have that 
\begin{align*}
d(\gamma(\iota(a)), \gamma( \iota(b))) &\leq d(\gamma(\iota(a)), \gamma( \iota(c))) + d(\gamma(\iota(b)), \gamma( \iota(c))) \\ &< \varepsilon d(a,c) + \varepsilon d(b,c) \\ &= \varepsilon d(a,b).
\end{align*}
This concludes the proof.
\end{proof}

The construction of the map $\heartsuit$ will require that $\varepsilon$ be small enough relative to the geometry of our manifold $M$. 
In particular, the map $\heartsuit$ is defined using a composition of the straightening maps from Section \ref{section:straightening} and maps from the standard simplex into $M$, which are built using the technique of Riemannian centers of mass\footnote{Also called Karcher means. 
See \cite{Karcher:RCMSCKM}.} pioneered by Grove and Karcher~\cite{GroveKarcher:HCC1CGA}. 
These techniques require strongly convex balls in $M$.
In order to guarantee strongly convex balls, we use the bounds in Definition \ref{def: bound for r}.
The presentation of this material mainly follows~\cite{Sander}; see~\cite{Afsari:RLPCMEUC} for a concise historical review.

\begin{defi}\label{def: bound for r}
Let $M$ be a Riemannian manifold. The {\em pre-convexity radius} of $M$, denoted $\rcx$, is
\[
\rcx \coloneqq  \frac{1}{2}\min\left\{\inj(M),\frac{\pi}{\sqrt{\Delta}}\right\}
\]
where $\inj(M)$ is the injectivity radius of $M$ and $\Delta$ is the supremum of sectional curvatures of $M$. If $\Delta\le 0$, we interpret $\frac{\pi}{\sqrt{\Delta}}$ as $\infty$. 
\end{defi}

\begin{theorem}\label{thm:Karcher}
Let $M$ be a compact Riemannian manifold with pre-convexity radius $r$.
Fix an $r$-ball $B$ in $M$.
Let $F$ be a finite set and let $\Delta_F$ be the simplex spanned by $F$.
Then there exists a smooth map $\Upsilon_F: B^F \times \Delta_F \to B$ called the {\em simplicial geodesic interpolation} such that
\begin{enumerate}
\item $\Upsilon_F$, restricted to $B^F \times F$, is evaluation: $(f, x) \mapsto f(x)$,
\item simplicial geodesic interpolation, restricted to a face of the simplex, is simplicial geodesic interpolation for that face, and
 \item fixing a configuration $f$ in $M^F$, the map $\Upsilon(f,\quad)$ viewed as a map from $\Delta_F$ to $M$ does not depend on the choice of $B$ containing $f(F)$.
\end{enumerate}
\end{theorem}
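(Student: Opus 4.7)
The plan is to define $\Upsilon_F(f,\lambda)$ as the Karcher mean (Riemannian center of mass) of the configuration $f \in B^F$ weighted by the barycentric coordinates $\lambda \in \Delta_F$, and then verify the three listed properties. Concretely, for each $\lambda \in \Delta_F$ with coordinates $\lambda_x \ge 0$, $\sum_x \lambda_x = 1$, consider the smooth functional
\[
E_{f,\lambda}(p) \;=\; \tfrac{1}{2}\sum_{x\in F} \lambda_x\, d_M(p, f(x))^2
\]
on the $r$-ball $B$. The choice of $\rcx$ in Definition~\ref{def: bound for r} guarantees that $B$ is a strongly convex ball in the sense of Karcher, so that on $B$ the function $p \mapsto d_M(p,q)^2$ is smooth and strictly geodesically convex for every $q \in B$. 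A convex combination of such functions is again strictly geodesically convex on $B$, so $E_{f,\lambda}$ has a unique critical point in $B$, which is its global minimum; define $\Upsilon_F(f,\lambda)$ to be this minimizer.

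Next I would verify the three properties. For (1), if $\lambda$ is the vertex of $\Delta_F$ corresponding to $x_0 \in F$, then $E_{f,\lambda}(p) = \tfrac12 d_M(p,f(x_0))^2$ is uniquely minimized at $p = f(x_0)$, giving $\Upsilon_F(f,\delta_{x_0}) = f(x_0)$. For (2), the restriction of $\lambda$ to a face $\Delta_{F'} \subset \Delta_F$ with $F' \subsetneq F$ corresponds to setting $\lambda_x = 0$ for $x \notin F'$, which eliminates those summands from $E_{f,\lambda}$, so that the restricted functional depends only on $f|_{F'}$ and coincides with the energy used to define $\Upsilon_{F'}$. For (3), suppose $B$ and $B'$ are two $r$-balls with $f(F) \subset B \cap B'$. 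The Karcher mean is characterized intrinsically (e.g.\ by vanishing of the gradient $\sum_x \lambda_x \exp^{-1}_p f(x) = 0$), and uniqueness within \emph{any} strongly convex ball containing both $f(F)$ and the candidate minimizer gives that the minimizers computed in $B$ and $B'$ coincide; this uses that any minimizer necessarily lies within distance $r$ of all the $f(x)$, hence within a common strongly convex neighborhood.

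Finally, I would check smoothness. Writing the critical point equation as $G(f,\lambda,p) := \sum_x \lambda_x \exp^{-1}_p f(x) = 0$ in a chart at $p$, the partial derivative $\partial_p G$ is the Hessian of $E_{f,\lambda}$, which by Karcher's Hessian estimates under the curvature bound $\tfrac{\pi}{2\sqrt{\Delta}}$ built into $\rcx$ is positive definite and thus invertible. The implicit function theorem then produces $\Upsilon_F$ as a smooth function of $(f,\lambda)$. The main obstacle, and the reason the precise form of $\rcx$ enters, is exactly this step: ensuring strict convexity of the squared-distance function uniformly on $B^F \times \Delta_F$ so that the minimizer both exists uniquely and depends smoothly on the data. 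All of these ingredients are established in the references cited in the excerpt (\cite{GroveKarcher:HCC1CGA,Karcher:RCMSCKM,Sander,Afsari:RLPCMEUC}), so the proof reduces to assembling them and recording the functorial properties (1)--(3).
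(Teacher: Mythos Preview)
Your proposal is correct and follows exactly the approach of the cited literature: the paper does not prove this theorem at all but simply records in the subsequent remark that it summarizes Definition~2.2, Theorem~2.1, and Corollary~2.2 of \cite{Sander}, with the correct choice of $r$ and independence of $B$ supplied by \cite[Section~1, Theorem~2.1]{Afsari:RLPCMEUC}. Your sketch via the Karcher mean, the implicit function theorem applied to the gradient equation, and the convexity/curvature bounds encoded in $\rcx$ is precisely the content of those references, so you have effectively unpacked what the paper leaves as a citation.
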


\begin{remark}
This theorem summarizes Definition 2.2,  Theorem 2.1, and Corollary 2.2 of \cite{Sander}. There the reliance on injectivity radius is erroneously omitted. The correct choice of $r$ and independence of $B$ are given in~\cite[Section 1, Theorem 2.1]{Afsari:RLPCMEUC} with some illuminating discussion.
\end{remark}

The values of $\varepsilon$ that make the construction of $\heartsuit$ possible are $\varepsilon=\frac{\rcx}{|\chi|}$ and $\varepsilon=\frac{\rcx}{2|\chi|}$.
We use the larger value of $\varepsilon$ to define the space $S$, the domain of $\heartsuit$, and we use the smaller value to identify a subspace $s$ of $S$ which we will use later for excision.

\begin{defi}\label{def of S}
The {\em diffuse intersection locus} is the subspace $S$ of $\spaced\times LM^k$ consisting of pairs $(\Gamma,\gamma)$ where $\gamma$ is $\frac{r}{|\chi|}$-Lipschitz with respect to $\Gamma$. 

We also identify the subspace $s$ of $S$ consisting of pairs $(\Gamma,\gamma)$ where $\gamma$ is $\frac{r}{2|\chi|}$-Lipschitz with respect to $\Gamma$.
\end{defi}

\begin{remark}
The closure of $s$ is contained in $S$.
\end{remark}

The map $\heartsuit$ will have codomain $\spaced(M)$, which is a universal space over $\spaced$.

\begin{defi}
The space $\spaced(M)$ consists of pairs $(\Gamma, \Theta)$ where $\Gamma$ is an oriented string diagram and  $ \Theta$ is a map from $|\Gamma|$ to $M$.
\end{defi}

\begin{remark}
The {\em set} $\spaced(M)$ comes equipped with a forgetful map  to the space $\spaced$, $(\Gamma, \Theta) \mapsto \Gamma$.
The fiber over $\Gamma$ is $\Maps(|\Gamma|, M)$; the fiber over a point in a small neighborhood of $\Gamma$ comes with a canonical map to the fiber over $\Gamma$.
We use standard point-set techniques to build a basis generating a topology on $\spaced(M)$ from the topologies on $\spaced$ and on fibers.
The notation $\spaced(M)$ to refers to this topological space.
\end{remark}

We are ready to define the map  $\heartsuit: S \to \spaced(M)$. 
For a pair $(\Gamma,\gamma)$, the map $\heartsuit$ will give us the pair $(\Gamma, \Theta(\Gamma, \gamma))$ where $\Theta(\Gamma, \gamma)$ is a map from $|\Gamma|$ to $M$ extending $\gamma$. 
To define $\Theta(\Gamma, \gamma)$, we need only specify its behavior
on the pseudometric realization of the intersection graph $|\widehat{\Gamma}|$ and ensure it  agrees with $\gamma$ on its leaves.

\begin{defi}\label{defi:theta}
The restriction of  $\Theta(\Gamma, \gamma)$ to $|\widehat{\Gamma}|$ is defined as follows.  
Let $C$ be a component  of $\widehat{\Gamma}$ with leaf set $\leaves{C}$.
First we straighten $C$ to get a map from $|C|$ to $\Delta_\leaves{C}$.
Next, we use $\gamma$ to specify a map from the vertices of $\Delta_\leaves{C}$ into $M$ and apply $\Upsilon_C$ to map $\Delta_{\leaves{C}}$ into $M$.

Formally, for $x$ a point  in $|C|$, we have
\[
\Theta(\Gamma,\gamma)(x) = \Upsilon_{\leaves{C}}(\gamma\circ \iota ,\straighten(x)).
\]
\end{defi}

The attaching map of Definition \ref{def:space of string diagrams} which identifies oriented string diagrams $\Gamma$ and $\Gamma'$ induces a canonical identification of their pseudometric realizations $|\Gamma|$ and $|\Gamma'|$.
We would like to show that if $\gamma$ is $\varepsilon$-Lipschitz with respect to $\Gamma$ and $\Gamma'$, then
 $\Theta(\Gamma, \gamma)$ and $\Theta(\Gamma', \gamma)$ are equal.
We will show this for particular pairs of oriented string diagrams $\Gamma$ and $\Gamma'$ identified under the attaching map; transitivity will imply that it is true for all such pairs.

More specifically, let $\Gamma$ be an oriented string diagram in the boundary of a cell $K$ which is identified with the oriented string diagram ${\Gamma}'$ in the interior of a cell ${K}'$.
The choice of a sequence of codimension one attaching maps identifying $\Gamma$ with ${\Gamma'}$ yields a canonical isomorphism from $|\Gamma|$ to $|{\Gamma}'|$.
This isomorphism is independent of the choice of the sequence.

\begin{prop}\label{prop:heartbreakcommutes}
Let ${\Gamma}$ be in the boundary of  $K$ and let ${\Gamma}'$ be in ${K}'$ as above.
Assume that $\gamma$ is $\varepsilon$-Lipschitz with respect to $\Gamma$ and $\Gamma'$.
Then the maps $\Theta(\Gamma, \gamma)$ and $\Theta(\Gamma', \gamma)$, from $|\Gamma| \cong |\Gamma'|$ to $M$, are equal.
\end{prop}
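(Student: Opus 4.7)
The plan is to reduce the statement to a single codimension-one attaching move, handle the two kinds of codimension-one moves separately, and treat the subcases of edge contraction individually. Any identification of $\Gamma$ with $\Gamma'$ factors as a sequence of codimension-one moves, and by Lemma~\ref{lemma:epsilon Lipschitz} the $\varepsilon$-Lipschitz hypothesis propagates along the sequence, so it suffices to verify the claim one step at a time. The two kinds of codimension-one moves are pruning of a prunable branch and contraction of an internal length-zero edge.

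For pruning $\Gamma' = \Gamma \prune h$, Lemma~\ref{lemma: prune straighten component} ensures that the straightening of the affected component commutes with the identification $|\widehat{\Gamma}| \to |\widehat{\Gamma \prune h}|$, the leaf sets are canonically matched, and $\iota$ is preserved. Consequently $\Upsilon_{\leaves{C}}(\gamma \circ \iota, \straighten(x))$ has the same value at corresponding points, giving $\Theta(\Gamma, \gamma) = \Theta(\Gamma \prune h, \gamma)$.

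For edge contraction $\Gamma' = \Gamma/e$, I distinguish the three subcases of Definition~\ref{def: contract edge}. If $e$ lies in some $Q_i$, then $e$ is disjoint from $\widehat{\Gamma}$, the intersection graph is canonically unchanged, and the claim is immediate. If $e$ is internal to some $T_j$, or external with a bivalent internal vertex, then Lemma~\ref{lemma: contract straighten component} yields commutativity of straightening and $\iota$ is preserved, so $\Upsilon$ gives matching values. The remaining subcase---$e$ is an external edge of $T_j$ whose internal vertex $v$ has valence at least three in $T_j$---is the main obstacle, because the component $C \subset \widehat{\Gamma}$ containing $T_j$ is replaced in $\widehat{\Gamma/e}$ by several components $\{C_a\}$ obtained via vertex explosion at the merged vertex.

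For this subcase, I would first observe that because $e$ has length zero, the Lipschitz condition (applied in its non-strict limiting form, as already used in the proof of Lemma~\ref{lemma:epsilon Lipschitz}) forces $\gamma \circ \iota$ to take the same value at the leaf $\ell = s(e)$ and at each of the newly created leaves $\{v_a\}$ in the components $\{C_a\}$; thus the leaf configurations on the $\Delta_{\leaves{C_a}}$ form a degeneration of the leaf configuration on $\Delta_{\leaves{C}}$ in which several distinct vertices are sent to a common point of $M$. I would then trace through the inductive construction of the straightening map for $C$---using the ordered partition of its constituent trees from Section~\ref{section:straightening}---and compare it to the corresponding construction for each $C_a$. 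The key input is property (2) of Theorem~\ref{thm:Karcher}: the restriction of simplicial geodesic interpolation to a face is again simplicial geodesic interpolation. Applied to the face of $\Delta_{\leaves{C}}$ spanned by the leaves other than $\ell$ (with repeated vertices collapsed), this lets one identify the composition $\Upsilon_{\leaves{C}} \circ \straighten$ on the tree portion of $|C|$ with the disjoint assembly $\sqcup_a \Upsilon_{\leaves{C_a}} \circ \straighten$. Contributions from trees other than $T_j$ are unaffected by the move, so the two compositions agree on $|C|$, establishing $\Theta(\Gamma, \gamma) = \Theta(\Gamma/e, \gamma)$ and completing the proof.
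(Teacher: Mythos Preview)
Your overall strategy and your handling of pruning and the first two contraction subcases match the paper's proof. The difficulty is entirely in the remaining subcase (external edge $e$ of $T_j$ with at-least-trivalent internal vertex $v$), and there your argument has two genuine gaps.

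First, the appeal to the Lipschitz condition is misplaced. The leaf $\ell$ of $e$ lives in $\widehat{\Gamma}$ while the new leaves $v_a$ live in $\widehat{\Gamma'}$, so no Lipschitz inequality compares them; moreover the condition is a strict inequality and says nothing about zero-distance pairs. The actual reason $\gamma\circ\iota$ agrees at $\ell$ and $\gamma\circ\iota'$ agrees at each $v_a$ is purely combinatorial: under the canonical identification $|\Gamma|\cong|\Gamma/e|$, the maps $\iota$ and $\iota'$ send all of these leaves to the \emph{same} point of $\sqcup|Q_i|$. Second, property~(2) of Theorem~\ref{thm:Karcher} (face restriction) is not the right tool. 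What is needed is compatibility of $\Upsilon$ with a linear map of simplices that may send several vertices to one, namely the map $\Phi:\sqcup\Delta_{\leaves{C'}}\to\sqcup\Delta_{\leaves{C}}$ sending each new leaf $v_a$ to $\ell$ and every other leaf to itself. When a component $C_a'$ contains two new leaves (which can happen whenever $\widehat{\Gamma}$ has a cycle through $v$ avoiding $e$), $\Phi$ is not injective on $\leaves{C_a'}$ and face restriction does not apply; one instead needs the elementary grouping property of Riemannian centers of mass. The paper proceeds by constructing $\Phi$ explicitly and verifying two commutative diagrams: one showing $\straighten_\Gamma\circ|\varphi| = \Phi\circ\straighten_{\Gamma'}$ (using that $\straighten$ collapses the length-zero edge $e$), and one showing $\gamma\circ\iota' = \gamma\circ\iota\circ\Phi$ on vertices. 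Together with the grouping property these give $\Theta(\Gamma,\gamma)=\Theta(\Gamma',\gamma)$; your sketch gestures at this but does not supply these ingredients.
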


\begin{proof}

The oriented string diagram $\Gamma' $ is obtained from the oriented string diagram $\Gamma$ by contracting internal edges of length zero and pruning prunable branches.
There are three different types of internal edges of length zero, so in total there are four types of degenerations to consider.
\begin{enumerate}
\item \label{case: contract internal}
The intersection graph  $\widehat{\Gamma}$ has an edge of length zero which is either an internal edge or a external edge whose internal vertex is bivalent.
\item \label{case: gamma 0}
The subgraph $\sqcup Q_i$ of $\Gamma$ has an internal edge of length zero. 
\item \label{case: contract external}
The intersection graph $\widehat{\Gamma}$ has an external edge of length zero whose internal vertex is at least trivalent.
\item \label{case: prunable}
The oriented string diagram ${\Gamma}$ has a tree with a prunable branch.
\end{enumerate}

Lemmas \ref{lemma: contract straighten component}  and \ref{lemma: prune straighten component} show that the straightening of $\Gamma$ commutes with contracting length zero edges of 
 $\widehat{\Gamma}$ of types listed in case (\ref{case: contract internal})
 and pruning branches as in case (\ref{case: prunable}).
 In these cases, the leaf set of each component $C$ of $\widehat{\Gamma}$ is unchanged, so the map $\iota: \leaves{\widehat{\Gamma}} \to \sqcup |Q_i|$ is also unchanged.
This shows that the maps $\Theta(\Gamma, \gamma)$ and $\Theta(\Gamma', \gamma)$ are equal for cases  (\ref{case: contract internal}) and (\ref{case: prunable}).

There is a special case of case (\ref{case: contract internal}) that appears to be missing: where the zero-length edge is a external edge of some tree $T_j$ whose internal vertex is at least trivalent in $T_j$.
In fact, in this case there is always a prunable branch $T_h$ of $T_j$ such that the source of $h$ is this vertex.
Thus, after repeated applications of Lemma \ref{lemma: prune straighten component}, this case is then accounted for by Lemma  \ref{lemma: contract straighten component}.

In case (\ref{case: gamma 0}), the straightening maps of $\Gamma$ and $\Gamma'$, from $|\widehat{\Gamma}| \cong |\widehat{\Gamma'}|$ to the disjoint unions $\sqcup \Delta_{\leaves{C}} \cong \sqcup \Delta_{\leaves{C'}}$, coincide.
This shows that the maps $\Theta(\Gamma, \gamma)$ and $\Theta(\Gamma', \gamma)$ are equal for  case (\ref{case: gamma 0}).

Case (\ref{case: contract external}) will take the most care because contracting an external edge $e$ of the intersection graph of $\Gamma$ whose internal vertex $v$ is at least trivalent changes the leaf set of the intersection graph and may even break $\widehat{\Gamma}$ into more components.

In this case, contraction of the edge $e$ induces a natural map $\varphi$ from $\widehat{\Gamma'}$ to $\widehat{\Gamma}$. The map $\varphi$ identifies corresponding edges of $\widehat{\Gamma'}$ with edges of  $\widehat{\Gamma}$ and identifies corresponding  leaves of $\widehat{\Gamma'}$ with leaves of $\widehat{\Gamma}$ or the vertex $v$. The number of leaves of $\widehat{\Gamma'}$ sent to $v$ by $\varphi$ is equal to one less than the valence of $v$; otherwise $\varphi$ is injective.
In Figure \ref{fig:contract external edge}, $\widehat{\Gamma}$ is equal to $\widetilde{T}_j$ and $\widehat{\Gamma'}$ is equal to the union of $\widetilde{T}_a$ and $\widetilde{T}_{a'}$.
In this example, the map $\varphi$ sends one leaf from each of these subgraphs to the vertex $v$.

To complete the proof, we show that contracting the edge commutes first with straightening and then with the Riemannian center of mass map in the appropriate sense.

Let $\{C\}$ and $\{C'\}$ be the sets of components of $\widehat{\Gamma}$ and $\widehat{\Gamma'}$ respectively. 

There is an induced natural map $\Phi$ from $\sqcup \Delta_{\leaves{C'}}$ to $\sqcup \Delta_{\leaves{C}}$ so that the diagram commutes.
\[
\begin{tikzcd}
{|\widehat{\Gamma'}|}  \ar{dd}[swap]{{|\varphi|}} \ar{rrr}{{\straighten}}
&&&
\sqcup \Delta_{\leaves{C'}} \ar{dd}{\Phi}
\\
\\
{|\widehat{\Gamma}|} \ar{rrr}{{\straighten}}
&&&
 \sqcup \Delta_{\leaves{C}}
\end{tikzcd}
\]
We will define $\Phi$ on the vertices of $\sqcup\Delta_{\leaves{C'}}$, that is, the leaves of $\widehat{\Gamma'}$, and extend linearly in each simplex factor. Any leaf of $\widehat{\Gamma'}$ is taken by $\varphi$ either to a leaf of $\widehat{\Gamma}$ or to the internal vertex of $e$.  The map $\Phi$ takes vertices in the first case to their image under $\varphi$, and takes vertices in the second case to the leaf vertex of $e$.

Since $e$ has length zero, the straightening map for $\Gamma$ takes both vertices of $e$ to the same point in $\sqcup \Delta_{\leaves{C}}$, namely, to the leaf vertex of $e$. This shows that contracting $e$ commutes with straightening.

Finally, since $\Phi$ takes vertices to vertices we have the following commutative diagram.
\[
\begin{tikzcd}
\sqcup \leaves{C'} \ar{rrd}{\sqcup \iota'} \ar{dd}{\Phi}
\\
&& \sqcup S^1 \ar{rr}{\gamma} && M
\\
\sqcup \leaves{C} \ar{rru}[swap]{\sqcup \iota} 
\end{tikzcd}
\]
This shows the maps $\Theta(\Gamma, \gamma)$ and $\Theta(\Gamma', \gamma)$ are equal for case (\ref{case: contract external}).

\end{proof}

\begin{defi}\label{defi:heart}
The function $\heartsuit:S \to \spaced(M)$ is defined as $\heartsuit(\Gamma, \gamma) = (\Gamma, \Theta(\Gamma, \gamma))$. 
\end{defi}

\begin{remark}
The function $\heartsuit$ is well-defined by Proposition~\ref{prop:heartbreakcommutes}. 
The map $\heartsuit$ is continuous because of the following.
First, consider the the image of $K \times LM^k$ in $\spaced \times LM^k$ under the characteristic map.
The restriction of $\heartsuit$ to the intersection of $S$ with this subspace depends continuously on parameters in $K$.
Then, because this intersection is closed in $S$, the map $\heartsuit$ is continuous by the gluing lemma.
\end{remark}

\section{The push-pull map and the string topology construction}\label{section:push-pull map}

All but one of the ingredients for the string topology construction are now in place.
In this section we fix an arbitrary singular cochain $W$ in $C^*(S, S-s)$ and define a map called the {\em push-pull map for $W$}.
We will eventually restrict to the subset of cocycles representing a particular homology class in $H^{\chid}(S, S-s)$, called the {\em diffuse intersection class}.
The push-pull map for a cocycle representing the diffuse intersection class will be called a {\em string topology construction}.
The definition of the diffuse intersection class class is rather involved, and its definition is the goal of
Sections \ref{section:thom} and \ref{section:diffuse intersection class}. Again, everything in this section would work fine if we replaced $\spaced$ with its unoriented version $\spaceduo$; we will only use the orientation to build the diffuse intersection class.
We work with coefficients in $R$, an arbitrary commutative ring with identity; we suppress the notation throughout.

Let $M^d$ be a closed, $R$-oriented $d$-dimensional Riemannian manifold.
Then for each $\chi$, $k$, and $\ell$, 
the push-pull map for a degree $|W|$ cochain $W$  in $C^*(S, S-s)$  is a degree $-|W|$ map
\[\mathcal{ST}_W: \chains(\spaced ) \otimes \chains(LM)^{\otimes k} \to \chains(LM)^{\otimes \ell},
\]
which we express as a composition of two maps, given in Definitions \ref{def:in} and \ref{def:out} below.

The first map in this composition is our version of a wrong-way map given by a Pontryagin--Thom construction.
\begin{defi}\label{def:in}
Let \[(\rho_{in})_{!}: \chains(\spaced ) \otimes \chains(LM)^{\otimes k}  \to C_{*-|W|}(\mathcal{SD}(M))\] be the following composition of maps:

\begin{enumerate}

\item
first,
$\chains (\spaced ) \otimes \chains(LM)^{\otimes k} \to \chains(\spaced  \times LM^k)$, an Eilenberg--Zilber map for singular chains,
\item
next,
$\chains(\spaced  \times LM^k) \to \chains(\spaced  \times LM^k, \spaced  \times LM^k - 
s)$, the usual quotient map from absolute to relative chains, 
\item
next,
$\chains(\spaced  \times LM^k, \spaced  \times LM^k - s) \to \chains(S, S- s)
$ a chain homotopy inverse to the map induced on chains by the inclusion of spaces
$ {(S, S- 
s)
 \hookrightarrow (\spaced  \times LM^k, \spaced  \times LM^k - 
s)}
 $
\item
next,
$\chains(S, S- s) \to C_{*-|W|}(S)$, given by the cap product with the relative cochain $W$, and
\item 
finally,
$\heartsuit_*: C_{*-|W|}(S) \to C_{*-|W|}(\mathcal{SD}(M))$, the map induced on singular chains by the map of spaces $\heartsuit: S \to \mathcal{SD}(M)$.
\end{enumerate}
For concreteness, we choose the so-called shuffle map~\cite[VI.12.26.2]{Dold:LAT} as our Eilenberg--Zilber map and use the explicit formula implementing iterated barycentric subdivision in~\cite[Theorem~2.20]{HATCHER} as our chain homotopy inverse to the map induced on chains by inclusion. 
\end{defi}
\begin{remark}
All of the maps involved in this definition are chain maps except potentially the cap product with $W$, which is only a chain map if $W$ is a relative cocycle. 
\end{remark}

The second map in the composition defining $\mathcal{ST}_W$ is also itself a composition, albeit a simpler one. First recall that for a string diagram $\Gamma$ with marked output boundary cycle $C$ we have the map $\bar{\partial}_C$ from the standard circle to $|\Gamma|$ which transverses the oriented edges of $C$ in ``reverse order.''

\begin{defi}\label{def:out}
Let
\[(\rho_{out})_*: \chains(\mathcal{SD}(M)) \to \chains(LM)^{\otimes \ell}\]
be the following composition of chain maps:
\begin{enumerate}
\item
first, 
$\chains(\mathcal{SD}(M)) \to \chains(LM^\ell)$, induced by the map of spaces $\rho_{out}$ from $\mathcal{SD}(M)$ to $LM^\ell$ which takes $(\Gamma, \Theta: |\Gamma| \to M)$ to the pullback of $\Theta$ to its outputs along the disjoint union of $\bar{\partial}_C$ over all output boundary cycles of $\Gamma$, and
\item
next,
$\chains(LM^\ell) \to \chains(LM)^{\otimes \ell}$, an Eilenberg--Zilber map.
\end{enumerate}
For concreteness we choose the Alexander--Whitney map as our Eilenberg--Zilber map in this direction.
\end{defi}

We come to the main definition of the paper.

\begin{defi}
Let $W$ be a cochain in $C^*(S, S-s)$.
The {\em push-pull map for $W$} $\mathcal{ST}_W$ is the
degree $-|W|$ map given by the 
 composition of the above maps:
\[ \mathcal{ST}_W \coloneqq (\rho_{out})_* \circ (\rho_{in})_!: 
 \chains(\spaced ) \otimes \chains(LM)^{\otimes k} \to C_{*-|W|}(LM)^{\otimes \ell}.
\]
\end{defi}

\begin{prop}
If $W$ is a cocycle in $C^*(S, S-s)$ then $\mathcal{ST}_{W}$ is a chain map. If $W$ and $W'$ are cohomologous cocycles in $C^*(S, S-s)$ then $\mathcal{ST}_{W'}$ and $\mathcal{ST}_W$ are chain homotopic chain maps.
\end{prop}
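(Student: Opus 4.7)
The plan is to check the statement factor-by-factor along the composition defining $\mathcal{ST}_W$ and localize all dependence on $W$ to a single step, namely the cap product. Examining Definitions~\ref{def:in} and~\ref{def:out}, every constituent map of $(\rho_{in})_!$ and $(\rho_{out})_*$ is manifestly a chain map except possibly the step $C_*(S, S-s) \to C_{*-|W|}(S)$ given by $\sigma \mapsto W \frown \sigma$. Indeed, the shuffle and Alexander--Whitney maps are standard chain maps, the quotient to relative chains is a chain map, the chosen chain homotopy inverse from~\cite[Theorem~2.20]{HATCHER} is a chain map, and $\heartsuit_*$ and $(\rho_{out})_*$ are induced by continuous maps of spaces.

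For the first assertion, I would recall the standard Leibniz-type identity for cap product with a singular cochain,
\[
\partial(W \frown \sigma) \;=\; (-1)^{|W|}\bigl(W \frown \partial\sigma\bigr) \;+\; (dW)\frown \sigma,
\]
which shows that $W \frown (-)$ is a chain map (up to the usual sign convention determining the degree shift) precisely when $dW = 0$. Thus when $W$ is a relative cocycle, step (4) in Definition~\ref{def:in} is a chain map, hence so is the entire composition $\mathcal{ST}_W$.

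For the second assertion, suppose $W' - W = dU$ for some $U \in C^{|W|-1}(S, S-s)$. The same Leibniz identity applied to $U$ gives
\[
\partial(U\frown \sigma) - (-1)^{|U|}\bigl(U\frown \partial\sigma\bigr) \;=\; (dU)\frown \sigma \;=\; (W' - W)\frown\sigma,
\]
so $U\frown(-)$ is (up to sign) a chain homotopy between the chain maps $W\frown(-)$ and $W'\frown(-)$ on $C_*(S, S-s)$. Pre-composing this chain homotopy with the chain maps in items (1)--(3) of Definition~\ref{def:in} and post-composing with $\heartsuit_*$ and with $(\rho_{out})_*$ from Definition~\ref{def:out} produces a chain homotopy between $\mathcal{ST}_W$ and $\mathcal{ST}_{W'}$, as desired.

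There is essentially no obstacle here: the proposition is formal once one has arranged the factorization in Definitions~\ref{def:in} and~\ref{def:out} so that all the geometric content (the Pontryagin--Thom wrong-way construction, the map $\heartsuit$, and the evaluation on output boundary cycles) is packaged into honest chain maps independent of $W$, leaving the $W$-dependence confined to a cap product. The only minor point to keep track of is the sign in the Leibniz formula, but this does not affect the chain-homotopy conclusion.
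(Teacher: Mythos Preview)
Your argument is correct and is essentially the paper's own: the paper simply observes that when $W$ is a cocycle, $\mathcal{ST}_W$ is a composition of chain maps, and that if $\widetilde{W}$ satisfies $d\widetilde{W}=W-W'$ then $\mathcal{ST}_{\widetilde{W}}$ itself furnishes the chain homotopy. Your version unpacks this by writing out the Leibniz formula for the cap product and explicitly pre- and post-composing, which amounts to the same thing stated in more detail.
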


\begin{proof}
If $W$ is a cocycle then $\mathcal{ST}_W$ is a composition of chain maps.

Let $\widetilde{W}$ be a cochain with coboundary $W-W'$. Then $\mathcal{ST}_{\widetilde{W}}$ is a chain homotopy between $\mathcal{ST}_W$ and $\mathcal{ST}_{W'}$.
\end{proof}

\begin{remark}
This proof shows that the string topology construction $\mathcal{ST}_{(\ )}$ is a chain map from $C^{-*}(S,S-s)$ to $Hom(\chains(\spaced ) \otimes \chains(LM)^{\otimes k}, \chains(LM)^{\otimes \ell})$, the homomorphism complex.
\end{remark}

Sections \ref{section:thom} and \ref{section:diffuse intersection class} are devoted to picking out a particular cohomology class $\Omega$ in $H^{\chid}(S, S-s)$ called the {\em diffuse intersection class}.

\begin{defi}\label{defi:string topology construction}
The {\em string topology construction for $W$} is the push-pull map $\mathcal{ST}_W$ where $W$ is a cocycle representing the diffuse intersection class $\Omega$.
\end{defi}

\section{Patching cohomology classes over the space of oriented string diagrams}\label{section:thom}

Our next goal is to define a cohomology class in $H^{|\chi| d}(S, S - s)$.
This relative cohomology class on $S$ will be defined as the pullback of a cohomology class on a {\em stratified pair} $\Mspace{\spaced}$ under an evaluation map from $S$ to $\Mspace{\spaced}$.

\subsection{The stratified pair}
In this section we define the pair $\Mspace{\spaced}$ as a colimit of a diagram in pairs of spaces. Just as for the space $S$, these pairs of spaces will involve some closeness condition in $M$. We will again employ the preconvexity radius $r$ (see Definition~\ref{def: bound for r}).

Up until this point, it has not been important for us to distinguish between the cells of $\spaced$, viewed as abstract polytopes, and the cells of $\spaced$, viewed as subspaces of $\spaced$.
In this section, by the {\em cell} $K$, we mean the abstract polytope, which comes equipped with a  characteristic map $\characteristic_K: K \to \spaced$.
Additionally, by a {\em face} of $K$, we mean a face $\widehat{K'}$ of the abstract polytope $K$.
The face $\widehat{K'}$ comes equipped with a canonical identification $\at : \widehat{K'} {\xrightarrow{\cong}} K'$,
with a particular cell $K'$ of $\spaced$.
By a {\em degeneration} of $K$, we mean such a cell $K'$, equipped with the inclusion $K'{\xrightarrow{\cong}}\widehat{K'}\hookrightarrow K$, whose image is $\widehat{K'}$.

\begin{defi}
Fix an oriented combinatorial string diagram and the corresponding cell $K$ of $\spaced$.
Let $\mathcal{T}$ denote the set of trees $\{T_j\}$ of the combinatorial string diagram and let $\mathcal{L}$ denote the disjoint union of the leaf sets $\leaves{T_j}$. Both of these sets depend on $K$.
Let $N_K$ be the subspace of $M^{\mathcal{L}}$ consisting of maps $f$ from $\mathcal{L}$ to $M$ such that for all trees $T_j$ in $\mathcal{T}$, the image of the restriction of $f$ to $\leaves{T_j}$ lies in an $r$-ball in $M$.
\end{defi}

Next we want to discuss the combinatorics of $N_K$ and how it varies as we move around in $\spaced$. To this end, we consider codimension one degenerations $K'$ of a cell $K$. Recall that there are two types of codimension one degenerations, one given by contracting edges and one given by pruning branches. We call these {\em contraction} and {\em pruning} degenerations respectively.

For a codimension one degeneration $K'$ of $K$, there is canonical map $\mathcal{T}'\to\mathcal{T}$ between the sets of trees of the corresponding oriented combinatorial string diagrams.
This map is bijective for a contraction degeneration.
In a pruning degeneration, the map is surjective and is one-to-one outside of $T^h$ and $T_h$; it sends both $T^h$ and $T_h$ in $\mathcal{T}'$ to the tree $T_j$ in $ \mathcal{T}$.

This canonical map of sets of trees does not, in general, induce a map of sets of leaves.
However, it does give rise to a map in the other direction $\xi: \mathcal{L} \to \mathcal{L}'$ where a leaf of a tree in $\mathcal{T}$ is sent to the corresponding leaf of a tree in $\mathcal{T}'$.
This map is also bijective for a contraction degeneration.
In a pruning degeneration, $\xi$ is injective but misses exactly one leaf $w$ of one tree $T_h$  in $\mathcal{T}'$, namely, the source of $h$ in $T_h$.

Note that if $K'_1$ and $K'_2$ are codimension one degenerations of the cell $K$, and $K''$ is a common codimension one degeneration of $K_1'$ and $K_2'$,
then the compositions $\mathcal{L} \to \mathcal{L}_1' \to \mathcal{L}''$ and $\mathcal{L} \to \mathcal{L}'_2 \to \mathcal{L}''$ need not agree; the following diagram need not commute.
See Figure \ref{fig:NonCommutativeLeafMap}.
This possible discrepancy will be shown not to matter for our purposes.

\begin{center}
\begin{tikzcd}
\mathcal{L}
\ar{rr}
\ar{d}
&&
\mathcal{L}'_1
\ar{d}\\
\mathcal{L}'_2
\ar{rr}
&&
\mathcal{L''}
\end{tikzcd}
\end{center}

\begin{figure}
\includegraphics[scale=0.8]{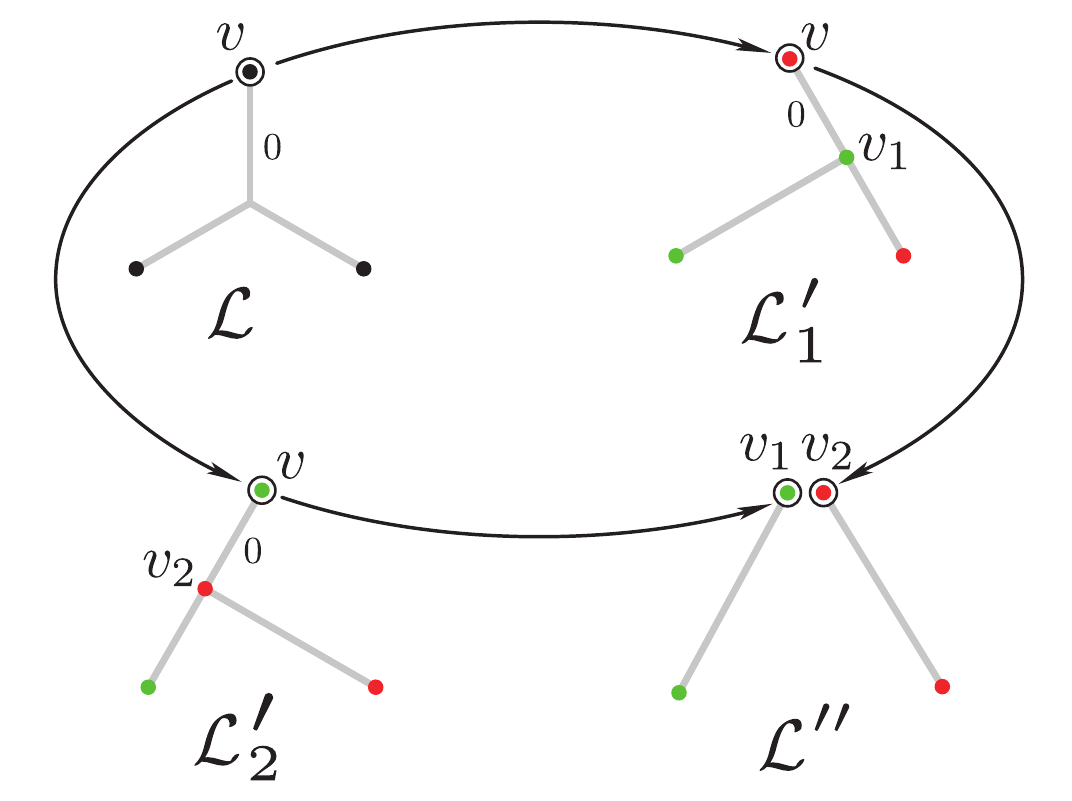}
\caption{This figure shows the noncommutativity of composition of  $\xi$ maps for two codimension two degenerations. Length-zero edges are labeled. The leaf for which the values of the two compositions are different is highlighted.}
\label{fig:NonCommutativeLeafMap}
\end{figure}

We will use $\xi$ and the Riemmanian center of mass map $\Upsilon$ to build a kind of attaching map for building blocks of the form $K\times N_K$. This attaching map, roughly, extends a map $\mathcal{L}\to M$ to a map $\mathcal{L}'\to M$. We begin by defining a map $\bigstarr{\widehat{K'}}{}$ from $\widehat{K'}\times N_K$ to $K'\times M^{\mathcal{L}'}$ and in Lemma \ref{lemma:nabla N_K'} show that its image lies in $N_{K'}$.

For a pair $(\Gamma,f)$ in $\widehat{K'}\times N_K$, the map $\bigstarr{{\widehat{K'}}}{}$ will give us the pair $(\at(\Gamma), \Psi(\Gamma,f))$ where $\Psi(\Gamma,f)$ is a map from $\mathcal{L}_{K'}$ to $M$ extending $f\xi^{-1}$. For a contraction degeneration, this suffices to define $\Psi(\Gamma,f)$. For a pruning degeneration, we need only specify the value of $\Psi(\Gamma,f)$ on the leaf $w$ that is not in the image of $\xi$.

\begin{defi}
Let $(\Gamma,f)$ be in $\widehat{K'}\times N_K$, where $K'$ is a pruning degeneration of $K$. The value of $\Psi(\Gamma,f)$ on $w$ is defined as follows.
Since $w$ is a leaf of $T_h$, it is also a vertex of $T_j$. In the following, $\straighten$ denotes the straightening map $|T_j|\to \Delta_{\leaves{T_j}}$ of Proposition~\ref{prop:degenerate straighten}.

Let $\iota$ be the inclusion of $\leaves{T_j}$ into $\mathcal{L}$. Then formally,

\[
\Psi(\Gamma,f)(w) = \Upsilon_{\leaves{T_j}}(f\circ \iota ,\straighten(w)).
\]

\end{defi}
\begin{defi}
The map $\bigstarr{\widehat{K'}}{}: \widehat{K'} \times N_K \to K' \times N_{K'}$ is defined as 
\[\bigstarr{\widehat{K'}}{}(\Gamma,f)=(\at(\Gamma),\Psi(\Gamma,f)).\]
\end{defi}
Compare these definitions to Definitions~\ref{defi:theta} and~\ref{defi:heart}. In both cases, we use the composition of straightening and the Riemmannian center of mass maps. As a result, the maps $\heartsuit$ and $\nabla$ have good compatibility, which will be exploited later.

When $K$ and $K'$ are clear from context we will suppress them and use $\bigstarr{}{}$ to refer to $\bigstarr{\widehat{K'}}{}$. This is the beginning of a relentless campaign of abuse of notation where any map with a passing resemblance to any $\bigstarr{\widehat{K'}}{}$ will be referred to with the notation $\bigstarr{}{}$.

\begin{lemma}\label{lemma:nabla N_K'}
The image of $\bigstarr{}{}$ is contained in $K' \times N_{K'}$.
\end{lemma}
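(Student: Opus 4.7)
The plan is to split into two cases according to the two types of codimension-one degenerations: contraction of a length-zero edge and pruning of a prunable branch. In each case, for every tree $T'_{j'} \in \mathcal{T}'$, we need to verify that $\Psi(\Gamma, f)$ sends $\leaves{T'_{j'}}$ into some $r$-ball in $M$.

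Suppose first that $K'$ is a contraction degeneration of $K$. Then $\xi : \mathcal{L} \to \mathcal{L}'$ is a bijection which, by the description of $\mathcal{T}' \to \mathcal{T}$, restricts to a bijection $\leaves{T_j} \to \leaves{T'_{j'}}$ for each pair of corresponding trees. There are no new leaves to define $\Psi$ on, so $\Psi(\Gamma, f) = f \circ \xi^{-1}$ on all of $\mathcal{L}'$. Hence $\Psi(\Gamma, f)(\leaves{T'_{j'}}) = f(\leaves{T_j})$, which lies in an $r$-ball by the assumption $f \in N_K$.

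Now suppose $K'$ is a pruning degeneration of $K$, obtained by pruning $T_j$ along $h$. Then $\mathcal{T}'$ is obtained from $\mathcal{T}$ by replacing $T_j$ with $T^h$ and $T_h$, and $\xi$ is injective but misses exactly one leaf, $w = s(h) \in \leaves{T_h}$. Fix an $r$-ball $B$ containing $f(\leaves{T_j})$. For any $T'_{j'}$ other than $T^h$ or $T_h$, we reduce to the contraction case. For $T^h$, every leaf lies in the image of $\xi$ with preimage in $\leaves{T_j}$, so $\Psi(\Gamma, f)(\leaves{T^h}) \subseteq f(\leaves{T_j}) \subseteq B$. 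For $T_h$, the same holds for the leaves other than $w$, while $\Psi(\Gamma, f)(w) = \Upsilon_{\leaves{T_j}}(f \circ \iota, \straighten(w))$ lies in $B$ by Theorem \ref{thm:Karcher}, which guarantees that simplicial geodesic interpolation takes values in the same $r$-ball as its input configuration. Thus $\Psi(\Gamma, f)(\leaves{T_h}) \subseteq B$.

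The only step above that is not bookkeeping is the last one: the value of $\Psi$ on the new leaf $w$ must be compatible with the $r$-ball condition defining $N_{K'}$. This is precisely what the Karcher-mean property of $\Upsilon$ guarantees, and it is the reason why the $r$-ball condition is built into the definition of $N_K$ in the first place.
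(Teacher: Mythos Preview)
Your proof is correct and follows essentially the same approach as the paper's own proof: split into contraction and pruning degenerations, observe that the contraction case is trivial since $\xi$ is a bijection, and in the pruning case use that $\Upsilon_{\leaves{T_j}}$ lands in the same $r$-ball $B$ containing $f(\leaves{T_j})$ to handle the new leaf $w$. Your write-up is slightly more detailed than the paper's, but the content is the same.
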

\begin{proof}
For a contraction degeneration, the map $\bigstarr{}{}$ is a homeomorphism to $K' \times N_{K'}$.
This is because the condition of being in $N_K$ and in $N_{K'}$ are the same.
For a pruning degeneration, the proof is also straightforward.
By assumption, the leaves of $T_j$ are sent into an $r$-ball in $M$.
Therefore, the leaves of $T^h$ are sent into the same $r$-ball and $\Upsilon_{\leaves{T_j}}(f\circ \iota ,\straighten(w))$ lies in the same $r$-ball, so the leaves of $T_h$ also lie into the same $r$-ball.
\end{proof}

\begin{remark}
The map $\bigstarr{}{}: \widehat{K'} \times N_K \to K' \times N_{K'}$ is injective.
\end{remark}

We use the notation $\length{T}$ to refer to the total length of the tree $T$.

\begin{defi}
Let $n_K$ be the subspace of $M^{\mathcal{L}}$ consisting of maps $f$ from $\mathcal{L}$ to $M$ such that the image of the restriction of $f$ to the set of leaves $\leaves{T_j}$ of any tree $T_j$ lies in a $\frac{\length{T_j}}{|\chi|^2} \frac{r}{4}$
 ball in $M$.
\end{defi}

\begin{remark}
Since $\length{T_j}$ is  less than or equal to $|\chi|$, the closure of $n_K$ is a subspace of $N_K$.

\end{remark}

\begin{lemma}\label{lemma:pre-is-good-enough}
The map $\bigstarr{}{}$ sends $\widehat{K'} \times (N_K - n_K)$ into $K' \times (N_{K'} -n_{K'})$.

\end{lemma}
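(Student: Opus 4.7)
The plan is to analyze the two codimension-one degeneration types separately and prove the contrapositive in each case: if $\Psi(\Gamma, f) \in n_{K'}$, then $f \in n_K$.

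For a contraction degeneration, the leaf map $\xi$ is a bijection, the trees in $\mathcal{T}$ and $\mathcal{T}'$ correspond bijectively, tree lengths are preserved (the contracted edge has length zero), and $\Psi(\Gamma, f) = f \circ \xi^{-1}$. The ball conditions defining $n_{K'}$ and $n_K$ therefore coincide under these identifications and there is nothing to prove in this case.

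For a pruning degeneration at a half-edge $h$ of a tree $T_j \in \mathcal{T}$, the tree $T_j$ is replaced by its pollard $T^h$ and branch $T_h$ in $\mathcal{T}'$, with $\length{T_j} = \length{T^h} + \length{T_h}$, and $\xi$ misses exactly the new leaf $w \in \leaves{T_h}$. Trees other than $T_j$ are unchanged, so their $n_{K'}$-conditions transfer unchanged to $n_K$-conditions. For $T_j$, the hypothesis $\Psi(\Gamma, f) \in n_{K'}$ supplies balls $B_1, B_2$ of radii $r_1 = \frac{\length{T^h}}{|\chi|^2} \cdot \frac{r}{4}$ and $r_2 = \frac{\length{T_h}}{|\chi|^2} \cdot \frac{r}{4}$ containing $f(\leaves{T^h})$ and $\Psi(\Gamma, f)(\leaves{T_h})$, respectively. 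The goal is to enclose $f(\leaves{T_j}) = f(\leaves{T^h}) \cup f(\leaves{T_h} \setminus \{w\}) \subseteq B_1 \cup B_2$ in a ball of radius $r_1 + r_2 = \frac{\length{T_j}}{|\chi|^2} \cdot \frac{r}{4}$.

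The key technical step is to show that $\Psi(\Gamma, f)(w) \in B_1 \cap B_2$. Membership in $B_2$ is immediate from the hypothesis. For $B_1$, property~(3) of Proposition~\ref{prop:degenerate straighten} places $\straighten(w)$ in the face $\Delta_{\leaves{T^h}} \subseteq \Delta_{\leaves{T_j}}$, and property~(2) of Theorem~\ref{thm:Karcher} then identifies $\Psi(\Gamma, f)(w)$ with the simplicial geodesic interpolation of the restriction $f|_{\leaves{T^h}}$; since $B_1$ is strongly convex (its radius is strictly less than the preconvexity radius $r$), this interpolation lies in $B_1$. Once $\Psi(\Gamma, f)(w)$ is known to lie in $B_1 \cap B_2$, a standard enclosing-ball estimate for two overlapping balls---valid in the Riemannian setting because everything takes place inside a strongly convex neighborhood---bounds the radius of a ball containing $B_1 \cup B_2$ by $r_1 + r_2$, completing the argument. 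The main obstacle is verifying these two Riemannian geometric facts with care (the simplicial geodesic interpolation really remains in the convex ball, and the overlapping-balls estimate really holds with constant $r_1 + r_2$ in this curved setting); both hold because all constructions in play are contained well within strongly convex neighborhoods.
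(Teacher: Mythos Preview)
Your proof is correct and follows essentially the same approach as the paper: prove the contrapositive, dispose of contraction degenerations trivially, and for pruning degenerations show that $\Psi(\Gamma,f)(w)$ lies in both balls so that the overlapping-balls estimate yields the required containing ball of radius $r_1+r_2$. Your write-up is in fact slightly more careful than the paper's at the key step, explicitly invoking property~(3) of Proposition~\ref{prop:degenerate straighten} and the face-compatibility of $\Upsilon$ together with strong convexity to place $\Psi(\Gamma,f)(w)$ in $B_1$; the paper simply asserts this, and only later (Lemma~\ref{lemma:convexballs}) formalizes the enclosing-ball estimate you invoke.
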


\begin{proof} 
We show that the intersection of $(K'  \times n_{K'})$ with the image of $\bigstarr{}{}$ is contained in $\bigstarr{}{}(\widehat{K'} \times n_K)$. For a contraction degeneration, the proof is trivial so we consider only pruning degenerations. For notational convenience, let $\delta = \frac{r}{4|\chi|^2}$.

Assume $(\Gamma' ,f')$ is in the intersection of $(K'  \times n_{K'})$ with the image of $\bigstarr{}{}$.
This means that $f'$ sends the leaves $\leaves{T^h}$ into a $\delta\length{T^h}$-ball $B^h$ in $M$ and
$f'$ sends the leaves $\leaves{T_h}$ into a $\delta\length{T_h}$-ball $B_h$ in $M$.
Since $(\Gamma', f')$ is in the image of $\bigstarr{}{}$, the map $f'$ sends $w$ to $\Upsilon_{\leaves{T_j}}(f\circ \iota ,\straighten(w))$.
This means that $f'$ sends $w$ into $B^h$.

Therefore the union of $B^h$ and $B_h$ is contained in a $\delta(\length{T^h} + \length{T_{h}})$-ball in $M$.
See Figure \ref{fig:EpsilonScales}.

\begin{figure}
\includegraphics[scale=0.7]{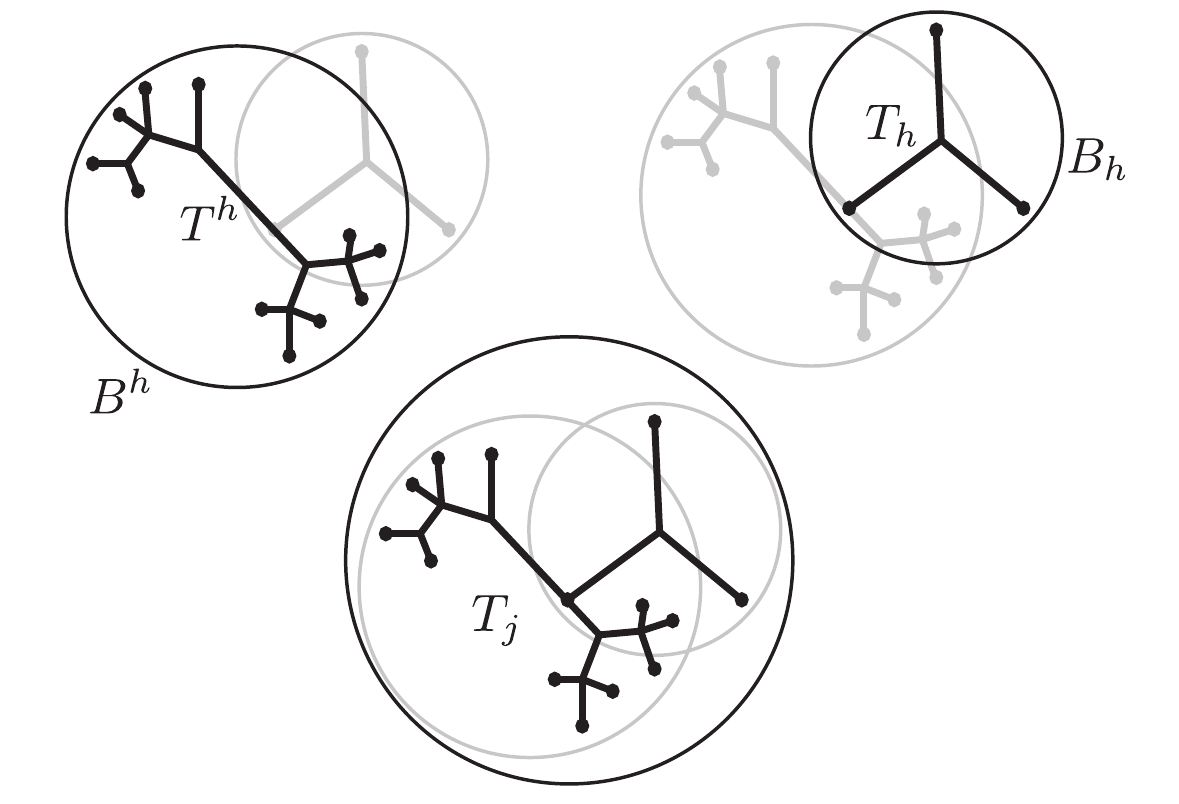}
\caption{The map $f'$ sends leaves of $T_j$  into a $\delta\length{T_j}$-ball.}
\label{fig:EpsilonScales}
\end{figure}

But $\leaves{T_j}$ is contained in the union of $\leaves{T^h}$ and $\leaves{T_h}$ and $\length{T_j} = \length{T^h} + \length{T_h}$.
Therefore $f'$ sends the leaves of $T_j$ into a $\delta\length{T_j}$-ball in $M$.
This shows that $(\Gamma', f')$ is in $\bigstarr{}{}(\widehat{K'} \times n_K)$.
\end{proof}

\begin{cor}
The map $\bigstarr{}{}$ is a map of pairs of spaces 
\[(\widehat{K'} \times N_K, \widehat{K'} \times (N_K - n_K)) \to (K' \times N_{K'}, K'  \times (N_{K'} - n_{K'})).\]
\end{cor}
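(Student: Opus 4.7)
The plan is to assemble this corollary directly from the two preceding lemmas; essentially nothing new needs to be proved.

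First, recall that a map of pairs $(X,A)\to(Y,B)$ is simply a map $X\to Y$ whose restriction to $A$ lands in $B$. So I need to check two things about $\bigstarr{}{}$: that it sends $\widehat{K'}\times N_K$ into $K'\times N_{K'}$, and that it sends the subspace $\widehat{K'}\times(N_K-n_K)$ into $K'\times(N_{K'}-n_{K'})$.

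The first containment is exactly the content of Lemma \ref{lemma:nabla N_K'}. For the second, by Lemma \ref{lemma:pre-is-good-enough} the image $\bigstarr{}{}\bigl(\widehat{K'}\times(N_K-n_K)\bigr)$ lies in $K'\times(N_{K'}-n_{K'})$. Combining these two facts yields a map of pairs in the sense required.

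The only mild subtlety worth mentioning is that \ref{lemma:pre-is-good-enough} is phrased in one direction (image of the complement lies in the complement) rather than as a preimage statement, but that is exactly what the pair-map condition demands here, so no further argument is needed.
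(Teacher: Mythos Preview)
Your proposal is correct and matches the paper's approach exactly: the corollary is immediate from the two preceding lemmas, with Lemma~\ref{lemma:nabla N_K'} giving the map on the larger spaces and Lemma~\ref{lemma:pre-is-good-enough} ensuring the subspace is preserved. The paper does not even write out a proof, treating it as an obvious consequence.
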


We will need to refer to these types of pairs of spaces a number of times below.
To simplify notation, for $J$ a subspace of a cell $K$ of $\spaced$, we denote the pair
$(J \times N_K, J \times (N_K - n_K))$ by $\Mover{J}{K}$ and refer to the pair as {\em the $K$-product over $J$}. In this notation, $\bigstarr{}{}$ is a map of pairs
\[
\Mover{\widehat{K'}}{K}\to \Mover{K'}{K'}.
\]

We arrive at the first main definition of this section, that of our stratified space $\Mspace{\spaced}$, which is assembled from all such $K$-products over the cells $J$ and the $\bigstarr{}{}$ maps.

\begin{defi}\label{defi:coequalizer}
Let $\Mspace{\spaced}$ be the coequalizer of the following diagram
\[
\coprod_{{K'}, K}\Mover{\widehat{K'}}{K}\rightrightarrows \coprod_{K}\Mover{K}{K}
\]
Here the first disjoint union is over pairs of cells of $\spaced$ where $K'$ is a codimension one degeneration of $K$, and the second disjoint union is over cells of $\spaced$. The two maps are 
\begin{itemize}
\item 
the maps $\Mover{\widehat{K'}}{K} \hookrightarrow \Mover{K}{K}$ induced by the inclusions of faces ${\widehat{K'} \hookrightarrow K}$ and 
\item
 the maps $\bigstarr{}{}:\Mover{\widehat{K'}}{K}\to \Mover{K'}{K'}$.
 \end{itemize}
\end{defi}
The projection maps $K \times N_K \to K$ and $K \times (N_K - n_k) \to K$ induce a well-defined projection map of pairs of spaces from $\Mspace{\spaced} \to (\spaced, \spaced)$.
Given an inclusion $V \hookrightarrow \spaced$, the {\em space over $V$}, $\Mspace{V}$ is the preimage of $(V,V)$ in $\Mspace{\spaced}$ under the projection map.
(Because the characteristic map $\characteristic_K: K \to \spaced$ is typically not injective on $\partial K$, we avoid the notation $\Mspace{K}$ and use $\Mspace{\characteristic_K(K)}$ instead.)

Informally, we can think of $\Mspace{\spaced}$ as being built inductively. If $K$ is an $m$-cell and we have already built the space over the $(m-1)$-skeleton of $\spaced$, then we glue $\Mover{K}{K}$ in using the various $\bigstarr{}{}$ maps as some sort of attaching maps.

A priori, this metaphor is not justified because the attaching could be poorly behaved. This is because of the that fact if $K'_1$ and $K'_2$ are codimension one degenerations of $K$ which share a common codimension one degeneration $K''$, the compositions of maps $\xi$ of leaves $\mathcal{L} \to \mathcal{L}'_{1} \to \mathcal{L}''$ and $\mathcal{L} \to \mathcal{L}'_{2} \to \mathcal{L}''$ need not agree. 

As a consequence, it might be possible that gluing in via the ``attaching maps'' could disrupt the topology of the space over the $(m-1)$-skeleton of $\spaced$.

Fortunately, this does not occur. In the next lemma, we show that the compositions of the corresponding $\bigstarr{}{}$ maps do agree.

\begin{lemma}\label{lemma:starscommute}
Let $K'_1$ and $K'_2$ be codimension one degenerations of $K$ which share a common codimension one degeneration $K''$.
Then the two compositions of maps of pairs from the $K$-product over $\widehat{K''}$ to the $K''$-product over $K''$---one factoring through the $K_1'$-product over $\widehat{K''}$ and the other factoring through the $K_2'$-product over $\widehat{K''}$---agree.
Specifically, the following diagram commutes, where maps are understood to refer to their appropriate restrictions:

\begin{center}
\begin{tikzcd}
\Mover{\widehat{K''}}{K}
\ar{rr}{\bigstarr{\widehat{K'_1}}{}}
\ar{dd}[swap]{\bigstarr{\widehat{K'_2}}{}}
&&
\Mover{\widehat{K''}}{K'_1}
\ar{dd}{\bigstarr{\widehat{K''}}{}}\\\\
\Mover{\widehat{K''}}{K'_2}
\ar{rr}[swap]{\bigstarr{\widehat{K''}}{}}
&&
\Mover{K''}{K''}
\end{tikzcd}
\end{center}

\end{lemma}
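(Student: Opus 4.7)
The plan is to check commutativity on each factor of the pair separately. On the $K''$-factor (lying in $\spaced$) the commutativity follows from the cell structure established in Proposition~\ref{prop:space of string diagrams}, since each $\nabla$ map acts on this factor via an inclusion of faces followed by the canonical identification. The nontrivial content lies on the $M$-factor, where a contraction degeneration acts as the identity on maps $\mathcal{L} \to M$ (its induced $\xi$ being a bijection of leaf sets), while a pruning degeneration extends $f : \mathcal{L} \to M$ to $\mathcal{L}' = \mathcal{L} \sqcup \{w\}$ by the formula $f(w) = \Upsilon_{\leaves{T_j}}(f \circ \iota, \straighten(w))$ at the new leaf $w = s(h)$.

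I would proceed by cases on the types of the two codimension one degenerations factoring through $K''$. Case~(i), both contractions: immediate, because $\nabla$ is the identity on $M^{\mathcal{L}}$ in each factor. Case~(ii), one contraction and one pruning: it suffices to check that the straightening position $\straighten_{T_j}(w) \in \Delta_{\leaves{T_j}}$, the leaf set $\leaves{T_j}$, and the restricted values $f|_{\leaves{T_j}}$ are all unchanged under a codimension one contraction. This is guaranteed by Proposition~\ref{prop:degenerate straighten}(2), together with the observation that the contractions arising in the cell attaching maps never alter the leaf set of any $T_j$.

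Case~(iii), both prunings along half-edges $h_1, h_2$, is the main obstacle. If the two half-edges lie in different trees of $\mathcal{T}$, their effects are manifestly independent. Otherwise both lie in a common tree $T_j$ with prunable branches $T_{h_1}, T_{h_2}$ which are either disjoint or nested in $T_j$. In either order of pruning one must identify the value assigned to each new leaf $s(h_i)$, which involves an $\Upsilon$-evaluation at a straightening position computed in a pruned subtree. The crucial input is Proposition~\ref{prop:degenerate straighten}(3), which identifies $\straighten_{T_j}(|s(h_i)|) \in \Delta_{\leaves{T_j}}$ with the image, under a canonical linear inclusion of simplices, of the straightening position computed in the pruned subtree. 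When this inclusion is onto a face of $\Delta_{\leaves{T_j}}$ (which happens for the pollard $T^h$, since $\leaves{T^h} \subset \leaves{T_j}$), Theorem~\ref{thm:Karcher}(2) immediately yields the required equality of $\Upsilon$-values. When the inclusion is not onto a face (which happens for the branch $T_h$, whose included vertex $s(h)$ is sent to an interpolated interior point of $\Delta_{\leaves{T^h}}$), one additionally needs a substitution property of simplicial geodesic interpolation: replacing a vertex of $\Delta_F$ by its $\Upsilon_F$-value at some prescribed point and reinterpolating on the linearly-included subsimplex yields the original $\Upsilon_F$-value. This substitution property is a standard feature of the iterative de Casteljau-style construction of $\Upsilon$ via Riemannian centers of mass. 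Assembling these identifications across the remaining sub-cases shows both compositions assign identical values on all of $\mathcal{L}''$, completing the proof.
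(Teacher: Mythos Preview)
Your case analysis misses precisely the situation the paper singles out as the ``most involved case.'' This occurs on the codimension-two face of $K$ at which some $T_j$ has a length-zero external edge $e$ whose internal vertex $v$ is trivalent in $T_j$. Here \emph{both} factorizations $K\to K_i'\to K''$ begin with a pruning---along the two non-$e$ half-edges $h_1,h_2$ at $v$---and end with the contraction of $e$; the two sequences are \emph{not} the same pair of operations in opposite order. Your Case~(ii) tacitly assumes the two paths are a single (contract, prune) pair reordered, and your Case~(iii) tacitly assumes the second arrows are themselves prunings on disjoint-or-nested branches. Neither applies. In fact this is the case where the two composite leaf maps $\mathcal{L}\to\mathcal{L}''$ genuinely disagree (see the discussion preceding the lemma and Figure~\ref{fig:NonCommutativeLeafMap}): there are two leaves $v_1,v_2\in\mathcal{L}''$, coinciding in $\Gamma''$, which are interchanged by the two paths. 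The paper resolves this by invoking Lemma~\ref{lemma: contract straighten} to show that the straightening map sends $v_1$ and $v_2$ to the same point of the relevant simplex, so the $\Upsilon$-values produced by the two compositions agree after all. This argument is absent from your proposal.

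A secondary concern: the ``substitution property'' you rely on in the nested sub-case of Case~(iii) is \emph{not} a property of Riemannian centers of mass in general. It holds for the iterated de~Casteljau construction, but the $\Upsilon$ of Theorem~\ref{thm:Karcher} (following Sander and Afsari) is the one-shot weighted Fr\'echet mean, which is not associative in this sense on a curved manifold. The paper's own treatment of the two-pruning case is terse, asserting that it reduces to Lemma~\ref{lemma: graft straighten}; if you wish to rely on substitution, you must either verify it for this specific $\Upsilon$ or rework the argument so that only the face-restriction property (Theorem~\ref{thm:Karcher}(2)) is needed.
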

\begin{proof}

Contracting a zero length edge of $\Gamma$ may be a degeneration of codimension greater than one. This phenomenon occurs for an external edge of a tree $T_j$ whose internal vertex is at least trivalent. For the purposes of this proof we call such edges {\em non-contractible in $\Gamma$}.

Consider the codimension two degeneration $K\to K''$. Pruning degenerations increase the total number of trees in a representative oriented  string diagram and all other codimension one degenerations preserve the number of trees. Therefore the number of pruning degenerations in any decomposition $K\to K'\to K''$ is constant. We divide into cases.

First, if there are no pruning degenerations, then the statement is trivially true because everything commutes and every map is an isomorphism.

Next, consider a codimension two degeneration sequence $K\to K'_1\to K''$ such that exactly one of the two codimension one degenerations is a pruning degeneration. The other codimension one degeneration must be a contraction degeneration along an edge $e$ of $\Gamma$ or $\Gamma'_1$.

If $e$ is contractible in $\Gamma$,
then the unique other degeneration sequence ${K\to K'_2\to K''}$ giving rise to the same codimension two degeneration consists of the same two codimension one degenerations applied in the opposite order. 
Contracting zero-length edges does not change the combinatorics of the map $\xi$ on leaf sets, nor do these actions affect the straightening or Riemannian center of mass maps, which give the missing component of $M^{\mathcal{L}''}$.

There is a  case involving contracting an edge which is non-contractible in $\Gamma$ (this is the most involved case). That is, after performing a pruning degeneration on $\Gamma$ to yield $\Gamma'$, there may be an edge which is contractible in $\Gamma'$ but not in $\Gamma$. In this case, the other degeneration sequence involves performing a different pruning degeneration followed by a contraction degeneration. Overall, this codimension two degeneration is realized by the contraction of an external edge of a tree $T_j$ whose non-leaf vertex is trivalent. Recall Figure~\ref{fig:NonCommutativeLeafMap}. 
 
In this case, the maps on leaves $\mathcal{L}\to \mathcal{L}''$ do not commute. There are two leaves $v_1$ and $v_2$ of two trees in $\Gamma''$ which are identified in $\Gamma''$. The leaves $v_1$ and $v_2$ correspond to a vertex $v$ in $\at(\Gamma'')$, the oriented string diagram in $K$.

Examining the compositions  $\bigstarr{\widehat{K''}}{}\circ\bigstarr{\widehat{K'_2}}{}$ and $\bigstarr{\widehat{K''}}{}\circ\bigstarr{\widehat{K'_1}}{}$ reveals that the two maps differ only in how they deal with $v_1$ and $v_2$. The first composition identifies $v_1$ with $v$ and uses the straightening map to decide what to do with $v_2$, while the second composition does the reverse. In both cases the second degeneration corresponds to contracting the edge between $v_1$ and $v_2$. By Lemma~\ref{lemma: contract straighten} this means that the straightening map identifies $v_1$ and $v_2$ in the appropriate simplex so that the two maps coincide after all.

Finally, in the case that both codimension one degenerations are pruning degenerations, then there is no confusion about leaf sets, which are canonically identified with one another, but we must ensure that the various straightening maps agree. There are three subcases, but all are dealt with by repeated applications of Lemma~\ref{lemma: graft straighten}. See Figure \ref{fig:Prune-prune}.
\end{proof}
\begin{figure}
\includegraphics[scale=0.75]{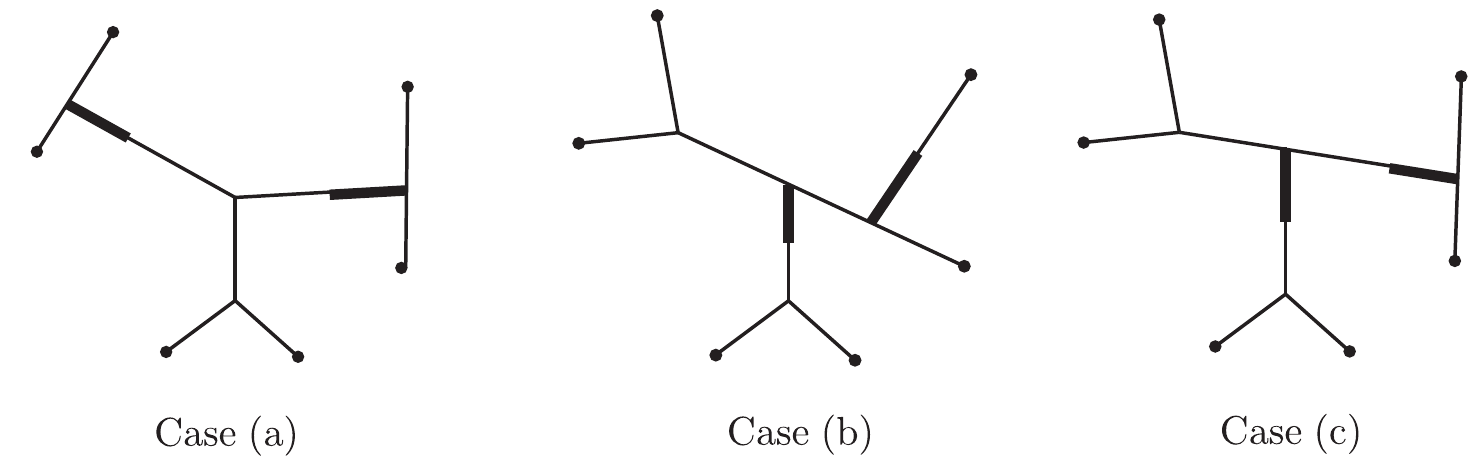}
\caption{These are examples of the three types of codimension two degenerations arising from distinct pruning degenerations. The pruning edges may be part of the same tree in $\Gamma''$ (case a) or may be part of distinct trees. If they are distinct trees, they may share a common pollard (case b) or one may be (contained in) the pollard of the other (case c).}
\label{fig:Prune-prune}
\end{figure}

\begin{cor}
For every cell $K$, the natural map $\Mover{K}{K}\to \Mspace{\characteristic_K(K)}$ is an inclusion of pairs; restricted to the interior $\mathring{K}$ of $K$, the map $\Mover{\mathring{K}}{K}\to \Mspace{\mathring{K}}$ is an isomorphism.
\end{cor}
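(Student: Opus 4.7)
The plan is to prove both assertions together by induction on the dimension of the cell $K$, with the central tool being Lemma~\ref{lemma:starscommute}. The base case $\dim K = 0$ is immediate, since then $\partial K = \emptyset$, no $\widehat{K'}$ exists inside $K$, and $\Mover{K}{K}$ sits inside $\Mspace{\characteristic_K(K)}$ with no nontrivial identifications imposed.

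For the interior assertion I would first observe that the equivalence relation on $\coprod_K \Mover{K}{K}$ defining the coequalizer $\Mspace{\spaced}$ is generated by identifications of the form $(x,f) \sim \bigstarr{}{}(x,f)$ for $(x,f) \in \Mover{\widehat{K'}}{K}$. Such identifications can only affect points whose first coordinate lies in some face $\widehat{K'} \subset \partial K$. Consequently no point of $\mathring{K} \times N_K$ is identified with anything else, and the natural map $\Mover{\mathring{K}}{K} \to \Mspace{\mathring{K}}$ is a continuous bijection on both components of the pair. Because $\mathring{K}$ is open in $\characteristic_K(K)$ and the map in question is the restriction of the quotient map to the preimage of an open set on which that map is injective, a routine check promotes this to a homeomorphism of pairs.

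For the first assertion I would establish injectivity of $\Mover{K}{K} \to \Mspace{\characteristic_K(K)}$. Suppose $(x_1,f_1)$ and $(x_2,f_2)$ are distinct points of $\Mover{K}{K}$ that are identified in the coequalizer. The interior analysis rules out either $x_i$ lying in $\mathring{K}$, so both lie in $\partial K$, each in some face $\widehat{K'_i}$. I would then trace the equivalence relation by iterating the $\bigstarr{}{}$-maps down the poset of degenerations; the core point, where Lemma~\ref{lemma:starscommute} is essential, is that any two sequences of codimension one degenerations from $K$ to a common further degeneration yield the same composite $\bigstarr{}{}$-image. This allows me to show that the equivalence class of $(x_i,f_i)$ has a canonical representative in some minimal common cell, obtained by iterated $\bigstarr{}{}$. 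Injectivity of each $\bigstarr{}{}$ individually (noted after Lemma~\ref{lemma:nabla N_K'}) then forces $(x_1,f_1) = (x_2,f_2)$, contradicting the assumption. The pair structure (the ``complement of $n_K$'' factor) is preserved at every step by repeated application of Lemma~\ref{lemma:pre-is-good-enough}.

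The main obstacle I foresee is the careful bookkeeping of arbitrary zigzags of identifications in the coequalizer, since a priori the generating relation could propagate in both directions among many cells simultaneously. The strategy for handling this is to show that every zigzag can be straightened into a single monotone chain of $\bigstarr{}{}$-maps descending in the poset of cells by degeneration, using Lemma~\ref{lemma:starscommute} to reconcile all branchings along the way; once the zigzag is straightened, the injectivity of each $\bigstarr{}{}$ finishes the argument.
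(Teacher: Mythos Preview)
Your proposal is correct and follows precisely the approach the paper leaves implicit: the paper states this result as an unproved corollary of Lemma~\ref{lemma:starscommute}, and your argument---using that lemma to straighten zigzags in the coequalizer together with the injectivity of each $\bigstarr{}{}$ noted after Lemma~\ref{lemma:nabla N_K'}---is exactly the natural way to supply the missing details.
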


Notice that Lemma \ref{lemma:starscommute} allows us to make the following definition.

\begin{defi}\label{def: starr}
Let $K'$ be a degeneration of the cell $K$ of arbitrary codimension. 
Then the map $\bigstarr{}{}: \Mover{\widehat{K'}}{K} \to \Mover{K'}{K'}$ is defined to be the composition ${\bigstarr{\widehat{K'}}{}\circ\bigstarr{\widehat{K_n}}{} \circ \cdots \circ \bigstarr{\widehat{K_1}}{}}$ where $K' \hookrightarrow K_n \hookrightarrow K_{n-1} \hookrightarrow \cdots \hookrightarrow K_1 \to K$ is a sequence of codimension one inclusions of cells.
\end{defi}

By Lemma \ref{lemma:starscommute}, the definition of  $\bigstarr{}{}$ is independent of the choice of this sequence.

\subsection{Patching cohomology classes}

In this section we describe how to assemble coherent collections of relative cohomology classes on $K$-products to give global relative cohomology classes on $\Mspace{\spaced}$.
We begin with a few lemmas about $K$-products and we return to the space $\Mspace{\spaced}$ in Theorem \ref{thm:patching}.

\begin{lemma}\label{lemma: disk bundle over diagonal}
The space $N_K$ is homeomorphic to the total space of a $\chid$-disk bundle over the space $M^\mathcal{T}$; the space $N_K - n_K$ is homotopy equivalent to the total space of the $(\chid - 1)$-sphere bundle of the disk bundle.
\end{lemma}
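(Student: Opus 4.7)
The plan is to exploit the product structure of $\mathcal{L}$ over trees. Since $\mathcal{L} = \sqcup_{T \in \mathcal{T}} \leaves{T}$ and both the defining conditions for $N_K$ and $n_K$ are imposed tree-by-tree, both subspaces factor as products $N_K = \prod_T N_K^T$ and $n_K = \prod_T n_K^T$, where $N_K^T$ (resp.\ $n_K^T$) consists of maps $\leaves{T} \to M$ whose image lies in some $r$-ball (resp.\ some $\frac{\length{T}}{|\chi|^2}\frac{r}{4}$-ball). Reducing to a single tree, I will establish that $N_K^T$ is an $(m_T - 1)d$-disk bundle over $M$, where $m_T = |\leaves{T}|$; taking products then yields a disk bundle over $M^{\mathcal{T}}$ of total fiber dimension $\sum_T (m_T - 1)d$. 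A quick Euler characteristic count then matches this to $\chid$: each fat lollipop $Q_i$ and each marking segment $L_i$ contributes $0$ to $\chi(\Gamma)$, while attaching a tree $T$ (with $\chi(T) = 1$) along its $m_T$ leaves identifies $m_T$ vertices with existing ones and so contributes $1 - m_T$, giving $|\chi| = \sum_T (m_T - 1)$.

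For a fixed tree $T$, the space $N_K^T$ is an open tubular neighborhood of the diagonal $\Delta \cong M \subset M^{\leaves{T}}$. I would take the projection $\pi_T : N_K^T \to M$ to be the Karcher mean, which exists uniquely on every $r$-ball by the pre-convexity radius bound; concretely, it is obtained by applying the simplicial geodesic interpolation $\Upsilon_{\leaves{T}}$ of Theorem~\ref{thm:Karcher} to the barycenter of $\Delta_{\leaves{T}}$. The exponential map based at $\pi_T(f)$ then identifies the fiber $\pi_T^{-1}(x)$ homeomorphically with an open, convex, zero-centered subset of $(T_xM)^{\leaves{T}}/T_xM \cong (T_xM)^{m_T-1}$, exhibiting $N_K^T$ as an open $(m_T - 1)d$-disk bundle over $M$. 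Taking the product over $\mathcal{T}$ assembles these into the claimed $\chid$-disk bundle over $M^{\mathcal{T}}$.

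For the sphere bundle statement, in the fiber coordinates just described, $n_K^T$ appears as an open sub-disk bundle of $N_K^T$ of strictly smaller radius, so $\overline{n_K} = \prod_T \overline{n_K^T}$ is a closed disk subbundle of $N_K$ whose fiber at each point is a product of closed disks, homeomorphic to a closed $\chid$-disk with boundary a $(\chid - 1)$-sphere. A fiberwise radial expansion from the Karcher mean retracts $N_K - n_K$ onto $\partial \overline{n_K}$, and this boundary is the sphere bundle of the ambient disk bundle. I expect the main technical obstacle to be verifying that the Karcher mean projection, the exponential chart on fibers, and the radial retraction assemble continuously into a global bundle structure compatible with the product decomposition; once the preconvexity radius bound from Definition~\ref{def: bound for r} is in force, this reduces to standard Riemannian-geometric bookkeeping.
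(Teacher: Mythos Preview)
Your proposal is correct and follows essentially the same route as the paper: factor over trees, recognize each $N_K^T$ as a tubular neighborhood of the diagonal $M\hookrightarrow M^{\leaves{T}}$, take products, and compute the codimension via the Euler-characteristic identity $|\chi|=|\mathcal{L}|-|\mathcal{T}|$. The only notable difference is the choice of projection to $M^{\mathcal{T}}$: the paper simply projects to one distinguished leaf coordinate per tree (relying only on the injectivity-radius bound), whereas you use the Karcher mean via $\Upsilon_{\leaves{T}}$ at the barycenter. Your choice is more symmetric and ties in nicely with the paper's existing machinery, while the paper's choice is marginally simpler to verify; either way the fibers are open star-shaped (hence disk-type) neighborhoods of the origin in the normal direction, and the rest of the argument is identical.
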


\begin{proof}
The inclusion of leaves in a tree defines a map from $\mathcal{L}$ to $\mathcal{T}$, which induces a diagonal embedding  of $M^{\mathcal{T}}$ into $M^{\mathcal{L}}$.
Since we are working inside the injectivity radius of $M$, we can project from $N_K$ to $M^\mathcal{T}$, for example, by projecting to a single leaf coordinate in each tree.
Again, because we are inside the injectivity radius, this gives $N_K$ the structure of a disk bundle.
A similar argument shows that $n_K$ is also a disk bundle whose closure is contained in the interior of $N_K$.
The complement $N_K - n_K$ is then homotopy equivalent to the sphere bundle of the disk bundle.

The calculation of the codimension of the embedding $M^\mathcal{T} \hookrightarrow M^\mathcal{L}$ requires an intermediate Euler characteristic argument.
Recall that an oriented string diagram $\Gamma$ can be written as the union of the $Q_i$, the $T_j$, and the $L_i$, modulo identification of vertices.
The  identifications occur at all leaves of $T_j$ and at one leaf of each $L_i$.
The overall Euler characteristic is the total number of $T_j$, plus the total number of $L_i$, minus the number of such identifications.
Thus the Euler characteristic of $\Gamma$ is equal to the cardinality of $\mathcal{T}$, minus the cardinality of $\mathcal{L}$.

The diagonal embedding $M^\mathcal{T} \hookrightarrow M^\mathcal{L}$ is a product over all trees $T_j$ in $\mathcal{T}$ of diagonal embeddings $M \hookrightarrow M^{\leaves{T_j}}$. 
For each $T_j$ the diagonal embedding has codimension $(\leaves{T_j} - 1)d$.
Therefore, the codimension of the diagonal embedding $M^\mathcal{T} \hookrightarrow M^\mathcal{L}$ is equal to $\sum_{T_j} (\leaves{T_j}- 1)d$.
By the Euler characteristic calculation above, the codimension of the diagonal embedding has codimension $\chid$.

\end{proof}

\begin{remark}
The space $N_K$ is homeomorphic to the disk bundle of the normal bundle of the diagonal embedding.
The space $N_K - n_K$ is homotopy equivalent to the sphere bundle of this disk bundle.
This conceptually explains the connection between our construction and others \cite{CJ, CG, godin, HingstonWahl, Kupers:CHSOMURSC}.
Our only use of these facts in this paper will be to connect our work to theirs. See Section~\ref{section:previouswork}.
\end{remark}

\begin{lemma}\label{lemma:Leray--Serre}
Let $K$ be a cell of $\spaced \gkl$.
Then the restriction map \[H^\ast(N_K)\to H^\ast(N_K-n_K)\] is an isomorphism for $\ast<\chid-1$ and injective for $\ast=\chid-1$.
\end{lemma}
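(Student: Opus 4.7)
The plan is to reduce the statement to the classical Gysin sequence for an oriented sphere bundle, using Lemma~\ref{lemma: disk bundle over diagonal} as the bridge. First, I would record the following geometric identifications: the disk bundle $\pi\colon N_K \to M^\mathcal{T}$ deformation retracts onto its zero section, and under this retraction the homotopy equivalence $N_K - n_K \simeq S(N_K)$ from Lemma~\ref{lemma: disk bundle over diagonal} is compatible with the projection to $M^\mathcal{T}$. Consequently, the restriction map in the statement is identified with the pullback $\pi^*\colon H^*(M^\mathcal{T}) \to H^*(S(N_K))$ along the $(\chid-1)$-sphere bundle.

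Next, I would verify orientability so that the Gysin sequence is available with coefficients in $R$. The fiber of the disk bundle is the normal bundle of the diagonal embedding $M^\mathcal{T} \hookrightarrow M^\mathcal{L}$, which is a direct sum of copies of $TM$ pulled back along the various projections $M^\mathcal{T} \to M$. Since $M$ is $R$-oriented, so is this normal bundle, and hence its sphere bundle is an $R$-oriented $(\chid-1)$-sphere bundle with a well-defined Euler class $e \in H^{\chid}(M^\mathcal{T})$.

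With these preliminaries, I would invoke the Gysin sequence
\[
\cdots \to H^{n-\chid}(M^\mathcal{T}) \xrightarrow{\cup e} H^n(M^\mathcal{T}) \xrightarrow{\pi^*} H^n(S(N_K)) \to H^{n-\chid+1}(M^\mathcal{T}) \xrightarrow{\cup e} H^{n+1}(M^\mathcal{T}) \to \cdots
\]
For $n < \chid - 1$, both $H^{n-\chid}(M^\mathcal{T})$ and $H^{n-\chid+1}(M^\mathcal{T})$ vanish for degree reasons, so $\pi^*$ is an isomorphism. For $n = \chid - 1$, only $H^{n-\chid}(M^\mathcal{T}) = H^{-1}(M^\mathcal{T}) = 0$ vanishes, while $H^{n-\chid+1}(M^\mathcal{T}) = H^0(M^\mathcal{T})$ may not, which is exactly enough to conclude that $\pi^*$ is injective in this degree. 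No step here looks to be a serious obstacle; the only real content is the identification of the restriction map with the sphere bundle pullback, which is a direct consequence of Lemma~\ref{lemma: disk bundle over diagonal}.
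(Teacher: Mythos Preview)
Your argument is correct and reaches the same conclusion, but by a somewhat different route than the paper. The paper compares the Leray--Serre spectral sequences of the two fibrations $N_K\to M^{\mathcal{T}}$ and $N_K-n_K\to M^{\mathcal{T}}$ directly: the $E_2$ page for the disk bundle is concentrated on the row $q=0$, the $E_2$ page for the sphere bundle has nonzero rows only at $q=0$ and $q=\chid-1$, and the map of spectral sequences is an isomorphism on the bottom row; a degree count on the surviving differentials then gives the result. Your approach instead packages the same information into the Gysin sequence of the oriented $(\chid-1)$-sphere bundle, after first verifying orientability via the identification of the normal bundle with a sum of pullbacks of $TM$.

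The trade-off is this: your argument is shorter and more elementary once orientability is in hand, and it makes the relevant exact sequence explicit rather than hidden in a spectral sequence. The paper's argument, on the other hand, does not need to invoke the $R$-orientation of $M$ at this stage---it works verbatim with local coefficient systems on the $E_2$ page, as they remark at the end of their proof. In the ambient context of the paper $M$ is assumed $R$-oriented, so either approach is available; your use of the Gysin sequence is a perfectly good substitute.
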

\begin{proof}

The statement is vacuous for $\chid<1$; for $\chid=1$ it is a straightforward statement about the normal bundle of the diagonal map from the circle to the torus which is left to the reader's imagination. So for the purpose of this proof assume $\chid>1$.

The map at the level of cohomology between these two bundles is induced by a map of the respective Leray--Serre spectral sequences. 

The entries of the $E_2$ page of the spectral sequence for $N_K$ are $H^p(M^{\mathcal{T}}, H^q(\mathbb{R}^{\chid}))$. The only nonzero groups occur when $q=0$ so the sequence collapses at $E_2$.
On the other hand, the entries of the $E_2$ page of the spectral sequence for $N_K - n_K$ are $H^p(M^{\mathcal{T}}, H^q(S^{\chid - 1}))$.
Therefore the nonzero entries on the $E_2$ page must have $q=0$ or $q=\chid-1$. 
The map of spectral sequences on the $E_2$ page is an isomorphism onto the bottom row.

While the spectral sequence for the sphere bundle need not collapse at page $E_2$, most differentials beginning with $\partial_2$ are zero for degree reasons. The only possibly nonzero differential is the component of $\partial_{\chid}$ which goes from 
the entry in position $(0,\chid-1)$ to the entry in position $(\chid,0)$ on the $E_{\chid}$ page.

Thus the $q=0$ row of the spectral sequence for the sphere bundle survives to $E_\infty$ below degree $\chid$. This gives us injectivity in the desired range. Further, the lowest degree present in the $q=\chid-1$ row of the spectral sequence for the sphere bundle is $\chid-1$. Thus we also get surjectivity in the desired range.

If $M$ is not simply connected, the cohomology groups giving the entries of the $E_2$ pages of both spectral sequences should be interpreted as having coefficients in local systems, but this does not affect the validity of the argument.
\end{proof}
\begin{lemma}\label{lemma:rank-zero}
For any cell $K$ of $\spaced \gkl$ and any degree $m<\chid$, the cohomology group $H^{m}(N_K, N_K - n_K)$ vanishes.
\end{lemma}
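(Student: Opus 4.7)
My plan is to deduce this from Lemma~\ref{lemma:Leray--Serre} via the long exact sequence of the pair $(N_K, N_K - n_K)$ in cohomology, namely
\[
\cdots \to H^{m-1}(N_K) \xrightarrow{\iota^*} H^{m-1}(N_K - n_K) \to H^m(N_K, N_K - n_K) \to H^m(N_K) \xrightarrow{\iota^*} H^m(N_K - n_K) \to \cdots,
\]
where $\iota^*$ is induced by the inclusion $N_K - n_K \hookrightarrow N_K$.

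The key observation is that the hypothesis $m < \chid$ is exactly what is needed to apply Lemma~\ref{lemma:Leray--Serre} on both sides. Indeed, $m \le \chid - 1$ means the map $\iota^*: H^m(N_K) \to H^m(N_K - n_K)$ is injective (isomorphism if $m < \chid - 1$, injective if $m = \chid - 1$), while $m - 1 \le \chid - 2 < \chid - 1$ means $\iota^*: H^{m-1}(N_K) \to H^{m-1}(N_K - n_K)$ is an isomorphism, in particular surjective. Thus the connecting map $H^{m-1}(N_K - n_K) \to H^m(N_K, N_K - n_K)$ is zero (its domain is hit entirely by $\iota^*$), so $H^m(N_K, N_K - n_K)$ injects into $H^m(N_K)$ and in turn into $H^m(N_K - n_K)$; exactness forces this injection to be zero, giving $H^m(N_K, N_K - n_K) = 0$.

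There is essentially no obstacle here: once Lemma~\ref{lemma:Leray--Serre} is in hand, the result follows mechanically from exactness. The only small point to verify is that the numerical range $m < \chid$ lines up correctly with the two statements of the Leray--Serre lemma, which it does, as checked above.
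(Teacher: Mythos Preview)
Your proof is correct and is essentially identical to the paper's own argument: both use the long exact sequence of the pair and invoke Lemma~\ref{lemma:Leray--Serre} to obtain surjectivity of the restriction map in degree $m-1$ (killing the connecting homomorphism) and injectivity in degree $m$ (forcing the relative group to vanish).
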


\begin{proof}
We consider the long exact sequence of the pair $(N_K, N_K - n_K)$:
\[
\xymatrixrowsep{20pt}
\xymatrixcolsep{20pt}
\xymatrix{
\save[]+<-55pt,+10pt>*{}="b"\restore \qquad\qquad\qquad\qquad\cdots\ar[r]
& H^{m-1}(N_K)\ar[r]^-{\psi}
& H^{m-1}(N_K-n_K)
\save[]+<+0pt,-16pt>*{}="a"\restore
\ar@{}[d]|(.45){}="a"
\ar `r/5pt[d]`"a" `^d"b" `[dll][dll]
\\
H^{m }(N_K, N_K - n_K)\ar[r]^-{\varphi}
&H^{m}(N_K)\ar[r]^-{\psi}
& H^{m}(N_K - n_K)\ar[r]
&
}\]


In degree $m$ the restriction map $\psi$ is injective by Lemma~\ref{lemma:Leray--Serre}, so $H^{m}(N_K, N_K - n_K)$ is the image of the connecting homomorphism.
In degree $m-1$ the map $\psi$ is surjective by Lemma \ref{lemma:Leray--Serre}, so the connecting homomorphism is  the zero map.
The result then follows by exactness.
\end{proof}
\begin{cor}\label{cor:nolowcohooversphere}
For any cell $K$, the cohomology group $H^{\chid - 1}(\Mover{\partial K}{K})$ vanishes.
\end{cor}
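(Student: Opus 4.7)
The plan is to observe that $\Mover{\partial K}{K}$ is by definition the product of the space $\partial K$ with the pair $(N_K, N_K - n_K)$---namely $(\partial K \times N_K,\, \partial K \times (N_K - n_K))$---and then reduce the cohomology of this product pair to the cohomology of $(N_K, N_K - n_K)$ alone, where Lemma~\ref{lemma:rank-zero} already provides the required vanishing.

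Concretely, I would apply the Leray--Serre spectral sequence of the trivial fibration $\partial K \times N_K \to \partial K$ with relative fiber $(N_K, N_K - n_K)$. This is a first-quadrant cohomology spectral sequence with
\[E_2^{p,q} = H^p\bigl(\partial K;\, H^q(N_K, N_K - n_K)\bigr)\]
converging to $H^{p+q}(\Mover{\partial K}{K})$. By Lemma~\ref{lemma:rank-zero} we have $H^q(N_K, N_K - n_K) = 0$ for every $q < \chid$, so the entire region $q < \chid$ of the $E_2$-page is zero. Consequently $E_\infty^{p,q} = 0$ whenever $p + q < \chid$, and in particular $H^{\chid - 1}(\Mover{\partial K}{K}) = 0$, which is the desired conclusion.

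There is essentially no obstacle of substance; the only bookkeeping concern is verifying that the appropriate spectral sequence (or equivalent Künneth decomposition) is available with our general coefficient ring $R$ for this particular kind of pair. This is standard since $\partial K$ is a finite CW complex (the boundary of a convex polytope), so its cohomology is finitely generated in each degree and the relative Künneth formula applies directly. If one prefers to avoid spectral sequences entirely, one can argue by induction on the cells of $\partial K$: the statement is trivially true for a point (where it is Lemma~\ref{lemma:rank-zero} itself), and the inductive step follows from the Mayer--Vietoris long exact sequence, since in each degree below $\chid$ the groups being pasted together vanish.
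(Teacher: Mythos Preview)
Your proposal is correct and takes essentially the same approach as the paper: both exploit the product structure of $\Mover{\partial K}{K} = (\partial K \times N_K,\, \partial K \times (N_K - n_K))$ to reduce to Lemma~\ref{lemma:rank-zero}. The paper invokes the K\"unneth formula directly (noting that $H^*(\partial K)$ is flat since $\partial K$ is a sphere), while you phrase the same computation via the Leray--Serre spectral sequence of the trivial fibration; since the fiber cohomology vanishes below degree $\chid$, your argument in fact sidesteps any need to check flatness, so your aside about ``finitely generated'' cohomology is harmless even though the relevant property for K\"unneth is freeness rather than finite generation.
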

\begin{proof}
The $K$-product over $\partial K$ is a bundle of pairs of spaces over $\partial K$, which is trivial as it is the restriction of the $K$-product over the cell $K$.
The corollary follows by the K\"unneth formula. There is no torsion contribution because the cohomology of $\partial K$ is flat.
\end{proof}

\begin{cor}\label{cor:partial K}
Let $\widehat{K'}$ be a face of the cell $K$ of $\spaced$.
The restriction map $H^{\chid}(\Mover{\partial K}{K}) \to H^{\chid}(\Mover{\widehat{K'}}{K})$ is injective.
\end{cor}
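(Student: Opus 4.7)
The plan is to extend the K\"unneth computation from the proof of Corollary~\ref{cor:nolowcohooversphere}. Since $K \times (N_K, N_K - n_K) \to K$ is a trivial bundle of pairs, its restrictions over $\partial K$ and over $\widehat{K'}$ are also trivial bundles of pairs. The K\"unneth formula then gives, for any subspace $J \subseteq K$,
\[
H^n(\Mover{J}{K}) \cong \bigoplus_{p+q=n} H^p(J) \otimes H^q(N_K, N_K - n_K),
\]
with the Tor contributions irrelevant because $J$, as a polytopal CW complex, has torsion-free cohomology.

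I would then apply Lemma~\ref{lemma:rank-zero}, which shows $H^q(N_K, N_K - n_K) = 0$ for every $q < \chid$. Taking $n = \chid$ kills every summand except the one with $(p,q) = (0, \chid)$, giving a natural identification
\[
H^{\chid}(\Mover{J}{K}) \cong H^0(J) \otimes H^{\chid}(N_K, N_K - n_K).
\]
By naturality in $J$, the restriction map $H^{\chid}(\Mover{\partial K}{K}) \to H^{\chid}(\Mover{\widehat{K'}}{K})$ is then the tensor product of the pullback $H^0(\partial K) \to H^0(\widehat{K'})$ with the identity on $H^{\chid}(N_K, N_K - n_K)$, so the claim reduces to injectivity on $H^0$.

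The final step is to observe that $\partial K$ is the boundary of a polytope (a topological sphere in the case of interest, when $\dim K \geq 2$) and hence connected, so $H^0(\partial K) \cong R$; any nonempty face $\widehat{K'}$ then gives an injection $H^0(\partial K) \hookrightarrow H^0(\widehat{K'})$ via the diagonal of components, finishing the proof. There is no serious technical obstacle; the only point that requires care is confirming that Lemma~\ref{lemma:rank-zero} supplies enough vanishing to isolate the single $(0, \chid)$ summand in total degree $\chid$, after which the argument collapses to the manifestly injective map on $H^0$.
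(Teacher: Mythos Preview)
Your proposal is correct and follows essentially the same approach as the paper: both reduce via the K\"unneth formula and Lemma~\ref{lemma:rank-zero} to the map $\iota^*\otimes\id$ on $H^0(\partial K)\otimes H^{\chid}(N_K,N_K-n_K)$, then argue that $\iota^*$ is injective (the paper in fact asserts it is an isomorphism). Your treatment is slightly more explicit about the vanishing of Tor terms and about why $\partial K$ is connected, but the argument is the same.
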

\begin{proof}
Again, by the K\"unneth formula and Lemma \ref{lemma:rank-zero},  this restriction map is equivalent to 
\[
H^0(\partial K) \otimes H^{\chid}(N_K, N_K - n_K) \xrightarrow{\iota^* \otimes \identity} H^0(K') \otimes H^{\chid}(N_K, N_K - n_K)
\]
where $\iota$ is the inclusion of $\widehat{K'}$ into $\partial K$.
In degree 0, $\iota^*$ is an isomorphism.
\end{proof}

\begin{defi}
Let $\{ \tau_K \}$ be a collection of classes in $H^{\chid}(\Mover{K}{K})$, as $K$ varies over all cells of $\spaced$.
Assume that for each $K$ and each codimension one degeneration $K'$ of $K$ that
 $\tau_K$ and $\tau_{K'}$ pull back to the same class in $H^{\chid}(\Mover{\widehat{K'}}{K})$ under the maps induced by inclusion and $\bigstarr{}{}$ respectively:
\[
\iota^\ast(\tau_K) = {\bigstarr{}{}}^\ast(\tau_{K'}).
\]
Then we call the collection of classes $\{ \tau_K \}$ {\em coherent}.
\end{defi}

\begin{theorem}\label{thm:patching}
Let $\{ \tau_K\}$ be a coherent collection of cohomology classes.
There exists a unique cohomology class $\tau$ in $H^{\chid}(\Mspace{\spaced})$ that pulls back to $\tau_K$ under the map ${\Mover{K}{K} \to \Mspace{\spaced}}$.
\end{theorem}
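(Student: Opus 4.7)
The plan is to construct $\tau$ by induction on the skeletal filtration of $\spaced$. For each $m \geq 0$, let $X_m \coloneqq \Mspace{\spaced^{(m)}}$ be the pair over the $m$-skeleton of $\spaced$; I aim to build a unique $\tau^{(m)} \in H^{\chid}(X_m)$ restricting to $\tau_K$ for every cell $K$ of dimension $\leq m$. The base case is trivial: the $0$-cells contribute no attaching maps, so $X_0$ is the disjoint union of the $\Mover{K}{K}$ and $\tau^{(0)}$ is just the collection of the $\tau_K$'s. The induction terminates because $\spaced = \spaced\gkl$ is a finite CW complex by Proposition~\ref{prop:gkl space}, producing the required $\tau$ on $\Mspace{\spaced}$.

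For the inductive step, suppose $\tau^{(m-1)}$ has been constructed. Around each $m$-cell $K$ choose a tubular thickening $V_K \subset X_m$ of the image of the interior of $\Mover{K}{K}$, together with an open thickening $U$ of $X_{m-1}$ in $X_m$. These open sets cover $X_m$, and the definition of $\Mspace{\spaced}$ as a coequalizer (Definition~\ref{defi:coequalizer}) implies that $U \cap V_K$ deformation retracts onto $\Mover{\partial K}{K}$, with one inclusion coming from the $\widehat{K'}\hookrightarrow K$ face inclusions and the other coming from the $\bigstarr{}{}$ maps. Lemma~\ref{lemma:starscommute} guarantees this is well-defined across codimension-two faces. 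The Mayer--Vietoris sequence for the pairs reads
\[
\bigoplus_{K} H^{\chid-1}(\Mover{\partial K}{K}) \to H^{\chid}(X_m) \to H^{\chid}(X_{m-1})\oplus \bigoplus_{K} H^{\chid}(\Mover{K}{K}) \to \bigoplus_{K} H^{\chid}(\Mover{\partial K}{K}),
\]
where $K$ ranges over $m$-cells. Corollary~\ref{cor:nolowcohooversphere} forces the leftmost group to vanish, so the second arrow is injective; this yields uniqueness. For existence, the coherence hypothesis asserts precisely that $(\tau^{(m-1)}, \bigoplus \tau_K)$ maps to zero in $\bigoplus H^{\chid}(\Mover{\partial K}{K})$: on the $\Mover{\partial K}{K}$ factor, $\tau_K$ pulls back to the common class which, traveling down through the $\bigstarr{}{}$-identifications into $X_{m-1}$, equals $\tau^{(m-1)}$ restricted along the same locus (by the inductive hypothesis applied iteratively across the composite degenerations, using Definition~\ref{def: starr}). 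Exactness then produces $\tau^{(m)}$.

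The main obstacle is the last verification: that the two pullback classes into $H^{\chid}(\Mover{\partial K}{K})$ agree for every $m$-cell $K$, not merely for codimension-one degenerations. The coherence assumption is stated only for codimension-one degenerations, so I would show inductively that if $\tau^{(m-1)}$ restricts correctly to $\Mover{K'}{K'}$ for every cell $K'$ of dimension $< m$, then for any face $\widehat{K'}$ of $\partial K$ of any codimension the composition through $\bigstarr{}{}$ matches the face restriction of $\tau_K$; this uses Lemma~\ref{lemma:starscommute} (which legitimizes the composite $\bigstarr{}{}$ of Definition~\ref{def: starr}) together with Corollary~\ref{cor:partial K} (which guarantees that the restriction from $H^{\chid}(\Mover{\partial K}{K})$ to each face is sufficiently constrained that matching on codimension-one faces suffices). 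Once this is established, the Mayer--Vietoris conclusion closes the induction.
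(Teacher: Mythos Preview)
Your proposal is correct and follows essentially the same strategy as the paper: an inductive Mayer--Vietoris argument over the cellular filtration of $\spaced$, using Corollary~\ref{cor:nolowcohooversphere} for uniqueness and Corollary~\ref{cor:partial K} together with the coherence hypothesis for existence. The only organizational difference is that the paper attaches one cell at a time (after totally ordering the cells by dimension) rather than attaching all $m$-cells simultaneously as you do; both variants work, and the ingredients and logical structure are the same.
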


\begin{proof}

Let $U$ be the interior of a cell $K$ of $\spaced$.
Let $V$ be the union in $\spaced$ of 
\begin{itemize}
\item a subcomplex $V_0$ of $\spaced$ containing the image of the boundary of $K$ under $\characteristic_K$ but not the image of $K$ itself, and 
\item the image under $\characteristic_K$ of the complement of a point in the interior of $K$.
\end{itemize}
Then a homotopy equivalence from $U$ to $K$ lifts to a homotopy equivalence from $\Mover{U}{K}=\Mspace{U}$ to $\Mover{K}{K}$.
Similarly, a deformation retraction from $V$ to $V_0$ lifts to a deformation retraction from $\Mspace{V}$ to $\Mspace{V_0}$.
Note that $U \cap V$ is homotopy equivalent to $\partial K$.
This homotopy equivalence lifts to a homotopy equivalence between $\Mover{U \cap V}{K}=\Mspace{U \cap V} = \Mspace{U} \cap \Mspace{V}$ and $\Mover{\partial K}{K}$.
Also note that $\Mspace{U \cup V} = \Mspace{U} \cup \Mspace{V}$.

Assume that $\tau_{V_0}$ is an element of $H^{\chid}(\Mspace{V_0})$ which pulls back to $\tau_{K'}$
under the map $\Mover{K'}{K'} \to \Mspace{V_0} \subset \Mspace{\spaced}$,
for each cell $K'$ of $V_0$.
Since $\Mspace{V}$ deformation retracts to $\Mspace{V_0}$, the pullback of $\tau_{V_0}$ gives an element $\tau_V$ in $H^{\chid}(\Mspace{V})$ which restricts to $\tau_{V_0}$ on $\Mspace{V_0}$.
The restriction of $\tau_K$ to $\Mspace{U}$ gives a class $\tau_U$ in $H^{\chid}(\Mspace{U})$.
We use the Mayer--Vietoris sequence to show that $\tau_U$ and $\tau_V$ are, respectively, restrictions of a unique element $\tau_{U \cup V}$ in $H^{\chid}(\Mspace{U \cup V})$ to $\Mspace{U}$ and $\Mspace{V}$.

The relevant portion of the Mayer--Vietoris sequence is:
\[
\xymatrixrowsep{20pt}
\xymatrixcolsep{20pt}
\xymatrix{
\save[]+<-50pt,+10pt>*{}="b"\restore& \qquad\qquad\cdots\qquad\qquad\ar[r]^-\psi
& H^{\chid -1}(\Mspace{U\cap V})\ar@{}[d]|(.35){}="a"
\ar `r/5pt[d]`"a" `^d"b"_(.51)\delta`[dll][dll]
\\
H^{\chid}(\Mspace{U\cup V})\ar[r]^-{\varphi}
&{\begin{array}{c}
H^{\chid}(\Mspace{U})\\ \oplus\\ H^{\chid}(\Mspace{V})
\end{array}}\ar[r]^-{\psi}
& H^{\chid}(\Mspace{U\cap V})\ar[r]
&
}\]


First we show the existence of the class $\tau_{U \cup V}$ by showing that $\psi(\tau_U, \tau_V) = 0$, that is, that $\tau_U$ and $\tau_V$ restrict to the same class in $H^{\chid}(\Mspace{U \cap V})$.
By Corollary \ref{cor:partial K} it is enough to show that $\tau_U$ and $\tau_V$ restrict to the same class in $H^{\chid}(\Mover{\widehat{K'}}{K})$ for a face $\widehat{K'}$ in the boundary of $K$.

By Definition~\ref{defi:coequalizer}, the diagram
\[\begin{tikzcd}
\Mover{\widehat{K'}}{K}\ar{dr}[swap]{\bigstarr{\widehat{K'}}{}}\ar{rr}
&&
\Mspace{V} 
\\
&\Mover{K'}{K'}\ar{ur}
\end{tikzcd}\]
commutes, so the pullback of $\tau_V$ to $H^{\chid}(\Mover{\widehat{K'}}{K})$ is the pullback of $\tau_{K'}$ in $H^{\chid}(\Mover{{K'}}{K'})$ under the map induced by $\bigstarr{\widehat{K'}}{}$. On the other hand, the pullback of $\tau_U$ to $H^{\chid}(\Mover{\widehat{K'}}{K})$ is given by the restriction of $\tau_K$ to $\Mover{\widehat{K'}}{K}$, by the definition of $\tau_U$.
But these are equal in $H^{\chid}(\Mover{\widehat{K'}}{K})$ by coherence.

Therefore $\psi(\tau_U, \tau_V) = 0$ and there exists a class $\tau_{U \cup V}$ in $H^{\chid}(\Mspace{U \cup V})$ such that $\varphi(\tau_{U \cup V}) = (\tau_U, \tau_V)$.

Now we show uniqueness of $\tau_{U \cup V}$.
Any two lifts $\tau_{U \cup V}$ in $H^{\chid}(\Mspace{U \cup V})$  of $(\tau_U, \tau_V)$ differ by an element in the image of the connecting homomorphism $\delta$.
The group $H^{\chid - 1}(\Mspace{U \cap V})$ vanishes by Corollary \ref{cor:nolowcohooversphere}.
Therefore, $\delta = 0$ and $\tau_{U \cup V}$ is unique.

Choose a total ordering of all the cells $K_1, K_2, \dots K_N$ of $\spaced$ such that the dimension of $K_i$ is less than or equal to the dimension of $K_{i+1}$. We will refer to the union $\bigcup_{1}^i \characteristic_{K_i}(K_i)$ as $\spaced_i$.

Now assume we have defined a class $\tau_i$ on $\Mspace{\spaced_i}$ which pulls back to $\tau_{K_j}$ on $\Mover{K_j}{K_j}$ for $j\le i$. Let $p$ be the image under the characteristic map $\characteristic_{K_{i+1}}$ of an interior point of $K_{i+1}$. Then the space $\Mspace{\spaced_i}$ is homotopy equivalent to $\Mspace{\spaced_{i+1} - p}$.

By the Mayer--Vietoris argument above, we can then extend $\tau_i$ to $\tau_{i+1}$. Define the class $\tau\in H^{\chid}\Mspace{\spaced}$ as $\tau_N$. 

This shows the existence part of Theorem~\ref{thm:patching}.

Now for uniqueness, let $\tau'$ be any class in $H^{\chid}(\Mspace{\spaced})$ that pulls back  to $\tau_K$ for each cell $K$. 
In particular, both $\tau'$ and $\tau$ restrict to the same class on $\spaced_1$. Now assume that $\tau'\ne \tau$. Then there exists a first $\spaced_i$ in the sequence of subcomplexes defining $\tau$ for which the restriction of $\tau'$ is not equal to the restriction of $\tau$.
However, the class $\tau_i$ above was unique.
Therefore, $\tau$ and $\tau'$ must be equal on $\spaced_i$ and hence on any subcomplex in the sequence.
In particular $\tau$ and $\tau'$ must be equal on the entire space $\Mspace{\spaced}$.

\end{proof}

\section{The diffuse intersection class}\label{section:diffuse intersection class}

In this section, we finally use the orientation of the $d$-dimensional manifold $M$ and orientations of string diagrams.
Our goal is to define a relative cohomology class in $H^{\chid}(S, S-s)$ called the {\em diffuse intersection class}.
In this section, we define a coherent collection of Thom classes for each cell $K$ of $\spaced$, assemble them to build a global Thom class on $\Mspace{\spaced}$ using Theorem \ref{thm:patching}, and build an evaluation map of pairs from $(S, s-s)$ to $\Mspace{\spaced}$.
The diffuse intersection class will be the pullback of the global Thom class under this evaluation map.

\subsection{The global Thom class}\label{subsection: global thom class}

Recall from the proof of Lemma \ref{lemma: disk bundle over diagonal} that the canonical map of leaves into trees $\mathcal{L} \to \mathcal{T}$ induces a diagonal embedding $M^\mathcal{T} \hookrightarrow  M^\mathcal{L}$ whose image lies in the subspace $N_K$, which can be given the structure of a tubular neighborhood of $M^\mathcal{T}$ in $M^\mathcal{L}$.

Fix a string diagram $\Gamma$ such that each $T_j$ is a segment and fix an order $o$ on $\Gamma$.
There is a canonical identification of $\mathcal{T}$ with the ordered set $\{1, \dots, |\chi|\}$.
Precomposing the map induced on homology by the canonical identification of $M^{|\chi|}$  with $M^\mathcal{T}$ with the Eilenberg--Zilber map  gives a map $H_d(M)^{\otimes |\chi|} \to H_{\chid}(M^{|\chi|}) \to H_{\chid}(M^\mathcal{T})$.
We define $[M^{\mathcal{T}}]$ to be the image under this composition of the $|\chi|$-fold tensor power of the fundamental class $[M]$ of $M$ determined by the $R$-orientation of $M$.
We define $[M^{\mathcal{L}}]$ to be the analogous element of $H_{2|\chi|d}(M^{\mathcal{L}})$ and $[N_K]$ to be its image under the composition  
\[H_{2\chid}(M^\mathcal{L}) \to H_{2\chid}(M^\mathcal{L}, M^\mathcal{L}-n_K) \stackrel{\sim}{\to}  H_{2\chid}(N_K, N_K-n_K).\]
The cap product with $[N_K]$ is a Poincar\'e--Lefschetz duality isomorphism between ${H^*(N_K, N_K-n_K)}$ and $H_{2|\chi|d - *}(N_K)$.
The Poincar\'e--Lefschetz dual to the image of $[M^\mathcal{T}]$ under the map induced by the diagonal embedding $M^\mathcal{T} \to N_K$ is an element of $H^{\chid}(N_K, N_K-n_K)$.
In particular, this dual element corresponds to the Thom class of the normal bundle of the diagonal embedding $M^\mathcal{T} \hookrightarrow M^\mathcal{L}$; see the remark after Lemma \ref{lemma: disk bundle over diagonal}.
If $o$ and $o'$ induce the same orientation on $\Gamma$, then the two dual elements in $H^{\chid}(N_K, N_K-n_K)$ are equal. 

\begin{remark}
In the case that $\spaced$ is a trivial double cover of $\spaceduo$, then we could work with $\spaceduo$ instead of $\spaced$.
Additionally, if $M$ is even-dimensional or if $R$ has characteristic $2$,  then we could work with $\spaceduo$ instead of $\spaced$.
\end{remark}

\begin{defi}\label{defi:K-Thom}
For $K$ a cell of $\spaced$, we will define  {\em the $K$-Thom class} $\omega_K$, a cohomology class in $H^{\chid}(\Mover{K}{K})$, as follows.

If $K$ is a $0$-cell, then we denote it by $p$. The $0$-cell $p$ represents an oriented string diagram $\Gamma$ where each $T_j$ is a segment.
Additionally, $\Mover{p}{p}$ is canonically isomorphic to the pair $(N_p, N_p - n_p)$.
In this case, the {\em $p$-Thom class} $\omega_p$ is the Poincar\'e--Lefschetz dual element in $H^{\chid}(N_p, N_p - n_p)$ described above, which is independent of the choice of order $o$ in the orientation class of $\Gamma$.

For $K$ a higher dimensional cell, let $\widehat{p}$ be a $0$-cell of $K$ corresponding to the $0$-cell $p$ of $\spaced$
. There are maps
\[\Mover{K}{K}\to \Mover{\widehat{p}}{K}\to \Mover{p}{p}.\]
The former map is induced by the projection of $K$ onto $\widehat{p}$; 
the latter map is $\bigstarr{\widehat{p}}{}$.
In this case, we define the {\em $K$-Thom class at $\widehat{p}$} as the pullback of the $p$-Thom class $\omega_p$ along this composition. As it will turn out that this is independent of the choice of 0-cell $\widehat{p}$, we will refer to it simply as the {\em $K$-Thom class} $\omega_K$.
\end{defi}
A priori, however, the $K$-Thom class $\omega_K$ depends on the choice of the vertex $\widehat{p}$. 

\begin{lemma}
The $K$-Thom class at $\widehat{p}$ is independent of the choice of the $0$-cell $\widehat{p}$.
\end{lemma}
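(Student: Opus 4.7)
The plan is to proceed by induction on $\dim K$. The base case $\dim K=0$ is vacuous. For the inductive step, since the $1$-skeleton of the convex polytope $K$ is connected, it suffices to show $\omega_K^{\widehat{p}_1}=\omega_K^{\widehat{p}_2}$ whenever $\widehat{p}_1$ and $\widehat{p}_2$ are joined by a single $1$-cell $\widehat{e}$ of $K$. Let $e$ denote the corresponding $1$-cell of $\spaced$; by the inductive hypothesis applied to $e$, the $e$-Thom class $\omega_e\in H^{\chid}(\Mover{e}{e})$ is unambiguously defined.

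The key structural observation is that because $K$ is contractible, the inclusion of the fiber $\Mover{\widehat{p}}{K}\hookrightarrow\Mover{K}{K}$ is a homotopy equivalence of pairs, so the restriction map on $H^{\chid}$ is an isomorphism for each $0$-cell $\widehat{p}$. Rather than compare $\omega_K^{\widehat{p}_1}$ and $\omega_K^{\widehat{p}_2}$ directly, I will introduce an auxiliary class $\widetilde{\omega}\in H^{\chid}(\Mover{K}{K})$, defined as the pullback of $\omega_e$ along
\[\Mover{K}{K}\xrightarrow{\pi\times\id}\Mover{\widehat{e}}{K}\xrightarrow{\bigstarr{\widehat{e}}{}}\Mover{e}{e},\]
where $\pi\colon K\to\widehat{e}$ is a deformation retraction fixing $\widehat{e}$ pointwise. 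I then show $\omega_K^{\widehat{p}_i}=\widetilde{\omega}$ for each $i$, which immediately yields the equality of the two $K$-Thom classes.

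To establish $\omega_K^{\widehat{p}_i}=\widetilde{\omega}$, I compare the restrictions of both classes to $\Mover{\widehat{p}_i}{K}$. By construction, $\omega_K^{\widehat{p}_i}$ restricts to $(\bigstarr{\widehat{p}_i}{})^*(\omega_{p_i})$. Because $\pi$ fixes $\widehat{p}_i$, the restriction of $\widetilde{\omega}$ to $\Mover{\widehat{p}_i}{K}$ is the pullback of $\omega_e$ along the restriction of $\bigstarr{\widehat{e}}{}$, whose image lies in $\Mover{\widehat{p}_i'}{e}$, where $\widehat{p}_i'$ is the $0$-face of $e$ identified with $\widehat{p}_i$ under the attaching. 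The inductive hypothesis identifies $\omega_e|_{\Mover{\widehat{p}_i'}{e}}$ with $(\bigstarr{\widehat{p}_i'}{})^*(\omega_{p_i})$, and then Definition \ref{def: starr}, justified by Lemma \ref{lemma:starscommute}, rewrites the resulting composition of $\bigstarr{}{}$-maps as the single map $\bigstarr{\widehat{p}_i}{}$, matching the restriction of $\omega_K^{\widehat{p}_i}$.

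The main obstacle is conceptual rather than computational: naively comparing $\omega_K^{\widehat{p}_1}$ and $\omega_K^{\widehat{p}_2}$ on the same fiber runs into the fact that $p_1$ and $p_2$ are different cells of $\spaced$ with distinct $\bigstarr{}{}$ maps and distinct Thom classes, so there is no obvious common target. The insight is to route both comparisons through the intermediate cell $e$, where the inductive hypothesis furnishes a single class $\omega_e$; once this setup is in place, the remainder of the argument is a direct application of the contractibility of $K$ together with the structural lemmas already established for the $\bigstarr{}{}$-maps.
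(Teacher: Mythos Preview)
Your induction has a genuine gap at $\dim K = 1$. In that case the only $1$-face of $K$ is $K$ itself, so $e=K$, and the line ``by the inductive hypothesis applied to $e$, the $e$-Thom class $\omega_e$ is unambiguously defined'' is circular: the inductive hypothesis covers only $0$-cells, where the statement is vacuous. Tracing your argument in this case, if you define $\omega_e$ using the vertex $\widehat{p}_1'$ then your restriction comparison at $\widehat{p}_1$ goes through tautologically, but at $\widehat{p}_2$ the step ``the inductive hypothesis identifies $\omega_e|_{\Mover{\widehat{p}_2'}{e}}$ with $(\bigstarr{\widehat{p}_2'}{})^*(\omega_{p_2})$'' is exactly the statement you are trying to prove.

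This is not a minor omission: the $1$-dimensional case is where all the content lives. The paper's proof handles it directly, without induction, by constructing an explicit identification $\theta_{pq}\colon\Mover{p}{p}\to\Mover{q}{q}$ coming from the canonical bijection of tree and leaf sets along the $1$-cell $pq$, then showing (i) $\theta_{pq}\circ\pi_p\simeq\pi_q$ via a homotopy parametrized by $pq$, and (ii) $\theta_{pq}^*(\omega_q)=\omega_p$. Step (ii) is the substantive point, and it uses the orientation on the combinatorial string diagram: the bijection of trees and leaves induced by $pq$ is orientation-preserving, so the Poincar\'e--Lefschetz duals defining $\omega_p$ and $\omega_q$ match under $\theta_{pq}$. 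Your argument never invokes the orientation, which is why it cannot close. Once the $1$-cell case is established this way, your inductive reduction for $\dim K\ge 2$ is correct, but it is also unnecessary: the paper's homotopy works uniformly in all dimensions.
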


\begin{proof}
Any two $0$-cells in $K$ can be connected by a chain of one-cells in $K$.
Therefore, it is enough to prove that the $K$-Thom class at $\widehat{p}$ is equal to the $K$-Thom class at $\widehat{q}$, where $\widehat{p}$ and $\widehat{q}$ are vertices of $K$ that are connected by a one-cell $\widehat{\pq}$.

The $1$-cell $\pq$ of $\spaced$ induces an identification of the set of trees of any string diagram in the cell $\pq$ with the set of trees of any string diagram in the cell $p$, and likewise for $q$. These identifications also respect the sets of leaves of all trees. This identification, in turn, induces an identification of $N_{p}$ with $N_{q}$ and of $n_{p}$ with $n_{q}$.
Let 
\[\theta_\pq: \Mover{p}{p} \cong (N_{p}, N_p - n_p) \to \Mover{q}{q} = (N_{q}, N_{q} - n_{q})\]
be induced by these identifications.
Let $\pi_p$ be the composition as defined above $\Mover{K}{K}\to  \Mover{p}{p}$ and let $\pi_q: \Mover{K}{K}\to  \Mover{q}{q}$ be the analogous composition.

To prove that $\pi_p^*(\omega_p)$ and $ \pi_q^*(\omega_q)$ are equal, we first show that the the map $\theta_\pq \circ \pi_p$ is homotopic to the map $\pi_q$ and then that $\theta_{\pq}^*(\omega_q)=\omega_p$.

First, to construct a homotopy, parametrize the $1$-cell $\widehat{\pq}$ of $K$ by a fixed homeomorphism  $[0,1]\to \widehat{\pq}$ and consider the following map:
\[H:[0,1] \times \Mover{K}{K}  \to \Mover{\widehat{pq}}{K}\to \Mover{\pq}{\pq}\to (N_\pq, N_\pq-n_\pq)\to  \Mover{q}{q},\]
where
\begin{itemize}
\item
the first map is induced by the parametrization of $\widehat{\pq}$ and projection from $\Mover{K}{K}$ to $(N_K, N_K - n_K)$,
\item 
the second map is $\bigstarr{\widehat{\pq}}{}$,
\item 
the third map is projection on the second factor in both coordinates (recall that $\Mover{\pq}{\pq}$ is ${(\pq\times N_\pq, \pq\times (N_\pq-n_\pq))}$), and
\item 
the final map uses the canonical bijection between the set of trees of $\pq$ and the set of trees of $q$.
\end{itemize}
It is clear that $H$ is a homotopy between $\theta_{\pq}\circ\pi_p$ and $\pi_q$.

Now, because the map $\theta_\pq \circ \pi_p$ is homotopic to the map $\pi_q$, it suffices to show that $\theta_{\pq}^*(\omega_q)=\omega_p$. Since both $p$ and $q$ are degenerations of the one-cell $\pq$, there is a canonical orientation-preserving bijection between their ordered sets of trees and leaves. Then by the definition of $\omega_p$ and $\omega_q$ for $0$-cells of $\spaced,$ they agree.
\end{proof}

\begin{lemma}\label{lemma:Thom classes pull back}
The collection $\{\omega_K\}$ of $K$-Thom classes is coherent.
\end{lemma}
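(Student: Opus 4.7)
The plan is to reduce the coherence identity to a homotopy argument using the contractibility of the face $\widehat{K'}$. Given a codimension-one degeneration $K'$ of $K$, choose a $0$-cell $\widehat{p}$ of $K$ lying in $\widehat{K'}$; then $\at(\widehat{p})$ is a $0$-cell of $K'$, and both correspond to the same $0$-cell $p$ of $\spaced$. By the preceding lemma, we may compute $\omega_K$ at $\widehat{p}$ as $c_K^{\ast}\,b_K^{\ast}\,\omega_p$ and $\omega_{K'}$ at $\at(\widehat{p})$ as $c_{K'}^{\ast}\,b_{K'}^{\ast}\,\omega_p$, where $c_K\colon \Mover{K}{K}\to \Mover{\widehat{p}}{K}$ and $c_{K'}\colon \Mover{K'}{K'}\to \Mover{\at(\widehat{p})}{K'}$ are the collapse maps and $b_K$, $b_{K'}$ are the corresponding $\bigstarr{}{}$ maps down to $\Mover{p}{p}$.

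Next, invoke the well-definedness of $\bigstarr{}{}$ in arbitrary codimension (Definition~\ref{def: starr} and Lemma~\ref{lemma:starscommute}) by choosing a sequence of codimension-one inclusions from $p$ to $K$ whose first step is $\widehat{K'} \hookrightarrow K$. This yields the factorization $b_K|_{\Mover{\widehat{p}}{K}} = b_{K'} \circ \bigstarr{\widehat{K'}}{}|_{\Mover{\widehat{p}}{K}}$. Substituting into $\iota^{\ast}\omega_K$ and comparing with $\bigstarr{\widehat{K'}}{}^{\ast}\omega_{K'}$ reduces the desired equality to showing that the two maps of pairs
\[
(x, n) \longmapsto (\at(\widehat{p}),\, \Psi(\widehat{p}, n)) \quad\text{and}\quad (x, n) \longmapsto (\at(\widehat{p}),\, \Psi(x, n))
\]
from $\Mover{\widehat{K'}}{K}$ to $\Mover{\at(\widehat{p})}{K'}$ induce the same pullback on cohomology, where $\Psi$ denotes the second-coordinate output of $\bigstarr{\widehat{K'}}{}$.

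For a contraction degeneration, $\Psi$ is independent of its first argument and the two maps coincide on the nose. For a pruning degeneration, fix a deformation retraction $\gamma_t$ of the cell $\widehat{K'}$ onto the point $\widehat{p}$ and set $H_t(x, n) = (\at(\widehat{p}),\, \Psi(\gamma_t(x), n))$; this interpolates between the two maps as $t$ runs from $0$ to $1$. The main delicate point is verifying that $H$ is a homotopy through maps of pairs, i.e., that for each $t$, the subspace $\widehat{K'} \times (N_K - n_K)$ is sent into $\at(\widehat{p}) \times (N_{K'} - n_{K'})$. This is precisely the content of Lemma~\ref{lemma:pre-is-good-enough}, which guarantees $\Psi(y, n) \in N_{K'} - n_{K'}$ whenever $y \in \widehat{K'}$ and $n \in N_K - n_K$. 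Homotopy invariance of relative cohomology then gives $\iota^{\ast}\omega_K = \bigstarr{\widehat{K'}}{}^{\ast}\omega_{K'}$, establishing coherence.
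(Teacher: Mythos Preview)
Your argument is correct and follows the same overall strategy as the paper: choose a $0$-cell $\widehat{p}$ in the face $\widehat{K'}$, use the independence lemma to compute both $\omega_K$ and $\omega_{K'}$ at (the image of) $\widehat{p}$, and then factor the arbitrary-codimension map $\bigstarr{}{}$ from $\Mover{\widehat{p}}{K}$ to $\Mover{p}{p}$ through $\Mover{\at(\widehat{p})}{K'}$ using Lemma~\ref{lemma:starscommute} and Definition~\ref{def: starr}. The paper packages this as the commutativity of a single diagram of pairs of spaces.

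Where you are more careful than the paper is in the comparison of the two maps $\Mover{\widehat{K'}}{K}\to \Mover{\at(\widehat{p})}{K'}$, namely $(x,n)\mapsto(\at(\widehat{p}),\Psi(\widehat{p},n))$ and $(x,n)\mapsto(\at(\widehat{p}),\Psi(x,n))$. The paper asserts that the corresponding rectangle commutes strictly, on the grounds that ``the right vertical map is just the restriction of the left vertical map.'' That justification is fine for contraction degenerations, where $\Psi$ is independent of its first argument; but for pruning degenerations $\Psi(\Gamma,f)(w)=\Upsilon_{\leaves{T_j}}(f\circ\iota,\straighten_\Gamma(w))$ depends on the metric of $\Gamma$ through the straightening map, so the two maps differ pointwise. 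Your homotopy $H_t(x,n)=(\at(\widehat{p}),\Psi(\gamma_t(x),n))$ along a retraction $\gamma_t$ of $\widehat{K'}$ onto $\widehat{p}$ repairs this: Lemma~\ref{lemma:pre-is-good-enough} guarantees that $H_t$ sends $\widehat{K'}\times(N_K-n_K)$ into $\at(\widehat{p})\times(N_{K'}-n_{K'})$ for every $t$, so $H$ is a homotopy of maps of pairs, and the cohomological conclusion follows. This is the right fix, and is in the same spirit as the homotopy $H$ already used in the proof of the preceding independence lemma.
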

\begin{proof}
By Definition~\ref{defi:K-Thom}, the result is implied by the commutativity of the following diagram of pairs of spaces.
\[
\begin{tikzcd}
&\Mover{K}{K}\ar{dr}{\pi}\\
\Mover{\widehat{K'}}{K}\ar{ur}{\iota}\ar{rr}{\pi}\ar{d}[swap]{\bigstarr{\widehat{K'}}{}}
&&
\Mover{\widehat{p}}{K}\ar{d}[swap]{\left.\bigstarr{\widehat{K'}}{}\right|_{\widehat{p}}}\ar{dr}{\bigstarr{\widehat{p}}{}}
\\
\Mover{K'}{K'}\ar{rr}[swap]{\pi}&&\Mover{\widehat{p}}{K'}\ar{r}[swap]{\bigstarr{\widehat{p}}{}}&\Mover{p}{p}.
\end{tikzcd}
\]
In the diagram, $\iota$ is induced by the inclusion of $\widehat{K'}$ into $K$ and $\pi$ is induced by projection onto $p$. In particular, both are the identity on the second factor.

The upper triangle commutes because projection to $\widehat{p}$ is insensitive to the domain space. The rectangle commutes because the right vertical map is just the restriction of the left vertical map. The bottom right triangle commutes using Lemma~\ref{lemma:starscommute} by the argument following Definition \ref{def: starr}.

\end{proof}

\begin{defi}
The {\em global Thom class} $\omega$ is the unique class in $H^{\chid}(\Mspace{\spaced})$ pulling back to $\omega_K$ in $H^{\chid}(\Mover{K}{K})$ guaranteed by Theorem \ref{thm:patching}.
\end{defi}

\subsection{The evaluation map}
Recall from Definition \ref{def of S} that
the {\em diffuse intersection locus} is the subspace $S$ of $\spaced\times LM^k$ consisting of pairs $(\Gamma,\gamma)$ where $\gamma$ is $\frac{r}{|\chi|}$-Lipschitz with respect to $\Gamma$ and
the subspace $s$ of $S$ contsists of pairs $(\Gamma,\gamma)$ where $\gamma$ is $\frac{r}{2|\chi|}$-Lipschitz with respect to $\Gamma$.

\begin{defi}
The subspaces $S_K$ and $s_k$ of $K \times LM^k$ are the preimages of the subspaces $S$ and $s$ of  $\spaced \times LM^k$ under $\characteristic_K \times \identity$.
\end{defi}

Let $\Gamma$ be an oriented string diagram in the cell $K$ of $\spaced$.
Recall that the set of leaves $\mathcal{L}$ has a canonical map $\iota$ into $\Gamma$.
Given a map $\Theta: |\Gamma| \to M$, the pullback $\iota^*(\Theta)$ is the restriction of $\Theta$ to the image of $\iota$ in $|\Gamma|$.
Recall also that $\spaced(M)$ consists of pairs $(\Gamma, \Theta: |\Gamma| \to M)$ and the projection $\spaced(M) \to \spaced$ is the forgetful map $(\Gamma, \Theta) \mapsto \Gamma$.
Let $K(M)$ denote the preimage in $\spaced(M)$ of $\characteristic_K(K)$ under this projection map.
Notice that the restriction of the map 
$\heartsuit$
to $S_K$ has image contained in $K(M)$.

\begin{defi}
The  {\em naive $K$-evaluation map} is a map $ev_K$ from $S_K$ to $K \times M^{\mathcal{L}}$.  It is given by:
\[
S_K\xrightarrow{\heartsuit}K(M)
\xrightarrow{(\id, \iota^*)}K\times M^{\mathcal{L}}.
\]
That is, on $(\Gamma, \gamma)$ it fixes $\Gamma$ and then uses $\heartsuit$ to decide where to send $\mathcal{L}$.
\end{defi}

\begin{lemma}
The map $ev_K$ takes $S_K$ into $K \times N_K$.
\end{lemma}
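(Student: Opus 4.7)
The plan is to verify, for each tree $T_j$ in the set $\mathcal T$ associated to $K$, that the restriction of $ev_K(\Gamma,\gamma)$ to $\leaves{T_j}$ has image contained in a single open $r$-ball of $M$, which is precisely the membership condition for $N_K$. I would write $ev_K(\Gamma,\gamma)=(\Gamma,f)$ with $f(v)=\Theta(\Gamma,\gamma)(v')$, where $v'$ denotes the image of $v\in\leaves{T_j}$ in $|\Gamma|$ under the canonical map $T_j\to\widehat\Gamma\to\Gamma$. Since $T_j$ is connected, its image in $|\widehat\Gamma|$ lies in the realization $|C|$ of a single component $C$ of $\widehat\Gamma$, and by Definition \ref{defi:theta} each value $f(v)=\Upsilon_{\leaves{C}}(\gamma\circ\iota,\straighten(v'))$ lies in the image of $\Theta$ restricted to $|C|$.

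The heart of the argument is a length bound on $|C|$. Each edge of $\widehat\Gamma$ is inherited from a unique $\widetilde T_j$, so the total length of $\widehat\Gamma$ is $\sum_j \length{T_j}$. The short-branched condition forces $\length{T_j}$ to equal the leaf length $|\leaves{T_j}|-1$, and the Euler-characteristic computation from the proof of Lemma \ref{lemma: disk bundle over diagonal} identifies $\sum_j(|\leaves{T_j}|-1)$ with $|\mathcal L|-|\mathcal T|=|\chi|$. Hence $\length{C}\le |\chi|$, so the pseudometric distance in $|\widehat\Gamma|$ between any two leaves of $C$ is at most $|\chi|$. Applying the $\tfrac{r}{|\chi|}$-Lipschitz condition defining $S_K$ yields $d_M(\gamma\circ\iota(a),\gamma\circ\iota(b))<r$ for all $a,b\in\leaves{C}$, so $\gamma\circ\iota(\leaves{C})$ is contained in a single open $r$-ball $B\subset M$, centered at any one of its image points.

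To conclude, I would invoke Theorem \ref{thm:Karcher}, which asserts that $\Upsilon_{\leaves{C}}$ sends $B^{\leaves{C}}\times\Delta_{\leaves{C}}$ into $B$. Thus $\Theta(\Gamma,\gamma)$ carries all of $|C|$ into $B$, and in particular $f(v)\in B$ for every $v\in\leaves{T_j}$. Running this for each $T_j\in\mathcal T$ places $ev_K(\Gamma,\gamma)$ in $K\times N_K$. I do not anticipate any substantive obstacle: the argument is a direct accounting using the strict Lipschitz inequality, the length identity forced by the short-branched condition, and the fact that Karcher means stay within the ambient $r$-ball.
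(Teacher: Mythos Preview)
Your argument is correct and follows essentially the same route as the paper's proof: both show that for each component $C$ of $\widehat\Gamma$ the leaf images $\gamma\circ\iota(\leaves{C})$ lie in a single $r$-ball, then use the Karcher mean property to push all of $|C|$ (hence the leaves of every $T_j$ in $C$) into that ball. The paper simply asserts that ``the length conditions on trees in $\Gamma$'' force the leaves of $C$ into an $r$-ball, whereas you spell out the underlying bound $\length{C}\le\sum_j\length{T_j}=|\mathcal L|-|\mathcal T|=|\chi|$ explicitly; this is a welcome elaboration of what the paper leaves implicit, not a different approach.
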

\begin{proof}
Let $(\Gamma, \gamma)$  be in $S_K$. 
The length conditions on trees in $\Gamma$ imply that
 for each component $C$ of $\widehat{\Gamma}$, $\gamma \circ \iota$ sends the leaves of $C$ into an ${r}$-ball in $M$.
The straightening of $C$ takes $C$ to $\Delta_{\leaves{C}}$ and the Riemannian centers of mass map $\Upsilon(\gamma \circ \iota, \quad)$ takes $\Delta_{\leaves{C}}$ into the $r$-ball containing $\gamma(\iota(\leaves{C}))$.
So for $\heartsuit(\Gamma, \gamma) = (\Gamma, \Theta)$, $\Theta: |\Gamma| \to M$ sends $C$ into the $r$-ball containing $\gamma(\iota(\leaves{C}))$.
In particular, for any tree $T$ in $C$, $\Theta$ sends $\leaves{T}$ into this $r$-ball in $M$.
Hence, $ev_K(\Gamma, \gamma)$ lies in $K \times N_K$.
\end{proof}

In order to proceed further, we use the following elementary consequence of convexity, already implicitly used in Lemma \ref{lemma:pre-is-good-enough} and Figure \ref{fig:EpsilonScales}.

\begin{lemma}\label{lemma:convexballs}
Let $U$ be a convex subset of a Riemannian manifold and let $\{P_j\}$ be a finite set of finite sets of points in $U$ such that 
\begin{enumerate}
\item each set of points $P_j$ lies in an $\varepsilon_j$-ball in $U$,
\item for any pair $(i,k)$ there is a sequence $P_i=P_{j_1},P_{j_2},\ldots, P_{j_n}=P_k$ such that 
the finite point sets $P_{j_r}$ and  $P_{j_{r+1}}$ have at least one point in in common.
\end{enumerate}
Then the union of all $P_j$ lies in a ball of radius $\sum \varepsilon_i$.
\end{lemma}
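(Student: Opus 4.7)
First I would reduce the lemma to a ``ball version'': if $B_1, \dots, B_n \subseteq U$ are balls of radii $r_1, \dots, r_n$ whose intersection graph (with an edge between $B_i$ and $B_k$ whenever $B_i \cap B_k \ne \emptyset$) is connected, then $\bigcup_i B_i$ is contained in a single ball of radius $\sum_i r_i$. The reduction is immediate: choose for each $P_j$ an $\varepsilon_j$-ball $B_j$ witnessing condition~(1), and observe that a common point of $P_{j_r}$ and $P_{j_{r+1}}$ lies in both $B_{j_r}$ and $B_{j_{r+1}}$, so the intersection graph of the $B_j$ inherits the chain-connectedness of condition~(2).

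The ball version is then proved by induction on $n$, the base case $n=1$ being trivial. The essential geometric input is the two-ball case: if $B_1 = B(c_1, r_1)$ and $B_2 = B(c_2, r_2)$ share a point, then $d(c_1, c_2) \le r_1 + r_2$. Since $U$ is convex, $c_1$ and $c_2$ are joined by a unique minimizing geodesic inside $U$; let $c$ be the point on this geodesic at distance $\tfrac{r_2}{r_1+r_2}\, d(c_1, c_2)$ from $c_1$. Then $d(c, c_1) \le r_2$ and, symmetrically, $d(c, c_2) \le r_1$, so the triangle inequality gives $d(c, p) \le r_1 + r_2$ for every $p \in B_1 \cup B_2$.

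For the inductive step with $n > 2$, choose a spanning tree of the intersection graph and let $B_n$ be a leaf of this tree, attached to some $B_k$. The intersection graph of $B_1, \dots, B_{n-1}$ remains connected, so by the inductive hypothesis there is a ball $B'$ of radius $\sum_{j<n} r_j$ containing $\bigcup_{j<n} B_j$. Since $B_n \cap B_k \ne \emptyset$ and $B_k \subseteq B'$, the balls $B'$ and $B_n$ share a point, so the two-ball case produces a ball of radius $\sum_j r_j$ containing $B' \cup B_n \supseteq \bigcup_j B_j$.

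The main obstacle is obtaining the sharp bound $r_1 + r_2$ in the two-ball case rather than the naive bound $2(r_1 + r_2)$ that a less careful triangle inequality argument yields. This is precisely where the geodesic convexity of $U$ enters: uniqueness and the minimizing property of the geodesic between the centers, combined with the fact that arclength along it realises Riemannian distance, ensures that the weighted geodesic interpolation $c$ behaves exactly as the weighted average of $c_1$ and $c_2$ would in Euclidean space. One also needs to check that the intermediate enclosing balls in the induction remain within the convexity radius, but this is automatic in the paper's applications, where $\sum \varepsilon_i$ is bounded by a fixed fraction of the pre-convexity radius $\rcx$.
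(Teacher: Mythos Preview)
Your proof is correct and follows essentially the same approach as the paper: reduce to the two-set case, then pick a point on the minimizing geodesic between the two centers close enough to each that the triangle inequality gives the desired bound. You are simply more explicit than the paper about the inductive structure (passing to balls, choosing a spanning-tree leaf), whereas the paper just asserts ``it suffices to prove the statement for $|\{P_j\}|=2$'' and leaves that reduction to the reader.
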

\begin{proof}
It suffices to prove the statement for $|\{P_j\}|=2$. Let $x$ be in $P_1\cap P_2$ and let $c_j$ be the center of an $\varepsilon_j$-ball containing $P_j$. Let $\ell_j$ be the distance from $x$ to $c_j$. By convexity, there is a segment from $c_1$ to $c_2$ with length at most $\ell_1+\ell_2<\varepsilon_1+\varepsilon_2$. Pick a point $y$ on the segment with $d(c_1,y)<\varepsilon_2$ and $d(c_2,y)<\varepsilon_1$. Then $P_1\cup P_2$ is all within $\varepsilon_1+\varepsilon_2$ of $y$.
\end{proof}

\begin{lemma}
The  map $ev_K$ takes $S_{K}-s_{K}$ into $K \times (N_K - n_K)$.
\end{lemma}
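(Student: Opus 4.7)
The plan is to argue by contraposition. Assume $(\Gamma, \gamma) \in S_K$ and $ev_K(\Gamma, \gamma) \in K \times n_K$; I will show $(\Gamma, \gamma) \in s_K$. Setting $\delta = \frac{r}{4|\chi|^2}$, the $n_K$ hypothesis says that for each tree $T_j$ of $\Gamma$, $\Theta$ sends $\leaves{T_j}$ into an open ball $B_j \subset M$ of radius $\delta \length{T_j}$. Since $\Theta$ restricted to $|T_j|$ factors through the straightening $|T_j|\to \Delta_{\leaves{T_j}}$ followed by $\Upsilon_{\leaves{T_j}}$, and Riemannian centers of mass of points in a convex ball stay in that ball, $\Theta(|T_j|)\subseteq B_j$.

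Let $a, b$ be leaves of $\widehat{\Gamma}$ in a common component $C$. A shortest path $P$ from $a$ to $b$ in $|\widehat{\Gamma}|$ decomposes into sub-segments of lengths $\ell_1,\ldots,\ell_m$ contained in trees $T_{j_1}, \ldots, T_{j_m}$, meeting at shared fundamental vertices $u_2, \ldots, u_m$.  Since $\Theta(u_{k+1})\in B_{j_k}\cap B_{j_{k+1}}$, Lemma~\ref{lemma:convexballs} places $\bigcup_k B_{j_k}$ inside a ball of radius at most $\sum_k \delta\length{T_{j_k}}\le \delta|\chi|$, using that $\sum_j \length{T_j}=|\chi|$. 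Hence
\[d_M(\gamma(\iota(a)),\gamma(\iota(b))) < 2\delta|\chi| = \frac{r}{2|\chi|}.\]

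To conclude, it suffices to show that $d_{|\widehat{\Gamma}|}(a,b)\ge 1$, since then $d_M < \frac{r}{2|\chi|}\le \frac{r}{2|\chi|}\cdot d_{|\widehat{\Gamma}|}(a,b)$. The key is the following sublemma: in any short-branched pseudometric tree $T$, two distinct leaves are at distance at least $1$. To prove this, let $P'$ be the unique path in $T$ between the leaves, of length $\ell$, and decompose $T$ into $P'$ together with its off-path side branches. For each trivalent-or-higher interior vertex $v_i$ of $P'$, let $B_i$ be the union of off-path branches at $v_i$ and let $|B_i|$ denote the number of original leaves of $T$ in $B_i$; condition (2) of short-branched applied to each off-path branch gives $\length{B_i}\le |B_i|$. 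Combining $\length{T}=\ell+\sum_i \length{B_i}$, $|\leaves{T}|=2+\sum_i |B_i|$, and condition (1) (length equals leaf length), one obtains
\[\ell = 1 + \sum_i\bigl(|B_i|-\length{B_i}\bigr)\ge 1.\]

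It remains to locate a sub-segment $P_k$ whose two endpoints are both leaves of $T_{j_k}$, so the sublemma forces $\ell_k\ge 1$ and thus $d_{|\widehat{\Gamma}|}(a,b)\ge 1$. By condition (4) of Definition~\ref{def:combinatorial string diagram}, each shared vertex $u_k$ is fundamental in exactly one of its adjacent trees, and therefore a leaf in the other. A short induction shows the alternative is impossible: $u_1=a$ is a leaf of $T_{j_1}$, so if $P_1$ lacks the desired property then $u_2$ must be internal in $T_{j_1}$ (i.e.~fundamental there, hence a leaf in $T_{j_2}$); propagating this, each $u_k$ with $k\ge 2$ is forced to be a leaf of $T_{j_k}$, but then the final segment $P_m$ has $u_m$ a leaf of $T_{j_m}$ together with $u_{m+1}=b$ a leaf of $T_{j_m}$, contradicting the assumption.

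The main technical ingredient is the sublemma on short-branched trees, whose proof rests on the slack-counting identity above; the remaining combinatorial step is a direct consequence of condition~(4).
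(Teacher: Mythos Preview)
Your overall strategy matches the paper's: argue by contraposition, apply Lemma~\ref{lemma:convexballs} to bound the $M$-distance between leaf images by $\frac{r}{2|\chi|}$, and conclude. You also supply something the paper leaves implicit, namely an explicit proof that distinct leaves in a component of $|\widehat{\Gamma}|$ are at distance at least $1$; your sublemma on short-branched trees and the combinatorial argument locating a segment with both endpoints leaves of a single $T_{j_k}$ are correct and useful.

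There is, however, a gap in your first paragraph. You assert that $\Theta$ restricted to $|T_j|$ factors as the straightening $|T_j|\to\Delta_{\leaves{T_j}}$ followed by $\Upsilon_{\leaves{T_j}}$. By Definition~\ref{defi:theta}, $\Theta$ on a component $C$ is $\Upsilon_{\leaves{C}}(\gamma\circ\iota,\straighten(-))$; on $|T_j|$ this reads $|T_j|\to\Delta_{\leaves{T_j}}\to\Delta_{\leaves{C}}\to M$, the middle arrow being the affine inclusion from Section~\ref{section:straightening} and the last being $\Upsilon_{\leaves{C}}$. Only when every leaf of $T_j$ is a leaf of $C$ (i.e.\ $T_j\in P_1$) is the affine image a face of $\Delta_{\leaves{C}}$, so that Theorem~\ref{thm:Karcher}(2) lets one replace $\Upsilon_{\leaves{C}}$ by $\Upsilon_{\leaves{T_j}}$. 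For $T_j$ in a higher level of the partition this fails, and since Riemannian centers of mass are not associative, knowing $\Theta(\leaves{T_j})\subset B_j$ does not by itself force $\Theta(|T_j|)\subset B_j$. Consequently the chaining claim $\Theta(u_{k+1})\in B_{j_k}\cap B_{j_{k+1}}$ is not justified as written. The paper's proof organizes the chaining differently: it uses all trees of $C$ and defines the finite point sets $\widetilde{P}_j$ (vertices in the image of $T_j$ that are also images of a leaf of some $T_{j'}$), ensuring adjacent $P_j=\Theta(\widetilde{P}_j)$ share a point by construction; the assertion that each $P_j$ lies in a $\delta\length{T_j}$-ball is given the same terse appeal to ``the straightening map and the Riemannian center of mass map'', so the subtlety is present there as well, but your stated factorization through $\Upsilon_{\leaves{T_j}}$ is not the one the construction actually provides.
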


\begin{proof}
We show that  the intersection of $K \times n_K$ with the image of $ev_K$  is contained in $ev_K(s_{K})$. 

Let $ev_K(\Gamma,\gamma)$ lie in $K\times n_K$ and write $\heartsuit(\Gamma,\gamma)=(\Gamma,\Theta)$. Fix a component $C$ of the intersection graph $\widehat{\Gamma}$ made up of the trees $\{T_j\}$. 
A vertex of $\widehat{\Gamma}$ is in  $\widetilde{P}_j$ if
\begin{enumerate}
\item it is the image of a vertex of $T_j$, and
\item it is the image of a leaf of $T_{j'}$ for any $j'$ (possibly including $j$).
\end{enumerate}
Let $P_{j}$ be  $\Theta(\widetilde{P}_j) \subset M$.
Since $ev_K(\Gamma,\gamma)$ is in $n_K$ and $\heartsuit$ relies on the straightening map and the Riemannian center of mass map, the collection $P_j$ lies in a $|T_{j}|\frac{r}{4|\chi|^2}$-ball in $M$. 
Furthermore, since the component $C$ is connected, the second condition of Lemma~\ref{lemma:convexballs} holds and we can conclude that the union $\bigcup P_{j}$ lies in a $\sum |T_{j}|\frac{r}{4|\chi|^2}$-ball. This union contains the image of all leaves of $C$ and the radius is at most $\frac{r}{4|\chi|}$. Thus the distance between the images of any two leaves of $C$ is at most $\frac{r}{2|\chi|}$. This shows that $(\Gamma,\gamma)$ is in $s_{K}$.
\end{proof}

\begin{cor}
The naive evaluation map $ev_K$ induces a map of pairs 
\[(S_K,S_K-s_K)
\to
\Mover{K}{K}\]
\end{cor}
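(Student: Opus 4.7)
The plan is to observe that this corollary is an immediate repackaging of the two immediately preceding lemmas. By definition, a map of pairs $(X, Y) \to (X', Y')$ is a continuous map $X \to X'$ that carries $Y$ into $Y'$. The first of the two lemmas establishes that $ev_K$ carries $S_K$ into $K \times N_K$, while the second establishes that $ev_K$ carries the subspace $S_K - s_K$ into $K \times (N_K - n_K)$. Since $\Mover{K}{K}$ is, by the notation introduced earlier, exactly the pair $(K \times N_K, K \times (N_K - n_K))$, these two containments combine without further work to produce the required map of pairs.

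The substantive content was already absorbed into the two preceding lemmas, so there is no real obstacle in this corollary. The first lemma is essentially a direct unpacking of the $r/|\chi|$-Lipschitz condition defining $S_K$: the Lipschitz bound ensures that for each component $C$ of the intersection graph, the values $\gamma \circ \iota$ on $\leaves{C}$ sit in a common $r$-ball, and the Riemannian-center-of-mass construction built into $\heartsuit$ keeps the image of every tree's leaves inside such a ball, which is exactly membership in $N_K$. The second lemma is the more quantitative half: the smaller Lipschitz bound $r/(2|\chi|)$ on $s_K$ was arranged precisely so that, when combined with the ball-union estimate of Lemma~\ref{lemma:convexballs} applied to the collection of intermediate balls produced along the trees of $C$, the images of the leaves cluster closely enough to fall into $n_K$.

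Given these two inputs, the present corollary follows by formal bookkeeping and requires no additional analytic or combinatorial input. Its role in what follows is to allow pullback of cohomology classes on $\Mover{K}{K}$ to relative cohomology classes on $(S_K, S_K - s_K)$, which is the next step in assembling the diffuse intersection class.
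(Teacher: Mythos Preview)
Your proposal is correct and matches the paper's approach exactly: the corollary is stated immediately after the two lemmas with no separate proof, since it is precisely the formal combination of the containments $ev_K(S_K)\subset K\times N_K$ and $ev_K(S_K-s_K)\subset K\times(N_K-n_K)$ under the definition $\Mover{K}{K}=(K\times N_K,\,K\times(N_K-n_K))$. Your additional commentary on the roles of the two lemmas and Lemma~\ref{lemma:convexballs} is accurate but goes beyond what is needed here.
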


\begin{lemma}

Let $K'$ be a codimension one degeneration of a cell $K$ of $\spaced$.
The following diagram commutes:

\[
\begin{tikzcd}
S_K \cap (\widehat{K'} \times LM^k) \ar{rr}{ev_K} \ar[rightarrow]{d}
&&
\widehat{K'} \times N_K \ar{d}{\bigstarr{}{}}
\\
S_{K'}\ar{rr}[swap]{ev_{K'}} && K' \times N_{K'}
\end{tikzcd}
\]
where the existence of the left vertical map is guaranteed by Lemma \ref{lemma:epsilon Lipschitz}.
\end{lemma}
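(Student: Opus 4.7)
\emph{Plan.} Fix a pair $(\Gamma, \gamma) \in S_K \cap (\widehat{K'} \times LM^k)$ and write $\Theta := \Theta(\Gamma, \gamma)$. Chasing the diagram, the top-right composition evaluates to $(\at(\Gamma), \Psi(\Gamma, \Theta \circ \iota_K))$ while the bottom-left composition evaluates to $(\at(\Gamma), \Theta(\at(\Gamma), \gamma) \circ \iota_{K'})$, where $\iota_K$ and $\iota_{K'}$ denote the canonical leaf inclusions into $|\Gamma| \cong |\at(\Gamma)|$. By Proposition~\ref{prop:heartbreakcommutes}, $\Theta(\at(\Gamma), \gamma) = \Theta$ under this identification, so the task reduces to verifying the equality of the two maps $\mathcal{L}_{K'} \to M$ given by $\Psi(\Gamma, \Theta \circ \iota_K)$ and $\Theta \circ \iota_{K'}$.

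I would split into cases according to the type of codimension-one degeneration. For a contraction, $\xi : \mathcal{L}_K \to \mathcal{L}_{K'}$ is a bijection, $\Psi(\Gamma, f) = f \circ \xi^{-1}$, and a direct inspection of the definitions shows $\iota_K = \iota_{K'} \circ \xi$ under the canonical identification of pseudometric realizations; the equality is then immediate.

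For a pruning along a half-edge $h$ of a tree $T_j$, the map $\xi$ is an injection whose image omits exactly the new leaf $w = s(h) \in \leaves{T_h}$, which corresponds under the identification to the interior vertex $s(h) \in |T^h| \subset |T_j| \subset |\Gamma|$. On $\xi(\mathcal{L}_K)$ the argument is as in the contraction case. On $w$, unpacking the definition of $\Psi$, the required equation is
\[
\Upsilon_{\leaves{T_j}}\!\bigl(\Theta \circ \iota_K \circ \iota_{T_j},\ \straighten_{T_j}(s(h))\bigr) = \Theta(s(h)).
\]
Since $T_h$ is prunable, Proposition~\ref{prop:degenerate straighten}(3) presents $\straighten_{T_j}(s(h))$ as the image of $\straighten_{T^h}(s(h))$ under the face inclusion $\Delta_{\leaves{T^h}} \hookrightarrow \Delta_{\leaves{T_j}}$, and property~(2) of Theorem~\ref{thm:Karcher} restricts $\Upsilon_{\leaves{T_j}}$ to $\Upsilon_{\leaves{T^h}}$ on this face. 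Iterating this reduction down prunable branches of the successive pollards and matching the result against the ordered-partition inductive construction of $\straighten$ on the component $C$ of $\widehat{\Gamma}$ containing $T_j$ recovers $\Theta(s(h)) = \Upsilon_{\leaves{C}}(\gamma \circ \iota_C, \straighten_C(s(h)))$.

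The principal technical obstacle is this final matching step: aligning the iterated pollard reductions of $\Upsilon$ with the ordered-partition inductive definition of $\straighten_C$, so that the $\Upsilon_{\leaves{T_j}}$-computation coincides with the defining $\Upsilon_{\leaves{C}}$-computation of $\Theta(s(h))$. This bookkeeping is parallel to, and reuses, the arguments in Lemmas~\ref{lemma: contract straighten component} and~\ref{lemma: prune straighten component}.
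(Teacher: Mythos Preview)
Your decomposition matches the paper's: both reduce the square, via Proposition~\ref{prop:heartbreakcommutes}, to verifying $\Psi(\Gamma,\Theta\circ\iota_K)=\Theta\circ\iota_{K'}$ at the new leaf $w=s(h)$ in the pruning case (the contraction case being trivial). The paper organizes this by inserting $K'(M)$ as an intermediate node, splitting the square into a left triangle (Proposition~\ref{prop:heartbreakcommutes}), a bottom triangle (the definition of $ev_{K'}$), and an ``upper right'' piece which it dispatches with the single observation that $ev_K$, $\heartsuit$, and $\nabla$ are all built from straightening followed by the Riemannian center of mass.

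Your explicit treatment of the pruning case has a gap at the final ``matching step.'' After applying Proposition~\ref{prop:degenerate straighten}(3) and Theorem~\ref{thm:Karcher}(2), the identity to verify becomes
\[
\Upsilon_{\leaves{T^h}}\bigl(\Theta|_{\leaves{T^h}},\ \straighten_{T^h}(s(h))\bigr)\;=\;\Theta(s(h)).
\]
There is nothing to ``iterate down prunable branches of successive pollards'' here: a codimension-one pruning supplies exactly one prunable branch, and once you have passed to $\Delta_{\leaves{T^h}}$ no further face restriction is available. Nor do Lemmas~\ref{lemma: contract straighten component} and~\ref{lemma: prune straighten component} help: they are statements about the straightening maps (with values in a simplex, where affinity is free) and say nothing about $\Upsilon$. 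When $T_j\in P_1$ (all leaves of $T_j$ on $\sqcup Q_i$), the identity follows directly from Theorem~\ref{thm:Karcher}(1)--(2); but if some leaf $\ell$ of $T_j$ lands on an earlier tree, then $\Theta(\ell)=\Upsilon_{\leaves{C}}(\gamma\circ\iota_C,\straighten_C(\ell))$ with $\straighten_C(\ell)$ an interior point of $\Delta_{\leaves{C}}$, and the displayed identity amounts to requiring $p\mapsto\Upsilon_{\leaves{C}}(\gamma\circ\iota_C,\phi_j(p))$ to be affine in $p\in\Delta_{\leaves{T_j}}$---a property you have not established and that the cited lemmas do not supply. The paper's one-line justification elides this same point; your more detailed route exposes the difficulty rather than resolving it.
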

\begin{proof}

If $K'$ is a contraction degeneration of $K$, the statement is trivially true, so we consider only when $K'$ is a pruning degeneration of $K$.

We break the diagram up into the following diagram:

\[
\begin{tikzcd}
S_K \cap (\widehat{K'} \times LM^k) 
	\ar{rr}{ev_K} 
	\ar[rightarrow]{dd}
	\ar{dr}[swap]{\heartsuit}
&&
\widehat{K'} \times N_K 
	\ar{dd}{\bigstarr{}{}}
\\
&
K'(M)
	\ar{dr}[swap]{(\id,\iota^*)}
\\
S_{K'}
	\ar{rr}[swap]{ev_{K'}} 
	\ar{ur}{\heartsuit}
&& 
K' \times N_{K'}
\end{tikzcd}
\]

The bottom triangle commutes by definition of $ev_{K'}$.
By Proposition \ref{prop:heartbreakcommutes}, the left triangle commutes.
The upper right triangle commutes because $ev_K$, $\heartsuit$ and $\bigstarr{}{}$ are all defined by straightening composed with the Riemannian centers of mass map.

\end{proof}

We will use a relative version of this result to define a global evaluation map.
\begin{cor}
Let $K'$ be a codimension one degeneration of a cell $K$ of $\spaced$.
The following diagram commutes:

\[
\begin{tikzcd}
(S_K,S_K-s_K)\cap (\widehat{K'} \times LM^k, \widehat{K'} \times LM^k) \ar{rr}{ev_K} \ar[rightarrow]{d}
&&
\Mover{\widehat{K'}}{K} \ar{d}{\bigstarr{}{}}
\\
{(S_{K'}, S_{K'}-s_{K'})}
\ar{rr}[swap]{ev_{K'}} && \Mover{K'}{K'}
\end{tikzcd}
\]

\end{cor}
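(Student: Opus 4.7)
The plan is to observe that the corollary is essentially a formal upgrade of the preceding lemma: commutativity of the diagram of underlying spaces is already established, and every map involved has separately been shown to respect the relevant pair structure, so the commutativity as a diagram of pairs is automatic once we check the bookkeeping. I will therefore spend the proof (a) identifying the four maps of pairs and (b) verifying that the restrictions behave as claimed.

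First I would note that $\bigstarr{}{}$ is a map of pairs $\Mover{\widehat{K'}}{K}\to \Mover{K'}{K'}$ by the corollary following Lemma~\ref{lemma:pre-is-good-enough}, and that $ev_K$ and $ev_{K'}$ are maps of pairs $(S_K, S_K-s_K)\to \Mover{K}{K}$ and $(S_{K'}, S_{K'}-s_{K'})\to \Mover{K'}{K'}$ respectively, by the corollary following the lemma that $ev_K$ sends $S_K-s_K$ into $K\times (N_K - n_K)$. For the left vertical map, observe that $S_K\cap (\widehat{K'}\times LM^k)$ and $s_K\cap(\widehat{K'}\times LM^k)$ consist of pairs $(\Gamma,\gamma)$ with $\Gamma\in\widehat{K'}$ and $\gamma$ satisfying an $\varepsilon$-Lipschitz condition with respect to $\Gamma$; by Lemma~\ref{lemma:epsilon Lipschitz} these Lipschitz conditions are unchanged when we reinterpret $\Gamma$ as a string diagram in the cell $K'$, so the identification furnishes a map of pairs into $(S_{K'}, S_{K'}-s_{K'})$.

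Once each of the four sides is recognized as a map of pairs, the commutativity of the square of pairs reduces to the commutativity of the underlying square of spaces, which is precisely the content of the immediately preceding lemma. Concretely, for a point $(\Gamma,\gamma)$ in $S_K\cap(\widehat{K'}\times LM^k)$, the two composites agree as points of $K'\times N_{K'}$, and it remains only to remark that if $(\Gamma,\gamma)\in S_K-s_K$, then under either composite the image lands in the subspace $K'\times (N_{K'}-n_{K'})$: via the right-then-down route this uses that $\bigstarr{}{}$ preserves complements of the ``small'' subspaces (again Lemma~\ref{lemma:pre-is-good-enough}), while via the down-then-right route it uses that $ev_{K'}$ takes $S_{K'}-s_{K'}$ into $K'\times(N_{K'}-n_{K'})$.

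I do not anticipate any genuine obstacle here; the substance of the argument has been packaged into the preceding lemmas, and the corollary amounts to assembling their conclusions. If anything, the only mild point of care is the choice of convention for the ``pair intersection'' $(S_K,S_K-s_K)\cap(\widehat{K'}\times LM^k,\widehat{K'}\times LM^k)$, which I would spell out as $(S_K\cap(\widehat{K'}\times LM^k),\ (S_K-s_K)\cap(\widehat{K'}\times LM^k))$, so that the left vertical map is unambiguously the restriction of the inclusion-and-reinterpretation described above.
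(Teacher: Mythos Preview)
Your proposal is correct and matches the paper's approach: the paper states this as a corollary with no proof, precisely because it is the formal upgrade to pairs of the preceding lemma, using the already-established facts that $ev_K$, $ev_{K'}$, $\bigstarr{}{}$, and the left vertical map all respect the pair structure. Your bookkeeping is accurate, including the interpretation of the pair intersection.
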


This finally allows us to make the following definition

\begin{defi}
The  {\em evaluation map} $ev: (S, S - s) \to \Mspace{\spaced}$ is defined on $(\Gamma, \gamma)$ as $ev_K(\Gamma,\gamma)$, where $\Gamma$ is in the cell $K$.
\end{defi}

\begin{defi}
The {\em diffuse intersection class} $\Omega$
is the pullback to $(S, S-s)$ of the global Thom class $\omega$ on $\Mspace{\spaced}$  under the evaluation map:
\[ \Omega = ev^*(\omega) \in H^{\chid}(S, S-s).\]

\end{defi}

\section{Connections to previous work}\label{section:previouswork}

Recall that the string topology construction  for a cocycle $W$ in $C^{\chid}(S, S-s)$ representing the diffuse intersection class $\Omega$ is a chain map $\mathcal{ST}_{W}$ from the tensor product ${C_*(\spaced) \otimes C_*(LM)^{\otimes k}}$ to $C_{*- \chid}(LM)^{\otimes \ell}$.
By fixing a cycle $\alpha$ in $C_n(\spaced)$, we obtain a chain map
\[
\mu_{(\alpha,W)}:C_*(LM)^{\otimes k} \to C_{*+n- \chid}(LM)^{\otimes \ell}
\]
which induces a map on homology
\[
\bar{\mu}_{(\alpha,W)}:H_*(LM)^{\otimes k} \to H_{*+n- \chid}(LM)^{\otimes \ell}
\]
which depends only on the homology class of $\alpha$ in $\spaced.$

In this section, we review the work of Cohen--Godin~\cite{CG}, who constructed a family of string topology operations
\[\mu_c: H_*(LM)^{\otimes{k}} \to H_{*-\chid}(LM)^{\otimes \ell}\]
parameterized by a space of decorated graphs of a certain type called {\em marked metric chord diagrams} closely related to our string diagrams. These marked metric chord diagrams are a version of what Cohen and Godin call \emph{Sullivan chord diagrams} with some extra data attached to them.

We isolate when marked metric chord diagrams are in fact string diagrams. Then, by choosing an arbitrary orientation, we can treat such a marked metric chord diagram as a $0$-cycle $\Gamma$ in $\spaced$. Then we prove Proposition~\ref{prop:we agree}, which says that our induced map on homology $\bar\mu_{(\Gamma,W)}$ coincides with $\mu_c$.

Tamanoi has shown that many (but not all) of these operations are trivial~\cite{Tamanoi:LCSTTHGTQFTO}. His methods also imply that \emph{some} operations induced by higher degree cycles in $\spaced$ are trivial. One interesting question for further research is to determine precisely which higher homology classes in $\spaced$ induce trivial string topology operations on the homology of the loop space.

At the end of the section we also show that we recover the Batalin--Vilkovisky operator described by Chas--Sullivan in~\cite{CS}.
\subsection{Homology operations induced by \texorpdfstring{$H_0(\spaced)$}{H0(SD)}}
\begin{prop}
\label{prop:we agree}
For a marked metric chord diagram $c$ which is the underlying string diagram of the oriented string diagram $\Gamma$,
the chain map $\mu_{(\Gamma, W)}$ induces the map $\mu_c$ on homology, up to a possible sign $(-1)^d$.
\end{prop}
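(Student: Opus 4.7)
The plan is to perform the comparison at the level of the single 0-cell $K=\{\Gamma\}$ of $\spaced$ and match each ingredient of $\mu_{(\Gamma,W)}$ with the corresponding piece of Cohen--Godin's construction. Because $\Gamma$ is a 0-cycle, $\mu_{(\Gamma,W)}$ is determined by the restrictions of all the maps to the fiber $S_K = S \cap (\{\Gamma\}\times LM^k)$. Under the canonical identification of Cohen--Godin's mapping space $\mathrm{Map}(c,M)$ with our fiber $\{\Gamma\}(M) \subset \spaced(M)$, the embedding $\mathrm{Map}(c,M) \hookrightarrow LM^k$ used by Cohen--Godin becomes the natural inclusion of the locus in $S_K$ where the leaves of each component of $\widehat{\Gamma}$ are mapped to coincident points in $M$. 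The $\varepsilon$-Lipschitz condition defining $S_K$ exhibits it as a tubular-neighborhood-like thickening of this locus, and $\heartsuit|_{S_K}$ realizes, up to homotopy, a retraction onto $\mathrm{Map}(c,M)$ by property (1) of Theorem~\ref{thm:Karcher}: once the leaves of a component are mapped to a single point, $\Upsilon$ simply reproduces the straight-line extension.

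Next I would identify the relevant cohomology classes. By Lemma~\ref{lemma: disk bundle over diagonal}, over the 0-cell $K$ the space $N_K$ is the disk bundle of the normal bundle $\nu$ of the diagonal embedding $M^{\mathcal{T}} \hookrightarrow M^{\mathcal{L}}$, and by Definition~\ref{defi:K-Thom} the $K$-Thom class $\omega_K$ is the Thom class of $\nu$. Cohen--Godin's wrong-way map is built using the Thom class of the normal bundle of the embedding $\mathrm{Map}(c,M) \hookrightarrow LM^k$, which is precisely the pullback of $\nu$ to $\mathrm{Map}(c,M)$. The evaluation map $ev_K : (S_K, S_K - s_K) \to (N_K, N_K - n_K)$ identifies the $\varepsilon$-Lipschitz thickening of $\mathrm{Map}(c,M)$ with a tubular neighborhood of the diagonal, so pulling back $\omega_K$ under $ev_K$ produces---up to the sign $(-1)^d$ arising from comparing orientation conventions for $\nu$---Cohen--Godin's Thom class. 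The composition of the cap product against $\Omega|_{S_K}$ with $\heartsuit_*$ into $\mathrm{Map}(c,M)$ is then the Pontryagin--Thom wrong-way map used by Cohen--Godin.

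Finally, $\rho_{out}$ from Definition~\ref{def:out} restricts an element of $\{\Gamma\}(M) = \mathrm{Map}(c,M)$ to its values on the output boundary cycles, which agrees by construction with Cohen--Godin's output evaluation. Assembling the three identifications yields the equality of $\bar\mu_{(\Gamma,W)}$ and $\mu_c$ up to the single sign $(-1)^d$. The main technical obstacle is pinning down this sign, which requires a careful comparison of the orientation of $\nu$ induced by the orientation on $\Gamma$ with Cohen--Godin's convention, and then tracking it through the Eilenberg--Zilber, cap product, and Thom isomorphism conventions. All other identifications are essentially formal once $\heartsuit$ is recognized as realizing the deformation-retraction of an $\varepsilon$-tube onto its zero section.
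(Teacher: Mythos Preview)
Your overall strategy matches the paper's: restrict to the fiber over the single point $\Gamma$, identify the Thom class there with the normal-bundle Thom class of the diagonal $M^{\mathcal{T}}\hookrightarrow M^{\mathcal{L}}$, and then match the three pieces (collapse, Thom isomorphism, output restriction) with Cohen--Godin's. However, you have collapsed a distinction that the paper treats carefully, and this creates two genuine gaps.

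The issue is your identification ``$\{\Gamma\}(M)=\mathrm{Map}(c,M)$''. The fiber $\{\Gamma\}(M)$ is $\Maps(|\Gamma|,M)$, the space of \emph{all} continuous maps from the realization of the string diagram to $M$. Cohen--Godin work instead with $\Maps(|S(c)|,M)$, where $S(c)$ is obtained from $c$ by contracting all length-zero edges; equivalently, these are the maps $|\Gamma|\to M$ that are \emph{constant on each tree}. The latter sits strictly inside the former via an inclusion $i$. Two consequences:
\begin{enumerate}
\item The map $\heartsuit$ lands in $\Maps(|\Gamma|,M)$, not in $\Maps(|S(c)|,M)$: it sends trees to short geodesic segments, not to points. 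So the square comparing ``cap with Thom class then $\heartsuit_*$'' against Cohen--Godin's Thom isomorphism does \emph{not} commute on the nose; one needs a homotopy contracting those geodesic segments to points (the paper's Lemma on the second square). Your phrase ``up to homotopy, a retraction onto $\mathrm{Map}(c,M)$'' gestures at this but doesn't survive the mis-identification of the target.
\item The output restrictions from $|\Gamma|$ and from $|S(c)|$ are \emph{not} equal ``by construction'': an output boundary cycle of $\Gamma$ traverses pieces of the trees, so pulling back along $\bar\partial_C$ from $|\Gamma|$ differs from Cohen--Godin's output map by a reparametrization of the circle that contracts those tree-subintervals. A further homotopy is needed here (the paper's Lemma on the third diagram), and this is where the choice of $\bar\partial_C$ versus $\partial_C$ actually matters.
\end{enumerate}
Once you separate $\Maps(|\Gamma|,M)$ from $\Maps(|S(c)|,M)$ and supply these two homotopies, your outline becomes the paper's proof. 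The sign is not the main obstacle; the paper simply leaves it unresolved because Cohen--Godin do not specify their orientation convention.
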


The proposition will be proved by Lemmas \ref{lemma: first square commutes}, \ref{lemma: second square commutes}, and \ref{lemma: third diagram commutes} below.
The reason for the ambiguity in the sign is as follows.
Recall that our construction uses the orientation of string diagrams to define the $K$-Thom classes $\omega_K$ for each cell $K$.
If the cells $K$ and $K'$ of $\spaced$ project to the same cell in $\spaceduo$, then there is a canonical identification of the spaces $\Mover{K}{K}$ and $\Mover{K'}{K'}$ which identifies the cohomology class $\omega_K$ with $(-1)^d \omega_{K'}$.
Cohen--Godin do not clearly state the ordering or orientation convention they use to define their Thom classes. If $M$ is even dimensional, then the choice of ordering or orientation is irrelevant. Thus in this case, $\mu_{(\Gamma, W)}$ induces a corresponding Cohen--Godin operation on homology with no problem. If $M$ is odd-dimensional, $\mu_{(\Gamma, W)}$ induces the corresponding operation up to the choice of sign. Godin uses local coefficient systems to keep track of signs  \cite{godin}; we expect that we recover her operations without ambiguity.

Note that if the string diagram $\Gamma$ were in a component of $\spaced$ where the two-sheeted covering ${\spaced\to\spaceduo}$ was nontrivial, then $\bar\mu_{(\Gamma, W)}$ and $-\bar\mu_{(\Gamma, W)}$ would agree on all odd dimensional manifolds. In this case, $\Gamma$ would induce a zero operation on the homology of the loop space of an odd dimensional manifold with coefficients in a ring where $2$ was invertible. We know of no such component.

There is another potential ambiguity because it is not clearly stated in~\cite{CG} which convention is used for identifying output boundary cycles with the standard circle. For a given boundary cycle $C$, one might use either the canonical map $\partial_C$ or its reverse $\bar\partial_C$. We choose to use the canonical map for input boundary cycles and its reverse for output boundary cycles as yields operations that agree with~\cite{CS}. In the absence of other evidence, we ascribe some choice along these lines to Cohen and Godin because without it, their gluing theorem (Theorem 6) fails.

Cohen and Godin construct string topology operations 
\[h_*(LM)^{\otimes k}\to h_{* - \chid}(LM)^{\otimes \ell}\] for any homology theory $h_*$ supporting an orientation of the manifold $M$, with coefficients in a field \cite{CG}. 
For our purposes, we consider only singular homology $H_*$ with coefficients in a field.
Their version of the space of string diagrams is the space of marked metric chord diagrams. We adapt their definition to our notation; their original definition is in~\cite{CG} as Definition 1 and the subsequent discussion.

\begin{defi}\label{defi: marked metric chord diagram}
A {\em marked metric chord diagram} is the equivalence class of a combinatorial string diagram equipped with a pseudometric structure such that
\begin{enumerate}
\item an edge has length zero if and only if it is a marking,
\item the union of the $\widetilde{T}_j$ contains no cycle subgraph, and
\item any vertex that belongs to more than one $\widetilde{T}_j$ or $L_i$ also belongs to some $Q_{i'}$.
\end{enumerate}
$\Gamma_1$ and $\Gamma_2$ are equivalent if 
\begin{enumerate}
\item 
by pruning both $\Gamma_1$ and $\Gamma_2$
along every internal half-edge  which is in some $L_i$
one obtains 
isomorphic partially marked pseudometric fatgraphs with specified subfatgraphs and choices of fundamental vertices, and
\item 
under this isomorphism, corresponding output boundary cycles of $\Gamma_1$ and $\Gamma_2$ induce the same cyclic order on the subset of half-edges in that boundary cycle not in any $\widetilde{T}_j$.
\end{enumerate}
\end{defi}
In particular, a combinatorial string diagram satisfying the appropriate properties is alone in its equivalence class unless some $L_i$ and some $\widetilde{T}_j$ intersect, necessarily at a vertex that is also in some $Q_{i'}$. See Figure \ref{fig:CG Equivalence}.

\begin{figure}[ht]
\includegraphics[scale=1]{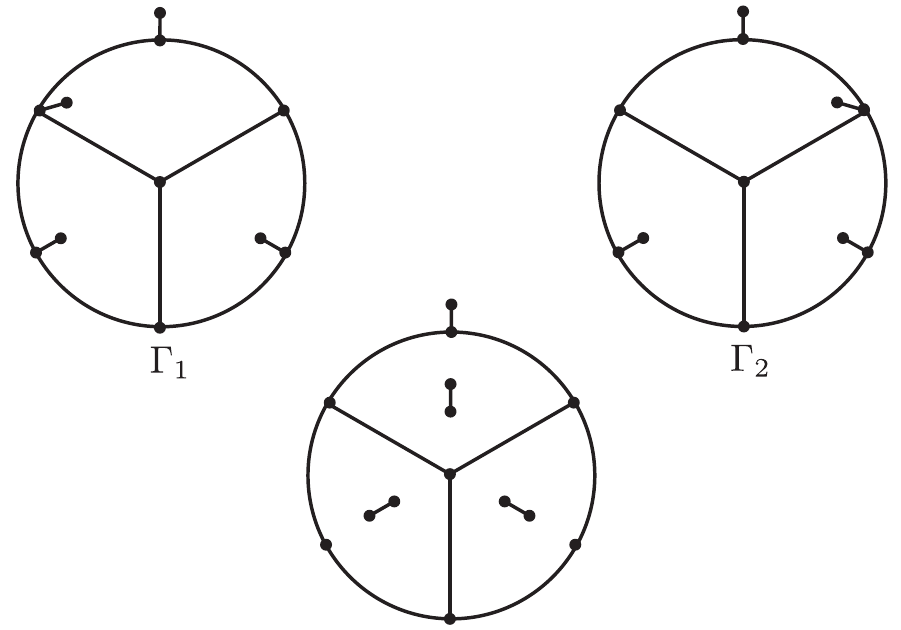}
\caption{$\Gamma_1$, $\Gamma_2$, and their common pruning.}
\label{fig:CG Equivalence}
\end{figure}

Cohen and Godin regard the input circles of a marked metric chord diagram $c$ as copies of the standard circle (of variable length). 
They do not distinguish between the graph and its pseudometric realization explicitly; we will do so. They also introduce a metric fatgraph $S(c)$ where all length-zero edges of $c$ have been contracted.

Cohen and Godin's primary tool for constructing a string topology operation $\mu_c$ for a marked metric chord diagram $c$ is the Pontryagin--Thom construction for the finite codimension subspace $\Maps(|S(c)|,M)$ of $LM^k$.
Evaluation of an element of $LM^k$ at the leaves $\mathcal{L}$ of the trees $\mathcal{T}$ of $c$ gives a map $e_c$ from $LM^k$ to $M^\mathcal{L}$.
The subspace $\Maps(|S(c)|,M)$ fits into the pullback square:
\[
\xymatrix{
\Maps(|S(c)|,M) \ar[r] \ar[d]\save[]+<6pt,-10pt>*{\lrcorner}\restore
&
LM^k \ar[d]^{e_c}
 \\
M^\mathcal{T}  \ar[r]
&
M^\mathcal{L} 
}
\]
where $M^\mathcal{T} \hookrightarrow M^\mathcal{L}$ is the diagonal embedding determined by $c$.
Let $\nu$ be the normal bundle of the diagonal embedding. Then $\nu$ can be given the structure of a tubular neighborhood of the diagonal inside $M^\mathcal{L}$ by choosing an appropriate diffeomorphism. This gives $e_c^*(\nu)$ the structure of a tubular neighborhood of $\Maps(|S(c)|,M)$ in $LM^k$; see~\cite{godin} for details.

For a marked metric chord diagram $c$ with $k$ inputs, $\ell$ outputs, and Euler characteristic $\chi$, Cohen and Godin define an operation 
\[
\mu_c: H_*(LM)^{\otimes k} \to H_{*- \chid}(LM)^{\otimes \ell}.
\]
as the following composition of maps:

\begin{enumerate}[label=(CG\arabic*),ref=(CG\arabic*)]
\item\label{CG: kunneth1}
$H_*(LM)^{\otimes k} \to H_*(LM^k)$, the K\"unneth isomorphism,
\item\label{CG: thom collapse}
next,
$H_*(  LM^k) \to H_*(  \Maps(|S(c)|,M)^\nu)$, the Thom collapse map from $LM^k$ to the Thom space $ \Maps(|S(c)|,M)^\nu$ of the bundle $e_c^*(\nu)$,
\item
\label{CG: thom iso}
next,
$H_*(\Maps(|S(c)|,M)^\nu) \to H_{*-\chid}(\Maps(|S(c)|,M))
$, the Thom isomorphism,
\item
\label{CG: outputs}
next, 
$H_{*-\chid}( \Maps(|S(c)|,M)) \to  H_{*-\chid}(LM^\ell)$, induced by the pullback to outputs, and
\item
\label{CG: kunneth2}
finally, 
$H_{*-\chid}(LM^\ell) \to H_{*-\chid}(LM)^{\otimes \ell}$, the K\"unneth isomorphism.
\end{enumerate}  

Cohen and Godin go on to show that the operation $\mu_c$ depends only on the type $\gkl$ of the marked metric chord diagram $c$.
Therefore, it is enough for us to consider only marked metric chord diagrams of a particularly manageable form. The definitions of marked metric chord diagrams and string diagrams give us the following lemma.

\begin{lemma}\label{lemma: companions}
Let $c$ be a marked metric chord diagram such that:
\begin{enumerate}
\item each $L_i$ is disjoint from all $\widetilde{T}_j$,
\item every $\widetilde{T}_j$ subgraph is a length $1$ segment and the images in $c$ of all of these segments' leaves are distinct, and
\item every $Q_i$ has total length $1$.
\end{enumerate}
Then $c$ satisfies the correct metric properties and is alone in its equivalence class and is thus a string diagram. 
\end{lemma}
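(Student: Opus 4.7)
The plan is to verify two things in turn: first, that $c$, regarded as a pseudometric combinatorial string diagram, satisfies the two metric conditions of Definition~\ref{def:string diagram}, and second, that the equivalence class of $c$ in the sense of Definition~\ref{defi: marked metric chord diagram} contains only $c$ itself. Since $c$ is already a marked metric chord diagram, its underlying fatgraph is automatically a combinatorial string diagram equipped with a pseudometric, so these two verifications are all that is needed to conclude that $c$ is a string diagram.

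For the metric conditions, condition (3) of the lemma is exactly the first requirement of Definition~\ref{def:string diagram}. For the second requirement, I would argue that each $T_j$ is short-branched as follows. By condition (2), each $\widetilde{T}_j$ is a length-$1$ segment, and its two leaves have distinct images in $c$. A segment has no internal vertices (in the graph-theoretic sense), so the fundamental subset of internal vertices of $\widetilde{T}_j$ is empty, hence the vertex explosion $T_j$ of $\widetilde{T}_j$ at its non-fundamental vertices is again a segment of length $1$. The leaf length of this segment is $2 - 1 = 1$, which equals its total length, verifying condition (1) of Definition~\ref{def:short-branched}. Since no vertex in a segment has valence at least three, condition (2) of Definition~\ref{def:short-branched} is vacuous. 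Thus $T_j$ is short-branched.

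For uniqueness in the equivalence class, I would invoke the observation recorded immediately after Definition~\ref{defi: marked metric chord diagram}: a combinatorial string diagram satisfying the marked metric chord diagram axioms is alone in its equivalence class unless some $L_i$ shares a vertex with some $\widetilde{T}_j$. Condition (1) of the lemma explicitly forbids any such intersection, so $c$ is the unique representative of its equivalence class. Combining this with the verified metric conditions yields that $c$ is a string diagram, as claimed.

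The main obstacle here is not conceptual but notational: one must be careful to distinguish between the tree $\widetilde{T}_j$ as a subgraph of $c$ (which is a segment by hypothesis) and the tree $T_j$ obtained by vertex explosion at the non-fundamental vertices (which is the object relevant to the short-branched condition). Once one observes that in the segment case these coincide and that condition (2) of Definition~\ref{def:short-branched} is vacuous, the rest is a bookkeeping exercise invoking results already in the paper.
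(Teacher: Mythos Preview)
Your proposal is correct and follows exactly the approach the paper intends: the paper states the lemma as an immediate consequence of the definitions of marked metric chord diagrams and string diagrams, and you have simply written out that verification in detail. Your care in distinguishing $\widetilde{T}_j$ from $T_j$ and in checking that the short-branched condition is vacuous for a segment is appropriate and fills in what the paper leaves implicit.
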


For the remainder of this section, assume that $c$ is a marked metric chord diagram satisfying the conditions of Lemma~\ref{lemma: companions} (and thus also a string diagram). Fix an arbitrary oriented string diagram $\Gamma$ with $c$ as its underlying string diagram.

It will be convenient for us to analyze the chain maps defining $\mu_{(\Gamma,W)}$ more closely.
Consider the inclusion of $LM^k$ into $\spaced \times LM^k$ as the fiber over the point $\Gamma$.
Denote the intersection of the fiber with $S$ by $S_\Gamma$ and the intersection of the fiber with $s$ by $s_\Gamma$; the inclusion of the fiber induces inclusions of $S_\Gamma$ into $S$ and of $S_\Gamma - s_\Gamma$ into $S - s$.

There is a unique cell $K$ of $\spaced$ such that $\Gamma$ is in $K$ but not in the boundary of $K$. Then the fiber over the point  $\Gamma$ in the space  $\Mspace{\spaced}$ is the pair $(N_K, N_K - n_K)$.
Because $\Gamma$ is an oriented string diagram where each $T_j$ is a segment, the restriction of the global Thom class $\omega$ on $\Mspace{\spaced}$ to the fiber $(N_K, N_K - n_K)$ is equal to the Thom class of the normal bundle of the embedding $M^\mathcal{T} \hookrightarrow M^\mathcal{L}$.
The evaluation map sends the fiber  $(S_\Gamma, S_\Gamma - s_\Gamma)$  over $\Gamma$ to the fiber $(N_K, N_K - n_K)$ over $\Gamma$ so the restriction of the diffuse intersection class $\Omega$ to  $(S_\Gamma, S_\Gamma - s_\Gamma)$ is the pullback of the Thom class of the normal bundle under the evaluation map.

Additionally, the fiber over the point $\Gamma$ in the space $\spaced(M)$ is the  mapping space $\Maps(|\Gamma|, M)$ and the  map 
$\heartsuit$ sends the fiber over $\Gamma$ to the fiber over $\Gamma$.
Therefore, the map $\mu_{(\Gamma,W)}$ is given by the following composition of maps:

\begin{enumerate}[label=(ST\arabic*),ref=(ST\arabic*)]
\item
\label{us: EZ}
first,
$\chains(LM)^{\otimes k} \to \chains(LM^k)$, the Eilenberg--Zilber shuffle map,
\item
\label{us: quotient}
next,
$\chains(  LM^k) \to \chains(   LM^k,    LM^k - s_\Gamma)$, the usual quotient map, 
\item
\label{us: excision}
next,
$\chains(   LM^k, LM^k - s_\Gamma) \to \chains(S_\Gamma, S_\Gamma- s_\Gamma)
$ a chain homotopy inverse to the map induced by inclusion 
$ (S_\Gamma, S_\Gamma- s_\Gamma)
 \hookrightarrow (LM^k,  LM^k - 
s_\Gamma)
 $
\item
\label{us: cap}
next,
$\chains(S_\Gamma, S_\Gamma- s_\Gamma) \to C_{*-\chid}(S_\Gamma)$,  the cap product with a representative of the pulled-back Thom class, 
\item 
\label{us: heart}
next,
$\heartsuit_*: C_{*-\chid}(S_\Gamma) \to C_{*-\chid}(\Maps(|\Gamma|, M))$, induced by the restriction of the map  $\heartsuit: S \to \mathcal{SD}(M)$ to $S_\Gamma$,
\item
\label{us: outputs}
next, 
$C_{*-\chid}(\Maps(|\Gamma|, M)) \to  C_{*-\chid}(LM^\ell)$, induced by the pullback to outputs, and
\item
\label{us: AW}
finally, 
$C_{*-\chid}(LM^\ell) \to C_{*-\chid}(LM)^{\otimes \ell}$, the Alexander--Whitney map.
\end{enumerate}

Since the shuffle and Alexander--Whitney maps induce the K\"unneth isomorphism over a field, to prove Proposition \ref{prop:we agree}, we need only to check that the map from $H_*(LM^k)$ to $H_{*-\chid}(LM^\ell)$ induced by our composition of maps \ref{us: quotient} to \ref{us: outputs} agrees with Cohen--Godin's composition of maps \ref{CG: thom collapse} to \ref{CG: outputs}, up to a possible sign.
We do this step-by-step in Lemmas  \ref{lemma: first square commutes}, \ref{lemma: second square commutes}, and \ref{lemma: third diagram commutes}, which focus on Cohen--Godin's maps \ref{CG: thom collapse}, \ref{CG: thom iso}, and \ref{CG: outputs}, respectively.

We wish to realize $S_\Gamma$ and $s_\Gamma$ as tubular neighborhoods of $Maps(|S(c)|,M)$ in $LM^k$.
We begin by discussing tubular neighborhoods of $M^\mathcal{T}$ in $M^\mathcal{L}$. In particular, the following definition generalizes the construction of $N_K$ and $n_K$.
\begin{defi}
Let $\epsilon>0$. Then $N_K^\epsilon$ is the subspace of $M^\mathcal{L}$ consisting of maps which, for each tree $T_j$,
 take the leaves of  $T_j$ into an ball of radius $\epsilon$ in $M$.
 \end{defi}
Then $N_K=N_K^r$. Furthermore, since each $T_j$ has length $1$,  $n_K=N_K^{\frac{r}{4|\chi|^2}}$.

\begin{lemma}
For any $\epsilon\le r$, the space $N_K^\epsilon$ has the structure of a tubular neighborhood of the diagonal map $M^\mathcal{T}\to M^{\mathcal{L}}$. Furthermore, if $\epsilon_1<\epsilon_2$ then the closure of $N_K^{\epsilon_1}$ is contained in the interior of $N_K^{\epsilon_2}$.
\end{lemma}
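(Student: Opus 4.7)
The plan is to leverage the strong convexity of $r$-balls in $M$, a property built into the pre-convexity radius, to establish both assertions. First I will note that $f\in N_K^\epsilon$ iff for each tree $T_j$, the circumradius (smallest enclosing radius in $M$) of $f(\leaves{T_j})$ is strictly less than $\epsilon$; since this circumradius depends continuously on a bounded configuration of points in $M$, the space $N_K^\epsilon$ is open in $M^{\mathcal{L}}$.

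For the tubular-neighborhood assertion, the disk-bundle argument from Lemma~\ref{lemma: disk bundle over diagonal} applies verbatim with $\epsilon \le r$ in place of $r$. Specifically, since $\epsilon \le r$, each $f(\leaves{T_j})$ lies inside a strongly convex ball of $M$, so the Riemannian barycenter $c_j$ of $f|_{\leaves{T_j}}$ is uniquely defined and depends smoothly on $f$ by the same Karcher-mean theory invoked for $\Upsilon$ in Theorem~\ref{thm:Karcher}. The assignment $f \mapsto (c_j)_j$ gives a smooth retraction $\pi \colon N_K^\epsilon \to M^{\mathcal{T}}$ onto the image of the diagonal, and applying $\exp_{c_j}^{-1}$ leaf-by-leaf identifies each fiber of $\pi$ with an open star-shaped neighborhood of the origin in the corresponding fiber of the normal bundle of the diagonal embedding. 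This realizes $N_K^\epsilon$ as an open tubular neighborhood of $M^{\mathcal{T}}$ in $M^{\mathcal{L}}$ whose fibers are open disks in the normal bundle.

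For the nesting, continuity of the circumradius gives $\overline{N_K^{\epsilon_1}}$ as the set of configurations whose circumradius in each tree is at most $\epsilon_1$. Since $\epsilon_1 < \epsilon_2$, every such configuration strictly satisfies the open condition defining $N_K^{\epsilon_2}$, so $\overline{N_K^{\epsilon_1}} \subset N_K^{\epsilon_2}$; because $N_K^{\epsilon_2}$ is itself open, this inclusion lands in the interior. The main technical point is the smoothness of the barycenter projection and the fiberwise trivialization via $\exp^{-1}$, but both are controlled by the standard Karcher-mean machinery on balls of radius at most $r$ already exploited throughout the paper; beyond that the argument is routine.
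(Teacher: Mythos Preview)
Your argument is correct and close in spirit to the paper's, but the explicit trivialization differs. The paper exploits the ambient context: by this point each $T_j$ is a length-one \emph{segment} with exactly two leaves, so the projection sends a pair $(x_j,y_j)$ to the geodesic midpoint $z_j$, and the normal coordinate is simply $\exp_{z_j}^{-1}(x_j)\in T_{z_j}M\cong\nu_j$; one then checks directly that this is a diffeomorphism onto the open $\epsilon$-disk bundle. You instead project via the Riemannian barycenter of all leaves and log every leaf, landing in the zero-sum subspace of $(T_{c_j}M)^{\leaves{T_j}}$. In the two-leaf case these agree (the midpoint is the Karcher mean of two points, and your pair $(v,-v)$ carries the same information as the paper's single $v$), so you recover the paper's map; your formulation has the mild advantage of making sense for trees with more leaves, at the cost of a less explicit description of the fiber and a slightly heavier appeal to Karcher-mean regularity. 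One small quibble: you write that the argument of Lemma~\ref{lemma: disk bundle over diagonal} ``applies verbatim,'' but that lemma's proof projects to a single leaf coordinate rather than to a barycenter, so the trivialization you go on to describe is not literally the same one.
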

\begin{proof}
The only part that does not follow directly is that $N_K^\epsilon$ is a tubular neighborhood. 

The diagonal map $M^\mathcal{T} \hookrightarrow M^\mathcal{L}$ is the product over $\{T_j\}$ of the diagonal maps ${M \hookrightarrow M^\leaves{T_j}}$.
The normal bundle $\nu$ of the diagonal $M^\mathcal{T} \hookrightarrow M^\mathcal{L}$ is a product over $\{T_j\}$ of normal bundles $\nu_j$ of diagonals $M \to M^\leaves{T_j}$ and we identify each $\nu_j$ with the tangent bundle $TM$.

The space $N_K^\epsilon$ is a product over $\{T_j\}$ of neighborhoods $N_{K,j}^\epsilon$ of the diagonals $M \to  M^\leaves{T_j}$ where $N_{K,j}^\epsilon$ consists of maps $\varphi_j$ from $\leaves{T_j}$ to $M$ with images lying in $\epsilon$ balls in $M$.

In particular, the map $\varphi_j$ sends the two leaves of the segment $T_j$ to points $x_j$ and $y_j$ in the $\epsilon$ ball centered at the midpoint $z_j$ of the unique geodesic segment joining $x_j$ and $y_j$.
(Note that any two of the points $x_j$, $y_j$, and $z_j$ determine the third.)
Let $exp_{z_j}$ be the exponential map for the tangent space to $M$ at the point $z_j$ and define $\phi_j: N_{K,j}^\epsilon \to \nu_j$ as $\phi_j(x_j, y_j) = exp^{-1}_{z_j}(x_j)$, identifying $\nu_j$ with $TM$, the tangent bundle of $M$.
Then $\phi_j$ maps the image of the diagonal to the zero section of $\nu_j$, and is a diffeomorphism onto its image which is the $\epsilon$-disk bundle of $\nu_j$.

Finally, define $\phi: N_K^\epsilon \to \nu$ as the product over $\{T_j\}$ of diffeomorphisms $\phi_j$.
Then $\phi$  maps the image of the diagonal to the zero section of $\nu$ and is a diffeomorphism onto its image which is a convex neighborhood of the zero section of $\nu$.

\end{proof}
\begin{cor}\label{cor: iso pairs downstairs}
Let $0<\epsilon_1<\epsilon_2\le r$; let $0<s_1<s_2$. Let $\nu$ be the normal bundle of the embedding $M^\mathcal{T}\to M^\mathcal{L}$; let $B_s\nu$ be the radius $s$ disk bundle of $\nu$. 

Then there is an isomorphism of fiber bundle pairs between $(B_{s_2}\nu,B_{s_2}\nu-B_{s_1}\nu)$ and $(N_K^{\epsilon_2},N_K^{\epsilon_2}-N_K^{\epsilon_1})$.
\end{cor}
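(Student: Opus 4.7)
The plan is to combine the diffeomorphism $\phi$ produced in the preceding lemma with a fiberwise radial rescaling of $\nu$. The corollary essentially records that the tubular-neighborhood diffeomorphism of the previous lemma preserves the radial disk-bundle structure, and that the mismatch between the parameters $\epsilon_i$ and $s_i$ is absorbed by a rescaling.

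First, I would apply the previous lemma with $\epsilon = \epsilon_2$ to obtain a diffeomorphism $\phi \colon N_K^{\epsilon_2} \to B_{\epsilon_2}\nu$. The key observation is that the factor maps $\phi_j$ are built radially from the inverse exponential at the midpoint $z_j$ of the geodesic joining $x_j$ and $y_j$, so $|\phi_j(x_j,y_j)| = d(z_j, x_j) = \tfrac{1}{2} d(x_j,y_j)$. In particular, $\phi_j$ carries $N_{K,j}^{\epsilon_1}$ diffeomorphically onto the $\epsilon_1$-disk subbundle of $\nu_j$. Taking the product over $\{T_j\}$, the map $\phi$ induces an isomorphism of fiber bundle pairs
\[
(N_K^{\epsilon_2},\, N_K^{\epsilon_2} - N_K^{\epsilon_1}) \,\cong\, (B_{\epsilon_2}\nu,\, B_{\epsilon_2}\nu - B_{\epsilon_1}\nu)
\]
over $M^{\mathcal{T}}$.

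Next, I would construct a fiberwise radial rescaling $R \colon \nu \to \nu$. Choose any piecewise linear homeomorphism $h \colon [0,\infty) \to [0,\infty)$ with $h(\epsilon_1) = s_1$ and $h(\epsilon_2) = s_2$, and define $R(v) = h(|v|)\, v/|v|$ for $v \neq 0$ and $R(0) = 0$. Then $R$ is a bundle automorphism of $\nu$ that restricts to a homeomorphism of pairs $(B_{\epsilon_2}\nu,\, B_{\epsilon_2}\nu - B_{\epsilon_1}\nu) \cong (B_{s_2}\nu,\, B_{s_2}\nu - B_{s_1}\nu)$.

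Composing $R \circ \phi$ produces the desired isomorphism of fiber bundle pairs. I do not expect any substantial obstacle: the radial nature of $\phi$ is built into the construction in the previous lemma, and the independent parameters $s_1, s_2$ are handled entirely by the elementary radial rescaling $R$.
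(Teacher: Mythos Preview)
Your proposal is correct and is exactly the argument the paper has in mind: the corollary is stated without proof, as an immediate consequence of the tubular-neighborhood diffeomorphism $\phi$ constructed in the preceding lemma together with a fiberwise radial adjustment. One small point of care: since $\phi$ is built as a product $\prod_j \phi_j$, its image is a priori a ``polydisc'' $\prod_j B_{\epsilon}\nu_j$ rather than the round disk bundle $B_\epsilon\nu$ (the paper's proof only says the image is a convex neighborhood of the zero section), but this is absorbed by the same kind of fiberwise star-shaped rescaling you already use in your second step, so no substantive change is needed.
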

\begin{lemma}
The subspace $S_\Gamma$  (respectively $s_\Gamma$) of $LM^k$ is the pullback of $N_K^{\frac{r}{|\chi|}}$ (respectively $N_K^{\frac{r}{2|\chi|}}$) under the evaluation map $e_c$.
\end{lemma}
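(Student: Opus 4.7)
The plan is to unwind the definitions on both sides and observe that they coincide, via the key geometric observation that under the hypotheses of Lemma~\ref{lemma: companions}, each component of $\widehat{\Gamma}$ is simply one of the length-$1$ segments $T_j$. More precisely, each $T_j$ is a length-$1$ segment whose two leaves have distinct images as vertices of $Q_i$ subgraphs; because the vertex explosion defining $\widehat{\Gamma}$ separates all vertices lying in $Q_i$ subgraphs, each $T_j$ persists as a component of $\widehat{\Gamma}$ containing exactly two leaves, and the graph distance in $|\widehat{\Gamma}|$ between those two leaves equals the length of $T_j$, namely $1$. In particular, the pairs of leaves of $\widehat{\Gamma}$ lying in a common component are precisely the two-leaf sets $\leaves{T_j}$.

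Next I would rewrite the Lipschitz condition in light of this: a loop system $\gamma \in LM^k$ lies in $S_\Gamma$ (respectively $s_\Gamma$) if and only if for each $T_j$ the two leaf-images under $\gamma \circ \iota$ are within distance $\tfrac{r}{|\chi|}$ (respectively $\tfrac{r}{2|\chi|}$) of each other in $M$. Simultaneously, unwinding the evaluation map gives $e_c(\gamma) = \gamma\circ\iota\colon \mathcal{L}\to M$, so $e_c(\gamma) \in N_K^\epsilon$ if and only if for each $T_j$ the two leaf-images lie in a common $\epsilon$-ball in $M$.

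Matching these two descriptions then identifies $S_\Gamma$ with $e_c^{-1}(N_K^{r/|\chi|})$ and $s_\Gamma$ with $e_c^{-1}(N_K^{r/(2|\chi|)})$, via the standard equivalence — inside the pre-convexity radius — between pairs at mutual distance less than $\epsilon$ and pairs contained in a common $\epsilon$-ball, under the same conventions already used in the preceding paragraph to identify $N_K = N_K^r$ and $n_K = N_K^{r/(4|\chi|^2)}$. The main content of the lemma is really the geometric reduction to segments carried out in the first paragraph; no serious obstacle is expected beyond this careful bookkeeping that translates between the graph-theoretic Lipschitz description used to define $S_\Gamma$, $s_\Gamma$ and the manifold-theoretic ball description used to define $N_K^\epsilon$.
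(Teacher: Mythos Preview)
Your proposal is correct and takes essentially the same approach as the paper's own proof. The paper's argument is the single sentence ``This is true roughly by definition. The conditions from Lemma~\ref{lemma: companions} imply that each tree $T_j$ is a separate component of the intersection graph of $\Gamma$,'' and you have simply unpacked that sentence: the key observation is that each $T_j$ is its own component of $\widehat{\Gamma}$ with two leaves at graph-distance $1$, after which the Lipschitz condition defining $S_\Gamma$ and $s_\Gamma$ matches the ball condition defining $N_K^\epsilon$ term by term.
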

\begin{proof}
This is true roughly by definition. The conditions from Lemma~\ref{lemma: companions} imply that each tree $T_j$ is a separate component of the intersection graph of $\Gamma$.
\end{proof}
\begin{cor}\label{cor: S is a tubular neighborhood}
Let $0<s_1<s_2$. The isomorphism of fiber bundle pairs from Corollary~\ref{cor: iso pairs downstairs} pulls back to an isomorphism fiber bundle pairs between 
\[(e_c^*(B_{s_2}\nu),e_c^*(B_{s_2}\nu-B_{s_1}\nu))=
(e_c^*(B_{s_2}\nu),e_c^*(B_{s_2}\nu)-e_c^*(B_{s_1}\nu))
\]
and 
\[(S_\Gamma,S_\Gamma-s_\Gamma).\]
\end{cor}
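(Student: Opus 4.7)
The plan is to obtain the desired isomorphism essentially by base change along $e_c$ from the isomorphism of Corollary~\ref{cor: iso pairs downstairs}. Specifically, I would first specialize the parameters in Corollary~\ref{cor: iso pairs downstairs} by setting $\epsilon_2 = \frac{r}{|\chi|}$ and $\epsilon_1 = \frac{r}{2|\chi|}$. This gives an isomorphism of fiber bundle pairs
\[
(B_{s_2}\nu,\, B_{s_2}\nu - B_{s_1}\nu) \;\cong\; \bigl(N_K^{r/|\chi|},\, N_K^{r/|\chi|} - N_K^{r/(2|\chi|)}\bigr)
\]
over $M^\mathcal{T}$, where I am free to choose any $0 < s_1 < s_2$.

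Next, I would apply the functor $e_c^*(-)$ to this isomorphism. The preceding lemma identifies $S_\Gamma = e_c^{-1}(N_K^{r/|\chi|})$ and $s_\Gamma = e_c^{-1}(N_K^{r/(2|\chi|)})$, so pulling back the right-hand pair along $e_c$ produces exactly $(S_\Gamma, S_\Gamma - s_\Gamma)$. Here I use the standard fact that pullback (being given by preimage at the point-set level) commutes with set-theoretic complements: for any $Y \subset X$, $e_c^{-1}(X - Y) = e_c^{-1}(X) - e_c^{-1}(Y)$.

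To finish, I would note that pullback of a fiber bundle is again a fiber bundle and that pullback of an isomorphism of fiber bundles over a common base is an isomorphism of the pulled-back bundles. Applied to the left-hand side, this produces the pair $(e_c^*(B_{s_2}\nu),\, e_c^*(B_{s_2}\nu) - e_c^*(B_{s_1}\nu))$. The equality in the statement of the corollary is simply the observation that $e_c^*(B_{s_2}\nu - B_{s_1}\nu) = e_c^*(B_{s_2}\nu) - e_c^*(B_{s_1}\nu)$, which is again preservation of complements by preimage.

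There is no real obstacle here: the entire argument is formal once one has the identification of $S_\Gamma$ and $s_\Gamma$ as preimages under $e_c$ and Corollary~\ref{cor: iso pairs downstairs} in hand. The only point that deserves a line of care is the coherence of the base change, namely that the isomorphism at the level of total spaces respects the bundle projections to $M^\mathcal{T}$ (and hence, after pullback via $e_c$, to $\Maps(|S(c)|, M)$), which follows directly from the construction of the isomorphism in Corollary~\ref{cor: iso pairs downstairs}.
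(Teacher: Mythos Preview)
Your proposal is correct and matches the paper's intended argument. The paper states this as an immediate corollary without an explicit proof, and the steps you outline---specializing $\epsilon_1,\epsilon_2$, invoking the preceding lemma identifying $S_\Gamma$ and $s_\Gamma$ as $e_c$-preimages, and using that pullback preserves complements and bundle isomorphisms---are exactly the implicit reasoning the paper relies on.
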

We will use the notation $f$ for the isomorphism from Corollary~\ref{cor: S is a tubular neighborhood}.

Now to prove that our operations and those of Cohen--Godin agree, we show that each of a sequence of three diagrams commute. 

\begin{lemma}
\label{lemma: first square commutes}
The following diagram commutes:
\[
\begin{tikzcd}
H_*(LM^k) \ar{rr}{\ref{CG: thom collapse}}  \ar{d}[swap]{\ref{us: quotient}}
\ar[dotted]{ddrr}
&&
H_*(\Maps(|S(c)|,M)^\nu)
 \\
H_*(LM^k, LM^k - s_\Gamma) \ar{d}{\sim}[swap]{\ref{us: excision}}
\\
H_*(S_\Gamma, S_\Gamma - s_\Gamma)\ar{rr}[swap]{f_*}
&&
H_*(e_c^*(B_{s_2}\nu),e_c^*(B_{s_2}\nu)-e_c^*(B_{s_1}\nu))\ar{uu}
\end{tikzcd}
\]
where the lower right horizontal map is induced by the diffeomorphism of Corollary \ref{cor: S is a tubular neighborhood} and the right vertical map is the isomorphism on homology that exists because the fiber bundle pair is a cofibration.
\end{lemma}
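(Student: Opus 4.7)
The plan is to recognize Cohen--Godin's Thom collapse map as being, up to standard identifications, the composition given by the other three sides of the square. The key fact is Corollary~\ref{cor: S is a tubular neighborhood}: the pair $(S_\Gamma, S_\Gamma-s_\Gamma)$ is identified via $f$ with the disk bundle pair $(e_c^*(B_{s_2}\nu), e_c^*(B_{s_2}\nu)-e_c^*(B_{s_1}\nu))$, which in turn realizes a tubular neighborhood of $\Maps(|S(c)|,M)$ in $LM^k$ together with a smaller concentric sub-neighborhood.

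First, I would recall the general formulation of a Pontryagin--Thom collapse: for a tubular neighborhood $U$ of a submanifold $Z\hookrightarrow X$ with a chosen sub-neighborhood $U'$ whose closure is contained in the interior of $U$, the Thom collapse map $X\to Z^\nu$ is obtained by pre-composing the quotient $X\to X/(X-U')$ with the identification $X/(X-U')\simeq U/(U-U')\simeq Z^\nu$ induced, at the level of homology, by the sequence
\[
H_*(X)\to H_*(X,X-U')\xleftarrow{\sim} H_*(U,U-U')\xrightarrow{\sim} H_*(Z^\nu),
\]
where the middle isomorphism is excision and the last is the cofibration identification of a disk bundle pair with the Thom space. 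This presentation of the Thom collapse is independent of the choice of concentric tubular neighborhood $(U,U')$ used to define it.

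Next, I would apply this with $X=LM^k$, $Z=\Maps(|S(c)|,M)$, $U=e_c^*(B_{s_2}\nu)$, and $U'=e_c^*(B_{s_1}\nu)$, noting that these pull back legitimate concentric tubular neighborhoods from $M^\mathcal L$ (Corollary~\ref{cor: iso pairs downstairs}) because $e_c$ is transverse to the diagonal. The resulting three-term factorization of \ref{CG: thom collapse} is exactly: quotient map, then excision inverse, then the cofibration identification; this is precisely the three-step composition along the other sides of the square, after translating the pair $(S_\Gamma,S_\Gamma-s_\Gamma)$ via $f_*$ to the disk-bundle pair. Commutativity then reduces to the statement that Cohen--Godin's Thom collapse is well-defined independently of the choice of tubular neighborhood, which is standard.

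The only mild subtlety — and the one step I would be most careful about — is that our map \ref{us: excision} is defined as a chain homotopy inverse to the inclusion $\chains(S_\Gamma,S_\Gamma-s_\Gamma)\to \chains(LM^k,LM^k-s_\Gamma)$ (via iterated barycentric subdivision), rather than as an excision isomorphism for a pair of open sets. However, at the level of homology both maps agree, since the inclusion $(S_\Gamma, S_\Gamma-s_\Gamma)\hookrightarrow (LM^k, LM^k-s_\Gamma)$ satisfies the excision hypothesis because the closure of $s_\Gamma$ lies in the interior of $S_\Gamma$. Thus the induced maps on homology coincide with the excision isomorphism used in the standard description of the Thom collapse, completing the commutativity.
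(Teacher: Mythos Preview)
Your proposal is correct and follows essentially the same approach as the paper: both recognize that the Thom collapse map \ref{CG: thom collapse} is, by definition, the composite of the quotient to relative homology, the excision isomorphism, and the cofibration identification, which is exactly the path around the other three sides of the square. Your write-up is more detailed than the paper's (which dispatches the lemma in two sentences), and your explicit attention to the agreement between the chain-level barycentric-subdivision inverse and the homology-level excision isomorphism is a nice touch, but the underlying argument is the same.
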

\begin{proof}
This is true by definition of the Thom collapse map. There is a quotient to relative homology followed by excision from the upper left corner to the bottom right corner which commutes by construction with the lower left maps and by definition with the upper right maps. 
\end{proof}

Now note that since $\Gamma$ is merely an oriented version of $c$, there is a map at the level of pseudometric realizations $|\Gamma|=|c|\to |S(c)|$ given by contracting images of trees.
This map induces an inclusion $i$ of $\Maps(|S(c)|,M)$ into $\Maps(|\Gamma|,M)$ as maps which are constant on images of trees.

\begin{lemma}
\label{lemma: second square commutes}
The following diagram commutes:
\[
\begin{tikzcd}
H_*(\Maps(|S(c)|,M)^\nu) \ar{rr}{\ref{CG: thom iso}}
&&
H_{* - \chid}(\Maps(|S(c)|,M)) \ar{d}{i_*}
 \\
H_*(S_\Gamma, S_\Gamma - s_\Gamma) \ar{u}
\ar{r}[swap]{\ref{us: cap}}
&
H_{*-\chid}(S_\Gamma) \ar{r}[swap]{\ref{us: heart}}
&
H_{*-\chid}(\Maps(|\Gamma|, M)) 
\end{tikzcd}
\]
where the left vertical map is induced by the diffeomorphism of Corollary~\ref{cor: S is a tubular neighborhood} and the right vertical map is induced by the inclusion $i$.
\end{lemma}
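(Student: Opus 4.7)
The plan is to split the diagram into a comparison of cap products and a homotopical comparison of post-cap maps into $\Maps(|\Gamma|, M)$.

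\textbf{Cap products.} By construction $\Omega = ev^*(\omega)$, and $\omega$ restricts over $\characteristic_K(K)$ to $\omega_K$ where $K$ is the open cell containing $\Gamma$. Since every tree $T_j$ of $\Gamma$ is a segment (Lemma~\ref{lemma: companions}), Definition~\ref{defi:K-Thom} identifies $\omega_K$ on the fiber pair over $\Gamma$ with the Thom class of the normal bundle $\nu$ of $M^{\mathcal{T}}\hookrightarrow M^{\mathcal{L}}$. The restriction of $ev$ to $S_\Gamma$ is $e_c$ followed by the isomorphism $f$ of Corollary~\ref{cor: S is a tubular neighborhood}, so $\Omega|_{(S_\Gamma, S_\Gamma-s_\Gamma)} = f^*(U)$ for $U$ the Thom class of $e_c^*(\nu)$. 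Thus the cap product in step \ref{us: cap} matches $f_*$ followed by $(\cap U)$ used in the Thom isomorphism.

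\textbf{Decomposing the Thom isomorphism.} Factor the Thom isomorphism \ref{CG: thom iso} as
\[
H_*(B\nu, \partial B\nu)\xrightarrow{\cap U} H_{*-\chid}(B\nu)\xrightarrow{\pi_*} H_{*-\chid}(\Maps(|S(c)|,M)),
\]
where $\pi$ is the bundle projection, a homotopy inverse to the zero-section inclusion. Transporting $\pi$ through $f$ yields a deformation retract $\pi_f: S_\Gamma \to \Maps(|S(c)|,M)$. Combined with the previous paragraph, we reduce to showing $i_* \circ \pi_{f,*} = \heartsuit_*$ on $H_*(S_\Gamma)$.

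\textbf{Homotopy.} The maps $i \circ \pi_f$ and $\heartsuit_\Gamma$ coincide on the zero section $\Maps(|S(c)|,M) \subset S_\Gamma$. Indeed, each $T_j$ is a length-one segment and, by the distinct-leaves condition of Lemma~\ref{lemma: companions}, is a component of $\widehat\Gamma$ with two leaves; for $\gamma\in \Maps(|S(c)|,M)$ these two leaves carry a common value $c_j\in M$. Since $\straighten$ sends each leaf to the corresponding vertex of $\Delta_{\leaves{T_j}}$, Theorem~\ref{thm:Karcher}(1) gives $\Upsilon(\gamma\circ\iota, \straighten(x)) = c_j$ for every $x\in |T_j|$, so $\heartsuit_\Gamma(\gamma)$ is constant $c_j$ on $|T_j|$ and agrees with $\gamma$ on each $|Q_i|$; that is, $\heartsuit_\Gamma(\gamma) = i(\gamma)$. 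The fiberwise radial contraction $v\mapsto (1-t)v$ on $e_c^*(\nu)$ transports via $f$ to a homotopy $H_t: S_\Gamma \to S_\Gamma$ with $H_0 = \id$ and $H_1$ landing in the zero section (factoring as zero-section inclusion composed with $\pi_f$). Then $\heartsuit_\Gamma\circ H_t$ is a homotopy from $\heartsuit_\Gamma$ to $\heartsuit_\Gamma\circ H_1 = i\circ\pi_f$, using the coincidence on the zero section.

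The main technical step is verifying $\heartsuit_\Gamma = i$ on the zero section, which reduces to the fact that simplicial geodesic interpolation of constant vertex values is constant. The rest is formal manipulation of Thom isomorphisms, cap products, and tubular neighborhoods of finite-codimension subspaces of $LM^k$.
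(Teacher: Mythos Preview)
Your proof is correct and follows essentially the same architecture as the paper's: split the Thom isomorphism \ref{CG: thom iso} into a cap product with the Thom class followed by the bundle projection, check that the cap products agree via the identification of Corollary~\ref{cor: S is a tubular neighborhood}, and then verify that the right-hand square of spaces commutes up to homotopy.

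The only genuine variation is in the homotopy. The paper builds it directly in the target: for fixed $\gamma\in S_\Gamma$, it homotopes $\heartsuit(\gamma)$ by shrinking the short geodesic segment on each tree $|T_j|$ to a constant map, giving a homotopy from $\heartsuit$ to $i\circ p\circ f$. You instead move in the source: transport the fiberwise radial contraction of $e_c^*(\nu)$ through $f$ to get $H_t:S_\Gamma\to S_\Gamma$, and precompose $\heartsuit_\Gamma$ with it, using the observation that $\heartsuit_\Gamma$ and $i$ agree on the zero section. Both are legitimate; yours is arguably more structural since it never leaves the setting of bundle retractions, while the paper's is more hands-on with the geodesic extension. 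One small quibble: your appeal to Theorem~\ref{thm:Karcher}(1) for the constancy of $\Upsilon(\gamma\circ\iota,\cdot)$ on $\Delta_{\leaves{T_j}}$ when both vertex values coincide only literally covers the vertices; you are implicitly also using that the Riemannian center of mass of a configuration concentrated at a single point is that point, which is immediate from the definition but worth stating. The paper also notes that the left square commutes only up to the sign $(-1)^d$ (coming from the orientation ambiguity); you may wish to flag this as well, since it is where the sign in Proposition~\ref{prop:we agree} originates.
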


\begin{proof}
The Thom isomorphism $H_*(\Maps(|S(c)|,M)^\nu)\xrightarrow{\ref{CG: thom iso}} H_*(\Maps(|S(c)|,M))$ splits as a composition of two isomorphisms:
\begin{enumerate}
\item[\ref{CG: thom iso}$'$]
 first, the isomorphism from $H_*(\Maps(|S(c)|,M)^\nu)$ to $H_{*-\chid}(e_c^*(\nu))$ given by capping with the Thom class of the bundle $e_c^*(\nu)$, and
\item[\ref{CG: thom iso}$''$]
 then, the isomorphism from $H_{*-\chid}(e_c^*(\nu))$ to $H_{*-\chid}(\Maps(|S(c)|,M))$ induced by projection $p$ from the total space to the base space of the pulled back normal bundle.
\end{enumerate}

Therefore, the diagram above may be rewritten as two adjacent squares. The square on the left commutes, possibly up to sign $(-1)^d$, again by Corollary~\ref{cor: S is a tubular neighborhood}. A priori, the Thom class on the bottom and the Thom class on the top are Thom classes of pulled back fiber bundle pairs $(N_K^{\epsilon_2},N_K^{\epsilon_2}-N_K^{\epsilon_1})$ for two different choices of $(\epsilon_1,\epsilon_2)$, but there is a map of pairs between these two choices which makes everything commute.
\[
\begin{tikzcd}
H_*(\Maps(|S(c)|,M)^\nu) \ar{r}{\ref{CG: thom iso}'} 
&
H_{* - \chid}(e_c^*(\nu)) \ar{r}{\ref{CG: thom iso}''} 
&
H_{* - \chid}(\Maps(|S(c)|,M)) \ar{d}{i_*}
 \\
H_*(S_\Gamma, S_\Gamma - s_\Gamma)
\ar{r}[swap]{\ref{us: cap}} \ar{u}{f_*}
&
H_{*-\chid}(S_\Gamma) \ar{r}[swap]{\ref{us: heart}} \ar{u}{f_*}
&
H_{*-\chid}(\Maps(|\Gamma|, M))
\end{tikzcd}
\]

The square on the right is induced by a diagram in spaces:
\[
\begin{tikzcd}
e_c^*(\nu) \ar{r}{p} 
&
\Maps(|S(c)|,M) \ar{d}{i}
 \\
S_\Gamma \ar{r}{\heartsuit} \ar{u}{f}
&
\Maps(|\Gamma|, M)
\end{tikzcd}
\]
This diagram does not commute on the nose, but it does commute up to homotopy.
Roughly, for a fixed element $\gamma$ of $S_\Gamma$, $\heartsuit(\Gamma)$ extends $\gamma$ to a map from $|\Gamma|$ to $M$ by mapping trees to short geodesic segments.
Following the same logic as in the proof of Lemma~\ref{lemma: disk bundle over diagonal}, there is a homotopy from such a map to one which is constant on each tree.
A consistent choice of such a homotopy for all points $\gamma$ in $S_\Gamma$ gives rise to a homotopy from $\heartsuit$ to the composition $i \circ p \circ f$.
Therefore, the induced diagram on the homology of these spaces commutes.
 \end{proof}

\begin{lemma}
\label{lemma: third diagram commutes}
The following diagram commutes:

\[
\begin{tikzcd}
H_{*-\chid}(\Maps(|S(c)|,M)) 
\ar{dd}{i_*}
 \ar{dr}{\ref{CG: outputs} }
&
\\
& H_{*-\chid}(LM^\ell)
\\
H_{*-\chid}(\Maps(|\Gamma|,M)) 
\ar{ur}[swap]{\ref{us: outputs}}&
\end{tikzcd}
\]
\end{lemma}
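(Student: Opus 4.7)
The plan is to prove the stronger claim that the triangle of \emph{spaces}
\[
\Maps(|S(c)|,M) \rightrightarrows LM^\ell
\]
commutes up to homotopy, where one arrow is Cohen--Godin's output pullback \ref{CG: outputs} and the other is our \ref{us: outputs} precomposed with the inclusion $i$; passing to homology then yields the lemma. On $\phi \colon |S(c)| \to M$ the two compositions produce, respectively, the tuples of loops $\phi \circ \bar\partial^{\mathrm{std}}_{C_0}$ (indexed by output boundary cycles $C_0$ of $S(c)$) and $\phi \circ q \circ \bar\partial^{\mathrm{std}}_C$ (indexed by output boundary cycles $C$ of $\Gamma$), where $q \colon |\Gamma| \to |S(c)|$ is the collapse map contracting the $\widetilde{T}_j$ to points and $C_0$ is the sub-cycle of $C$ obtained by discarding the half-edges lying in trees. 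Here the superscript denotes the length-one rescaling of the standard parameterization.

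The key observation is that for each matched pair $(C,C_0)$, the maps $\bar\partial^{\mathrm{std}}_{C_0}$ and $q \circ \bar\partial^{\mathrm{std}}_C$ are both continuous, weakly monotone, degree-one parameterizations of the same image $|C_0| \subset |S(c)|$, traversing it in the same cyclic order. They differ only in how they distribute arc-length: the first at constant speed, the second with ``rest periods'' precisely on the subintervals that $\bar\partial^{\mathrm{std}}_C$ spent traversing tree edges of $\Gamma$. Thus producing the desired homotopy between the two maps of spaces reduces to constructing a homotopy between these two parameterizations through weakly monotone degree-one maps, continuously in $\phi$.

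Such a homotopy is built by lifting each parameterization to a weakly monotone map $[0,1] \to \mathbb{R}$ in the universal cover of $|C_0|$, linearly interpolating there (the convex combination of weakly monotone functions is weakly monotone), and descending back. Performing this construction simultaneously for every output boundary cycle produces a family of $\ell$-tuples $(\gamma_s^1,\ldots,\gamma_s^\ell)$ and hence a homotopy
\[
H \colon [0,1] \times \Maps(|S(c)|, M) \to LM^\ell, \qquad (s,\phi) \mapsto (\phi \circ \gamma_s^1, \ldots, \phi \circ \gamma_s^\ell),
\]
between the two compositions. Continuity in $\phi$ is immediate because the family $\gamma_s$ is independent of $\phi$.

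The main obstacle I anticipate is not the homotopy argument itself, which is essentially formal, but the combinatorial bookkeeping needed to confirm that the cyclic orders on the two sides really do agree. This matching is encoded in condition (2) of the equivalence relation of Definition~\ref{defi: marked metric chord diagram}, and also depends on the convention, flagged in the introductory discussion to this section, that Cohen--Godin (like us) use the reverse parameterization $\bar\partial$ rather than $\partial$ on output boundary cycles, chosen so that their gluing theorem holds and the resulting operations agree with those of Chas--Sullivan.
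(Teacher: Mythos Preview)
Your proposal is correct and follows essentially the same approach as the paper: both lift to the space level and show the triangle commutes up to homotopy by observing that the two output parameterizations $S^1 \to |S(c)|$ differ only by a weakly monotone reparameterization (one constant-speed, the other with pauses on the tree subintervals), then build an explicit homotopy between them and postcompose with $\phi$. Your construction of the homotopy via linear interpolation of lifts to $\mathbb{R}$ is a clean way to make precise what the paper phrases more loosely as ``a homotopy parametrizing the contraction of the subintervals and concomitant rescaling.''
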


\begin{proof}
We show that the following diagram in spaces commutes up to homotopy:
\[
\begin{tikzcd}
\Maps(|S(c)|,M) \ar{dd}{i} \ar{dr}{\rho_{out} }&
\\
& LM^\ell
\\
\Maps(|\Gamma|,M) \ar{ur}[swap]{\rho_{out} }&
\end{tikzcd}
\]
where the two maps $\rho_{out}$ to $LM^\ell$ are given by pulling back maps from $|S(c)|$ and $|\Gamma|$ respectively to output circles.

Consider a single output circle of $\Gamma$, with $S^1 \to |\Gamma|$.
The composition of maps ${S^1 \to |\Gamma| \to |S(c)|}$ contracts distinct subintervals of $S^1$ which are preimages of trees of $\Gamma$.
The corresponding output of $S(c)$, $S^1 \to |S(c)|$ does not have such contracted subintervals.
Therefore, the following diagram commutes up to a homotopy parametrizing the contraction of the subintervals and concommitant rescaling:
\[
\begin{tikzcd}
& 
{|S(c)|} 
\\
 S^1 \ar{ur} \ar{dr} &&
 \\
&
{|\Gamma|}.  \ar{uu}
\end{tikzcd}
\]
Postcomposing this homotopy with a map $\theta$ in $\Maps(|S(c)|,M)$ gives a homotopy of the two pullbacks to this output, that is, a homotopy of the two maps from $S^1$ to $M$.
For this fixed $\theta$, the disjoint union over all $\ell$ outputs of these homotopies gives a map from $\sqcup_\ell S^1\times I \to M$ which gives rise to a homotopy of maps in the diagram.
Therefore, the diagram of mapping spaces commutes up to homotopy.

\end{proof}

By placing these three commutative diagrams side by side, we see that both constructions induce the same operations from $H_*(LM^k)$ to $H_{*-\chid}(LM^\ell)$ and the proposition follows.

\subsection{The Batalin--Vilkovisky operator}
In this final section we shall show that we recover the Batalin--Vilkovisky operator defined by Chas--Sullivan~\cite{CS}. 

Consider the map $\reparam:S^1\times LM\to LM$ given by ${\reparam(s,\gamma)(t)=\gamma(s+t\pmod 1)}$. This map induces a map on homology $\Delta:H_*(LM)\to H_{*+1}(LM)$ given by
\[\Delta(\alpha)=\reparam_*(\eta
\times
 \alpha)\]
where $\eta$ is the class of the definining map $[0,1]\to S^1$, viewed as a $1$-chain.

At the chain level, we may make the following definition:
\begin{defi}
The \emph{chain-level Batalin--Vilkovisky operator} $\Delta$ is given by the composition
\[
C_i(LM)\xrightarrow{\eta\otimes} C_1(S^1)\otimes C_i(LM)\xrightarrow{\text{EZ}}C_{i+1}(S^1\times LM)\xrightarrow{\reparam_*}C_{i+1}(LM).\]
\end{defi}

Now consider our space $\spaced(0,1,1)$ of string diagrams of Euler characteristic $0$ with one input and one output. A string diagram $\Gamma$ in that space necessarily has a single $Q_1$ and a single $L_1$ with empty $\{\widetilde{T}_j\}$ set. 
For the input $in$ and the output $out$ of such a diagram $\Gamma$, the maps $\partial_{in}$ and $\bar\partial_{out}$ provide explicit identifications of the pseudometric realization $|\Gamma|$ of $\Gamma$ with the standard circle.
On such a string diagram, an ordering or orientation is no data at all. 
There is an explicit cellular identification $p$ of the space $\spaced(0,1,1)$ with the standard circle unit circle $S^1$ given by the position on $Q_1$ of the vertex of $L_1$, relative to the vertex of $Q_1$.
See Figure \ref{fig:SD011}. 
The string diagram for which these vertices coincide corresponds to the $0$-cell; string diagrams for which these vertices are distinct correspond to points in the interior of the $1$-cell. 
An explicit formula for the identification is given by $p(\Gamma) = \partial_{in}^{-1}\bar\partial_{out}(0)$.

\begin{figure}[ht]
\includegraphics[scale=1]{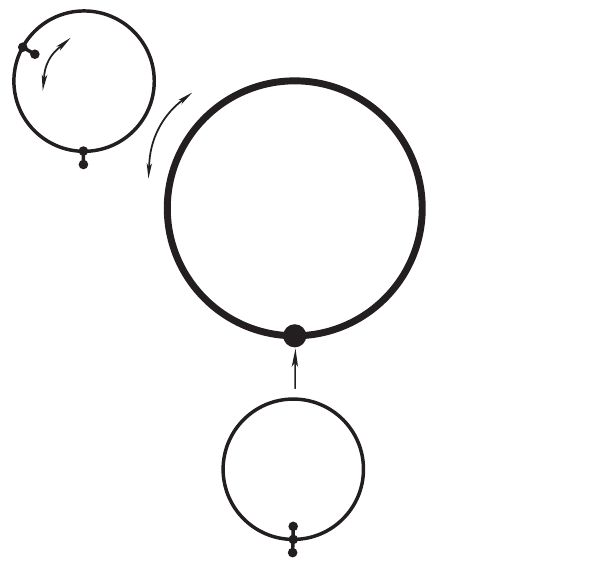}
\caption{The space $\spaced(0,1,1)$ and two string diagrams; one corresponding to  the $0$-cell and one corresponding to a point in the interior of the $1$-cell.}
\label{fig:SD011}
\end{figure}

\begin{prop}\label{prop:we agree BV}
Our string topology operation $\mu_{p^{-1}\eta,W}:C_*(LM)\to C_{*+1}(LM)$ coincides with the chain-level Batalin--Vilkovisky operator $\Delta$.
\end{prop}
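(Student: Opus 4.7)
The plan is to unwind the definitions in the extreme case $(\chi,k,\ell)=(0,1,1)$, where most of the string topology machinery degenerates to the identity, and then to identify the remaining nontrivial step, $\heartsuit$ followed by $\bar\partial_{out}^{*}$, with $\reparam$ on the nose. The first observation is that every $\Gamma\in\spaced(0,1,1)$ has empty tree and leaf sets, so the intersection graph $\widehat{\Gamma}$ is empty, the $\varepsilon$-Lipschitz condition of Section~\ref{section:stringtopology} is vacuous, and $S=s=\spaced(0,1,1)\times LM$ with $S-s=\emptyset$. The stratified pair $\Mspace{\spaced(0,1,1)}$ reduces cellwise to $(\mathrm{pt},\emptyset)$, so the global Thom class, and hence the diffuse intersection class $\Omega$, are both represented by the unit cocycle, whose cap action is the identity. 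Similarly, any chain homotopy inverse to the tautological inclusion $S\hookrightarrow\spaced\times LM$ and the Alexander--Whitney map on $LM^{1}$ are, up to chain homotopy, the identity.

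Next I would describe $\heartsuit$ and $\bar\partial_{out}^{*}$ explicitly. Since $Q_{1}$'s marking and $L_{1}$'s edge both have length zero, the canonical map $\partial_{in}$ is a homeomorphism from the standard circle onto $|\Gamma|$, and with $\mathcal{L}=\emptyset$ the formula in Definition~\ref{defi:theta} forces $\Theta(\Gamma,\gamma)=\gamma\circ\partial_{in}^{-1}$. The two boundary cycles of a fat lollipop traverse the loop of $Q_{1}$ in opposite cyclic orders; using $\bar\partial_{out}$ in place of $\partial_{out}$ compensates for this. Writing $s=p(\Gamma)=\partial_{in}^{-1}\bar\partial_{out}(0)$, it follows that
\[
\bar\partial_{out}^{*}\Theta(\Gamma,\gamma)(t)=\gamma(s+t\bmod 1)=\reparam(s,\gamma)(t).
\]

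Assembling the string topology composition, the homeomorphism $p$ identifies $p^{-1}\eta\in C_{1}(\spaced(0,1,1))$ with $\eta\in C_{1}(S^{1})$, so the shuffle map sends $p^{-1}\eta\otimes\sigma$, for $\sigma\in C_{i}(LM)$, to the same chain in $C_{i+1}(S^{1}\times LM)$ that appears in the definition of $\Delta$. The trivial middle steps preserve this chain, and the pointwise formula above shows that $\bar\partial_{out}^{*}\circ\heartsuit$ pushes it forward to $\reparam_{*}(\mathrm{EZ}(\eta\otimes\sigma))=\Delta(\sigma)$. The main point requiring care is purely orientational: one must verify that the two boundary cycles of a fat lollipop really do induce opposite cyclic orders on the loop of $Q_1$, so that the $\bar\partial_{out}$ convention produces a $+s$ shift rather than $-s$; once this is established, the proposition follows by a direct unwinding of definitions.
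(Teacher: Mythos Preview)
Your proposal is correct and follows essentially the same path as the paper: both observe that in $\spaced(0,1,1)$ the absence of trees forces $S=s$, $\Omega=1$, and the excision and Alexander--Whitney steps to be the identity, and both reduce the claim to the pointwise identity $\rho_{out}\circ\heartsuit=\reparam\circ(p\times\id)$, which holds because $\partial_{in}^{-1}\circ\bar\partial_{out}$ is rotation by $p(\Gamma)$. The paper packages this last comparison as a commutative diagram of spaces rather than a direct formula, but the content is identical; one small caution is that the proposition asserts equality on the nose, so you should say the middle steps \emph{are} the identity (as the paper does) rather than merely chain homotopic to it.
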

\begin{proof}
Because there are no $T_j$ in any string diagram in $\spaced(0,1,1)$, the string topology construction is significantly simpler. The spaces $S$ and $s$ are both equal to $\spaced(0,1,1)\times LM$, so our relative chains are absolute chains and our excision map is the identity. 

The embedding of $M^\mathcal{T}$ in $M^\mathcal{L}$ is the constant map on the point and its Poincar\'e--Lefschetz dual is the degree $0$ cohomology class represented by the constant function $1$. Chasing the definitions, we see that the diffuse intersection class $\Omega$ is also the class in degree $0$ cohomology of the constant function $1$. In degree $0$, cocycle representatives are unique, so $W$ must be the constant function $1$ and capping with it is the identity.

The map $\heartsuit$ uses the canonical map $\partial_{in}$ for the input boundary cycle and takes the pair $(\Gamma,\gamma)$ to the pair $(\Gamma,\gamma\circ \partial_{in}^{-1})$ in $\spaced(0,1,1)(M)$.

Then the entire composition $(\rho_{in})_!$ described in Definition~\ref{def:in} boils down to the Eilenberg--Zilber shuffle map $C_1(\spaced(0,1,1))\otimes C_i(LM)\to C_{i+1}(\spaced(0,1,1)\times LM)$ followed by the map induced by $\heartsuit$.

Next, the map $(\rho_{out})_*$ described in Definition~\ref{def:out} is induced by restriction to 
the single output using $\bar\partial_{out}$
(the Alexander--Whitney map is the identity). 
Then our map is the composition that runs along the left side of the following commutative diagram, whereas the chain level Batalin--Vilkovisky operator is the composition that runs along the right:
\[
\begin{tikzcd}
&C_i(LM)\ar{dl}[swap]{p^{-1}\eta \otimes}
\ar{dr}{\eta\otimes}
\\
C_1(\spaced(0,1,1))\otimes C_i(LM)\ar{rr}{p_*\otimes\id}
\ar{d}[swap]{\text{EZ}} && C_1(S^1)\otimes C_i(LM)\ar{d}{\text{EZ}}
\\
C_{i+1}(\spaced(0,1,1)\times LM)\ar{rr}{(p\times \id)_*}
\ar{dr}[swap]{(\rho_{out})_*\circ\heartsuit_*}&&C_{i+1}(S^1\times LM)\ar{dl}{{\reparam_*}}
\\
&C_{i+1}(LM).
\end{tikzcd}\]
The upper triangle and square clearly commute. To see commutativity of the bottom triangle, observe that it is induced by a diagram of spaces
\[
\begin{tikzcd}
\spaced(0,1,1)\times LM\ar{rr}{p\times \id}
\ar{dr}[swap]{\rho_{out}\circ\heartsuit}&&S^1\times LM\ar{dl}{{\reparam}}
\\
&LM.
\end{tikzcd}\]
The two compositions are as follows:
\begin{align*}
(\rho_{out}\circ\heartsuit)(\Gamma,\gamma)(t)
&=\gamma(\partial_{in}^{-1}\circ\bar\partial_{out}(t))\\
\reparam(p(\Gamma),\gamma)(t)
&=\gamma(t+\partial_{in}^{-1}\bar\partial_{out}(0)).
\end{align*}
But $\partial_{in}^{-1}\bar\partial_{out}:S^1\to S^1$ is always a rotation so these coincide and the diagram commutes.
\end{proof}

\appendix

\section{Straightening of short-branched trees}\label{appendix:straightening}

The purpose of this appendix is to prove Proposition \ref{prop:degenerate straighten}.
We will do this constructively by defining a straightening map $\straighten$ which satisfies the conditions of the proposition.
Namely, given a short-branched tree $T$, the map $\straighten$ from the pseudometric realization $|T|$ of $T$ to the simplex $\Delta_{\leaves{T}}$ spanned by the leaves of $T$ takes each leaf to itself.
Further, if $T$ has an internal edge of length zero or a prunable branch, then the straighten map behaves well with respect to contracting the edge or pruning the branch.

To assign a point in $\Delta_{\leaves{T}}$ to a point $v$ of $|T|$ it suffices to give barycentric coordinates.
This means that for every leaf $w \in \leaves{T}$ we give $v$ a nonnegative coordinate $a(v,w)$ corresponding to $w$  such that the sum over all leaves of the coordinates $a(v,w)$ is $1$.

Our formula for $a(v,w)$ includes expressions involving lengths of branches of trees.
Because we are working in $|T|$ rather than in $T$ itself, we have definitions analogous to those given previously.
In particular, we define versions of {\em vertices}, {\em edges}, {\em length,} and {\em leaf length} in the metric space $|T|$ rather than in $T$ itself.

Fix a point $v$ of $|T|$.
\begin{enumerate}
\item
A {\em $v$-zero cell of $|T|$} is a point in the image of a vertex of $T$ or $v$ itself, if $v$ is not in the image of any vertex of $T$.
\item
A {\em $v$-one cell of $|T|$} is a subspace of $|T|$ homeomorphic to a closed interval whose boundary consists of two $v$-zero cells and which contains no other $v$-zero cells.
\item
The {\em length} of a $v$-one cell is the distance in $|T|$ between its two boundary $v$-zero cells.
\item
Given a subspace of $|T|$ consisting of $v$-one cells, its {\em length} is the sum of the lengths of the constituent $v$-one cells.
\item
Given a pair of distinct $v$-zero cells $v_i$ and $v_j$ of $|T|$ the subspace $T(v_i, v_j)$ is the closure in $|T|$ of the connected component of $|T|- v_i$ containing $v_j$.
\item 
Given such a subspace $T(v_i, v_j)$ of $|T|$, its {\em leaf length} is the number of leaves of $T$ whose image in $|T|$ lies in $T(v_i, v_j) - v_i$.
\item
Given such a subspace $T(v_i, v_j)$ of $|T|$, its {\em deviation} $D(v_i, v_j)$ is the leaf length of $T(v_i, v_j)$ minus the length of $T(v_i, v_j)$.
\end{enumerate}\

Now we prove a sequence of lemmas giving bounds on the values of the deviations.
We will use these lemmas to define barycentric coordinate $a(v,w)$.

\begin{lemma}\label{lemma:positive deviation}
Given a pair distinct $v$-zero cells $v_i$ and $v_j$ of $|T|$, the deviation $D(v_i, v_j)$ of  $ T(v_i,v_j)$ is greater than or equal to zero.
\end{lemma}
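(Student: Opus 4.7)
The plan is to reduce to the two forms of the short-branched condition by extending the subspace $T(v_i,v_j)$ in the direction away from $v_j$ until it becomes something directly controlled by Definition~\ref{def:short-branched}: either a branch at a trivalent-or-higher vertex, or the entire tree $|T|$.

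Concretely, I would define an auxiliary point $v_i^{**}$ as follows. Starting from $v_i$, move along $|T|$ into the connected component of $|T|-v_i$ not containing $v_j$, passing straight through any bivalent vertices of $T$, until reaching the first point that is either (a) the image of an at-least-trivalent vertex of $T$, or (b) a leaf of $T$. (If $v_i$ is itself trivalent-or-higher or a leaf, set $v_i^{**}=v_i$.) In a finite tree such a $v_i^{**}$ exists and is unique, and by construction the geodesic segment $[v_i,v_i^{**}]$ contains no leaves of $T$ in its interior and no branching. Hence $T(v_i^{**},v_j)=T(v_i,v_j)\cup[v_i,v_i^{**}]$ with intersection $\{v_i\}$, so $\mathrm{length}(T(v_i^{**},v_j))=\mathrm{length}(T(v_i,v_j))+d(v_i,v_i^{**})$ and, because no leaves of $T$ are gained when passing through bivalent vertices, $\mathrm{leaflength}(T(v_i^{**},v_j))=\mathrm{leaflength}(T(v_i,v_j))$.

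In Case (a), if $h$ is the half-edge of $T$ at the trivalent-or-higher vertex $v_i^{**}$ pointing toward $v_j$, then $T(v_i^{**},v_j)=|T_h|$ and condition (2) of Definition~\ref{def:short-branched} gives $\mathrm{length}(|T_h|)\le\mathrm{leaflength}(|T_h|)$. In Case (b), the segment $[v_i,v_i^{**}]$ is a pendant of the tree, so $T(v_i^{**},v_j)=|T|$; condition (1) gives $\mathrm{length}(|T|)=\mathrm{leaflength}(T)$, and a quick count shows the leaves of $T$ lying in $|T|-v_i^{**}$ form a set of size equal to the leaf length of $T(v_i,v_j)$, so $\mathrm{length}(T(v_i^{**},v_j))=\mathrm{leaflength}(T(v_i^{**},v_j))$. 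Either way,
\[
\mathrm{length}(T(v_i,v_j))+d(v_i,v_i^{**})=\mathrm{length}(T(v_i^{**},v_j))\le\mathrm{leaflength}(T(v_i^{**},v_j))=\mathrm{leaflength}(T(v_i,v_j)),
\]
and since $d(v_i,v_i^{**})\ge 0$ we obtain $D(v_i,v_j)\ge 0$.

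The only real obstacle is bookkeeping: the short-branched inequalities are only stated for half-edges at trivalent-or-higher vertices and for the whole tree, but $v_i$ is allowed to be a $v$-zero cell that lies in the interior of an edge or at a bivalent vertex, so neither inequality applies to $T(v_i,v_j)$ directly. The sliding trick handles this, and crucially the direction of sliding must be \emph{away} from $v_j$ so that the length grows while the leaf count is unchanged, which is exactly the direction in which a short-branched inequality on the enlarged subspace gives us the inequality we want on the original.
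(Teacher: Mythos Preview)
Your proof is correct and is essentially the same argument as the paper's. The paper constructs a subtree $\widehat{T}(v_i,v_j)$ of $T$ by starting with the edges of $T$ whose images meet $T(v_i,v_j)-v_i$ and then extending through bivalent vertices away from $v_j$ until hitting a non-bivalent vertex; this subtree is either a branch $T_h$ or all of $T$, and the short-branched conditions finish the job exactly as you do. Your point $v_i^{**}$ is the image in $|T|$ of the non-bivalent vertex the paper terminates at, and your $T(v_i^{**},v_j)$ is precisely $|\widehat{T}(v_i,v_j)|$, so the two proofs are the same construction phrased in $|T|$ versus in $T$.
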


\begin{proof}
Consider the subtree of $T$ consisting of edges of $T$ whose images in $|T|$ intersect $T(v_i,v_j) - v_i$.
At most one leaf of this subtree is not a leaf of $T$.
In the case that this subtree is a branch of $T$ or all of $T$ itself, we name this subtree $\widehat{T}(v_i,v_j)$.
Otherwise, the subtree has one leaf which is a bivalent vertex of $T$.
In this case, we add to the subtree the other edge of $T$ adjacent to this bivalent vertex.
We continue adding edges in this way until we reach a non-bivalent vertex of $T$ and name the resulting subtree $\widehat{T}(v_i, v_j)$.
In the case that this last vertex is at least trivalent in $T$, the subtree $\widehat{T}(v_i, v_j)$ is a branch of $T$.
In the case that this last vertex is a leaf of $T$, the subtree $\widehat{T}(v_i, v_j)$ is $T$ itself.

The leaf length of $\widehat{T}(v_i, v_j)$ is equal to the leaf length of ${T}(v_i, v_j)$.
The length of $\widehat{T}(v_i, v_j)$ is greater than or equal to the length of ${T}(v_i, v_j)$.
Because $T$ is a short-branched tree, the length of $\widehat{T}(v_i, v_j)$ is less than or equal to its leaf length.
Thus the  length of ${T}(v_i, v_j)$ is less than or equal to its leaf length. See Figure \ref{fig:branch cover}.

\begin{figure}[ht]
\includegraphics[scale=0.8]{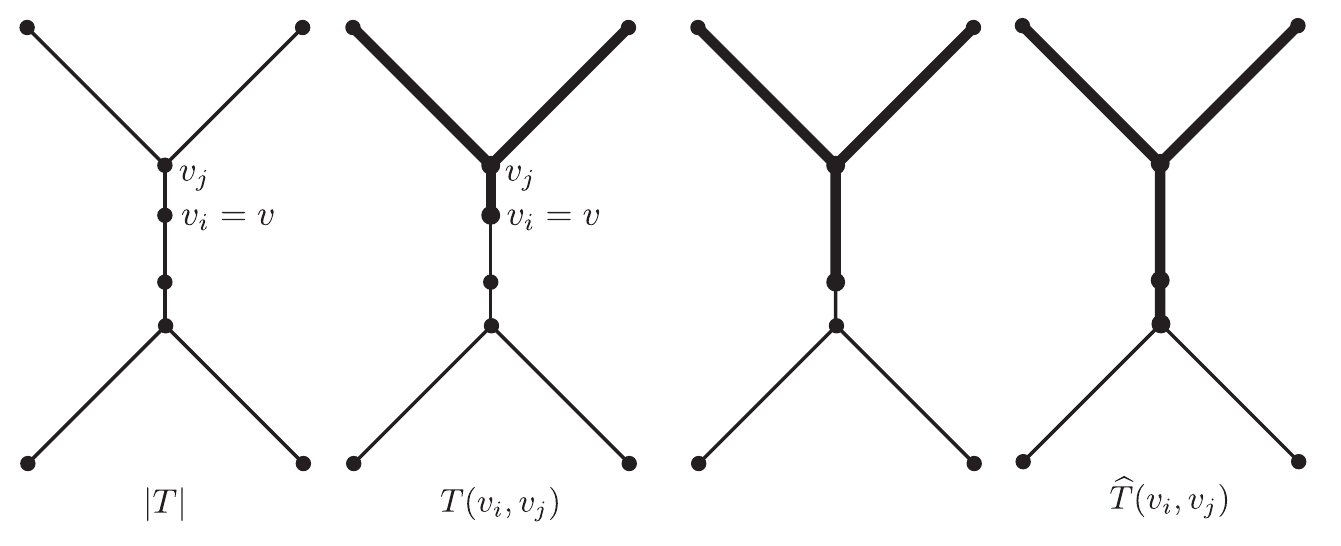}
\caption{Construction of the branch $\widehat{T}(v_i, v_j)$ where $v_i = v$ is not the image of a vertex of $T$. 
The first picture shows the $v$-zero and $v$-one cells of $|T|$ and the second shows the subspace $T(v_i,v_j)$ of $|T$.
The third picture shows the initial subtree of $T$ and the fourth shows its completion to the branch $\widehat{T}(v_i, v_j)$ of $T$.
}
\label{fig:branch cover}
\end{figure}
\end{proof}
\begin{lemma}\label{lemma: fundamental lemma}
Let $T$ be a short-branched tree.
Let $v_i$ be a $v$-zero cell of $|T|$ and let $\{v_j\}$ be a set of points, one from each component of $|T|- v_i$.
Then
\[
\sum_j D(v_i, v_j) =
\left \{
\begin{array}{ll}
0 & \text{ if } v_i \text{  is  the  image  of  a leaf of } T \\
1 & \text{ otherwise.}
\end{array}
\right.
\]
\end{lemma}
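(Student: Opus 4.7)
The plan is to prove this as a straightforward bookkeeping argument, separately summing the leaf lengths and the lengths and then using the short-branched hypothesis to equate them.

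First I would observe that the subspaces $\{T(v_i, v_j)\}$ cover $|T|$ and meet pairwise only at $v_i$, since by definition $T(v_i, v_j)$ is the closure of the component of $|T| - v_i$ containing $v_j$, and one such subspace is taken for each component. Therefore the sum $\sum_j \ell(T(v_i, v_j))$ of lengths equals the total length of $|T|$, which equals the length of $T$. Because $T$ is short-branched, Definition~\ref{def:short-branched}(1) tells us this equals the leaf length of $T$, which by Definition~\ref{def:leaf length} is $|\leaves{T}| - 1$.

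Next I would compute the sum of leaf lengths. Every leaf of $T$ whose image in $|T|$ is distinct from $v_i$ lies in exactly one component of $|T| - v_i$, and is therefore counted in the leaf length of exactly one $T(v_i, v_j)$. Meanwhile, any leaf of $T$ whose image is $v_i$ is not counted in any $T(v_i, v_j)$, because the definition of leaf length excludes the point $v_i$ itself. Since $T$ has no bivalent vertices in the statement's short-branched trees (more precisely: at most one leaf can map to a given point, because leaves are univalent vertices), we conclude that
\[
\sum_j (\text{leaf length of } T(v_i, v_j)) = \begin{cases} |\leaves{T}| - 1 & \text{if $v_i$ is the image of a leaf,} \\ |\leaves{T}| & \text{otherwise.} \end{cases}
\]
Subtracting the sum of lengths from the sum of leaf lengths gives exactly the stated values $0$ and $1$.

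The main thing to take care with is the convention at the distinguished point $v_i$: it is a $v$-zero cell but might be (a) the image of a leaf, (b) the image of an internal vertex, or (c) an interior point of an edge (when $v_i = v$ and $v$ is not at the image of a vertex). In cases (b) and (c) no leaf of $T$ maps to $v_i$, so the leaf count is $|\leaves{T}|$; in case (a) exactly one leaf maps to $v_i$ and is excluded everywhere, so the count is $|\leaves{T}| - 1$. I expect this case analysis on $v_i$ — rather than the arithmetic — to be the only real subtlety, and it is handled by unpacking the definition of leaf length (which explicitly removes $v_i$ from $T(v_i, v_j)$ before counting).
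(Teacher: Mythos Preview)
Your argument is correct and essentially identical to the paper's: decompose $\sum_j D(v_i,v_j)$ as the sum of leaf lengths minus the sum of lengths, note that the $T(v_i,v_j)$ cover $|T|$ meeting only at $v_i$, and invoke the short-branched hypothesis. One caveat on your parenthetical: short-branched trees may have bivalent vertices, and univalence of leaves does not by itself prevent two leaves from collapsing to the same point of $|T|$ when the path between them has length zero; the fact that at most one leaf maps to $v_i$ is true but actually uses the short-branched condition---the paper's proof glosses over this point as well.
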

Figure \ref{fig:fundamental lemma} shows the space $|T|$, together with the $v$-zero cell $v_i$ and the set $\{v_j, v_j', v_j''\}$, each in a different component of $|T| - v_i$.
The space $|T|$ is the union of subspaces $T(v_i, v_j)$, $T(v_i, v_j')$, and $T(v_i, v_j'')$

\begin{figure}[ht]
\includegraphics[scale=0.9]{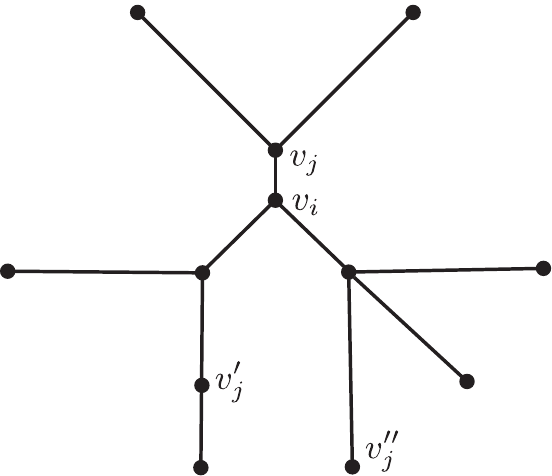}
\caption{The space $|T|$ as the union of subspaces $T(v_i, v_j)$, $T(v_i, v_j')$, and $T(v_i, v_j'')$.
}
\label{fig:fundamental lemma}
\end{figure}

\begin{proof}
By definition, 
\[\sum_j D(v_i, v_j) = \sum_j \text{leaf length of } T(v_i, v_j) - \sum_j \text{length of } T(v_i, v_j).\]
The first sum  on the right is equal to the number of leaves of $T$ whose images lie in $|T| - v_i$.
The second sum on the right is the length of $|T|$, which is the length of $T$.
Because $T$ is a short-branched tree, the length of $T$ is equal to one less than the number of leaves of $T$.
Thus, if $v_i$ is itself the image of a leaf, the two sums on the right are equal; if $v_i$ is not the image of a leaf, the first sum is one greater than the second.
\end{proof}

\begin{lemma}
Given a pair of distinct $v$-zero cells $v_i$ and $v_j$ of $|T|$, the deviation $D(v_i, v_j)$ of  $ T(v_i,v_j)$ is strictly less than one.
\end{lemma}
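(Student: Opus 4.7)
The plan is to combine the previous two lemmas with a targeted application of the short-branched condition at a well-chosen branch. Lemma~\ref{lemma: fundamental lemma} gives $\sum_k D(v_i, v_k) \in \{0,1\}$ (summed over one representative per component of $|T|-v_i$) while Lemma~\ref{lemma:positive deviation} gives $D \geq 0$, so the inequality $D(v_i, v_j) \leq 1$ is automatic; the content of this lemma is ruling out equality.

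First I would dispatch the case where $v_i$ is the image of a leaf of $T$: the fundamental lemma then gives that the sum is $0$, so each $D(v_i, v_k) = 0 < 1$. Otherwise I would argue by contradiction, supposing that $D(v_i, v_j) = 1$. This forces $D(v_i, v_k) = 0$, that is $\text{length}(T(v_i, v_k)) = L_k$, for every $k \neq j$. Since $v_j$ and $v_i$ are distinct points of the metric space $|T|$, the length of $T(v_i, v_j)$ is strictly positive, so $L_j = \text{length}(T(v_i,v_j)) + 1 \geq 2$.

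The main step is to produce a witness $\geq 3$-valent vertex $u^j$ of $T$ and apply the short-branched condition to a branch there. Since $L_j \geq 2$, a least-common-ancestor argument applied to two leaves of $T$ mapping into component $j$ produces at least one $\geq 3$-valent vertex of $T$ whose image lies in $T(v_i, v_j) - v_i$; choose $u^j$ to minimize the $|T|$-distance $\delta^j > 0$ from its image to $v_i$, which guarantees that the $T$-path from $u^j$ to $v_i$ passes only through vertices bivalent in $T$. Let $h^j$ be the half-edge at $u^j$ pointing toward $v_i$ and consider the branch $T_{h^j}$. Its leaf length as a branch of $T$ equals $\sum_{k \neq j} L_k = L - L_j$, while its total length equals $\delta^j + \sum_{k \neq j} \text{length}(T(v_i, v_k)) = \delta^j + (L - L_j)$ using the hypothesis $D_k = 0$. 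The short-branched inequality applied to $T_{h^j}$ then yields $\delta^j \leq 0$, contradicting $\delta^j > 0$.

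The main obstacle is the leaf-counting bookkeeping in the last step: the combinatorial count of leaves of $T$ appearing in $T_{h^j}$ must be shown to agree with the count of leaves of $T$ whose image in $|T|$ lies outside component $j$. This identification rests on the minimality of $\delta^j$ (which ensures that the path from $u^j$ to $v_i$ in $T$ contains no intermediate $\geq 3$-valent vertex) and on the uniqueness of paths in the tree $T$, and it must be verified carefully in degenerate cases where zero-length edges of $T$ identify distinct vertices to a single point of $|T|$.
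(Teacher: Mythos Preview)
Your argument is correct, but it takes a more laborious route than the paper's. Both proofs share the same opening (dispatch the leaf case, then assume $D(v_i,v_j)=1$ and deduce $D(v_i,v_k)=0$ for $k\neq j$), but they diverge at the contradiction step. You locate a $\ge 3$-valent vertex $u^j$ in component $j$ and apply the short-branched inequality directly to the branch $T_{h^j}$, which requires the existence argument for $u^j$, the minimality argument to control intermediate vertices, and the leaf-count bookkeeping you correctly flag as delicate. The paper instead stays entirely within the deviation framework: it moves one $v$-one cell $e$ from $v_i$ into component $j$, landing at a neighboring $v$-zero cell $v_i'$, and observes that $T(v_i',v_i) = e \cup \bigcup_{k\neq j} T(v_i,v_k)$. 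Then $D(v_i',v_i) = \sum_{k\neq j} D(v_i,v_k) - \text{length}(e) = -\text{length}(e) < 0$, contradicting Lemma~\ref{lemma:positive deviation}. This is shorter because it reuses the nonnegativity lemma (which already encapsulates the short-branched condition) rather than unpacking that condition again at a specially chosen vertex; no trivalent vertex need be produced, and no zero-length-edge subtleties arise since the argument works purely with $v$-zero cells and $v$-one cells in $|T|$.
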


\begin{proof}
If $v_i$ is the image of a leaf of $T$, then the previous two lemmas show that $D(v_i, v_j)=0$.

If $v_i$ is not the image of a leaf of $T$, then assume $D(v_i, v_j)=1$.
Then for any $v_j'$ in a different component of $|T| - v_i$ than the one containing $v_j$, $D(v_i, v_j')$ must be $0$ by the previous two lemmas.
Let $e$ be the $v$-one cell of $|T|$ adjacent to $v_i$ in $T(v_i, v_j)$ and consider the subspace of $|T|$ given by the union of the $T(v_i, v_j')$, for all such $v_j'$ and $e$.
If  $v_i'$ is the other $v$-zero cell of $e$ then this subspace is equal to $T(v_i', v_i)$.
The leaf length of $T(v_i', v_i)$ is equal to the sums of the leaf lengths of the $T(v_i, v_j')$ and
the length of $T(v_i', v_i)$ is equal to the sums of the lengths of the $T(v_i, v_j')$, plus the length of $e$.
Thus, $D(v_i', v_i)$ is equal to the sum of the  $D(v_i, v_j')$, minus the length of $e$ and $D(v_i', v_i)$ is equal to negative the length of $e$.
But $e$ has positive length and $D(v_i', v_i)$ must be nonnegative by Lemma \ref{lemma:positive deviation}, so we arrive at a contradiction.

\end{proof}

\begin{lemma}\label{lemma: sum of deviations}
Given  a $v$-one cell $e$ of $|T|$ containing the $v$-zero cells $v_i$ and $v_j$, the sum of deviations $D(v_i, v_j) + D(v_j, v_i)$ is equal to one minus the length of $e$.
\end{lemma}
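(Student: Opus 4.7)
My plan is to compute $D(v_i, v_j) + D(v_j, v_i)$ directly by separately summing the leaf lengths and the lengths of the two subspaces $T(v_i, v_j)$ and $T(v_j, v_i)$, then using the short-branched hypothesis that $\length{T}$ equals the leaf length of $T$, i.e.\ one less than the number of leaves of $T$.

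First I would establish two set-theoretic facts about the decomposition of $|T|$ given by $e$. Since $v_i$ and $v_j$ are joined by the $v$-one cell $e$ with no intermediate $v$-zero cells, removing $v_i$ leaves $v_j$ in the same component as the entire ``far side'' of $e$, and likewise after removing $v_j$. Consequently $T(v_i,v_j) \cup T(v_j,v_i) = |T|$ and $T(v_i,v_j) \cap T(v_j,v_i) = e$. By additivity of the length function on one-cells, this yields
\[
\text{length}(T(v_i,v_j)) + \text{length}(T(v_j,v_i)) = \length{T} + \text{length}(e).
\]

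Next I would analyze the leaves. The interior of the $v$-one cell $e$ contains no $v$-zero cell, hence no image of a vertex of $T$, so in particular no leaf of $T$. Therefore each leaf of $T$ has image in exactly one of $T(v_i,v_j) - v_i$ and $T(v_j,v_i) - v_j$: leaves whose image equals $v_j$ (if any) land in $T(v_i,v_j) - v_i$, leaves whose image equals $v_i$ (if any) land in $T(v_j,v_i) - v_j$, and all other leaves sit on one definite side of $e$. Summing gives
\[
\text{leaf length}(T(v_i,v_j)) + \text{leaf length}(T(v_j,v_i)) = \#\leaves{T}.
\]

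Finally I would combine these identities. By definition,
\[
D(v_i,v_j) + D(v_j,v_i) = \#\leaves{T} - \bigl(\length{T} + \text{length}(e)\bigr),
\]
and the short-branched hypothesis gives $\length{T} = \#\leaves{T} - 1$, so the right-hand side collapses to $1 - \text{length}(e)$. The only subtle point worth double-checking is that the leaves really partition cleanly in the edge cases where $v_i$ or $v_j$ itself is the image of a leaf, but the definitions of $T(v_i,v_j)$ and leaf length (removing $v_i$, not $v_j$) make this work out, so I expect no serious obstacle; the main care is just to handle the bookkeeping about whether $v_i$, $v_j$ are leaf-images, vertex-images, or equal to $v$.
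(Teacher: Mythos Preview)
Your proposal is correct and follows essentially the same approach as the paper's proof: compute the sum of leaf lengths (equal to $\#\leaves{T}$), the sum of lengths (equal to $\length{T}+\text{length}(e)$), and invoke the short-branched condition. Your version is simply more explicit about the inclusion--exclusion $T(v_i,v_j)\cup T(v_j,v_i)=|T|$, $T(v_i,v_j)\cap T(v_j,v_i)=e$ and the leaf bookkeeping, which the paper leaves implicit.
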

\begin{proof}
The sum of the leaf lengths of $T(v_i, v_j)$ and $T(v_j, v_i)$ is equal to the total number of leaves of $T$.
The sum of the lengths of $T(v_i, v_j)$ and $T(v_j, v_i)$ is equal to the length of $|T|$ plus the length of $e$.
But the length of $|T|$ is equal to the length of $T$, which is equal to one less than the number of leaves of $T$.
Therefore the sum of deviations $D(v_i, v_j) + D(v_j, v_i)$ is equal to one minus the length of $e$.
\end{proof}

\begin{cor}
Given  a $v$-one cell $e$ of $|T|$ containing the $v$-zero cells $v_i$ and $v_j$,
$\frac{D(v_i, v_j)}{1-D(v_j, v_i)}$ is well-defined and is between $0$ and $1$.
\end{cor}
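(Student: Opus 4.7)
The plan is to assemble this directly from the lemmas just proved, since each of the three requirements (well-definedness, lower bound, upper bound) follows from exactly one of them.

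First I would establish well-definedness. The denominator $1 - D(v_j, v_i)$ is strictly positive precisely when $D(v_j, v_i) < 1$, and this is exactly the content of the unnumbered lemma stating that deviations are strictly less than one. Since $e$ is a $v$-one cell containing both $v_i$ and $v_j$, the pair $(v_j, v_i)$ is a valid input to that lemma, so $1 - D(v_j, v_i) > 0$ and the fraction makes sense.

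Next I would handle the two inequalities. The lower bound $\frac{D(v_i,v_j)}{1-D(v_j,v_i)} \geq 0$ is immediate: the numerator is non-negative by Lemma~\ref{lemma:positive deviation} and the denominator is positive by the previous paragraph. For the upper bound $\frac{D(v_i,v_j)}{1-D(v_j,v_i)} \leq 1$, I would clear denominators (legal because the denominator is positive) and reduce to showing $D(v_i, v_j) + D(v_j, v_i) \leq 1$. But Lemma~\ref{lemma: sum of deviations} gives this sum exactly as $1 - \text{length}(e)$, and since $T$ is pseudometric its edges have non-negative length, so $1 - \text{length}(e) \leq 1$ as required.

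There is essentially no obstacle here; the corollary is really a bookkeeping step that packages the previous three lemmas in the form in which they will be used to define the barycentric coordinates $a(v,w)$ of the straightening map. The only point worth remarking is that the strict inequality $D(v_j, v_i) < 1$ (as opposed to $\leq 1$) is what ensures the denominator never vanishes, which is why the unnumbered lemma was stated with strictness; the upper bound on the quotient is merely a weak inequality and tolerates $\text{length}(e) = 0$.
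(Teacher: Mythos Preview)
Your proposal is correct and is exactly the intended argument: the paper states this as a corollary without proof, leaving the reader to assemble it from the preceding three lemmas (nonnegativity of deviations, strict upper bound of one, and the sum formula) just as you have done.
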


We are now ready to define the barycentric coordinate $a(v,w)$, which uses a sequence of $v$-zero cells in $|T|$.
Given a point $v$ of $|T|$ and a leaf $w$ of $T$, any path in $|T|$ from $v$ to the image $|w|$ of $w$ passes through a finite sequence of $v$-zero cells.
In particular, there is a unique such sequence $v=v_1, v_2, \dots, v_n=|w|$ of {\em distinct} $v$-zero cells.

\begin{figure}[ht]
\includegraphics[scale=0.8]{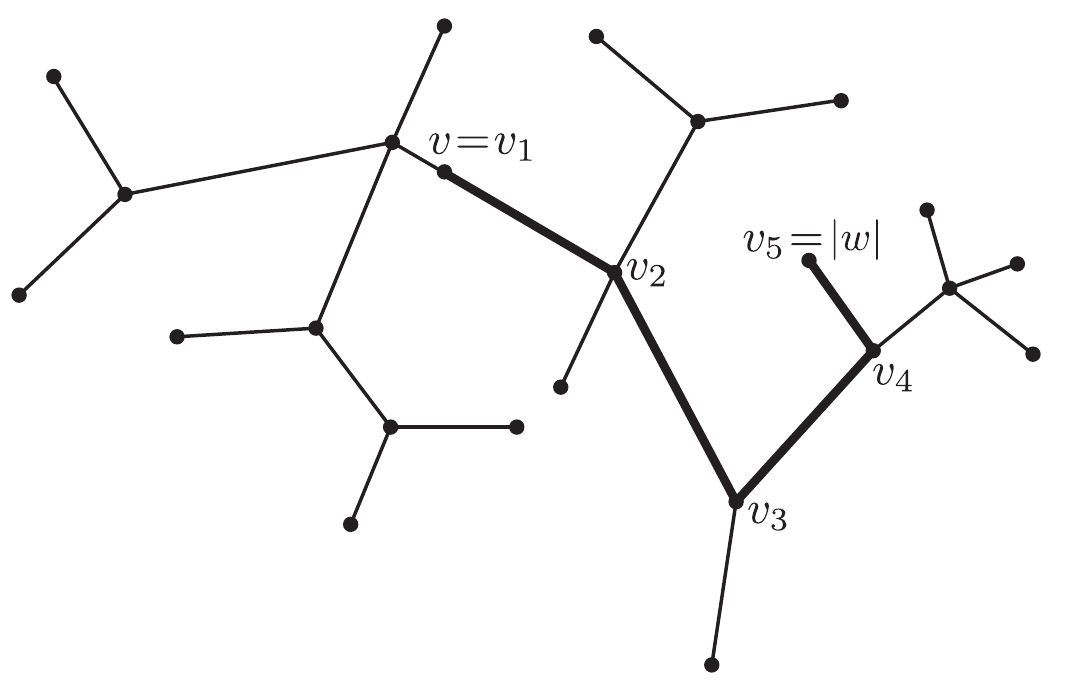}
\caption{A path from a point $v$ to a leaf $w$ in the  short-branched tree $T$}
\label{fig:path in tree}
\end{figure}

\begin{defi}\label{def:barycentriccoordinate}
The {\em barycentric coordinate} $a(v,w)$ is given by the formula
\[
a(v,w) = \prod_{i=1}^{n-1}\frac{D(v_i, v_{i+1})}{1-D(v_{i+1}, v_i).}
\]
\end{defi}
\begin{figure}[ht]
\includegraphics[scale=0.8]{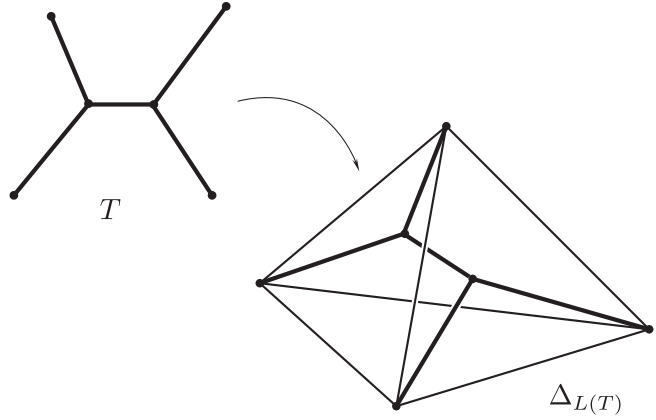}
\caption{A short-branched tree $T$ and its straightening in $\Delta_{\leaves{T}}$}
\label{fig:straightened tree2}
\end{figure}

\begin{lemma}
The barycentric coordinate is continuous with respect to the first coordinate.
\end{lemma}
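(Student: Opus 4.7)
The plan is to check continuity of $v \mapsto a(v, w)$ locally, splitting into two cases: either $v$ lies in the interior of some $v$-one cell of $|T|$, or $v$ is the image of a vertex of $T$.

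At an interior point, the combinatorial structure of the $v$-zero cell sequence $v = v_1, v_2, \ldots, v_n = |w|$ is locally constant---only the position of $v_1 = v$ varies. Consequently, every factor except the first in the product of Definition~\ref{def:barycentriccoordinate} is independent of $v$. In the first factor $D(v, v_2)/(1 - D(v_2, v))$, the quantity $D(v_2, v)$ depends only on the component of $|T| \setminus \{v_2\}$ containing $v$ and so is locally constant, while the leaf length of $T(v, v_2)$ is locally constant and the length of $T(v, v_2)$ varies affinely in the distance from $v$ to $v_2$. Continuity on edge interiors is immediate.

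The main work is at a vertex point $v = u_0$. Approach $u_0$ from the interior of an adjacent edge $e$ with other endpoint $u_1$. Whether the first factor of the product survives in the limit depends on a case distinction. \textbf{Case A:} If the path from $u_0$ to $|w|$ leaves $u_0$ along an edge other than $e$, then for $v$ interior to $e$ and close to $u_0$ the path from $v$ to $|w|$ passes through $u_0$, inserting $u_0$ as the second $v$-zero cell. The first factor is $D(v, u_0)/(1 - D(u_0, v))$. Here $D(u_0, v) = D(u_0, u_1)$ is independent of the position of $v$ in $e$, and Lemma~\ref{lemma: sum of deviations} applied to the $v$-one cell $[v, u_0]$ yields $D(v, u_0) \to 1 - D(u_0, u_1)$ as that cell collapses. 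So this factor tends to $1$, and the remaining factors agree with those of $a(u_0, w)$ since they involve the same underlying sequence of $u_0$-zero cells. \textbf{Case B:} If the path from $u_0$ to $|w|$ leaves $u_0$ along $e$, then for $v$ interior to $e$ the path from $v$ to $|w|$ bypasses $u_0$ entirely and begins $v, u_1, \ldots$. The first factor is $D(v, u_1)/(1 - D(u_1, v))$, with $D(u_1, v) = D(u_1, u_0)$ constant and $D(v, u_1) \to D(u_0, u_1)$ by direct calculation. This matches exactly the first factor of $a(u_0, w)$, and again the subsequent factors coincide.

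In either case, the limit of $a(v, w)$ from any edge adjacent to $u_0$ equals $a(u_0, w)$, which completes the proof. The main obstacle is the bookkeeping in Case A, where the sequence of $v$-zero cells has one more entry than the sequence computed at $u_0$ itself; the crucial point is that this extra leading factor collapses to $1$ in the limit, which is precisely what Lemma~\ref{lemma: sum of deviations} (together with the short-branched balance condition underlying Lemma~\ref{lemma: fundamental lemma}) provides.
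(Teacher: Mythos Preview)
Your proof is correct and takes essentially the same approach as the paper's: both reduce to checking what happens as $v$ approaches a vertex $u_0$ along an incident edge, split on whether the path to $|w|$ passes through $u_0$ or not, and in the former case use Lemma~\ref{lemma: sum of deviations} to show the extra leading factor collapses to $1$. Your Case~B is spelled out a bit more explicitly than the paper's corresponding ``factors vary continuously'' sentence, but the argument is the same.
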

\begin{proof}

Fix an oriented edge $\vec{e}$ of $T$ with length $\ell(\vec{e})>0$.
In general, the factors $\frac{D(v_i, v_{i+1})}{1-D(v_{i+1}, v_i)}$ defining $a(v,w)$ vary continuously---and in fact, all but the first one of them are constant---as $v$ moves around in the image of $(0, \ell(\vec{e})) \times \{\vec{e}\}$ in $|T|$.
Therefore it suffices to consider a sequence of points $\{v^k\}$ in the image of ${(0, \ell(\vec{e})) \times \{\vec{e}\}}$ in $|T|$ converging to the point $v^\infty$, the image of $\{0\} \times \{\vec{e}\}$ in $|T|$.

Unless $w$ is in $T(v^k, v^\infty)$, the factors vary continuously and therefore $a(v^k, w)$ converges to $a(v^\infty, w)$.

If $w$ is in $T(v^k, v^\infty)$, then in the limit the first factor $\frac{D(v^k, v^\infty)}{1-D(v^\infty, v^k)}$ defining $a(v^k, w)$ disappears in the product defining $a(v^\infty,w)$.
By Lemma \ref{lemma: sum of deviations}, the sum of deviations $D(v^k, v^\infty) + D(v^\infty, v^k)$ is equal to one minus the length of the $v^k$-one cell $e$ joining $v^k$ and $v^\infty$.
When $v^k$ approaches $v^\infty$, the length of $e$ approaches zero.
Therefore the first factor $\frac{D(v^k, v^\infty)}{1-D(v^\infty, v^k)}$ approaches one
and $a(v^k, w)$ converges to $a(v^\infty, w)$.
\end{proof}

\begin{lemma}\label{lemma: sum is 1}
For any point $v$ in $|T|$, the sum over all leaves $w$ in $\leaves{T}$ of the barycentric coordinates $a(v,w)$ is $1$.
\end{lemma}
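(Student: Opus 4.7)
The plan is to root the pseudometric realization $|T|$ at $v$, and for every $v$-zero cell $u \neq v$ let $p(u)$ denote the first $v$-zero cell on the unique path from $u$ back to $v$. For each such $u$, define
\[A(u) = \sum_w \prod_{i=1}^{n-1}\frac{D(u_i, u_{i+1})}{1 - D(u_{i+1}, u_i)},\]
where the sum ranges over leaves $w$ of $T$ with $|w| \in T(p(u),u)$ and $u = u_1, \ldots, u_n = |w|$ is the unique sequence of distinct $v$-zero cells from $u$ to $|w|$; the empty product (when $u = |w|$) is $1$. Set $A(v) = \sum_w a(v,w)$ analogously, so that $A(v)$ is exactly the quantity we need to equal $1$. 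Pulling off the factor $i=1$ in each product and grouping by the child $u_2$ of $u$ yields the recursion
\[A(u) = \mathbf{1}\{u = |w|\text{ for some leaf }w\} + \sum_{u' \text{ child of } u} \frac{D(u, u')}{1 - D(u', u)}\, A(u').\]

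The main step of the proof will be the following claim, established by induction on the height of $u$ in the rooted tree: for every $v$-zero cell $u \neq v$, $A(u) = 1 - D(u, p(u))$. In the base case $u = |w|$ a leaf, $u$ has no children, so the recursion gives $A(u) = 1$, and Lemma \ref{lemma: fundamental lemma} applied at $u$ forces $D(u,p(u)) = 0$. For the inductive step $u$ is internal; each child $u'$ satisfies $p(u') = u$, so by the inductive hypothesis $A(u') = 1 - D(u', u)$, which cancels the denominator $1 - D(u',u)$ in the recursion. This reduces the recursion to $A(u) = \sum_{u' \text{ child of } u} D(u, u')$, and a final use of Lemma \ref{lemma: fundamental lemma} at the non-leaf $u$ converts this to $1 - D(u, p(u))$.

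Plugging the claim into the recursion at the root gives $A(v) = \mathbf{1}\{v = |w|\} + \sum_{u \text{ child of } v} D(v, u)$. A final application of Lemma \ref{lemma: fundamental lemma} at $v$ then yields $1$ in both cases: if $v$ is not the image of a leaf the sum alone contributes $1$, while if $v = |w|$ is a leaf the sum vanishes and the indicator contributes $1$. Either way $A(v) = 1$, which is the desired identity.

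The main obstacle will be identifying the correct invariant $A(u) = 1 - D(u, p(u))$; once it is guessed from a small example such as the one in Figure~\ref{fig:path in tree}, the rest of the proof is essentially bookkeeping with the identities of Lemmas \ref{lemma: fundamental lemma} and \ref{lemma: sum of deviations}. No further use of the short-branched structure of $T$ is needed beyond what those two lemmas already encapsulate.
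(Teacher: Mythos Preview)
Your proof is correct and is essentially the same argument as the paper's, just organized slightly differently. The paper defines $a(v,\hat v)=\sum_{w}a(v,w)$ (summing over leaves $w$ beyond $\hat v$) and proves by induction from the leaves toward $v$ that
\[
a(v,\hat v)=D(v_{n-1},\hat v)\prod_{i=1}^{n-2}\frac{D(v_i,v_{i+1})}{1-D(v_{i+1},v_i)},
\]
where $v=v_1,\ldots,v_n=\hat v$. Your quantity $A(u)$ is exactly $a(v,u)$ with the common prefix product $\prod_{i=1}^{n-1}\frac{D(v_i,v_{i+1})}{1-D(v_{i+1},v_i)}$ factored out, so the paper's closed form is equivalent to your invariant $A(u)=1-D(u,p(u))$. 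Both inductions pivot on the same application of Lemma~\ref{lemma: fundamental lemma} at each internal $v$-zero cell, and both finish with one more application at the root $v$. Your version is a bit cleaner because stripping the prefix removes one layer of notation, but neither argument has any content the other lacks.
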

\begin{proof}

Fix a $v$-zero cell $\hat{v}$ of $|T|$.

If $v$ is the image of a leaf of $T$, then the first factor defining $a(v,w)$ is zero unless $v = |w|$.
If $v = |w|$, the product is empty, so $a(v,w) = 1$.

Therefore, assume that $v$ is not the image of a leaf in $T$.
Consider the set of leaves $\{w_i\}$ such that  every path from $v$ to $|w_i|$ in $|T|$ passes through $\hat{v}$.
Define $a(v,\hat{v})$ to be the sum $\sum_{w\in \{w_i\}}a(v,w)$.
Notice that 
\[\sum_w a(v,w) = \sum_{\hat{v}} a(v,\hat{v}),\]
where the sum on the left is over all leaves $w$ of $T$ and the sum on the right is over all vertices $\hat{v}$ which are adjacent to $v$ in $T$,
so it suffices to prove that $\sum_{\hat{v}} a(v,\hat{v}) = 1$.

In fact, we will prove a more general statement than this.
We will show that for a general $v$-zero cell $\hat{v}$ of $|T|$, 
\[a(v,\hat{v}) = D(v_{n-1},\hat{v}) \prod_{i=1}^{n-2}{\frac{D(v_i, v_{i+1})}{1-D(v_{i+1}, v_i)}}\]
where $v=v_1, v_2, \dots, v_n=\hat{v}$ is the sequence of $v$-zero cells in $|T|$ between $v$ to $\hat{v}$.
This will prove what we want, that $\sum_{\hat{v}} a(v,\hat{v}) = 1$ for $\hat{v}$ adjacent to $v$, since in this case
$a(v,\hat{v}) = D(v, \hat{v})$.
The result then follows by Lemma \ref{lemma: fundamental lemma}.

We now prove the general statement, that
by induction over the vertices $\hat{v}$.

In the base case, $\hat{v}$ is the image of a leaf  of $T$, so the formula holds by our original definition of $a(v,w)$ since all deviations
$D(\hat{v},v')$ are equal to zero.

Now assume that $\hat{v}$ is not the image of a leaf of $T$ and that this formula holds for $a(v,\tilde{v})$, where $\tilde{v}$ is a $v$-zero cell sharing a $v$-one cell with $\hat{v}$ and every path from $v$ to $\tilde{v}$ passes through $\hat{v}$.
For such $\tilde{v}$, let $v=v_1, \dots, \hat{v} = v_n, \tilde{v}=v_{n+1}$ be the sequence of $v$-zero cells between $v$ and $\tilde{v}$.
So by assumption
\[a(v,\tilde{v}) = 
D(\hat{v}, \tilde{v}) \prod_{i=1}^{n-1}\frac{D(v_i, v_{i+1})}{1 - D(v_{i+1}, v_i)}.
\]

We need to sum over all choices of $\tilde{v}$ as above.
Doing, this we get

\[ a(v,\hat{v}) = \sum_{\tilde{v}} a(v,\tilde{v})\\
= 
\left(
\prod_{i=1}^{n-1}\frac{D(v_i, v_{i+1})}{1 - D(v_{i+1}, v_i)}\right)
 \sum_{\tilde{v}} 
D(\hat{v}, \tilde{v}).
\]

By Lemma \ref{lemma: fundamental lemma}, the sum $ \sum_{\tilde{v}} 
D(\hat{v}, \tilde{v}) = 1 - D(\hat{v}, v_{n-1})$.

\end{proof}

\begin{cor}\label{cor:straighten to simplex}
The barycentric coordinate is a map from $|T|$ to $\Delta_{\leaves{T}}$ that sends each leaf to itself.
\end{cor}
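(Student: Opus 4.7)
The corollary is essentially a packaging of the lemmas already established, so the plan is to assemble them in the right order. First, I would verify that the barycentric coordinate formula of Definition~\ref{def:barycentriccoordinate} actually produces a point in $\Delta_{\leaves{T}}$. Non-negativity of each $a(v,w)$ is immediate: by Lemma~\ref{lemma:positive deviation} every factor $D(v_i,v_{i+1})$ in the numerator is $\geq 0$, and by the corollary following Lemma~\ref{lemma: sum of deviations} the denominators $1 - D(v_{i+1},v_i)$ are strictly positive, so each factor lies in $[0,1]$. That the coordinates sum to $1$ over $w \in \leaves{T}$ is exactly Lemma~\ref{lemma: sum is 1}. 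Continuity in $v$ is the preceding lemma. Together these give a continuous map $|T| \to \Delta_{\leaves{T}}$.

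It then remains to check that this map sends each leaf of $|T|$ to the corresponding vertex of the simplex. For $v = |w|$ the image of a leaf $w$, the sequence of distinct $v$-zero cells from $v$ to $|w|$ has length one, so the defining product for $a(|w|,w)$ is empty and equals $1$. For any other leaf $w'\neq w$, the relevant sequence starts at the leaf $v_1 = |w|$, and I would invoke Lemma~\ref{lemma: fundamental lemma} in the leaf case: it forces $\sum_j D(v_1,v_j) = 0$, and combined with Lemma~\ref{lemma:positive deviation} this gives $D(v_1,v_2) = 0$, so the first factor in the product vanishes and $a(|w|,w') = 0$. This establishes that the barycentric coordinates of $|w|$ are concentrated at $w$.

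There is no serious obstacle here — the substance of the proposition is already contained in the deviation estimates and in Lemma~\ref{lemma: sum is 1}. The only mild subtlety is making sure the empty-product convention and the leaf case of Lemma~\ref{lemma: fundamental lemma} are invoked cleanly at the endpoint $v = |w|$, but this is routine once the previous lemmas are in hand.
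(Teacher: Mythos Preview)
Your proposal is correct and matches the paper's implicit argument: the corollary has no separate proof in the paper because everything is already in the preceding lemmas, and the leaf-to-itself verification you give (empty product for $w$, vanishing first factor for $w'\neq w$ via Lemma~\ref{lemma: fundamental lemma} and Lemma~\ref{lemma:positive deviation}) is exactly the reasoning used at the start of the proof of Lemma~\ref{lemma: sum is 1}.
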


\begin{defi}
The {\em straightening} of the  short-branched tree $T$ with leaf set $\leaves{T}$  is the map from $|T|$ to $\Delta_{\leaves{T}}$ 
defined by barycentric coordinates.
\end{defi}

By Corollary \ref{cor:straighten to simplex}, the straightening map $\straighten$ satisfies the first condition of Proposition \ref{prop:degenerate straighten}.
It remains to show that it satisfies the second and third conditions.
Namely,  the straightening map degenerates nicely with respect to contraction degenerations (Lemma \ref{lemma: contract straighten}) and with respect to pruning degenerations (Lemma \ref{lemma: graft straighten}).

\begin{lemma}\label{lemma: contract straighten}
Let $e$ be an internal edge of $T$ or an external edge of $T$ whose internal vertex is bivalent,
Assume $e$ has length zero.
Let $|T| \to |T/e|$ be the isomorphism induced the contraction of the edge $e$.
Then the following diagram commutes.
\[
\begin{tikzcd}
{|T|} \ar{rrrr}{\straighten} \ar{dd}[swap]{\cong}
&&&&
\Delta_{\leaves{T}}\ar{dd}{\cong}
\\
\\
{|T/e|} \ar{rrrr}{\straighten}
&&&&
\Delta_{\leaves{T/e}}
\end{tikzcd}
\]

\end{lemma}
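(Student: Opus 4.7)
The plan is to use the explicit formula for $\straighten$ in terms of barycentric coordinates $a(v,w)$ (Definition~\ref{def:barycentriccoordinate}) and to observe that every ingredient in that formula is an invariant of the underlying pseudometric realization together with its vertex and leaf data, both of which are unchanged under contraction of the zero-length edge $e$. Since the pseudometric realization already contracts length zero edges (as noted in the discussion before Figure \ref{fig:fat lollipop}), the canonical isomorphism $|T|\to|T/e|$ is literally the identity on the underlying metric space, so it suffices to verify that $a(v,w)$ is computed identically in both $T$ and $T/e$.

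First I would check that the set of $v$-zero cells and the set of $v$-one cells of $|T|$ coincide with those of $|T/e|$ as subsets of the common metric space. The endpoints of $e$ are distinct vertices of $T$ but already have the same image in $|T|$; contracting $e$ merely identifies these two vertices into one vertex of $T/e$ whose image is that same point. Hence the set $\{\,\text{images of vertices}\,\}$ (hence the set of $v$-zero cells) is unchanged, and the $v$-one cells are the same since they are determined by the set of $v$-zero cells in the metric space. Next I would verify that $\leaves{T}$ and $\leaves{T/e}$ agree as subsets of $|T|$: if $e$ is internal, the leaves of $T$ and $T/e$ clearly coincide; if $e$ is external with a bivalent internal vertex, then contracting $e$ merges the leaf with the bivalent vertex to produce a univalent vertex of $T/e$, whose image in $|T|$ is the same point as the original leaf. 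In either case the canonical identification $\Delta_{\leaves{T}}\cong\Delta_{\leaves{T/e}}$ sends each leaf vertex to itself as a point of $|T|$.

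Given these identifications, I would check that the deviation $D(v_i,v_j)$ is preserved. Its two constituents are the length of $T(v_i,v_j)$, which is a metric invariant of the subspace and so unchanged, and the leaf length of $T(v_i,v_j)$, which counts leaves (of $T$ or $T/e$, respectively) whose image lies in $T(v_i,v_j)-v_i$; by the previous step these leaf sets coincide as sets of points in $|T|$, so the counts agree. Consequently every factor $D(v_i,v_{i+1})/(1-D(v_{i+1},v_i))$ in the formula for $a(v,w)$ is the same for $T$ and $T/e$. Since the sequence of distinct $v$-zero cells traversed by a path from $v$ to $|w|$ depends only on the underlying metric space and the set of $v$-zero cells, the sequences agree as well, and therefore $a(v,w)$ computed in $T$ equals $a(v,w)$ computed in $T/e$. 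This gives commutativity of the square.

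The only subtlety requiring care is in the external-edge case, where one must confirm that contraction does not accidentally change the leaf set as a set of points in $|T|$. The hypothesis that the internal vertex of $e$ is bivalent in $T$ is exactly what makes the merged vertex in $T/e$ univalent, so there is neither a gain nor a loss of leaves; without this hypothesis (for instance, if the internal vertex were trivalent), the merged vertex would not be a leaf and a genuine change in the leaf set would occur, which is precisely why that case is excluded from the statement.
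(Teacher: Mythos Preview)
Your proof is correct and follows the same approach as the paper's proof. The paper's argument is essentially a two-sentence version of yours: it observes that the isomorphism $|T|\to|T/e|$ preserves $v$-zero cells and lengths of $v$-one cells, and concludes that the barycentric coordinates of Definition~\ref{def:barycentriccoordinate} coincide. You have spelled out the intermediate steps (preservation of leaf sets, of the subspaces $T(v_i,v_j)$, and hence of deviations) that the paper leaves implicit, and your remark about why the bivalent hypothesis is needed in the external-edge case is a helpful clarification.
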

\begin{proof}
For any point $v$ of $|T|$, the isomorphism $|T| \to |T/e|$ preserves $v$-zero cells and lengths of $v$-one cells.
Therefore the barycentric coordinates of Definition \ref{def:barycentriccoordinate} coincide in the two cases.
\end{proof}

\begin{lemma}\label{lemma: graft straighten}
Let $T_h$ be a prunable branch of $T$ with pollard $T^h$.
Since $\leaves{T^h}$ is a subset of $\leaves{T}$, there is a natural inclusion of $\Delta_\leaves{T^h}$ in $\Delta_{\leaves{T}}$.
Since every leaf of $T_h$ except $s(h)$ is also a leaf of $T$, assigning a point in $\Delta_{\leaves{T}}$ to $s(h)$ yields a  linear inclusion of $\Delta_{\leaves{T_h}}$ in $\Delta_{\leaves{T}}$.
Here, since $s(h)$ is also a vertex of $T^h$, it has an image point $|s(h)|$ in the pseudometric realization $|T^h|$. Thus we assign to $s(h)$ the image of $|s(h)|$ under the straightening map of $T^h$.
Then the following diagram commutes.
\[
\begin{tikzcd}
{|T^h|} \sqcup |T_h| \ar{rrrr}{\straighten} \ar{dd}[swap]{\rho}
&&&&
\Delta_{\leaves{T^h}} \sqcup \Delta_{\leaves{T_h}} \ar{dd}
\\\\
{|T|} \ar{rrrr}{\straighten}
&&&&
\Delta_{\leaves{T}}
\end{tikzcd}
\]

\end{lemma}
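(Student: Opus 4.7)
My plan is to unpack Definition~\ref{def:barycentriccoordinate} and compare, leaf-by-leaf, the barycentric coordinate $a_T(v,w)$ with the corresponding coordinate of the image of $\straighten_{T^h}(v) \sqcup \straighten_{T_h}(v)$ under the inclusion ${\Delta_{\leaves{T^h}} \sqcup \Delta_{\leaves{T_h}} \hookrightarrow \Delta_{\leaves{T}}}$ described in the statement. First I would verify that $T^h$ and $T_h$ are themselves short-branched, so that Definition~\ref{def:barycentriccoordinate} applies to them: $T_h$ inherits this from being prunable and from the fact that any sub-branch of $T_h$ is a sub-branch of $T$, while for $T^h$ the arithmetic $\length{T^h} = \length{T} - \length{T_h}$ combined with short-branchedness of $T$ and prunability of $T_h$ gives the total length condition. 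I would also note that $s(h)$, being at least trivalent in $T$, is not a leaf of $T^h$, so $\leaves{T^h} = \leaves{T} \cap T^h$, while in $T_h$ the vertex $s(h)$ is a new leaf not in $\leaves{T}$, and $\leaves{T_h} \setminus \{s(h)\} = \leaves{T} \cap T_h$.

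The argument then splits into four cases. In Case~A, with $v \in |T^h|$ and $w$ a leaf of $T$ lying in $T_h$, the unique path in $|T|$ from $v$ to $|w|$ must cross $|s(h)|$, and the subspace $T(|s(h)|, v_{m+1})$ associated to the first $v$-zero cell past $|s(h)|$ coincides with $|T_h|$. Its $T$-leaf length equals $\length{T_h}$ by prunability, so $D_T(|s(h)|, v_{m+1}) = 0$ and hence $a_T(v,w) = 0$, matching the zero coordinate at $w$ under the inclusion. In Case~B, with $v \in |T^h|$ and $w \in \leaves{T^h}$, the path stays in $|T^h|$ and I would verify that each factor of the product defining $a_T(v,w)$ agrees with the corresponding factor for $a_{T^h}(v,w)$: when the defining subtree for a given deviation does not extend through $|s(h)|$ this is immediate, and when it does extend through $|s(h)|$ into $|T_h|$ the extra length contributed by $T_h$ cancels the extra $T$-leaf count contributed by $T_h$, again by prunability.

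Cases C and D handle $v \in |T_h|$. In Case~C, with $w$ a leaf of $T$ in $T_h$, the path stays in $|T_h|$; the subtle bookkeeping here is that the extra $T_h$-leaf $s(h)$ contributes a $+1$ to the $T_h$-leaf count whenever the defining subtree extends through $|s(h)|$ into $|T^h|$, but this is exactly cancelled by the identity $\length{T^h} = \#\leaves{T^h} - 1$, again yielding equal deviations. In Case~D, with $w$ a leaf of $T$ in $T^h$, the path from $v$ to $|w|$ splits at $|s(h)|$ as $v = v_1, \ldots, v_m = |s(h)|, v_{m+1}, \ldots, v_n = |w|$; applying the deviation comparisons of Cases~C and B to the two halves and regrouping the resulting product gives precisely $a_T(v,w) = a_{T_h}(v, s(h)) \cdot a_{T^h}(|s(h)|, w)$, which by the affine inclusion described in the statement is the coordinate at $w$ of the image of $\straighten_{T_h}(v)$.

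The main obstacle will be the careful bookkeeping across the three trees $T$, $T^h$, $T_h$, in which $s(h)$ plays three distinct roles. The entire calculation is organized by two numerical identities: $\length{T_h} = \#(\leaves{T} \cap T_h)$ from prunability, and $\length{T^h} = \#(\leaves{T} \cap T^h) - 1$ from short-branchedness of $T^h$. The off-by-one in the second identity is what kills the bridging deviation in Case~A and what matches the affine correction in Case~D; once these identities are in hand, each case reduces to a direct comparison of the numerator and denominator of each factor in Definition~\ref{def:barycentriccoordinate}.
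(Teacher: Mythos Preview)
Your proposal is correct and follows essentially the same four-case decomposition as the paper's proof, with the same mechanism (prunability of $T_h$ forces matching deviations across $T$, $T^h$, $T_h$). The one difference worth noting is your Case~A (the paper's Case~2): you argue directly that the factor at $|s(h)|$ vanishes because $T(|s(h)|,v_{m+1})=|T_h|$ has deviation zero by prunability, whereas the paper deduces $a_T(v,w)=0$ indirectly from Case~1 together with Lemma~\ref{lemma: sum is 1} (the coordinates already sum to $1$ over $\leaves{T^h}$, so the remaining coordinates must vanish). Your direct argument is slightly cleaner and avoids invoking the sum-to-one lemma; the paper's route has the virtue of making Case~2 a one-line corollary of Case~1.
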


See Figure \ref{fig:grafted straightening} for a picture of the inclusion of $\Delta_{\leaves{T_h}}$ in $\Delta_{\leaves{T}}$.

\begin{proof}
Let $v$ be a point in ${|T^h|} \sqcup |T_h|$ and let $w$ a be leaf of $T$.
We prove  different cases, which depend on whether the preimage of $|w|$ in ${|T^h|} \sqcup |T_h|$ under $\rho$ lies in the same component as $v$ or not.

\subsubsection*{Case 1}If the point $v$ is in $|T^h|$ and there is a leaf $w^h$ of $T^h$ with $\rho(|w^h|) = |w|$, 
then the choice of leaf $w^h$ is unique. Let $T^h(v_i,v_j)$ be a subspace of $|T^h|$ whose deviation appears in the expression for $a(v,w^h)$ in $\Delta_{\leaves{T^h}}$.
If $|s(h)|$ is not a point in $T^h(v_i,v_j)-v_i$, then the deviation of the isomorphic image $T(v_i,v_j) = \rho(T^h(v_i,v_j))$ appears in the expression for $a(v,w)$ in $\Delta_{\leaves{T}}$.
On the other hand,
if $|s(h)|$ is a point in $T^h(v_i,v_j)-v_i$, then in the expression for $a(v,w)$ in $\Delta_{\leaves{T}}$, the subspace $T(v_i,v_j)$  of $|T|$ corresponding to $T^h(v_i,v_j)$ is instead the image under $\rho$ of the union of $(|T_h|)$ with  $T^h(v_i,v_j)$.
Because $T_h$ is prunable, the deviation $D(v_i,v_j)$ is equal to $D^h(v_i,v_j)$.
Each factor of $a(v,w^h)$ in $\Delta_{\leaves{T^h}}$ is equal to the corresponding factor of $a(v,w)$ in $\Delta_{\leaves{T}}$, so $a(v,w^h) = a(v,w)$.

\subsubsection*{Case 2}If the point $v$ is in $|T^h|$ and there is no leaf $w^h$ of $T^h$ with $\rho(|w^h|) = |w|$, 
then notice by Lemma \ref{lemma: sum is 1}, the sum over leaves of $T^h$ of $a(v,w) = 1$, and likewise for the sum over leaves of $T$.
Then the argument for Case 1 also shows that in this case $a(v,w) =0$.

\subsubsection*{Case 3}If the point $v$ is in $|T_h|$ and there is no leaf $w^h$ of $T^h$ with $\rho(|w^h|) = |w|$, 
then there is a unique leaf $w_h$ of $T_h$ so that $\rho(|w_h|)=|w|$. Now this case is analogous to the first case.

Let $T_h(v_i,v_j)$ be a subspace of $|T_h|$ whose deviation appears in the expression for $a(v,w_h)$ in $\Delta_{\leaves{T_h}}$.
If $|s(h)|$ is not a point in $T_h(v_i,v_j)-v_i$, then the deviation of the isomorphic image $T(v_i,v_j) = \rho(T_h(v_i,v_j))$ appears in the expression for $a(v,w)$ in $\Delta_{\leaves{T}}$.
On the other hand,
if $|s(h)|$ is a point in $T_h(v_i,v_j)-v_i$, then in the expression for $a(v,w)$ in $\Delta_{\leaves{T}}$, the subspace $T(v_i,v_j)$  of $|T|$ corresponding to $T_h(v_i,v_j)$ is instead the image under $\rho$ of the union of $(|T^h|)$ with  $T_h(v_i,v_j)$.
 As in the previous case, prunability of $T_h$ implies that the deviation $D(v_i,v_j)$ is equal to $D_h(v_i,v_j)$.
Each factor of $a(v,w_h)$ in $\Delta_{\leaves{T_h}}$ is equal to the corresponding factor of $a(v,w)$ in $\Delta_{\leaves{T}}$, so $a(v,w_h) = a(v,w)$.

\subsubsection*{Case 4} If the point $v$ is in $|T_h|$ and there is a leaf $w^h$ of $T^h$ with $\rho(|w^h|) = |w|$, then any path from $v$ to $|w|$ in $|T|$ must pass through the $v$-zero cell $|s(h)|$; let this $v$-zero cell be denoted $v_j$.

Then the formula for $a(v,w)$ in $\Delta_{\leaves{T}}$ may be written as
\[
a(v,w) = \left( \prod_{i=1}^{j-1}\frac{D(v_i, v_{i+1})}{1 - D(v_{i+1}, v_i) }\right)  \left(\prod_{i=j}^{n-1}\frac{D(v_i, v_{i+1})}{1-D(v_{i+1}, v_i) }\right)
\]
As in the previous cases, we have obvious equalities
\[\begin{array}{ll}
D(v_i,v_{i+1})=D^h(v_i,v_{i+1}),& i\ge j\\
D(v_{i+1},v_i)=D_h(v_{i+1},v_i),&i<j
\end{array}\]
because of an isomorphism of the corresponding subspaces. 

On the other hand, for $i<j$ the subspace
$T(v_i,v_{i+1})$ is equal to the union of the image under $\rho$ of $|T^h|$ and $T_h(v_i,v_{i+1})$; for $i\ge j$ the subspace $T(v_{i+1},v_i)$ is equal to the union of the image under $\rho$ of $|T_h|$ and $T^h(v_{i+1},v_{i})$. In both cases, prunability of $T_h$ implies that the corresponding deviations are equal to one another. Thus
\[
\prod_{i=1}^{j-1}\frac{D(v_i, v_{i+1})}{1 -D(v_{i+1}, v_i) } = 
\prod_{i=1}^{j-1}\frac{D_h(v_i, v_{i+1})}{1 -D_h(v_{i+1}, v_i) } = 
a(v,s(h)) \text{ in } \Delta_{\leaves{T_h}}
\]
and
\[
\prod_{i=j}^{n-1}\frac{D(v_i, v_{i+1})}{1-D(v_{i+1}, v_i) } =
\prod_{i=j}^{n-1}\frac{D^h(v_i, v_{i+1})}{1-D^h(v_{i+1}, v_i) } 
= a(|s(h)|, w) \text{ in } \Delta_{\leaves{T^h}}.
\]
This guarantees that a point $v$ in $T_h$ has the same barycentric coordinates when $\Delta_{\leaves{T_h}}$ is mapped to $\Delta_{\leaves{T}}$ as when the barycentric coordinates are computed in $\Delta_{\leaves{T}}$ directly.
\end{proof}

\begin{figure}[ht]
\includegraphics{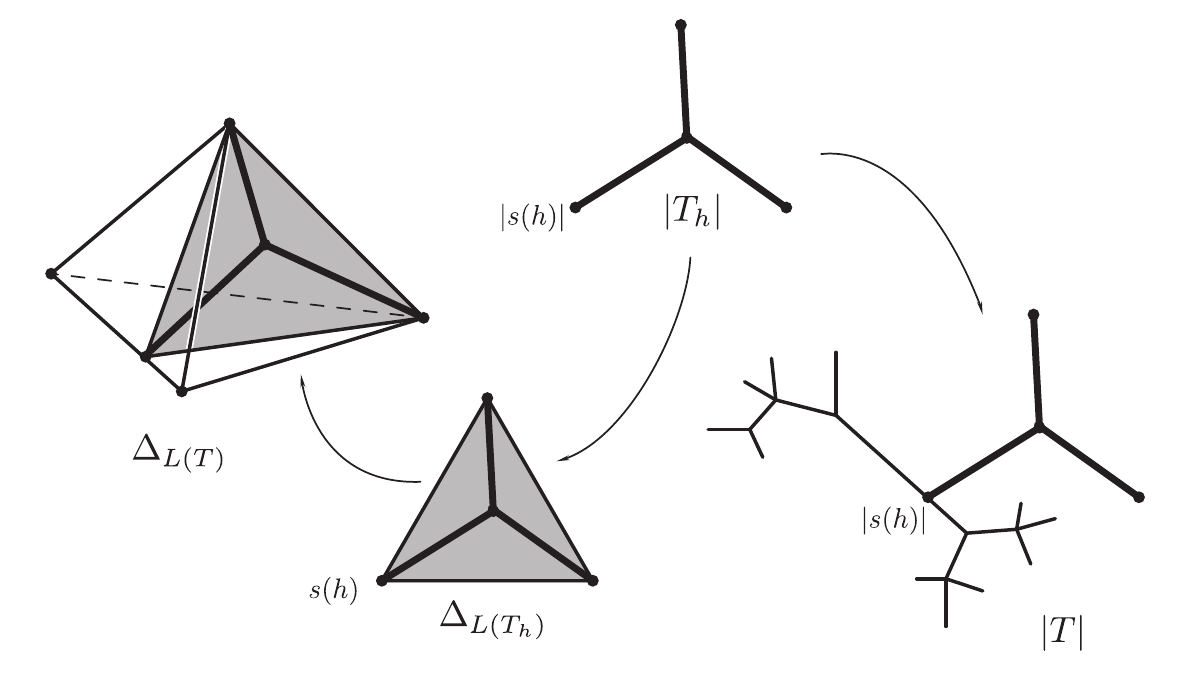}
\caption{The inclusion of $|T_h|$ into $|T|$, the straightening of $T_h$ in  $\Delta_{\leaves{T_h}}$, and the inclusion of  $\Delta_{\leaves{T_h}}$ into $\Delta_{\leaves{T}}$}
\label{fig:grafted straightening}
\end{figure}

\bibliography{dpr}
\bibliographystyle{amsalpha}
\end{document}